\documentclass[12pt]{article}
\usepackage[english]{babel}
\usepackage{graphicx}
\usepackage{framed}
\usepackage[normalem]{ulem}
\usepackage{amsmath}
\usepackage{amsthm}
\usepackage[utf8]{inputenc}
\usepackage[english]{babel}
\usepackage{amssymb}
\usepackage{amsfonts}
\usepackage{tikz-cd}
\usepackage{enumerate}
\usepackage[utf8]{inputenc}
\usepackage{tikz}
\usepackage[top=1 in,bottom=1in, left=1 in, right=1 in]{geometry}
\usepackage{enumitem}

\usepackage{dynkin-diagrams}
\usetikzlibrary{decorations.markings}

\usepackage{hyperref}

\newcommand{\p}{\mathbb{P}}
\newcommand{\N}{\mathbb{N}}
\newcommand{\F}{\mathbb{F}}

\newcommand{\A}{\mathcal{A}}
\newcommand{\B}{\mathcal{B}}
\newcommand{\C}{\mathbb{C}}

\newcommand{\s}{\mathcal{O}}
\newcommand{\Q}{\mathbb{Q}}
\newcommand{\Z}{\mathbb{Z}}
\newcommand{\<}{\langle}
\renewcommand{\>}{\rangle}
\renewcommand{\emptyset}{\varnothing}

\theoremstyle{plain}
\newtheorem{theorem}{Theorem}[section] 

\theoremstyle{definition}
\newtheorem{prop}[theorem]{Proposition}
\newtheorem{lemma}[theorem]{Lemma}
\newtheorem{conj}[theorem]{Conjecture}

\newtheorem{Not}[theorem]{Notation}

\newtheorem{Remark}[theorem]{Remark}

\newtheorem{cor}[theorem]{Corollary}

\newtheorem{defn}[theorem]{Definition} 
\newtheorem{exmp}[theorem]{Example}

\usepackage{mathtools}
\usepackage{multirow}

\tikzstyle{vertex} = [fill,shape=circle,node distance=80pt]
\tikzstyle{edge} = [fill,opacity=.5,fill opacity=.5,line cap=round, line join=round, line width=50pt]
\tikzstyle{elabel} =  [fill,shape=circle,node distance=30pt]

\title{Relative Kazhdan-Lusztig isomorphism for $GL_{2n}/Sp_{2n}$}
\author{Guy Shtotland \\
\small Department of Mathematics\\
\small Ben-Gurion University of the Negev}
\date{}

\begin{document}

\maketitle
\setlength\parindent{0pt}

\begin{abstract}
The Kazhdan–Lusztig isomorphism, relating the affine Hecke algebra of a $p$-adic group to the equivariant K-theory of the Steinberg variety of its Langlands dual, played a key role in the proof of the Deligne–Langlands conjectures concerning the classification of smooth irreducible representations with an Iwahori fixed vector. In this work we state and prove a relative version of the Kazhdan–Lusztig isomorphism for the symmetric pair $(GL_{2n},Sp_{2n})$. The relative isomorphism is an isomorphism between the module of compactly supported Iwahori invariant functions on $X=GL_{2n}/Sp_{2n}$ and another module over the affine Hecke algebra constructed using equivariant $K$-theory and the relative Langlands duality. We use this isomorphism to give a new proof of a condition on $X$ distinguished representations.  

\end{abstract}

\begin{section}{Introduction}
Let $F$ be a non-archimedean local field, $\s$ its ring of integers and $k$ its residue field. Denote $q_r=\#k$.

Let $\mathbf{G}$ be a reductive connected group split over $F$ and let $G=\mathbf{G}(F)$ be its $F$ points.

Let $B$ be a Borel subgroup of $G$ and let $I$ be an Iwahori subgroup of $G$. Let $H(G,I)$ be the affine Hecke algebra of $G$, it is the algebra of $I$ bi-invariant compactly supported functions on $G$. 

Let $H_q$ be the generic version of $H(G,I)$, it depends on a complex parameter $q\in\C$. For $q=q_r$ we get $H_{q_r}\cong H(G,I)$.

Let $G^\vee$ be the complex dual group of $G$. Let $\mathfrak{g}^\vee$ be the Lie algebra of $G^\vee$ and let $\mathfrak{g}^{*\vee}$ be its dual. Denote by $Ad$ the natural action of $G^\vee$ on $\mathfrak{g}^{\vee}$. Let $\B$ be the flag variety of $G^\vee$. For $B^\vee\in\B$ we denote by $\mathfrak{b}^\vee$ its Lie algebra.

The Deligne Langlands conjecture gives a classification of the irreducible representations of $G$ with an $I$ fixed vector, in terms of $G^\vee$. 

\begin{theorem}{\cite{Kazhdan1987ProofOT},\cite{reeder}} \label{DL conjecture}The irreducible smooth representations of $G$ with an $I$ fixed vector are parametrized by conjugacy classes of triples $(t,n,\chi)$ with $t\in G^\vee$ semi simple, $n\in \mathfrak{g}^{\vee}$ such that $Ad_t(n)=q_rn$. The element $\chi$ is an irreducible representation of the finite group $C_{G^\vee}(t,n)/C^0_{G^\vee}(t,n)$ that appears in the representation $H_\bullet(\B_n^t)$. Here, $C_{G^\vee}(t,n)$ is the common centralizer of $t,n$ and $C^0_{G^\vee}(t,n)$ is the connected component of the identity in $C_{G^\vee}(t,n)$. The variety $\B_n^t$ is the variety of Borel subgroups $B^\vee\in\B$ such that $n\in \mathfrak{b}^\vee$ and $t\in B^\vee$.

We refer to $(t,n,\chi)$ as the Deligne Langlands parameter of $\pi$.
    
\end{theorem}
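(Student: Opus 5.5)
The plan follows the Kazhdan--Lusztig route. The first step is to reduce from smooth representations of $G$ to modules over the Hecke algebra. By Borel's theorem, the functor $\pi\mapsto\pi^I$ is an equivalence between the category of smooth representations of $G$ generated by their $I$-fixed vectors and the category of $H(G,I)$-modules. Irreducible representations with $\pi^I\neq 0$ are generated by $\pi^I$. It therefore suffices to classify the simple $H_{q_r}$-modules, using $H_{q_r}\cong H(G,I)$.

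The second step is the Kazhdan--Lusztig isomorphism. I would prove that
\[
H_q\cong K^{G^\vee\times\C^\times}(Z),
\qquad Z=\widetilde{\mathcal N}\times_{\mathfrak{g}^\vee}\widetilde{\mathcal N},
\]
where $\widetilde{\mathcal N}=\{(B^\vee,n): n\in \mathfrak{b}^\vee \text{ nilpotent}\}$ is the Springer resolution. The ring structure on the right is given by convolution, and $q$ corresponds to the generator of $R(\C^\times)$. The proof proceeds in three parts.
\begin{enumerate}
\item Match the Bernstein presentation: the lattice part $\C[X^*(T)]$ goes to line bundles on the diagonal, and the generators $T_s$ for simple reflections go to classes of structure sheaves of the closures of the codimension-one orbits $Z_s$.
\item Check the quadratic and braid relations via convolution computations on $\p^1$-bundles.
\item Deduce bijectivity from the cellular (Bruhat) filtration of $Z$ by $G^\vee$-orbits over pairs of Borels.
\end{enumerate}
In particular, the center $\C[X^*(T)]^W\otimes\C[q^{\pm1}]$ is identified with $R(G^\vee\times\C^\times)$.

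The third step constructs and classifies the simple modules by specialization and localization. Fix a central character, namely a pair $a=(t,q_r)$ with $t$ semisimple. The bivariant Riemann--Roch and localization theorems identify the completion of $K(Z)$ at $a$ with Borel--Moore homology of the fixed points:
\[
K^{G^\vee\times\C^\times}(Z)^\wedge_a\cong H_\bullet(Z^a).
\]
Here $Z^a$ is the Steinberg variety of the fixed-point data, i.e.\ the pairs $(n,B^\vee)$ with $Ad_t(n)=q_rn$ and $t\in B^\vee$. The right-hand side is a convolution algebra.

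The fourth step applies the BBD decomposition theorem to the proper map $\mu^a:\widetilde{\mathcal N}^a\to\mathcal N^a$. This gives
\[
H_\bullet(Z^a)\cong \operatorname{Ext}^\bullet(\mu^a_*\C,\mu^a_*\C),
\]
and by the decomposition theorem $\mu^a_*\C=\bigoplus_{(O,\mathcal L)} L_{O,\mathcal L}\otimes IC(O,\mathcal L)[d]$. Standard convolution-algebra theory then says the simple modules are exactly the nonzero multiplicity spaces $L_{O,\mathcal L}$. The pairs $(O,\mathcal L)$ run over $C(t)$-orbits of $n$ together with irreducible local systems, which are representations $\chi$ of $C_{G^\vee}(t,n)/C^0_{G^\vee}(t,n)$. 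Such a pair occurs if and only if $\chi$ appears in $H_\bullet(\B^t_n)$, since the stalk of $\mu^a_*\C$ at $n$ is $H^\bullet(\B^t_n)$. Finally, conjugating by $G^\vee$ yields the parametrization by conjugacy classes of triples $(t,n,\chi)$.

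I expect the main obstacle to be twofold. The first part is the equivariant K-theory isomorphism, especially showing that the constructed homomorphism is injective and surjective; for this I would use the cellular fibration argument together with the Thom isomorphism. The second part is the localization and Riemann--Roch step. One also has to handle the case $q_r$ a root of unity (excluded here since $q_r>1$), and the non-simply-connected case, where Reeder's refinement of the component group is needed. That refinement is the reason \cite{reeder} is cited; I would invoke it there rather than reprove it.
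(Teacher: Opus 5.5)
The paper does not prove this theorem. It is quoted from \cite{Kazhdan1987ProofOT} and \cite{reeder}, so there is no argument in the paper to compare against. Your outline is the standard Kazhdan--Lusztig strategy as organized in \cite{Chriss1997RepresentationTA}, which is the same machinery the paper uses later (Propositions~\ref{fixed points algebra}, \ref{fixed points module} and Section~\ref{s9}). The overall architecture is right, but one step is misstated and one step is genuinely missing.

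\textbf{The localization step should be a specialization, not a completion.} The completion $K^{G^\vee\times\mathbb{C}^\times}(Z)^\wedge_a$ is infinite-dimensional: it matches a completion of \emph{equivariant} homology of $Z^a$. It is therefore not $H_\bullet(Z^a)$, which is finite-dimensional. Simple modules with central character $a$ are modules over $H_a=H_q\otimes_{Z(H_q)}\mathbb{C}_a$, so what you need is
\[
\mathbb{C}_a\otimes_{R(G^\vee\times\mathbb{C}^\times)}K^{G^\vee\times\mathbb{C}^\times}(Z)\cong\mathbb{C}_a\otimes_{R(A)}K^{A}(Z)\cong K(Z^a)\otimes\mathbb{C}\cong H_\bullet(Z^a).
\]
Here $A$ is the closed subgroup generated by $a$. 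The first isomorphism is base change (part 3 of Lemma~\ref{cfl}), the second is localization, and the third is bivariant Riemann--Roch. This is Proposition~\ref{fixed points algebra}, and the paper reruns the same chain in Proposition~\ref{fixed points module}. Note also that $Z^a$ consists of triples $(B_1^\vee,B_2^\vee,n)$; the pairs $(n,B^\vee)$ you describe form $\widetilde{\mathcal N}^a$. A minor point: braid relations are not a $\mathbb{P}^1$-bundle computation. The usual proof gets the homomorphism from the faithful action on $K^{G^\vee\times\mathbb{C}^\times}(\widetilde{\mathcal N})$ by Demazure--Lusztig operators. Also, $T_s+1$ corresponds to $-q$ times a twisted sheaf on $Z_s$ (compare $Q_s$ in Section~\ref{s7}), not the plain structure sheaf.

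\textbf{The real gap is the final ``if and only if''.} The stalk of $\mu^a_*\mathbb{C}$ at $n$ has cohomology $H^\bullet(\mathcal{B}^t_n)$ and contains $L_{O,\chi}\otimes IC(O,\chi)_n$ as a summand. This gives one direction only: $L_{O,\chi}\neq0$ implies $\chi$ occurs in $H_\bullet(\mathcal{B}^t_n)$. The stalk also contains $L_{O',\mathcal{L}'}\otimes H^\bullet(IC(O',\mathcal{L}')_n)$ for every orbit $O'$ with $O\subsetneq\overline{O'}$, and a priori $\chi$ could occur only in those terms. For general proper maps from smooth varieties this really happens, even when the fibres are pure with no odd cohomology. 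Take the $GL_2\times GL_2$-equivariant small resolution $\{(A,\ell)\in M_2(\mathbb{C})\times\mathbb{P}^1:\operatorname{im}A\subset\ell\}\to\{\det A=0\}$. The fibre over $0$ is $\mathbb{P}^1$ and the stabilizer is connected, yet $\mu_*\mathbb{C}[3]$ is the $IC$ sheaf of the cone, with no summand supported at $0$.

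So the converse is a substantive part of the Kazhdan--Lusztig theorem. There it comes from the analysis of the standard modules $\operatorname{Hom}_{C(t,n)}(\chi,H_\bullet(\mathcal{B}^t_n))$, each of which, when nonzero, has a unique simple quotient $L_{t,n,\chi}$. Nothing in your outline replaces this. Without it you only get a parametrization by triples with $L_{O,\chi}\neq0$, which is not the condition stated in the theorem. The paper relies on exactly this direction in Section~\ref{s9}, where it invokes ``a result of Kazhdan and Lusztig'' that $L_f\neq0$ for every $f\in\mathcal{E}$.
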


A key step in the proof of The Deligne Langlands conjecture is a geometric description of $H_q$ using equivariant $K$ theory.

 Let $\Tilde{N}=T^*\B$ be the cotangent bundle of the flag variety of $G^\vee$. It can be described as  $\Tilde{N}=\{(B^\vee,\phi)|B^\vee\in\B,\phi\in \mathfrak{g}^{\vee*}, \phi|_{\mathfrak{b}^\vee}=0\}$.
 
 Let $St=\Tilde{N}\times_{\mathfrak{g}^{\vee*}}\Tilde{N}=\{(B^\vee_1,B^\vee_2,\phi)|B^\vee_1,B^\vee_2\in\B, \phi\in \mathfrak{g}^{\vee*}, \phi|_{\mathfrak{b}_1^\vee}=\phi|_{\mathfrak{b}_2^\vee}=0\}$ be the Steinberg variety of $G^\vee$. The group $G^\vee\times\C^\times$ acts on $St$, $G^\vee$ acts in the obvious way and $\C^\times$ acts on $\mathfrak{g}^{\vee*}$ by $z,\phi\mapsto z^2\phi$.

    The three projection maps from $\Tilde{N}\times_{\mathfrak{g}^{\vee*}}\Tilde{N}\times_{\mathfrak{g}^{\vee*}}\Tilde{N}$ to $St$, give us a convolution product on homology theories of $St$.

    Let $W$ be the Weyl group of $G$ and let $W_{aff}$ be the extended affine Weyl group of $G$.

The proofs of the following results can be found in \cite{Kazhdan1987ProofOT} and  \cite{Chriss1997RepresentationTA} .

\begin{theorem}\label{classic}
    \begin{enumerate}
     \item The number of irreducible components of $St$ is equal to the size of the Weyl group $W$.
    
        \item For top Borel Moore homology of $St$, there is an isomorphism $\Phi_f:H^{BM}_{top}(St)\xrightarrow{\sim}\mathbb{C}[W]$. 
        \item For equivariant K theory, there is an isomorphism  $\Phi_a:K^{G^\vee}(St)\xrightarrow{\sim} \mathbb{C}[W_{aff}]$.
        \item For equivariant K theory, there is an isomorphism $\Phi_{KL}:K^{G^\vee\times \mathbb{C}^\times}(St)\xrightarrow{\sim} H_q$. 
    \end{enumerate}
\end{theorem}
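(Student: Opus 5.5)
The plan follows the standard route of Chriss–Ginzburg, treating the four items in order, each building on the previous one.

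For item (1), I would use the projection $St\to\B\times\B$. The preimage of a $G^\vee$-orbit $\mathcal{O}_w\subset\B\times\B$ (indexed by $w\in W$ via the Bruhat decomposition) is the conormal bundle $T^*_{\mathcal{O}_w}(\B\times\B)$. It is a vector bundle over $\mathcal{O}_w$, so it is irreducible of dimension $\dim(\B\times\B)=2\dim\B$. Since $St$ is the finite disjoint union of these locally closed pieces, all of the same dimension, their closures $\overline{T^*_{\mathcal{O}_w}}$ are exactly the irreducible components. Hence there are $|W|$ of them.

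For item (2), the classes $[\overline{T^*_{\mathcal{O}_w}}]$ form a basis of $H^{BM}_{top}(St)$, so the dimension matches $|W|$. It remains to see that convolution gives an algebra isomorphism with $\C[W]$. I would not check this on the basis directly. Instead, I would use the action of $H^{BM}_{top}(St)$ on $\bigoplus_n H_{top}(\B_n)$ (Springer fibers) and identify it with the Springer action of $W$. To do this, restrict to the regular semisimple locus. There the Steinberg variety of the Grothendieck simultaneous resolution $\tilde{\mathfrak g}\times_{\mathfrak g}\tilde{\mathfrak g}$ is a disjoint union of graphs of $w$, indexed by $W$. Specializing to the nilpotent fiber (Ginzburg's specialization argument) gives an algebra map $\C[W]\to H^{BM}_{top}(St)$, and it is an isomorphism by the dimension count together with injectivity, which is checked on the leading terms of the Bruhat order.

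For items (3) and (4), I would follow Kazhdan–Lusztig via the Bernstein presentation of $H_q$. It is generated by $\C[q^{\pm1}][X^*(T)]$ and the finite Hecke algebra $H_q(W)$, subject to the Bernstein relation. Using $K^{G^\vee\times\C^\times}(\tilde N)\cong K^{B^\vee\times\C^\times}(\mathfrak n^{\perp})\cong R(T^\vee)[q^{\pm1}]$ (Thom isomorphism), the diagonal component $T^*_\Delta$ gives line bundle classes $\mathcal{O}(\lambda)$, and these realize the lattice part. For a simple reflection $s$, the component $\overline{T^*_{\mathcal O_s}}$ lies over the $\p^1$-fibration $\B\times_{\mathcal P_s}\B$, and a suitably twisted structure sheaf of it gives $T_s$. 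Then:

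\begin{enumerate}
\item I would verify the quadratic relation and the Bernstein relation by computing the convolution on the faithful module $K^{G^\vee\times\C^\times}(\tilde N)$, where $T_s$ acts by a Demazure–Lusztig operator. This reduces to a rank-one $\p^1$ computation.
\item This gives a homomorphism $H_q\to K^{G^\vee\times\C^\times}(St)$.
\item For bijectivity, I would use the cellular fibration of $St$ over the orbits $\mathcal O_w$. Each stratum has $K$-group a free $R(T^\vee)[q^{\pm1}]$-module of rank one, which yields a filtration whose associated graded matches the Bernstein basis $\{X^\lambda T_w\}$.
\end{enumerate}

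Item (3) then follows by specializing $q=1$, where $H_1=\C[W_{aff}]$ for the extended affine Weyl group.

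The main obstacle is this last step: showing that $K^{G^\vee\times\C^\times}(St)$ is free with the expected basis, and that the map is injective. The freeness requires the cellular-fibration and Künneth-type arguments in equivariant $K$-theory. Faithfulness of the polynomial representation is what lets the relations be checked there. Getting the normalization of $T_s$ (the twists by $q$ and the relevant line bundle) exactly right is also delicate.
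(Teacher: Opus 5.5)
Your proposal is the standard Kazhdan--Lusztig/Chriss--Ginzburg argument: conormal bundles to the $G^\vee$-orbits on $\mathcal{B}\times\mathcal{B}$ for (1), specialization from the Grothendieck--Springer family for (2), and the polynomial representation together with the cellular fibration filtration of $St$ for (3)--(4); this is exactly what the paper relies on, since it gives no proof of its own and only cites \cite{Kazhdan1987ProofOT} and \cite{Chriss1997RepresentationTA}. Two points need stating precisely: checking the relations on $K^{G^\vee\times\mathbb{C}^\times}(\Tilde{N})$ yields a homomorphism $H_q\to K^{G^\vee\times\mathbb{C}^\times}(St)$ only once you know that $K^{G^\vee\times\mathbb{C}^\times}(St)$ acts faithfully on $K^{G^\vee\times\mathbb{C}^\times}(\Tilde{N})$ (a separate step in Chriss--Ginzburg, distinct from faithfulness of the polynomial representation of $H_q$), and passing to $q=1$ in (3) uses the base-change part of the cellular fibration lemma.
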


We prove a relative version of Theorem \ref{classic} for the symmetric space $X=GL_{2n}/Sp_{2n}$.

Let $C^\infty(X)$ be the space of locally constant functions on $X$. Let $S(X)$ be the space of compactly supported locally constant functions on $X$, and let $S(X)^I$ be the space of $I$ invariant compactly supported functions on $X$. The algebra $H(G,I)$ acts on $S(X)^I$ by convolution.

In \cite{benzvi2024relativelanglandsduality}, a generalization of Langlands duality is suggested. For every spherical variety $X$ with a $G$ action, we can consider $M=T^*X$, a Hamiltonian space. Under some conditions, a dual Hamiltonian space $M^\vee$ with a $G^\vee$ action is defined. It comes with a $\mathbb{G}_m$ action that commutes with the $G^\vee$ action.

For the $GL_{2n}$ space $X=GL_{2n}/Sp_{2n}$ the attached dual space $M^\vee$ is described as follows.
Let $H\subset GL_{2n}$ be the group of block matrices of the form $H=\{\begin{pmatrix}
        g & a  \\
        0 & g
        \end{pmatrix}|g\in GL_n,a\in M_n\}$. Denote by $\mathfrak{h}$ the Lie algebra of $H$ and by $\mathfrak{h}^*$ its dual. We fix $\psi\in (\mathfrak{h}^*)^H$, an $H$ invariant element of $\mathfrak{h}^*$, $\psi(\begin{pmatrix}
        g & a  \\
        0 & g
        \end{pmatrix})=tr(a)$. 

        Let $M^\vee=T_\psi^*(GL_{2n}/H)$ be the twisted cotangent bundle attached to $\psi$.

        It can be described explicitly as $M^\vee=\{(gH,\phi),gH\in G^\vee/H, \phi\in \mathfrak{g}^*, (g^{-1}\phi)|_\mathfrak{h}=\psi\}$.
         
         By table 1.5.1 of \cite{benzvi2024relativelanglandsduality} this is the dual space to $T^*(GL_{2n}/Sp_{2n})$. 

       Recall that the Weyl group $W$ acts on $B\backslash X$ (see \cite{knop}) and that the extended affine Weyl group $W_{aff}$ acts on $I\backslash X$ (see \cite{my}).

        We consider the space $\Lambda=M^\vee\times_{\mathfrak{g}^*}\Tilde{N}$, this is a relative analogue of $St$. There are three projection maps from $M^\vee\times_{\mathfrak{g}^*}\Tilde{N}\times_{\mathfrak{g}^*}\Tilde{N}$, two to $\Lambda$ and one to $St$. These maps give a module structure on homology theories of $\Lambda$ over homology theories of $St$ as in Subsection 5.2.20 of \cite{Chriss1997RepresentationTA}.

        Let $sgn$ be the sign representation of $W$, and let $sgn_f$ be its extension to $W_{aff}$.
        
        Let $IM:H_q\rightarrow H_q$ be the Iwahori Matsumoto involution, it induces an involution on $H_q$ modules. For each $H_q$ module $V$ we denote by $IM(V)$ the $H_q$ module obtained from $V$ by twisting the $H_q$ action by this involution. That is, $h\in H(G,I)$ acts on $IM(V)$ the same way $IM(h)$ acts on $V$. 

        Let $\C[B\backslash X]$ be the vector space spanned by $B\backslash X$. Similarly, let $\C[I\backslash X]$ be the vector space spanned by $I\backslash X$. 

        Let $M_q$ be the generic version of the module $S(X)^I $ as defined in \cite{my}.
        
        We prove the following results.

        \begin{theorem}\label{relative KL}
            \begin{enumerate}
    \item The number of Borel orbits on $X$ is equal to the number of irreducible components of $\Lambda$.
    
    \item There is a module isomorphism $\Phi_{f,X}:H^{BM}_{top}(\Lambda) \otimes sgn\xrightarrow{\sim} \mathbb{C}[B\backslash X]$ compatible with $\Phi_f$.
    
    \item There is a module isomorphism $\Phi_{a,X}:K^{G^\vee}(\Lambda) \otimes sgn_f\xrightarrow{\sim} \C[I\backslash X]$ compatible with $\Phi_a$.

    \item 
    
    There is a module isomorphism $\Phi_{KL,X}:IM(K^{G^\vee\times\mathbb{C}^\times}(\Lambda))\xrightarrow{\sim} M_q$ compatible with $\Phi_{KL}$. 
\end{enumerate}
        \end{theorem}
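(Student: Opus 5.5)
The plan is to reduce everything to the classical Steinberg picture by exhibiting $\Lambda$ as a ``partial'' Steinberg variety, and then to compare with the combinatorial description of $M_q$ from \cite{my}.

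\textbf{Geometry of $\Lambda$.} First I describe $M^\vee$ concretely. The moment map $M^\vee\to\mathfrak{g}^*$ is $(gH,\phi)\mapsto\phi$. Identify $\mathfrak{g}^*\cong\mathfrak{gl}_{2n}$ via the trace form. The condition $(g^{-1}\phi)|_{\mathfrak h}=\psi$ then forces $g^{-1}\phi g$ to be block upper triangular with equal diagonal blocks $A$ and with lower-left block equal to the identity (up to the fixed normalization). Such matrices are exactly the companion-type matrices $\begin{pmatrix} A & * \\ 1 & A\end{pmatrix}$. Conjugating by the unipotent radical of $H$ kills the $*$ up to a centralizer term, so I expect the moment map to be a closed embedding (or finite) onto the locus of $\phi\in\mathfrak{gl}_{2n}$ all of whose eigenvalues have even multiplicity in a ``doubled'' Jordan pattern, i.e. $\phi\sim\begin{pmatrix} A & 0\\ 1 & A\end{pmatrix}$.

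With this in hand, $\Lambda$ fibers over $\B$ with fiber over $B^\vee$ equal to $M^\vee\cap(\mathfrak b^\vee)^\perp$, i.e. nilpotent elements of this doubled form lying in $\mathfrak n^\vee$. The components of $\Lambda$ are then closures of conormal-type pieces indexed by $G^\vee$-orbits on $G^\vee/H\times\B$, that is by $H$-orbits on $\B$. So part 1 reduces to a bijection between $H$-orbits on $\B$ (for $GL_{2n}$) and $B$-orbits on $GL_{2n}/Sp_{2n}$. I would prove the latter combinatorially: $B\backslash GL_{2n}/Sp_{2n}$ is parametrized by fixed-point-free involutions in $S_{2n}$. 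I would try to parametrize $H\backslash\B$ by the same set, by computing relative positions of a flag with respect to the pair $(\text{a Lagrangian-like } n\text{-plane}, \text{nilpotent }N)$ defining $H$. Each component has dimension $\dim\B+\dim G^\vee/H-\dim G^\vee$ plus the twist, and I would check that this is exactly half of $\dim$ of the relevant ambient space, so that $\Lambda$ is equidimensional.

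\textbf{Homology and $K$-theory.} For part 2, I follow Chriss--Ginzburg \S3.6. Filter $\Lambda$ by preimages of the $G^\vee$-orbits on $G^\vee/H\times\B$, ordered by dimension, so that $H^{BM}_{top}(\Lambda)$ has a basis of fundamental classes of components. I compute the action of the simple reflection $s$ (the convolution with the component $\overline{T^*_{Y_s}}$ in $St$) by pushing and pulling through the $\p^1$-fibration $\B\to\mathcal P_s$. This matches Knop's action of $W$ on $B\backslash X$ twisted by $sgn$: the sign arises from the usual orientation convention for the class of the $\p^1$-bundle. For part 3, the same cellular fibration gives a free $K^{G^\vee}(pt)=R(T^\vee)^W$-basis after Thomason localization. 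I then identify $K^{G^\vee}(\Lambda)$ with the induced module $\C[W_{aff}]\otimes_{\C[W]}$(part 2) up to the lattice part, matching the $W_{aff}$-action on $I\backslash X$ from \cite{my}.

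\textbf{Hecke module (main obstacle).} For part 4 I would construct $\Phi_{KL,X}$ by choosing a point of the closed (smallest) orbit. Its structure sheaf generates $K^{G^\vee\times\C^\times}(\Lambda)$ over $K^{G^\vee\times\C^\times}(St)\cong H_q$, via the Bernstein presentation. Here lattice elements $\theta_\lambda$ act through line bundles $\mathcal O(\lambda)$ on $\B$, and $T_s$ acts through Demazure-type operators. I compute the annihilator of this generator and compare it with the presentation of $M_q$ in \cite{my}, which I expect to be of the form $H_q\otimes_{H_{q,J}}\chi$ or a sum of such over orbit types.

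The hard part is showing that the two modules have the same relations. In particular, the $\C^\times$-weights coming from the twist $\psi$ must produce exactly the $q$-shifts appearing in $M_q$, and these appear only after applying $IM$, which swaps $sgn$ and triv. I would first verify rank and the relations in the case $n=1$, where $X=GL_2/SL_2$-type and $\Lambda$ is explicit. Then I would reduce the general case to Levi subgroups of type $GL_2^n$ via parabolic induction on both sides.

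Finally, freeness of both modules over $R(T^\vee\times\C^\times)$ of the same rank, together with a surjection, gives the isomorphism. Specializing $q\to1$ then recovers part 3.
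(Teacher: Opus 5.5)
Your proposal has a genuine gap at the very first step, and it propagates through all four parts. You never use the fact that, because of the twist by $\psi$, most $G^\vee$-orbits on $G^\vee/H\times\mathcal{B}$ are not in the image of $\tau$. The fibre of $\tau$ over $(H,wB^\vee)$ is $\{\phi:\ \phi|_{\mathfrak{h}}=\psi,\ \phi|_{\mathrm{Ad}(w)\mathfrak{b}^\vee}=0\}$. This is empty unless $\psi$ vanishes on $\mathfrak{h}\cap\mathrm{Ad}(w)\mathfrak{b}^\vee$, and only these \emph{relevant} orbits give components.

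Consequences of this omission:
\begin{itemize}
\item Your reduction of part 1 to a bijection $H\backslash\mathcal{B}\leftrightarrow B\backslash X$ is false. By parabolic induction from $GL_n\times GL_n/\Delta GL_n$, $H$ has $(2n)!/n!$ orbits on $\mathcal{B}$, against $(2n-1)!!$ Borel orbits on $X$. Already for $n=1$, $H$ has two orbits on $\mathbb{P}^1$ while $B$ is transitive on $GL_2/SL_2$. The correct count uses only the relevant $\sigma\in S_{2n}/S_n$, namely those with $\sigma(i)>\sigma(i+n)$; these biject with pairings of $\{1,\dots,2n\}$, exactly as $B\backslash X$ does.
\item The closed orbit $HB^\vee/B^\vee$ that you propose as generator in part 4 is never relevant. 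Indeed, $\mathfrak{h}\cap\mathfrak{b}^\vee$ contains the block matrix with identity in the upper right corner, where $\psi=n\neq 0$, so its preimage in $\Lambda$ is empty. The generator must be $\mathcal{A}_{\Lambda_0}$ for the \emph{minimal relevant} orbit $\mathcal{O}_0$, which corresponds to the open orbit $\mathcal{U}_{max}$ under the order-reversing bijection.
\item The $sgn$ in part 2 is not an orientation convention. Take $s=s_{2i-1}$, which stabilizes $\mathcal{U}_{max}$. The honest translate $s\mathcal{O}_0$ in $B^\vee\backslash G^\vee/H$ is smaller and not relevant, so $\pi_1^{-1}\Lambda_0\cap\pi_3^{-1}St_s=\emptyset$ and $[St_s]*[\Lambda_0]=0$. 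Hence $s$ acts by $-1$. In the untwisted $T^*X\times_{\mathfrak{g}^*}T^*(G/B)$, the analogous convolution for a reflection fixing the base orbit gives $2[\Lambda^\vee_0]$ (the Euler characteristic of $\mathbb{P}^1$).
\item $W$ does not act on component classes by signed permutations: $[St_s]$ acts as $1+s$, triangularly. So ``matching Knop's action'' cannot be done basis by basis. The paper instead proves that $[\Lambda_0]$ is a cyclic generator (by induction on the Bruhat order). It then checks that the stabilizer of $\mathcal{U}_{max}$, generated by $s_{2i-1}$ and $s_{2i}s_{2i-1}s_{2i+1}s_{2i}$, acts on $[\Lambda_0]$ through $sgn$, and compares dimensions.
\end{itemize}

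For parts 3 and 4, your expected shape of $M_q$ is wrong, and the proposed reduction cannot work. The stabilizer of $\mathcal{U}_{max}$ in $W$ is the hyperoctahedral group $S_2\wr S_n$, which is not parabolic, so $M_q$ is not of the form $H_q\otimes_{H_{q,J}}\chi$. The annihilator of $f_{max}$ is generated by three families:
\begin{itemize}
\item $T_{s_{2i-1}}-q$;
\item $\theta_\lambda-q^{(l(\lambda^-)-l(\lambda^+))/2}$ for $\lambda$ in the rank $n$ lattice $L_{max}$;
\item the non-parabolic relations $T_{s_{2i-1}}T_{s_{2i}}-T_{s_{2i+1}}T_{s_{2i}}$.
\end{itemize}
Parabolic induction from $GL_2^n$ only sees the first two families. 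It produces the cyclic module $H_qf_\emptyset$ attached to the boundary degeneration $X_\emptyset=G/U_X^-L_X$, which is $n!$ times too large: compare $|W/\langle s_1,s_3,\dots,s_{2n-1}\rangle|=(2n)!/2^n$ with $(2n-1)!!$.

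The paper obtains the third family by computing the kernel of the Bernstein map $e:S(X_\emptyset)^I\to S(X)^I$. It then proves separately that this family kills $\mathcal{A}_{\Lambda_0}$, in three steps:
\begin{itemize}
\item restriction to the top stratum by base change;
\item the $s_{2i-1}\leftrightarrow s_{2i+1}$ symmetry there;
\item a Bernstein-relation computation showing that the remaining difference, supported on lower strata, is an eigenvector of suitable $\theta_\lambda$, hence zero by the line-bundle description of $\Theta$ on strata.
\end{itemize}

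Finally, the freeness you invoke is false. $\theta_\lambda$ for $\lambda\in L_{max}$ acts by a scalar on the generator, so neither side is free over $R(T^\vee\times\mathbb{C}^\times)$. Also, $K^{G^\vee}(\Lambda)\cong K^H(\Lambda_H)$ is torsion over $R(G^\vee)=R(T^\vee)^W$: every element of $H$ has repeated eigenvalues, so the discriminant restricts to $0$ in $R(H)$. The correct freeness comes from writing $\Lambda=G^\vee\times^H\Lambda_H$ and applying the cellular fibration lemma to $\Lambda_H\to\mathcal{B}_n$, the flag variety of $GL_n$. This makes $K^{G^\vee\times\mathbb{C}^\times}(\Lambda)$ a free $R(B_n\times\mathbb{G}_m)$-module on the component structure sheaves. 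Injectivity of the comparison map is then proved via the orbit filtration at $q=1$ and a $(q-1)$-divisibility argument, not by a rank count.
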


\begin{Remark}
    The first part of the above theorem is a special case of a conjecture made in \cite{finkelberg2023lagrangiansubvarietieshypersphericalvarieties}.

\end{Remark}

\begin{Remark}
    The paper \cite{finkelberg2023lagrangiansubvarietieshypersphericalvarieties} also contains a conjecture related to the second part. There, it is proposed that $H^{BM}_{top}(\Lambda)\cong H^{BM}_{top}(M\times_{\mathfrak{g}^{*}} T^*(G/B))$ as $W$ representations. This conjecture is not precise even in the case of the Whittaker model which is dual to a point. 
    
    For this example we get $H^{BM}_{top}(T^*_\psi(G/U)\times_{\mathfrak{g}^{*}} T^*(G/B))=sgn$ and $H^{BM}_{top}(pt\times_{\mathfrak{g}^{\vee*}} T^*(G^\vee/B^\vee))$ is trivial as a $W$ representation. In particular, the two representations are not isomorphic (for any reductive connected group $G$), they differ by a sign.

    Following the example of the Whitaker model, we propose that the more correct conjecture is that $H^{BM}_{top}(\Lambda)\cong H^{BM}_{top}(M\times_{\mathfrak{g}^{*}} T^*(G/B))\otimes sgn$. We prove this for our example.

    \end{Remark}

We use Theorem \ref{relative KL} to prove a result about irreducible $GL_{2n}$ representations with an $I$ fixed vector that are $X$ distinguished.

\begin{defn}\label{dist rep}
    An irreducible representation $\pi$ of $G$ is called $X$ distinguished if 
    
    $Hom_G(\pi,C^\infty(X))\neq 0$. 
\end{defn}

 We recover a result of \cite{MitraOffenSayag2017} that gives a condition on $X$ distinguished representations. In order to state this condition we pass to the language of Zelevinsky parameters (see \cite{Zelevinsky1980}).

\begin{defn}\label{multisegments}
 A segment is a finite set of numbers $\{a_1,...,a_l\}$ such that $a_1<a_2<...,<a_l$ and $a_t-a_{t-1}=1$ for $1<t\leq l$. A multi-segment is a multi-set of segments. Zelevinsky proved that irreducible representations of $G$ with an $I$ fixed vector are parametrized by multi-segments (see \cite{Zelevinsky1980}). This is closely related to the parametrization given by Theorem \ref{DL conjecture}.  
    
\end{defn}

\begin{theorem}\label{even condition}
    Let $\pi$ be an irreducible representation of $G$ that is $X$ distinguished, then all the segments in the Zelevisnky parameter of $\pi$ have even length.
\end{theorem}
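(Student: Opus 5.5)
The plan is to use part 4 of Theorem \ref{relative KL} and translate distinction into a statement about the geometry of $\Lambda$, in the style of the Kazhdan–Lusztig classification.

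\textbf{Step 1: reduce to a nonzero $H_{q_r}$-homomorphism.} Let $\pi$ be irreducible with $\pi^I\neq 0$ and $X$ distinguished. Frobenius-type duality shows that $Hom_G(\pi,C^\infty(X))\neq 0$ forces a nonzero $H(G,I)$-homomorphism $S(X)^I\to \widetilde{\pi}^I$. Here one uses that $S(X)^I$ generates $S(X)^{\text{Iwahori part}}$ and that the Bernstein Iwahori block is equivalent to $H(G,I)$-modules. Passing to the contragredient, or equivalently applying a suitable anti-involution, we get that $\pi^I$, or a twist of it, is a quotient of $M_{q_r}$. By part 4 of Theorem \ref{relative KL}, $M_{q_r}\cong IM\bigl(K^{G^\vee\times\C^\times}(\Lambda)\bigr)\otimes_{q=q_r}\C$. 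So $IM(\pi^I)$, up to contragredient, is a quotient of the specialization of $K^{G^\vee\times\C^\times}(\Lambda)$.

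\textbf{Step 2: localize at a central character.} Specialize further at the central character $(t,q_r)$ of $\pi$, where $t\in G^\vee$ is semisimple. Apply the Thomason localization theorem and the bivariant Riemann–Roch argument of Chriss–Ginzburg, Chapter 8. This gives
\[K^{G^\vee\times\C^\times}(\Lambda)_{(t,q_r)}\cong H_\bullet(\Lambda^{(t,q_r)}),\]
compatibly with $K^{G^\vee\times\C^\times}(St)_{(t,q_r)}\cong H_\bullet(St^{(t,q_r)})$. The standard modules of the convolution algebra $H_\bullet(St^{a})$ are the $H_\bullet(\B_n^t)$. The same decomposition theorem argument decomposes $H_\bullet(\Lambda^a)$ along the fibers over $\mathfrak{g}^{\vee*}$. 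The conclusion is that a simple module $L_{(t,n,\chi)}$ can be a quotient only if the nilpotent $n$, viewed through $\mathfrak{g}^\vee\cong\mathfrak{g}^{\vee*}$, lies in the image of $(M^\vee)^{(t,q_r)}\to\mathfrak{g}^{\vee*}$. The reason is that the fiber of $\Lambda^a$ over $n$ is $(M^\vee)^a_n\times \B^t_n$, which is empty otherwise.

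\textbf{Step 3: identify the image of the moment map and handle $IM$.} The image of $M^\vee\to\mathfrak{g}^*$ consists of the $\phi$ with $(g^{-1}\phi)|_{\mathfrak{h}}=\psi$. Identify $\mathfrak{g}^*$ with $\mathfrak{gl}_{2n}$ via the trace form and intersect with the nilpotent cone. Using the block form of $\mathfrak{h}$, the condition forces $\phi$ to be conjugate to a matrix of shape $\begin{pmatrix} * & *\\ 1 & *\end{pmatrix}$ that commutes appropriately. One checks that the nilpotents in the image have all Jordan blocks of even size, or rather their transposed partitions; this should come out of computing the $H$-orbits. The Iwahori–Matsumoto involution on the Deligne–Langlands side corresponds to the Zelevinsky involution, which transposes the partition attached to a multisegment. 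The Jordan type of $n$ for $IM(\pi)$ is then the multiset of segment lengths of the Zelevinsky parameter of $\pi$, possibly after taking the contragredient, which does not change lengths. So evenness of the Jordan blocks of nilpotents in the image gives the theorem.

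\textbf{Main obstacle.} I expect Step 2 to be the hardest part. It must justify that quotients of the localized $K$-theory module are controlled by supports in $\mathfrak{g}^{\vee*}$, using a module version of the Ginzburg theorem. It also requires the precise bookkeeping of $IM$ together with the passage between $\pi$ and its contragredient, so that one lands on even segment lengths and not on even multiplicities. I would first check the $n=1$ case, $GL_2/SL_2$. There the distinguished representations with Iwahori vectors are the characters, whose multisegment is a single segment of length $2$. This fixes the convention for which partition, the Jordan type or its transpose, must be even.
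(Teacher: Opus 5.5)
\textbf{There is a genuine gap in Step 2}, exactly where you place the main obstacle.

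$H_\bullet(\Lambda_H^a)$ is not the homology of the fibre over $n$. It is the Borel--Moore homology of the preimage of all of $S^a$. As a module over $A=\mathrm{Ext}^\bullet(\pi_*\mathbb{C}_{\tilde N^a},\pi_*\mathbb{C}_{\tilde N^a})$ it equals $V=\bigoplus_{c\in\mathcal{E}}\mathrm{Ext}^\bullet(IC_c,\mu_*\mathbb{C}_{S^a})\otimes L_c^*$. The $c$-summand is nonzero as soon as $\overline{c}$ meets $S^a$, so $L_c$ really occurs as a composition factor of $V$ for orbits $c$ disjoint from $S^a$.

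Here is a concrete case. Take $n=2$ and $t$ with eigenvalues $1,q_r,q_r,q_r^2$, normalized as in the last subsection. The open $C_{G^\vee}(t)$-orbit in $\mathfrak{g}^a$ has Jordan type $(3,1)$ and misses $S^a$. Its $IC$ sheaf is a shifted constant sheaf, so its summand is, up to shift, $H^\bullet(S^a)\neq 0$.

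So emptiness of the fibre over $n$ does not stop $L_c$ from being a quotient. A support argument alone gives at most $\overline{c}\cap S^a\neq\emptyset$, and that does not force even segment lengths. Two things are missing.
\begin{enumerate}
\item[(i)] The reformulation that $L_c$ is a quotient of $V$ if and only if its summand is not contained in $\mathrm{Rad}(A)V$. Equivalently, the composition map from $\bigoplus_{k>0,d}\mathrm{Ext}^k(IC_c,IC_d)\otimes\mathrm{Ext}^\bullet(IC_d,\mu_*\mathbb{C}_{S^a})$ to $\mathrm{Ext}^\bullet(IC_c,\mu_*\mathbb{C}_{S^a})$ is not surjective (Proposition \ref{sheaf condition}).
\item[(ii)] A proof that this map is surjective whenever $c\cap S^a=\emptyset$.
\end{enumerate}
The paper gets (ii) from weights. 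It passes to $\ell$-adic sheaves over a finite field and uses purity of $IC$ sheaves. It also uses that restricting $\mathrm{Ext}$ between pure objects to an open union of strata is surjective (Lemma \ref{Exts are pure}). It then inducts over the strata of $Q_S$ (Proposition \ref{orbits must intersect}, Corollary \ref{MOS condition}). Nothing in a ``decomposition along fibres'' supplies this.

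Separately, $K^{G^\vee\times\mathbb{C}^\times}(\Lambda)\otimes\mathbb{C}_a\cong H_\bullet(\Lambda_H^a)$ does not follow from Thomason localization plus Riemann--Roch alone. It uses the cellular fibration structure of $\Lambda_H$ over the flag variety of $GL_n$ (Lemma \ref{cfl}).

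\textbf{Step 3 also needs correcting}, though your final sentence states the right conclusion. The Zelevinsky involution does not transpose the partition of segment lengths; for example, it fixes $\{[0],[2]\}$. The correct chain is as follows.
\begin{enumerate}
\item[(a)] $IM$ on Iwahori invariants realizes Aubert--Zelevinsky duality, which exchanges $L(\mathfrak{m})$ and $Z(\mathfrak{m})$.
\item[(b)] The Deligne--Langlands nilpotent of $L(\mathfrak{m})$ has Jordan blocks of sizes equal to the segment lengths of $\mathfrak{m}$.
\item[(c)] Passing to the contragredient does not change lengths.
\end{enumerate}
On the geometric side, in your normalization, conjugate $\begin{pmatrix} a & b\\ I & -a\end{pmatrix}\in S$ by $\begin{pmatrix} I & -a\\ 0 & I\end{pmatrix}\in H$. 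This gives $\begin{pmatrix} 0 & b+a^2\\ I & 0\end{pmatrix}$, whose Jordan type is $2\lambda$ when $b+a^2$ has type $\lambda$. So the nilpotents in the image of the moment map are exactly those with all Jordan blocks of even size. Your hedge must therefore be resolved in favour of the Jordan type itself, not its transpose, which agrees with your $GL_2/SL_2$ check.
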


\begin{Remark}
    As mentioned, this was first proven in \cite{MitraOffenSayag2017}. Our proof is completely different.
\end{Remark}

\begin{Remark}
 This does not solve the classification problem of $X$ distinguished representations with an $I$ fixed vector because not all representations which satisfy the aforementioned condition are $X$ distinguished. We do not have a general solution to this problem. Nevertheless, we give some examples where we do manage to solve this problem. We also give some conditions which may lead to a solution.
   
\end{Remark}

\begin{subsection}{General conjectures}
In this subsection we propose a conjecture generalizing Theorem \ref{relative KL} as well as a conjecture generalizing Theorem \ref{even condition}. 

Let $\sigma:G\rightarrow G$ be an algebraic involution and let $H=G^\sigma$ be its fixed points. Let $X=G/H$ be a symmetric space. Assume that the characteristic of $k$ is not $2.$

\begin{defn}
Let $M_q$ be the module attached to $X$ introduced in \cite{my}. We define a quotient of this module, which we denote by $\Tilde{M}_q$, by identifying $I$ orbits which become the same over a separable closure of $F$. The action of the affine Hecke algebra $H_q$ descends to an action on $\Tilde{M}_q$

A similar construction to what was done in \cite{my} can be done also for Borel orbits over a finite field instead of Iwahori orbits over a non archimedean local field. We denote the module obtained from this construction by $M^f_q$, it is a module over the generic Hecke algebra of the finite Weyl group $W$, we denote this algebra by $H^f_q$. We can again take a quotient by identifying orbits that become the same over an algebraic closure. The resulting module is denoted by $\Tilde{M}^f_q$.

\end{defn}

We make two assumptions:
\begin{enumerate}
    \item the Hamiltonian space $M=T^*X$ is hypershperical with hyperspherical dual $M^\vee$ (see \cite{benzvi2024relativelanglandsduality}). 
   
    \item Both $G$ and $H$ are split over $F$.
\end{enumerate}

\begin{Remark}
    One can also consider the assumption $1'$ below , it is related to assumption $1$ above.
    \begin{enumerate}[label=\arabic*$'$.]
         \item The symmetric space $X$ has no roots of type $N$ (see 2.1.4 of \cite{Ressayre_2010} for the definition of a root of type $N$).
    \end{enumerate}
    Under the assumptions $1'$ and $2$, the module $\Tilde{M}_q$ is isomorphic to the affine Lusztig-Vogan module defined in \cite{chen2025singularitiesorbitclosuresloop}. 

    Under the same assumptions, the module $\Tilde{M}^f_q$ is isomorphic to the Lusztig-Vogan module constructed in \cite{Lusztig1983}.
\end{Remark}

 We have a moment map $\mu:M^\vee\rightarrow \mathfrak{g}^{\vee*}$. We denote $\Lambda=M^\vee\times_{\mathfrak{g}^{\vee*}}\Tilde{N}$, this variety has an action of $G^\vee\times\C^\times$.

We make the following conjecture.

\begin{conj}
\begin{enumerate}
        \item The number of Borel orbits on $X$ over an algebraically closed field is equal to the number of irreducible components of $\Lambda$.
    
    \item There is an isomorphism $\Phi_{f,X}:H^{BM}_{top}(\Lambda) \otimes sgn\xrightarrow{\sim} \Tilde{M}^f_1$ compatible with $\Phi_f$.
    
    \item There is an isomorphism $\Phi_{a,X}:K^{G^\vee}(\Lambda) \otimes sgn_f\xrightarrow{\sim} \Tilde{M}_1$ compatible with $\Phi_a$.

    \item 
    
    There is an isomorphism $\Phi_{KL,X}:IM(K^{G^\vee\times\mathbb{C}^\times}(\Lambda))\xrightarrow{\sim} \Tilde{M}_q$ compatible with $\Phi_{KL}$. 
\end{enumerate}
\end{conj}

We also propose a conjecture about distinguished irreducible representations with an $I$ fixed vector.

For a smooth $G$ representation $\pi$ we denote by $Z(\pi)$ the Aubert-Zelevinsky dual representation (see \cite{Aubert}). Let $Z(\pi)^\vee$ be the contragridient, i.e. smooth dual, of $Z(\pi)$.

We use a Killing form on $\mathfrak{g}^\vee$ to identify $\mathfrak{g}^\vee\cong \mathfrak{g}^{\vee*}$. We can consider an element $n\in \mathfrak{g}^\vee$ as $n\in\mathfrak{g}^{\vee*}$.

\begin{conj}
    Let $\pi$ be an irreducible representation of $G$ with $\pi^I\neq 0$. Let $(t,n,\chi)$ be the Deligne Langlands parameter of $\pi$. Let $a=(t,\sqrt{q_r})\in G^\vee\times\C^\times$. If $Z(\pi)^\vee$ is $X$ distinguished then $(M^\vee)^a\neq 0$ and $n\in (\mathfrak{g}^{*\vee})^a$ is in the image of the moment map $\mu:(M^\vee)^a\rightarrow (\mathfrak{g}^{*\vee})^a$. 
\end{conj}

\end{subsection}

\begin{subsection}{Methods of proof}

We use $H(G,I)$ and $S(X)^I$ also to denote their generic version $H_q$ and $M_q$. We use $q$ to denote a formal variable.

We consider the projection $\tau:\Lambda\rightarrow \B\times G^\vee/H$. There is a natural bijection between the irreducible components of $\Lambda$ and the $G^\vee$ orbits on $Im(\tau)$ (see for example \cite{kononenko2024lagrangiansubvarietieshypersphericalvarieties}). We construct a a map $\Phi_X:B\backslash X\rightarrow G^\vee\backslash Im(\tau)$ and prove the following.

\begin{prop}
    The map $\Phi_X$ is a bijection and it reverses the weak Bruhat order on $G^\vee\backslash Im(\tau)$ and on $B\backslash X$. 
\end{prop}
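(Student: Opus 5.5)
Both sides admit a combinatorial description by involutions, so the plan is to write each as a set of (partial) involution data on $\{1,\dots,2n\}$ and match them.

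\emph{The $B$-orbits on $X$.} Over the base field, $B\backslash GL_{2n}/Sp_{2n}$ is in bijection with the fixed-point-free involutions $w\in S_{2n}$. To see this, identify $X$ with the nondegenerate alternating forms on $F^{2n}$ via $g\mapsto gJg^{t}$. A $B$-orbit is then determined by the relative position of the form with respect to the standard flag, which is recorded by a perfect matching of $\{1,\dots,2n\}$. Under this identification the weak order on $B\backslash X$ is generated by the Knop/Richardson--Springer action: a simple reflection $s_i$ raises an orbit $w$ exactly when $s_iws_i\neq w$ and $\ell(s_iws_i)>\ell(w)$ in the appropriate twisted sense.

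\emph{The $G^\vee$-orbits on $Im(\tau)$.} First, $G^\vee\backslash(\B\times G^\vee/H)\cong B^\vee\backslash G^\vee/H$. Next I determine which pairs $(B^\vee,gH)$ lie in $Im(\tau)$, i.e. which admit $\phi$ with $\phi|_{\mathfrak b^\vee}=0$ and $(g^{-1}\phi)|_{\mathfrak h}=\psi$.

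Translate so that $g=1$. The question becomes whether $\psi$ extends to a functional on $\mathfrak g$ vanishing on $\mathrm{Ad}_x\mathfrak b\cap\mathfrak h$. This holds iff $\psi$ vanishes on $\mathfrak h\cap\mathrm{Ad}_x\mathfrak b$, that is, iff $\mathrm{tr}(a)=0$ for every element of $\mathfrak h\cap\mathrm{Ad}_x\mathfrak b$.

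The $B^\vee$-orbits on $G^\vee/H$ are parametrized using $H=\Delta GL_n\ltimes M_n$, whose unipotent radical is $M_n$. The flag variety of $G^\vee$ restricted to the parabolic $P=GL_n\times GL_n\ltimes M_n$ decomposes into $P$-orbits indexed by $W_P\backslash W$. Each such orbit further splits into $\Delta GL_n$-orbits on $\B_n\times\B_n$, indexed by $S_n$, together with orbits of the unipotent part.

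The condition $\psi|_{\mathfrak h\cap\mathrm{Ad}_x\mathfrak b}=0$ forces the diagonal entries of $a$ paired with the two $GL_n$-flags to avoid the stabilizer. I expect the surviving orbits to be those where the shuffle $W_P\backslash W$ interleaves the two blocks and the $S_n$-part matches them compatibly. Each surviving orbit then gives a perfect matching of $\{1,\dots,2n\}$, pairing the $i$-th position of the first block with its partner in the second block. This defines $\Phi_X$.

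\emph{Bijection.} I would verify bijectivity by counting: both sides have $(2n-1)!!$ elements, which also matches Theorem~\ref{relative KL}(1). Alternatively, one can construct the inverse directly.

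\emph{Order reversal.} The closed orbit on one side corresponds to the open (dense) orbit on the other. For instance, the open $B$-orbit on $X$ is the one whose matching is $\{(i,2n+1-i)\}$, and it should correspond to the minimal-dimensional component. I would check that for each simple reflection $s_i$, the $P_i$-orbit (minimal parabolic) through $\Phi_X(w)$ contains $\Phi_X(s_i\cdot w)$. I would also check that dimension goes down on the $G^\vee$ side exactly when it goes up on the $X$ side.

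For this check I would compute, for each $s_i$, the type of the $P_i^\vee$-orbit on $Im(\tau)$: whether it is complex, or noncompact imaginary, or not in the image. I would do this by restricting to the Levi of $s_i$, which reduces the question to $GL_2$ acting on $GL_2/(\text{unipotent twisted})$ versus $GL_2/SL_2$-type computations on $X$. This rank-one case analysis, and especially the verification that the image condition $\psi|_{\mathfrak h\cap\mathrm{Ad}_x\mathfrak b}=0$ interacts correctly with $P_i$-saturation, is where I expect the main difficulty to lie.
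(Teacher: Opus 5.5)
There is a genuine gap. Both substantive steps are announced rather than proved, and the one concrete claim you make about the order is wrong.

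For the bijection, you set up the relevance criterion correctly: $\psi$ must vanish on $\mathfrak{h}\cap\mathrm{Ad}_{w^{-1}}\mathfrak{b}^\vee$ for the orbit $B^\vee wH$. But you only ``expect'' which orbits survive, so the count $(2n-1)!!$ on the dual side is unsupported. A count would not by itself make your matching map bijective anyway. Part~1 of Theorem~\ref{relative KL} cannot supply the count either, since it is deduced from this proposition. The missing computation is one line. For a permutation $w$, $\mathrm{Ad}_{w^{-1}}\mathfrak{b}^\vee$ is the coordinate subspace spanned by the $E_{kl}$ with $w(k)\le w(l)$. So $\mathfrak{h}\cap\mathrm{Ad}_{w^{-1}}\mathfrak{b}^\vee$ is spanned by those basis vectors $E_{kl}+E_{k+n,l+n}$ and $E_{k,l+n}$ of $\mathfrak{h}$ that it contains. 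Hence $\psi=\mathrm{tr}(a)$ vanishes on it iff $w(k)>w(k+n)$ for all $k\le n$. It follows that $B^\vee wH\mapsto\{\{w(k),w(k+n)\}\}$ is a bijection from relevant orbits onto perfect matchings, and $\Phi_X$ is bijective by construction.

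For the order, your anchor is backwards.
\begin{itemize}
\item The matching $\{\{i,2n+1-i\}\}$ is the \emph{closed} $B$-orbit on $X$, not the open one. For that form the standard flag is self-dual, so the stabilizer in $B$ is (up to transpose) a Borel subgroup of a symplectic group, and the orbit has the minimal dimension $n^2$. The open orbit is $\{\{1,2\},\{3,4\},\dots\}$.
\item Under the matching identification, $\{\{i,2n+1-i\}\}$ is the \emph{open} $B^\vee$-orbit on $G^\vee/H$. Take $w(k)=n+k$, $w(k+n)=n+1-k$; then $\dim(\mathfrak{h}\cap\mathrm{Ad}_{w^{-1}}\mathfrak{b}^\vee)=n$. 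So with your identification $\Phi_X$ would send top to top.
\item ``Closed corresponds to open'' also fails on the dual side. The closed orbit $B^\vee H$ is not relevant, since $E_{k,k+n}\in\mathfrak{h}\cap\mathfrak{b}^\vee$. The bottom of the restricted order is the relevant orbit with matching $\{\{1,2\},\{3,4\},\dots\}$.
\end{itemize}

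The step you defer as the main difficulty is where the content lies. It needs the observation that $W$ acts transitively on $B^\vee\backslash G^\vee/H$, so it cannot preserve relevance. One must use the modified action $s*x$, equal to $sx$ if $sx$ is relevant and to $x$ otherwise. The paper checks $\Phi_X(sy)=s*\Phi_X(y)$ directly on matchings: $s_i$ swaps the labels $i,i+1$, and if $\{i,i+1\}$ is a pair, the $X$-orbit is fixed while $s_iw$ violates $w(k)>w(k+n)$. It then deduces reversal from the fact that $\Phi_X$ exchanges the extremal elements.

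Your dimension-comparison idea can be completed without any rank-one case analysis. Let $d$ and $c$ count the disjoint and the crossing pairs of arcs of a matching. Then the corresponding $B$-orbit on $X$ has dimension $n^2+2d+c$, and the corresponding relevant $B^\vee$-orbit has dimension $2n^2-2d-c$, so the two always sum to $3n^2$. Every nontrivial simple step changes $2d+c$ by exactly one. Hence it goes up on one side precisely when it goes down on the other.
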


This of course implies the first part of Theorem \ref{relative KL}.

 Now we describe the argument to prove part two of Theorem \ref{relative KL}. We denote by $\mathcal{U}_{max}\in B\backslash X$ the open orbit, we use the same notation also for the element of $\C[B\backslash X]$. We have $\Lambda_0$ the irreducible component of $\Lambda$ corresponding to $\Phi_X(\mathcal{U}_{max})$. We show that $\mathcal{U}_{max}$ and $[\Lambda_0]$ generate $\C[B\backslash X]$ and $H_{top}^{BM}(\Lambda)$ respectively, as modules over $\C[W]$. We then show that the map $\C[W]\rightarrow H_{top}^{BM}(\Lambda)\otimes sgn$ given by action on $[\Lambda_0]$ factor through the map $\C[W]\rightarrow\C[B\backslash X]$ given by acting on $\mathcal{U}_{max}$. This gives us a map $\C[B\backslash X]\rightarrow H_{top}^{BM}(\Lambda)\otimes sgn$ and we prove it is an isomorphism.

 The proof of part three combines arguments similar to the argument for part two and a cellular fibration argument.  Let $S=\{\phi\in \mathfrak{g}^{\vee*},\phi|_\mathfrak{h}=\psi\}$, $H$ acts on $S$. We have $\Lambda=G\times^H\Lambda_H$ for the $H$ space $\Lambda_H=\Tilde{N}\times_{\mathfrak{g}^{\vee*}}S$. We show that $\Lambda_H$ satisfies the cellular fibration Lemma (see Lemma \ref{cfl}) over the flag variety of $GL_n$. This allows us to describe $K^{G^\vee}(\Lambda)\cong K^H(\Lambda_H)$.

 The proof of part four requires all the arguments used in order to prove part three as well as an additional one. Let $w\in\mathbf{G}(\s)$ be such that $BwSp_{2n}=\mathcal{U}_{max}$. We show that the map $H(G,I)\rightarrow IM(K^{G^\vee\times \C^\times}(\Lambda))$ given by acting on the structure sheaf of $\Lambda_0$, factors thorough the map $H(G,I)\rightarrow S(X)^I$ given by acting on the characteristic function $1_{IwSp_{2n}}$. In order to show this we compute the annihilator of $1_{IwSp_{2n}}\in S(X)^I$. To compute this annihilator we use the most degenerate boundary degeneration of $X$, denoted by $X_\emptyset$ and the Bernstein morphism $e:S(X_\emptyset)^I\rightarrow S(X)^I$. In Appendix \ref{A1} we present some results about boundary degenerations of symmetric spaces which are interesting in their own right.

 Let us also outline the main ingredients of our proof of Theorem \ref{even condition}.

 For $a\in G^\vee\times\C^\times$ and a variety $Y$ with a $G^\vee\times\C^\times$ action, we denote by $Y^a$ the fixed points of $a$.

 Let $\chi$ be a central character of $H(G,I)$, i.e. a character of the center $Z=Z(H(G,I))$.
The algebra $Z$ is isomorphic to the representation ring of the group $G^\vee\times\C^\times$. We can associate to $\chi$ a semi-simple conjugacy class of $G^\vee$. Let $t$ be a semi-simple element of $G^\vee$ which is a representative of such a class. Let $v$ be the positive square root of $q_r$ and let $a=(t,v)\in G^\vee\times\C^\times$.

Using standard arguments taken from Section 5 of \cite{Chriss1997RepresentationTA} and the cellular fibration lemma we show the following.

\begin{prop}\label{1.16}
 We have an isomorphism $H^{BM}_\bullet(\Lambda_H^a)\cong IM(S(X)^I)\otimes_{Z(H(G,I))} \C_a$ as $H^{BM}_\bullet(St^a)\cong H(G,I)\otimes_{Z(H(G,I))} \C_a$ modules.
\end{prop}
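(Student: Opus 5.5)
The plan is to follow the scheme of Chapter 8 of \cite{Chriss1997RepresentationTA}: specialize the equivariant $K$-theory at the central character $a$, then pass to Borel--Moore homology of the $a$-fixed points. We start from part four of Theorem \ref{relative KL}, which gives $IM(K^{G^\vee\times\C^\times}(\Lambda))\cong M_q$ compatibly with $\Phi_{KL}:K^{G^\vee\times\C^\times}(St)\cong H_q$. Under $\Phi_{KL}$ the center $Z(H_q)$ corresponds to $R(G^\vee\times\C^\times)$, acting on both sides through the equivariant structure. Since $IM$ fixes the center up to the standard twist of central characters (which we absorb into the choice of $a$), tensoring with $\C_a$ yields
\[
K^{G^\vee\times\C^\times}(\Lambda)\otimes_{R(G^\vee\times\C^\times)}\C_a\cong IM(S(X)^I)\otimes_{Z}\C_a .
\]
This holds as modules over $K^{G^\vee\times\C^\times}(St)\otimes_R\C_a\cong H(G,I)\otimes_Z\C_a$ after specializing $q=q_r$.

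Next we replace the group $G^\vee\times\C^\times$ by the abelian reductive subgroup $A=\overline{\langle a\rangle}$. We use the induction isomorphism $K^{G^\vee\times\C^\times}(\Lambda)=K^{H\times\C^\times}(\Lambda_H)$ from $\Lambda=G^\vee\times^H\Lambda_H$. The cellular fibration lemma (Lemma \ref{cfl}) shows that $K^{H\times\C^\times}(\Lambda_H)$ is free over $R(H\times\C^\times)$, with cells affine bundles over the flag variety of $GL_n$. This freeness is the input needed for the restriction-to-$A$ statement, as in Theorem 6.2.? / Section 8.1 of \cite{Chriss1997RepresentationTA}: $K^{G'}(Y)\otimes_{R(G')}\C_a\cong K^{A}(Y)\otimes_{R(A)}\C_a$ for $G'$ with simply connected derived group, which holds for $GL$-type groups. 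Then we apply Thomason's concentration/localization theorem to get $K^A(\Lambda_H)_a\cong K^A(\Lambda_H^a)_a\cong K(\Lambda_H^a)$. Care is needed because $a$ is only a point of $G^\vee$, so we first localize at the $G^\vee$-conjugacy class of $t$. Reducing from $\Lambda$ to $\Lambda_H$ is harmless, since $\Lambda^a$ is an induced space built from copies of $\Lambda_H^{a'}$ for the $H$-conjugates $a'$ of $a$. The same steps applied to $St$ recover $K(St^a)\cong H(G,I)\otimes_Z\C_a$, and convolution commutes with each step, so all isomorphisms are module isomorphisms.

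Finally we pass from $K(\Lambda_H^a)$ to $H^{BM}_\bullet(\Lambda_H^a)$ via the bivariant Riemann--Roch map $RR$ of \cite{Chriss1997RepresentationTA} Section 5.11, twisted by the Todd class of $\tilde N^a$ so that it is compatible with convolution. For this to be an isomorphism we need $K(\Lambda_H^a)$ to have no torsion and the cycle map to be bijective. Both follow if $\Lambda_H^a$ again admits a cellular fibration with affine-space fibers over fixed points of partial flag varieties. The fixed points of $a$ on the base (flags of $GL_n$) are unions of products of smaller flag varieties, and the affine fibers restrict to linear subspaces, so Lemma \ref{cfl} applies again. This step, verifying the cellular fibration for $\Lambda_H^a$, together with correctly tracking the $IM$ twist on central characters, is where I expect the main difficulty. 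I would first check it directly from the explicit description $\Lambda_H=\tilde N\times_{\mathfrak g^{\vee*}}S$, using the $a$-stable decomposition of $S$.
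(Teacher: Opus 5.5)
Your proposal is correct and follows essentially the same route as the paper. Both arguments specialize the relative Kazhdan--Lusztig isomorphism at $a$, rewrite $K^{G^\vee\times\mathbb{C}^\times}(\Lambda)$ as $\Tilde{H}$-equivariant $K$-theory of $\Lambda_H$, restrict to the closed subgroup $A$ generated by $a$ via part 3 of the cellular fibration lemma, localize to get $K(\Lambda_H^a)$, and pass to $H^{BM}_\bullet(\Lambda_H^a)$ by the Todd-twisted Riemann--Roch map, checking compatibility with convolution at each step.

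One small correction: the group acting on $\Lambda_H$ must be the stabilizer $\Tilde{H}=H\rtimes\mathbb{G}_m$, with $\mathbb{G}_m$ twisted by $\rho$, not $H\times\mathbb{C}^\times$, because plain scaling does not preserve $S$. Your explicit check that $\Lambda_H^a$ inherits an affine cellular fibration over the $a$-fixed points of the flag variety of $GL_n$ fills in a step the paper only asserts.
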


We use a perverse sheaf description of $H^{BM}_\bullet(St^a)$ and $H^{BM}_\bullet(\Lambda_H^a)$ (as in Subsection 8.6 of \cite{Chriss1997RepresentationTA}) to prove Theorem \ref{even condition}.

Let $C_{G^\vee}(t)$ be the centralizer of $t$ and let $\mathcal{E}$ be the set of $C_{G^\vee}(t)$ orbits on $(\mathfrak{g}^{\vee *})^a$. For each $c\in\mathcal{E}$ we denote by $IC_{c}$ the intersection cohomology sheaf of the orbit $c$ (with the trivial local system on it). Recall $S=\{\phi\in \mathfrak{g}^{\vee*},\phi|_\mathfrak{h}=\psi\}\subset \mathfrak{g}^{\vee*}$. Let $\mu:S\hookrightarrow \mathfrak{g}^{\vee*}$ be the embedding. Using Proposition \ref{1.16} we prove the following condition.

\begin{prop}\label{short sheaf condition}
    Let $\pi$ be an irreducible representation of $G$, generated by its $I$ fixed vectors. Let $(t,n)$ be a Deligne Langlands parameter of $\pi$. Using a Killing form one can identify $\mathfrak{g^\vee}\cong\mathfrak{g}^{\vee*}$ and consider $n\in \mathfrak{g}^{\vee*}$. Let $c\in\mathcal{E}$ be the orbit of $n$.

    The following are equivalent:

    \begin{enumerate}
        \item Let $Z(\pi)$ be the Zelevinsky dual of $\pi$ (see Section 9 of \cite{Zelevinsky1980}) and let $Z(\pi)^\vee$ be its contragrident. The representation $Z(\pi)^\vee$ is $X$ distinguished. 
        \item The map $\bigoplus_{k>0,d\in\mathcal{E}}Ext^k(IC_c,IC_d)\otimes \bigoplus Ext^{\bullet}(IC_d,\mu_*\C_{S^a})\rightarrow\bigoplus Ext^\bullet(IC_c,\mu_*\C_{S^a})$ is not surjective.
    \end{enumerate}
    
\end{prop}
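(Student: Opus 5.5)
Via Frobenius reciprocity and the contragredient, $X$-distinguishedness of $Z(\pi)^\vee$ becomes a statement about the simple module $Z(\pi)^I$ appearing as a quotient of $S(X)^I$. By Proposition \ref{1.16}, the central specialization $S(X)^I\otimes_{Z}\C_a$ is computed geometrically, and condition 2 is then recognised as the non-vanishing of the top (simple-head) part of the corresponding $\mathrm{Ext}$-module.

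\emph{Step 1: reduction to Hecke modules.} Since $\pi$ is generated by $I$-fixed vectors, so is $Z(\pi)^\vee$. Borel's equivalence identifies such representations with $H(G,I)$-modules via $V\mapsto V^I$. We have $\mathrm{Hom}_G(\rho,C^\infty(X))=\mathrm{Hom}_G(S(X),\rho^\vee)$ for admissible $\rho$. Applied to $\rho=Z(\pi)^\vee$ this gives $\rho^\vee=Z(\pi)$, so we need $\mathrm{Hom}_{H(G,I)}(S(X)^I,Z(\pi)^I)\neq 0$, using that $S(X)$ is generated by $I$-invariants on the Iwahori block. The central character of $Z(\pi)$ corresponds to the same $a=(t,v)$ up to the standard twist. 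Hence the question is whether $Z(\pi)^I$ is a quotient of $S(X)^I\otimes_Z\C_a$. The Iwahori–Matsumoto involution implements the Aubert–Zelevinsky duality on Iwahori modules, so $Z(\pi)^I\cong IM(\pi^I)$. Twisting by $IM$, the condition becomes: $\pi^I$ is a quotient of $IM(S(X)^I)\otimes_Z\C_a\cong H^{BM}_\bullet(\Lambda_H^a)$, by Proposition \ref{1.16}.

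\emph{Step 2: sheaf-theoretic description.} Following Subsection 8.6 of \cite{Chriss1997RepresentationTA}, consider $\mu_s:\tilde N^a\to(\mathfrak g^{\vee*})^a$ and write $\mu_{s*}\C=\bigoplus_{d}L_d\otimes IC_d$. Here only trivial local systems occur, since centralizers in $GL$ are connected. Then $H^{BM}_\bullet(St^a)=\mathrm{Ext}^\bullet(\mu_{s*}\C,\mu_{s*}\C)$, and by base change $H^{BM}_\bullet(\Lambda_H^a)=\mathrm{Ext}^\bullet(\mu_{s*}\C,\mu_*\C_{S^a})$. The latter equals $\bigoplus_d L_d^*\otimes\mathrm{Ext}^\bullet(IC_d,\mu_*\C_{S^a})$, with the convolution action given by Yoneda composition. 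The simple module attached to $(t,n)$ is $L_c$, by the classification in \cite{Chriss1997RepresentationTA}, and (modulo checking conventions) it matches $\pi^I$.

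\emph{Step 3: computing the head.} The algebra $A=\mathrm{Ext}^\bullet(\mu_{s*}\C,\mu_{s*}\C)$ has radical $\bigoplus_{k>0}$ plus off-diagonal pieces; its degree-$0$ part is semisimple, $\bigoplus\mathrm{End}(L_d)$, by the decomposition theorem and $\mathrm{Hom}(IC_c,IC_d)=\delta_{cd}\C$. For the module $N=\bigoplus_d L_d^*\otimes E_d$ with $E_d=\mathrm{Ext}^\bullet(IC_d,\mu_*\C_{S^a})$, the $L_c$-isotypic part of $N/\mathrm{rad}(A)N$ is $L_c^*\otimes\bigl(E_c/\sum_{k>0,d}\mathrm{Ext}^k(IC_c,IC_d)\cdot E_d\bigr)$. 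Hence $L_c$ is a quotient of $N$ iff the composition map in condition 2 is not surjective onto $E_c$. Variance conventions (left versus right module, $L$ versus $L^*$) must be tracked, but they only swap which side is composed.

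The main obstacle I expect is Step 1's bookkeeping: making the IM twist, the contragredient, and the sign/normalization conventions of $a$ (Zelevinsky dual versus IM) line up so that the simple geometric module $L_c$ indexed by the orbit of $n$ really corresponds to $\pi^I$ rather than to its dual or twist. A secondary point is to justify that $\mathrm{rad}(A)$ is exactly the span of positive-degree Ext, so that the head computation in Step 3 is valid. This holds because $A$ is non-negatively graded with semisimple degree-$0$ part.
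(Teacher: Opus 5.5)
Your proposal is correct and follows the paper's proof essentially step for step. The paper likewise passes from distinction of $Z(\pi)^\vee$ to $IM(\pi^I)$ being a quotient of $S(X)^I$ (via Frobenius reciprocity, Borel's equivalence and $Z(\pi)^I\cong IM(\pi^I)$). It then identifies $IM(S(X)^I)\otimes_{Z(H(G,I))}\mathbb{C}_a$ with your $N$ (the paper's $V=\bigoplus_d L_d^*\otimes \mathrm{Ext}^\bullet(IC_d,\mu_*\mathbb{C}_{S^a})$) using Propositions \ref{fixed points module} and \ref{pass to Ext}, and finishes with the same radical/head argument.

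The one input you use tacitly and should state is that $L_d\neq 0$ for every $d\in\mathcal{E}$ (Kazhdan--Lusztig, which the paper cites). Without it, $\mathrm{rad}(A)N$ only realizes compositions through those $d$ with $L_d\neq 0$. Your formula for the $L_c$-isotypic part of $N/\mathrm{rad}(A)N$ would then not match condition 2, whose sum runs over all of $\mathcal{E}$.
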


We pass to the settings of etale $l$-adic sheaves in order to use the theory of weights and pure sheaves. In this setting, we show that the second condition of Proposition \ref{short sheaf condition} can not be satisfied unless $n$ is in the image of the moment map $M^\vee\rightarrow \mathfrak{g}^{\vee*}$. This implies Theorem \ref{even condition}.

\end{subsection}

\begin{subsection}{Structure of the paper}

In Section \ref{s2} we recall some classical results from geometric representation theory.

In Section \ref{s3} we prove the first part of Theorem \ref{relative KL}.

In Section \ref{s4} we prove the second part of Theorem \ref{relative KL}. We also prove a version of the Conjecture 1.1.2 of \cite{finkelberg2023lagrangiansubvarietieshypersphericalvarieties} for this case.

In Section \ref{s5} we describe the $I$ orbits on $X$ and the module $S(X)^I$ over $H(G,I)$.

In Section \ref{s6} we show that $\Lambda_H$ satisfies the cellular fibration lemma.

In Section \ref{s7} we prove the third and fourth parts of Theorem \ref{relative KL}.

In Section \ref{s8} we prove Proposition \ref{1.16} and deduce a condition on the central character of an irreducible representation $\pi$ with an $I$ fixed vector that is $X$ distinguished.

In Section \ref{s9} we give our proof of  Theorem \ref{even condition}.

\end{subsection}

\begin{subsection}{Acknowledgment}

I would like to thank Shachar Carmeli, Michael Finkelberg, Nadya Gurevitch, Guy Kapon, Toan Pham, and Eitan Sayag for helpful discussions. I was partially supported by ISF grant No. 1781/23.

\end{subsection}

\end{section}

\begin{section}{Preliminaries}\label{s2}
In this section we recall known results (from \cite{Chriss1997RepresentationTA}) that are used throughout the paper.

\begin{subsection}{The convolution construction}\label{convultion}
    We have three projections  from $M^\vee\times_{\mathfrak{g}^{\vee*}}T^*\B\times_{\mathfrak{g}^{\vee*}}T^*\B$. Two projections $\pi_1,\pi_2$ to $\Lambda$, and one projection $\pi_3$ to $St$. Notice that $\pi_2$ is proper because $\B$ is proper.

    This gives a module structure for every homology theory of $\Lambda$ over the homology theory of $St$. The examples we use are Borel Moore homology and equivariant $K$ theory. 

    The convolution action is given by $X*Y=\pi_{2*}(\pi_1^*X\otimes \pi^*_3Y)$ for $X$ on $\Lambda$ and $Y$ on $St$.
\end{subsection}

\begin{subsection}{Cellular Fibration Lemma}

We recall the formulation of the cellular fibration Lemma. It is a main tool in the proof of the last two parts of Theorem \ref{classic} (see \cite{Chriss1997RepresentationTA}) and it is also a main tool for the proof of the last two parts of Theorem \ref{relative KL} in this paper.

\begin{lemma}[The Cellular Fibration Lemma, 5.5.1 of \cite{Chriss1997RepresentationTA}]\label{cfl}

Let $\pi:F\rightarrow X$ be a morphism of $G$ varieties. Assume there is a filtration $\emptyset=F^0\subset F^1\subset ...\subset F^n=F$ such that the following conditions hold.

\begin{enumerate}
    \item $F^i$ is $G$ stable closed subvariety, $\pi:F^i\rightarrow X$ is locally trivial fibration.
    \item  The map $\pi:F^{i+1}\setminus F^i\rightarrow X$ is an affine locally trivial fibration.
    \item $K^G(X)$ is a free $R(G)$ module. Here $R(G)$ is the representation ring of $G$.
\end{enumerate}

In this case we will say that $F$ satisfies the cellular fibration lemma over $X$. When this happens we have:

\begin{enumerate}
    \item There are canonical short exact sequences:
    $$0\rightarrow K^G(F^i)\rightarrow K^G(F^{i+1})\rightarrow K^G(F^{i+1}\setminus F^i)\rightarrow 0$$
    \item These sequences non canonically split as $R(G)$ modules. Moreover, $K^G(F)$ is a free $R(G)$ module.
    \item If $A\subset G$ is an algebraic subgroup such that $K^A(X)$ is a free $R(A)$ module and the map $R(A)\otimes_{R(G)} K^G(X)\rightarrow K^A(X)$ is an isomorphism then so is the map 
    
    $R(A)\otimes_{R(G)} K^G(F)\rightarrow K^A(F)$. 
\end{enumerate}
    
\end{lemma}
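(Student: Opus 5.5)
The plan is to argue by induction on $i$, using the localization exact sequence in equivariant $K$-theory together with the Thom isomorphism for affine fibrations. Write $U^{i}=F^{i+1}\setminus F^{i}$, which is open in $F^{i+1}$ with closed complement $F^i$. The localization sequence for this closed/open pair is
$$K^G_1(U^i)\xrightarrow{\partial} K^G(F^i)\rightarrow K^G(F^{i+1})\rightarrow K^G(U^i)\rightarrow 0 .$$
So the short exact sequences of part 1 reduce to showing that the connecting map $\partial$ vanishes. The sequences are canonical because the maps are pushforward along the closed embedding and restriction to the open piece.

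For part 2, I will first identify the outer terms. Since $\pi:U^i\rightarrow X$ is a $G$-equivariant affine locally trivial fibration, the equivariant Thom isomorphism gives $\pi^*:K^G(X)\xrightarrow{\sim}K^G(U^i)$. By hypothesis 3 this is a free $R(G)$-module. A surjection onto a free module splits, so once exactness is known each sequence splits non-canonically. Induction on $i$, starting from $K^G(F^0)=0$, then shows that $K^G(F^i)\cong\bigoplus_{j<i}K^G(U^j)$ is free, and in particular $K^G(F)$ is free.

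The main obstacle is the vanishing of $\partial$, i.e.\ injectivity of $K^G(F^i)\rightarrow K^G(F^{i+1})$. I would prove exactness and freeness simultaneously by induction, using the base-change property of part 3 as a tool:
\begin{enumerate}
\item Reduce to a maximal torus $T\subset G$. This uses that $K^T(-)\cong R(T)\otimes_{R(G)}K^G(-)$ on the spaces at hand, which is the base-change statement for $A=T$ applied inductively, and that $R(T)$ is faithfully flat over $R(G)$.
\item Over $T$, tensor with the fraction field of $R(T)$ and apply Thomason's localization theorem. This reduces injectivity to the $T$-fixed loci.
\item On the $T$-fixed loci the filtration again has affine-space pieces over $X^T$, and there the $K_1$-boundary is zero by a rank count: the total rank equals $\sum_j \mathrm{rk}\,K(U^j)$, forced by freeness.
\item Since the modules are free, and hence torsion-free, injectivity after localization gives injectivity before it.
\end{enumerate}
If this route runs into trouble, the fallback is the argument of \cite{Chriss1997RepresentationTA}. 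That argument shows directly, by the five lemma and the local triviality over $X$, that the natural map $K^G(X)\otimes K(\text{fibre})$-type presentation is compatible with the filtration.

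For part 3, I would write down the same localization sequences for $A$ and for $G$, and map the $R(A)\otimes_{R(G)}(-)$-tensored $G$-sequence to the $A$-sequence. Tensoring preserves exactness because the $G$-sequences are split. On the open pieces the vertical map is an isomorphism: by Thom isomorphisms for both groups it is identified with $R(A)\otimes_{R(G)}K^G(X)\rightarrow K^A(X)$, which is an isomorphism by assumption. The exactness just proved also applies to $A$, since hypothesis 3 holds for $A$. Induction on $i$ and the five lemma then give that $R(A)\otimes_{R(G)}K^G(F)\rightarrow K^A(F)$ is an isomorphism.
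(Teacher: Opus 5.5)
The paper does not prove this lemma; it quotes it from Chriss--Ginzburg (5.5.1), so the comparison is with the argument there. Your setup via the localization sequence is the standard one and is fine. So are your deduction of part 2 from exactness (Thom isomorphism on $U^i$, freeness of $K^G(X)$, splitting, induction) and your five-lemma argument for part 3. The gap is the step you single out yourself, the vanishing of $\partial:K^G_1(U^i)\to K^G(F^i)$: the torus-localization route does not establish it.

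\begin{itemize}
\item Step 1 needs $K^T(X)$ to be free and $R(T)\otimes_{R(G)}K^G(X)\to K^T(X)$ to be an isomorphism. That is precisely the extra hypothesis of part 3, and it is not among the lemma's assumptions.
\item Faithful flatness of $R(T)$ over $R(G)$ fails in the generality of the lemma, for instance for disconnected groups. This matters here: the paper applies part 3 to the closure $A$ of $\langle a\rangle$, which can be disconnected, and your proof of part 3 needs exactness for $A$.
\item Steps 2--3 only move the problem. After Thomason localization you are looking at essentially non-equivariant $K$-theory of fixed loci fibred over $X^T$, and the same $K_1$-boundary question reappears there. Nothing makes those $K_1$-groups vanish; already $K_1(\mathrm{pt})=\mathbb{C}^\times$ in algebraic $K$-theory.
\item The ``rank count'' has no independently known rank to compare against.
\item The claim that the fixed loci again form affine-space fibrations over $X^T$ would need an affine-linear structure on the fibres, which the hypotheses do not provide.
\item The fallback you mention is not the Chriss--Ginzburg argument.
\end{itemize}

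The missing idea is hypothesis 1, which your proposal never uses. Since $\pi:F^{i+1}\to X$ is a locally trivial fibration, it is flat. So flat pullback $\pi^*:K^G_\bullet(X)\to K^G_\bullet(F^{i+1})$ is defined in every degree. Its composite with the restriction $j^*:K^G_\bullet(F^{i+1})\to K^G_\bullet(U^i)$ is $(\pi|_{U^i})^*$. That map is the Thom isomorphism (homotopy invariance) for the affine fibration $U^i\to X$, in degree $1$ as well as degree $0$. Hence $j^*:K^G_1(F^{i+1})\to K^G_1(U^i)$ is surjective, and $\partial=0$ by exactness of the localization sequence.

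This argument uses no freeness and works for any group. It therefore also gives the exactness for $A$ that part 3 needs, and $\pi^*\circ\bigl((\pi|_{U^i})^*\bigr)^{-1}$ is an explicit section of $j^*$. With this replacing your steps 1--4, the rest of your proposal goes through.
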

    
\end{subsection}

\end{section}

\begin{section}{Borel orbits and irreducible components}\label{s3}
In this section we prove part 1 of Theorem \ref{relative KL}.

First, we give an explicit description of $\Lambda$.

Recall that we have the following.

$$M^\vee=\{(gH,\phi)|gH\in G^\vee/H,\phi\in \mathfrak{g}^{\vee*},(g^{-1}\phi)|_\mathfrak{h}=\psi\}$$

The moment map $\mu:M^\vee\rightarrow \mathfrak{g}^{\vee*}$ is just the projection on the second coordinate.

We fix some Borel $B^\vee$ of $G^\vee$ with Lie algebra $\mathfrak{b}^\vee$, we have:  

$$\Lambda=M^\vee\times_{\mathfrak{g}^{\vee*}}T^*\B=\{(g_1H,g_2B^\vee,\phi)|g_1H\in G^\vee/H,g_2B^\vee\in \B,\phi\in \mathfrak{g}^{\vee*},(g^{-1}_1\phi)|_\mathfrak{h}=\psi,(g^{-1}_2\phi)|_{\mathfrak{b}^\vee}=0\}$$

We denote by $\tau$ the projection $\tau:\Lambda\rightarrow G^\vee/H\times \B$ on the first two coordinates, and by $\mu$ the projection on the third one.

\begin{defn}
    We call a $G^\vee$ orbit on $G^\vee/H\times \B$ relevant if it is in the image of $\tau$. We freely use this term also for $H$ orbits on $\B$ and for $B^\vee$ orbits on $G^\vee/H$.

    This definition agrees with the one given in \cite{kononenko2024lagrangiansubvarietieshypersphericalvarieties}.
\end{defn}

It is proven in \cite{kononenko2024lagrangiansubvarietieshypersphericalvarieties} that the number of irreducible components of $\Lambda$ is equal to the number of relevant orbits. The irreducible components are given by the closers of the preimages of the relevant orbits under $\tau$.

We describe the relevant orbits. 

Denote by $S_m$ the permutation group on $m$ elements.

\begin{prop}
    Let $W=S_{2n}$ be the Weyl group of $G^\vee=GL_{2n}$, its action on the Borel orbits of $G^\vee/H$ is transitive. There is a unique closed Borel orbit and the stabilizer of the action of $W$ on this closed orbit is $S_n$ embedded into $S_{2n}$ by the map $\alpha$.  $$\alpha(\tau)(x)=\begin{dcases}
    \tau(x),& \text{if } x\leq n\\
    \tau(x-n)+n,              & \text{otherwise}
\end{dcases}$$
\end{prop}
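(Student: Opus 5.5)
The plan is to translate everything into $H$-orbits on the flag variety $\B$ of $G^\vee=GL_{2n}$. The $B^\vee$-orbits on $G^\vee/H$ correspond bijectively to the $H$-orbits on $\B=G^\vee/B^\vee$. Let $V=\C^n\subset\C^{2n}$ be the span of the first $n$ basis vectors. Then $H$ is the stabilizer of $V$ together with the identification $\C^{2n}/V\cong V$ under which the Levi acts diagonally. Its unipotent radical $U_H\cong M_n=\mathrm{Hom}(\C^{2n}/V,V)$ acts trivially on both $V$ and $\C^{2n}/V$.

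To a flag $F_\bullet$ I attach three invariants:
\begin{itemize}
\item the set of indices $i$ with $\dim(F_i\cap V)>\dim(F_{i-1}\cap V)$, which is a subset of size $n$ of $\{1,\dots,2n\}$ (equivalently a shuffle);
\item the induced full flag $F_\bullet\cap V$ on $V$;
\item the induced full flag $\mathrm{im}(F_\bullet)$ on $\C^{2n}/V\cong V$.
\end{itemize}
The diagonal $GL_n$ acts on the pair of induced flags through their relative position, which is an element of $S_n$.

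\emph{Step 1 (classification).} Show that the $H$-orbits are exactly the fibres of (shuffle, relative position). The key point is that $U_H$ acts transitively on the flags with prescribed $F_\bullet\cap V$ and prescribed image in the quotient. These flags form an affine space of successive lifts. Lifting step by step, the choices at each quotient-jump step form a torsor under a quotient of $\mathrm{Hom}(\C^{2n}/V,V)$, and one checks that $U_H$ realises all of them. This gives $\binom{2n}{n}n!=|S_{2n}/\alpha(S_n)|$ orbits.

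Next, check that every orbit contains a coordinate flag $wB^\vee$ with $w\in S_{2n}$. Then check that $wB^\vee$ and $w'B^\vee$ lie in the same orbit iff $w'\in \alpha(S_n)w$, after passing to the appropriate side convention; this is $w\alpha(S_n)$ on the $G^\vee/H$ side. The "if" direction holds because permutation matrices of $\alpha(S_n)$ are block diagonal of the form $\mathrm{diag}(\sigma,\sigma)$ and hence lie in $H$. The "only if" direction follows from the invariants above. So the orbits are indexed by $W/\alpha(S_n)$.

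\emph{Step 2 (the $W$-action).} Identify Knop's action of a simple reflection $s_i$ with left multiplication on $W/\alpha(S_n)$. For an orbit $\mathcal O=B^\vee wH$, consider $P_i\mathcal O$, where $P_i$ is the minimal parabolic. I would do a case-by-case rank-one analysis using the explicit invariants: moving the $i$-th step of the flag across the $(i+1)$-st either swaps a $V$-jump with a quotient-jump, or changes the relative position by a transposition. One must show that the relevant rank-one situations are only of the types where Knop's $s_i$ sends $B^\vee wH$ to $B^\vee s_iwH$: type $G$, type $U$, and type $T$ with the two $B^\vee$-orbits swapped. In particular no type-$N$ phenomenon may occur. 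I expect this to be the main obstacle.

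Granting Step 2, transitivity is immediate, since $W$ acts transitively on $W/\alpha(S_n)$.

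\emph{Step 3 (closed orbit and stabilizer).} The orbits that are closed in $G^\vee/H$ correspond to the $H$-orbits that are closed in $\B$. Such an orbit is projective, so it contains a fixed point of a Borel subgroup of $H$, and a dimension count via the invariants shows it is unique. Concretely, it is the orbit with $F_n=V$ and equal induced flags on $V$ and on $\C^{2n}/V$, that is the coset of the identity. Its stabilizer under left multiplication is therefore $\alpha(S_n)$.
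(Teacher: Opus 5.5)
Your Steps 1 and 3 are sound, and your route is genuinely different from the paper's. You classify the $H$-orbits on $\mathcal{B}$ by the shuffle and the relative position of the two induced flags, and then identify Knop's action by rank-one analysis. The paper instead notes that $G^\vee/H=G^\vee\times^P(L/\Delta GL_n)$ is parabolically induced from the group case and quotes Brion's Lemma 7, which gives $W\times^{W_L}S_n\cong S_{2n}/\alpha(S_n)$.

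As written, however, your proposal has a genuine gap where the content of the statement lies. Step 2 is only announced, and without it you have only a bijection of sets $B^\vee\backslash G^\vee/H\cong W/\alpha(S_n)$, not transitivity or the stabilizer. Your criterion for Step 2 is also wrong. No transposition lies in $\alpha(S_n)$, since the image of $(c\,d)$ is $(c\,d)(c{+}n\;d{+}n)$. Hence left multiplication by $s_i$ has no fixed points on $W/\alpha(S_n)$. A single occurrence of type G, or of type T (whose open orbit is fixed by $s_i$), would therefore contradict the claimed description. What you must show is that every simple reflection is of type U at every orbit.

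This follows quickly from your own invariants. For $\mathcal{O}=B^\vee gH$, the $B^\vee$-orbits in $P_{s_i}\mathcal{O}$ correspond to the $H$-orbits meeting the line $\mathbb{P}^1$ of flags that differ from $F_\bullet=g^{-1}B^\vee$ only in $F_i$. Put $d=\dim(F_{i+1}\cap V)-\dim(F_{i-1}\cap V)$.
\begin{itemize}
\item If $d=2$, the shuffle and the quotient flag are constant along the line, and the flag on $V$ runs through a fiber of $\mathcal{B}_n\to GL_n/P_j$.
\item If $d=0$, the shuffle and the flag on $V$ are constant, and the quotient flag runs through such a fiber.
\item In both these cases, Bruhat theory for $GL_n$ says the relative position takes exactly two values along the line, one of them at a single point.
\item If $d=1$, both induced flags are constant along the line, and the shuffle changes exactly at the single point $F_i=F_{i-1}+(F_{i+1}\cap V)$.
\end{itemize}
By Step 1, in every case the line meets exactly two $H$-orbits, one of them in a single point. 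So the type is U: type N would give a two-point orbit, and types G and T give one and three orbits respectively.

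Since $B^\vee s_iwH\subset P_{s_i}B^\vee wH$ and $B^\vee s_iwH\neq B^\vee wH$, Knop's $s_i$ must send $B^\vee wH$ to $B^\vee s_iwH$. Transitivity and the stabilizer $\alpha(S_n)$ of the identity coset then follow.

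Once this is filled in, your argument is self-contained and more explicit than the paper's. It produces the representatives $B^\vee wH$ with $W$ acting by left multiplication, which is exactly how relevant orbits are handled afterwards. The paper's argument is shorter but rests entirely on Brion's lemma and on the group case.

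In Step 3, a dimension count does not by itself give uniqueness of the closed orbit. Use instead that a closed $H$-orbit contains a fixed point of $B_H=\{\mathrm{diag}(b,b)\}\ltimes U_H$, and that a $U_H$-stable subspace either lies in $V$ or contains $V$. This forces $F_n=V$ and both induced flags to be standard, so the only $B_H$-fixed flag is the standard one.
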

\begin{proof}
    Notice that $G^\vee/H$ is parabolically induced from $GL_n\times GL_n/\Delta GL_n$ in the sense of \cite{Brion2001OnOC}. Let $P$ be the parabolic of $G^\vee$ whose unipotent part is equal to the unipotent part of $H$. Write $P=LU$ where $L$ is a Levi of $P$ and $U$ is its unipotent radical. Let $W_L=S_n\times S_n$ be the Weyl group of $L$ and denote $W^L=W/W_L$.
    
    The Weyl group of $GL_n\times GL_n$ acts transitively on the Borel orbits of $GL_n\times GL_n/\Delta GL_n$. The Borel orbits are parametrized by $S_n$.

    By Lemma 7 of \cite{Brion2001OnOC} the Borel orbits on $X$ can be described as $W\times^{W_L} S_n=S_{2n}\times^{S_n\times S_n} S_n\cong S_{2n}/S_n$. Thus $S_{2n}$ acts transitively. The closed orbit is given by $(1,1)\in S_{2n}\times^{S_n\times S_n} S_n$ and its stabilizer is $S_n$ embedded as desired.
\end{proof}

Now, we want to single out the relevant orbits. Recall that $B^\vee$ is the Borel of upper triangular matrices. The orbit of $w\in W$ is relevant if and only if $\psi$ is trivial on $H\cap w^{-1}B^\vee w$. This is equivalent to all the matrices in $w^{-1}B^\vee w$ having zero $(i,i+n)$ entries. In terms of permutations this means that an element $\sigma\in S_{2n}/S_n$ is relevant if and only if $\sigma(i)>\sigma(i+n)$ for all $1\leq i\leq n$.

\begin{prop}
    The set of relevant orbits is in bijection with the set of ways to divide the numbers $1,...,2n$ to pairs.
\end{prop}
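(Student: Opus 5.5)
We work with the description just obtained: $B^\vee$-orbits on $G^\vee/H$ are indexed by $S_{2n}/\alpha(S_n)$, where $\alpha(S_n)$ acts by $\sigma\mapsto\sigma\circ\alpha(\tau)$. An orbit $\sigma$ is relevant exactly when $\sigma(i)>\sigma(i+n)$ for all $1\leq i\leq n$. We first check that this condition is well defined on cosets. Replacing $\sigma$ by $\sigma\circ\alpha(\tau)$ gives the conditions $\sigma(\tau(i))>\sigma(\tau(i)+n)$ for all $i$. Since $\tau$ permutes $\{1,\dots,n\}$, this is the same family of conditions.

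The plan is to attach to each coset the set partition
$$P(\sigma)=\bigl\{\{\sigma(i),\sigma(i+n)\}\;:\;1\leq i\leq n\bigr\}$$
of $\{1,\dots,2n\}$ into $n$ pairs. This is the image under $\sigma$ of the standard pairing $\{i,i+n\}$. Right multiplication by $\alpha(\tau)$ only reindexes the pairs via $i\mapsto\tau(i)$, so $P$ is well defined on $S_{2n}/\alpha(S_n)$. The claim is that $P$, restricted to relevant cosets, is a bijection onto the set of pairings.

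\emph{Surjectivity.} Given a pairing $\{a_1,b_1\},\dots,\{a_n,b_n\}$, label each pair so that $a_i>b_i$. Define $\sigma(i)=a_i$ and $\sigma(i+n)=b_i$. This $\sigma$ is a permutation, it satisfies the relevance inequality, and $P(\sigma)$ is the given pairing.

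\emph{Injectivity.} Suppose $\sigma,\sigma'$ are relevant and $P(\sigma)=P(\sigma')$. Then there is $\tau\in S_n$ with
$$\{\sigma'(i),\sigma'(i+n)\}=\{\sigma(\tau(i)),\sigma(\tau(i)+n)\}\quad\text{for all } i.$$
Relevance of both permutations pins down the order inside each pair: the larger element sits in the position $\leq n$. Hence $\sigma'(i)=\sigma(\tau(i))$ and $\sigma'(i+n)=\sigma(\tau(i)+n)$, that is, $\sigma'=\sigma\circ\alpha(\tau)$. So $\sigma$ and $\sigma'$ give the same coset.

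The main point requiring care is that the stabilizer $\alpha(S_n)$ acts simultaneously on the two halves and cannot swap $i$ with $i+n$. This is why an ordering within each pair must be fixed, and the relevance condition supplies exactly that. The count gives $(2n-1)!!$ relevant orbits. By the result of \cite{kononenko2024lagrangiansubvarietieshypersphericalvarieties}, this is also the number of irreducible components of $\Lambda$.
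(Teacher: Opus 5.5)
Your proof is correct and takes the same route as the paper. Both assign to a coset $\sigma$ the pairing $\{\sigma(i),\sigma(i+n)\}$, and the paper simply asserts that each pairing comes from exactly one relevant coset. You fill in the details it leaves implicit: the map and the relevance condition are well defined on $S_{2n}/\alpha(S_n)$, and the inequality $\sigma(i)>\sigma(i+n)$ fixes the order within each pair, which gives both injectivity and surjectivity.
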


\begin{proof}
    For every $\sigma\in S_{2n}/S_n$ we assign the collection of pairs $\{\sigma(i),\sigma(i+n)\}$. Every collection of pairs is obtained uniquely from a permutation corresponding to a relevant orbit.
\end{proof}

Now we turn to the other side, meaning to $X$. $GL_{2n}$ acts transitively on the space of anti-symmetric matrices. The stabilizer being $Sp_{2n}$. Under the identification of $X$ with anti-symmetric matrices, the Borel orbits are represented by monomial matrices with ones above the diagonal. To any such matrix we attach a unique partition of $1,...,2n$ into pairs, given by the indices of the entries of the matrix which are not zero. Thus we obtain:

\begin{cor}
    The number of irreducible components of $\Lambda$ is equal to the number of Borel orbits on $GL_{2n}/Sp_{2n}$ and is equal to $(2n-1)!!$.
\end{cor}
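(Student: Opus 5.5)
The plan is to count both sides separately and identify each count with the number of perfect matchings on $\{1,\dots,2n\}$, which is $(2n-1)!!$.

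\textbf{The $\Lambda$ side.} By the result of \cite{kononenko2024lagrangiansubvarietieshypersphericalvarieties} quoted above, irreducible components of $\Lambda$ are in bijection with relevant $G^\vee$ orbits on $G^\vee/H\times\B$. These are the same as relevant $B^\vee$ orbits on $G^\vee/H$. The two preceding propositions identify the $B^\vee$ orbits with $S_{2n}/S_n$. They also show that the relevant ones correspond bijectively to partitions of $\{1,\dots,2n\}$ into pairs, via $\sigma\mapsto\{\{\sigma(i),\sigma(i+n)\}\}_{i\le n}$. Two points need checking here.

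First, the map is well defined on cosets. Right multiplication by $\alpha(S_n)$ permutes the index $i$ simultaneously in both halves, so the set of pairs is unchanged.

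Second, the map is bijective on relevant cosets. Given a matching, list its pairs in some order. Put the larger element of each pair at position $i$ and the smaller at position $i+n$. The condition $\sigma(i)>\sigma(i+n)$ forces this orientation, and any two choices of ordering differ by an element of $\alpha(S_n)$. So the count is $(2n-1)!!$.

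\textbf{The $X$ side.} Identify $X=GL_{2n}/Sp_{2n}$ with nondegenerate antisymmetric matrices, with $g$ acting by $A\mapsto gAg^{T}$. I would show that every $B$ orbit contains a unique antisymmetric monomial matrix whose nonzero entries above the diagonal equal $1$. Such matrices correspond exactly to fixed-point-free involutions of $\{1,\dots,2n\}$, that is, perfect matchings.

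\emph{Existence} comes from a Gaussian elimination argument on alternating forms. Take the last basis vector $e_{2n}$ and let $j$ be the largest index with $A(e_j,e_{2n})\neq 0$. Using upper triangular operations, clear the row and column entries to normalize that pairing. Then induct on the complement. This is the standard Bruhat-type decomposition for skew forms, and over $F$ scalars can be absorbed by the diagonal torus.

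\emph{Uniqueness} comes from $B$-invariants. For each $i,j$, the rank of the submatrix of $A$ on rows $\ge i$ and columns $\ge j$ is preserved, since upper triangular $g$ preserves the flags spanned by the last basis vectors. These ranks determine the monomial pattern.

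\textbf{Main obstacle.} The hard step is the existence part of the normal form on the $X$ side. One must carefully check that, after pairing $2n$ with $j$, the elimination does not destroy the upper triangular structure needed to continue the induction. The usual fix is to process indices from the bottom up and clear only entries with larger indices.

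Given both counts, the corollary follows immediately.
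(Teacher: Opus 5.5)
Your proposal is correct and follows essentially the same route as the paper: both sides are identified with perfect matchings of $\{1,\dots,2n\}$, the $\Lambda$ side via the relevance criterion $\sigma(i)>\sigma(i+n)$ on $S_{2n}/S_n$, and the $X$ side via antisymmetric monomial normal forms for $B$-orbits. You go further than the paper, which only asserts the $B$-orbit normal form, by supplying the elimination argument for existence and the rank invariants for uniqueness; the paper's elimination appears only in its analogous Iwahori-orbit proposition.
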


We got a natural bijection between these sets as both sets are in bijection with the set of ways to the partition $1,...2n$ into pairs. We denote this particular bijection from  $B\backslash X$ to the relevant orbits in $B^\vee\backslash G^\vee/H$ by $\Phi_X$.

Recall that $W$ acts on the set of Borel orbits on a spherical variety.

The action of $W$ on $B^\vee\backslash G^\vee/H$ can not preserve the set of relevant orbits as it is transitive. We define a different action of $W$ on the set of relative orbits. 

\begin{defn}
    Let $w\in W$, write it as a reduce product of simple reflections $w=s_1\cdot...\cdot s_k$. Let $x$ be a Borel orbit on $G^\vee/H$, we define $s_i*x$ to be the usual product $s_ix$ if $s_ix$ is relevant and $x$ if $s_ix$ is not relevant. We define the action of $w$ to be the composition of the actions of $s_1,...,s_k$.
\end{defn}

\begin{prop}\label{group map}
\begin{enumerate}
    \item The above definition does not depend on the choice of factorization of $w$ to simple reflections.
    \item The above definition gives a product.
    \item The map $\Phi_X$ respects the $W$ action on both sides.
    
\end{enumerate} 
\end{prop}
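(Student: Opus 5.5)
The plan is to show that, once relevant orbits are identified with perfect matchings of $\{1,\dots,2n\}$ via the bijection in the preceding proposition, the action $*$ becomes the \emph{natural} permutation action of $S_{2n}$ on matchings. This is the action $w\cdot\{\{a_1,b_1\},\dots,\{a_n,b_n\}\}=\{\{w(a_1),w(b_1)\},\dots,\{w(a_n),w(b_n)\}\}$. Parts 1 and 2 then follow at once, and part 3 reduces to showing that Knop's action on $B\backslash X$ is also this relabeling action.

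First I compute $s_i*x$ for a simple reflection $s_i=(i,i+1)$. Take a relevant coset $\sigma\in S_{2n}/S_n$, so $\sigma(j)>\sigma(j+n)$ for all $j$, with matching $M_\sigma=\{\{\sigma(j),\sigma(j+n)\}\}$. The left action gives $s_i\sigma$. The relevance condition is invariant under the right $\alpha(S_n)$-action, since $\alpha(\tau)$ moves $j$ and $j+n$ simultaneously, so it is a well-defined condition on cosets.

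There are two cases. If $i$ and $i+1$ lie in different pairs of $M_\sigma$, then composing with $s_i$ does not change the relative order of the two entries within any pair. Hence $s_i\sigma$ is relevant, and $M_{s_i\sigma}=s_i\cdot M_\sigma$. If instead $\{i,i+1\}$ is a pair, say $\sigma(j)=i+1$ and $\sigma(j+n)=i$, then $s_i\sigma$ violates the inequality at $j$. So $s_i*x=x$, and this agrees with $s_i\cdot M_\sigma=M_\sigma$, because relabeling $i\leftrightarrow i+1$ fixes that pair.

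Hence $s_i*x$ corresponds to $s_i\cdot M_\sigma$ for every $i$. For any word $w=s_{1}\cdots s_{k}$, the composite of the $s_i*$ therefore corresponds to $w\cdot M_\sigma$ for the natural action. This proves independence of the factorization (part 1), and it holds even for non-reduced words. It also shows that $*$ is a group action (part 2), being the transport of an honest action.

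For part 3 I work on the $X$ side, identifying $X$ with nondegenerate antisymmetric matrices and Borel orbits with matchings through the monomial representatives. It suffices to check each simple reflection, since Knop's action is determined by the $s_i$. Fix $s_i$ and the minimal parabolic $P_i\supset B$. I will look at the $P_i$-orbit through a monomial representative $A_M$ and carry out the rank-one analysis through the Levi $GL_2$ acting on rows and columns $i,i+1$.

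If $\{i,i+1\}\in M$, the $2\times 2$ block of $A_M$ on $\{i,i+1\}$ is a nonzero antisymmetric form, on which $GL_2$ acts transitively with stabilizer $SL_2$. So $P_iA_M$ consists of a single $B$-orbit, and Knop's $s_i$ fixes it.

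If $i$ is paired with $a$ and $i+1$ with $b$, where $a,b\notin\{i,i+1\}$, then $GL_2$ acts on the two columns $(e_a,e_b)$-coefficients. One checks that $P_iA_M=BA_M\cup BA_{s_iM}$, and this is a case where $s_i$ swaps the two $B$-orbits. In both cases Knop's action realises $M\mapsto s_i\cdot M$, so $\Phi_X$ intertwines the two actions.

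The main obstacle is this last step. I must pin down Knop's conventions precisely, namely which of the orbit types (complex $U$, or $T$/$N$) occurs and whether the action is composed with an inverse. I must also check in the non-paired case that the two $B$-orbits in $P_iA_M$ are genuinely exchanged rather than one being fixed, as would happen for a type $T$ or $N$ root. I expect no type $N$ phenomena here, since $GL_{2n}/Sp_{2n}$ has spherical roots $\alpha_i+\alpha_{i+1}$. I will verify this directly with explicit $2\times 2$ and $4\times 4$ computations in $GL_2\subset GL_{2n}$.
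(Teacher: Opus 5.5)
Your proof is correct, and its core is the paper's own computation. The paper also reduces to a single simple reflection and splits according to whether $\{i,i+1\}$ is one of the pairs. Your reading of $s_i\sigma$ as swapping the \emph{values} $i,i+1$ is the right one. The formula the paper displays for $s\sigma$ swaps positions, i.e.\ it describes $\sigma s$, although the conclusion the paper draws is the one you obtain.

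Where you differ is the logical organization:
\begin{itemize}
\item The paper proves only $\Phi_X(sy)=s*\Phi_X(y)$. It gets parts 1 and 2 by transporting Knop's $W$-action on $B\backslash X$ through the bijection $\Phi_X$. For this it simply asserts that Knop's $s$ sends the monomial representative $m$ to $sms$.
\item You transport instead the tautological $S_{2n}$-action on perfect matchings. This makes parts 1 and 2 purely combinatorial and independent of Knop's theory. The description of Knop's action as relabeling, which the paper takes for granted, becomes a separate step that you actually justify by the rank-one analysis.
\end{itemize}
The paper's route is shorter; yours is more self-contained and fills that gap.

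The obstacle you flag is easily settled. Since $P_iA_M$ is a single $P_i$-orbit, the type can be read off from the stabilizer of $A_M$ in $P_i$. In the non-paired case, let $N$ be the invertible diagonal $2\times 2$ block of $A_M$ in rows $i,i+1$ and columns $a,b$ (in this order). For $g$ in the Levi $GL_2$ on $\{i,i+1\}$, the element $g\oplus h$ with $h=(N^{-1}g^{-1}N)^{t}$ acting on coordinates $a,b$ fixes $A_M$. This holds because all other blocks of $A_M$ meeting these rows and columns vanish. Choose $g$ in the appropriate Borel subgroup of that $GL_2$: lower triangular if $a<b$, upper triangular if $a>b$. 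Then $h$ lies in $B$, so $g\oplus h\in P_i$.

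Hence the image of the stabilizer in $PGL_2$ contains a Borel subgroup, which excludes types $T$ and $N$. With exactly two $B$-orbits the root is of type $U$, and $s_i$ exchanges $BA_M$ and $BA_{s_iM}$. Since $s_i^2=1$, the question of composing with an inverse does not arise when checking on generators.
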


\begin{proof}
    It is enough to show that for any simple reflection $s$ and $y\in B\backslash X$ we have $\Phi_X(sy)=s*\Phi_X(y)$. 

     We identify $y$ with an anti-symmetric matrix $m$. The orbit $sy$ corresponds to the matrix $sms$. Assume that $s$ is the reflection that switches $i$ and $i+1$. Let $\alpha,\beta\in \{1,...,2n\}$ be such that $\{i,\alpha\}$ and $\{i+1,\beta\}$ are pairs defined by $m$. On the level of pairs $s$ sends the pairs $\{i,\alpha\}$ and $\{i+1,\beta\}$ to the pairs $\{i,\beta\}$ and $\{i+1,\alpha\}$. Let $\sigma\in S_{2n}/S_n$ be some permutation corresponding to a relevant orbit and to our partition to pairs. Meaning, there are $1\leq j,k\leq n$ such that $\{\sigma(j),\sigma(j+n)\}=\{i,\alpha\}$ and $\{\sigma(k),\sigma(k+n)\}=\{i+1,\beta\}$. 
    
    The permutation $s\sigma$ is defined by $(s\sigma)(l)=\begin{dcases}
    \sigma(l),& \text{if } l\neq i,i+1\\
    \sigma(i),              & l=i+1\\
    \sigma(i+1), & l=i
\end{dcases}$

If $\alpha\neq i+1$ then $s\sigma$ is also relevant and thus $s*\Phi_X(y)=s*\sigma=s\sigma=\Phi_X(sy)$. If $\alpha=i+1$ then $s\sigma$ is not relevant and $s*\Phi_X(y)=\Phi_X(y)=\Phi_X(sy)$. 
\end{proof}

On the Borel orbits of a spherical variety we do not have just the action of $W$, we also have the weak Bruhat order. 

\begin{defn}
    Consider a spherical variety with a transitive action of $W$ on its Borel orbits.
    Let $x,y$ be two Borel orbits of this spherical variety. We say that $x<y$ If there is a series of simple reflection $s_1,...,s_k$ such that $s_1...s_kx=y$ and for each $1\leq i< k$ the closure of the orbit $s_i...s_kx$ contains the orbit $s_{i+1}...s_kx$. This order relation is called the weak Bruhat order.

    This order can be defined also without the assumption of the transitive action, we will only need this case in our paper.
\end{defn}

We can restrict the weak Bruhat order on $B^\vee\backslash G^\vee/H$ to the relevant orbits.

\begin{prop}\label{reverse order}
    The bijection $\Phi_X$ reverses the weak Bruhat order. 
\end{prop}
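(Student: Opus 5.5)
Since both orders are generated by simple reflections and $\Phi_X$ is $W$-equivariant by Proposition \ref{group map}, I will reduce Proposition \ref{reverse order} to a statement about a single simple reflection. Fix a simple reflection $s=s_i$ exchanging $i$ and $i+1$, and a Borel orbit $y\in B\backslash X$ with $sy\neq y$. For the weak order on $B\backslash X$, I will decide which of $y, sy$ lies in the closure of the other. For the restricted order on relevant orbits, I will decide the same for $\Phi_X(y)$ and $s*\Phi_X(y)=\Phi_X(sy)$, which is also $s\Phi_X(y)$ when $sy\neq y$ (the case $\alpha\neq i+1$ in the proof of Proposition \ref{group map}). The goal is the following claim: $y\subset\overline{sy}$ if and only if $\Phi_X(sy)\subset\overline{\Phi_X(y)}$. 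Chaining this along sequences of simple reflections then gives order reversal, since the weak order is the transitive closure of these elementary covering relations. On the $G^\vee/H$ side I must check that the relation between relevant orbits is realized through relevant intermediate steps. This holds because the proposition asserts reversal of the order restricted to relevant orbits, and the elementary steps $s*$ stay inside relevant orbits.

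On the $X$ side I will use the model of anti-symmetric matrices. Here $y$ is represented by the monomial matrix attached to a pairing, containing the pairs $\{i,\alpha\}$ and $\{i+1,\beta\}$ with $\alpha\neq i+1$. The orbit $sy$ has pairs $\{i,\beta\}$ and $\{i+1,\alpha\}$. The standard criterion for the weak order on a spherical variety is computed with the minimal parabolic $P_s=B\cup BsB$: $P_sy$ is a union of $y$ and $sy$ (type $U$ or $T$), and the open one is the one of larger dimension. So I will compute the dimension of the $B$-orbit of a pairing. On anti-symmetric matrices this should be given by a formula of the form $\dim = \text{const} - \#\{\text{nestings/crossings}\}$, concretely via the rank conditions on the upper-left/lower-right submatrices, as in the standard description of $B$-orbits on skew forms. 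I then compare the cases $\alpha,\beta<i$, $\alpha,\beta>i+1$, and mixed. The outcome should be that $sy$ is the larger orbit exactly when $\alpha<\beta$, or possibly $\alpha>\beta$; I will fix the sign by checking $n=1$ and $n=2$.

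On the $G^\vee/H$ side I will carry out the same computation for the $B^\vee$-orbits $B^\vee\sigma H$, with $\sigma\in S_{2n}/S_n$ relevant. The dimension is $\dim B^\vee-\dim(B^\vee\cap \sigma H\sigma^{-1})$. The intersection is spanned by the diagonal torus part of $\Delta GL_n$ conjugated, together with the root spaces that $\sigma$ makes upper triangular, both in the diagonal $\mathfrak{gl}_n$ block and in the $a$-block. I will express this dimension as a count on pairs $\{\sigma(j),\sigma(j+n)\}$: the diagonal block contributes the number of $j<k$ with $\sigma(j)<\sigma(k)$ and $\sigma(j+n)<\sigma(k+n)$, and the $a$-block contributes the number of $(j,k)$ with $\sigma(j)<\sigma(k+n)$. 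Relevance is exactly what makes the count depend only on the pairing, not on the representative in $S_{2n}/S_n$. Comparing $\sigma$ and $s\sigma$ under the same case split on $\alpha,\beta$ will show that the dimension changes in the opposite direction to the $X$ side.

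The main obstacle is this combinatorial bookkeeping: obtaining correct dimension formulas on both sides and verifying, case by case on the relative positions of $\alpha,\beta$ with respect to $i,i+1$, that the differences have opposite signs. A cleaner route, which I would try first, is to show that $\dim y+\dim \Phi_X(y)$ is constant, equal to the common dimension of the irreducible components of $\Lambda$ being Lagrangian-type. Equidimensionality of the conormal-type components $\overline{\tau^{-1}(O)}$ gives $\dim O+\operatorname{rank}$ of the fiber of $\tau$ $=\text{const}$. The fiber over a relevant orbit is an affine space whose dimension I would then need to match with $\operatorname{codim} y$. If this matching fails, I fall back to the explicit counts described above.
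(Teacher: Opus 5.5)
Your reduction to a single simple reflection $s=s_i$ and a single orbit $y$ with $sy\neq y$ matches the paper's. The reduced claim, $y\subset\overline{sy}$ iff $\Phi_X(sy)\subset\overline{\Phi_X(y)}$, is the right target. (Using only chains of $s*$-steps is an interpretation of the restricted order, which the paper also adopts; it does not follow from the proposition itself, as your wording suggests.)

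The gap is that you never prove the reduced claim, and after the reduction it is the whole content of the proposition. The $X$-side dimension formula is only guessed in shape, and the outcome is left as ``$\alpha<\beta$, or possibly $\alpha>\beta$''. Checking $n=1,2$ cannot fix a sign for all $n$ without a general formula.

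Two ingredients you do write down would also mislead you. First, counting $j<k$ in the diagonal block is not constant on the coset $\sigma S_n$; you need ordered pairs $j\neq k$. Second, the ``cleaner route'' carries no information. The fibre of $\tau$ over a relevant orbit has dimension $\dim\mathfrak g-\dim\mathfrak h-\dim\mathfrak b+\dim(\text{stabilizer})$, so its sum with the orbit dimension is constant for trivial reasons, and you still have to compute one of them. Moreover, matching that fibre dimension with $\operatorname{codim} y$ would prove order \emph{preservation}. For reversal it must track $\dim y$; in fact it equals $\dim y-n^2$.

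The computation you outline does close the gap. For a pairing, let $D,C,N$ be the numbers of pairs of pairs that are disjoint, crossing and nested.
\begin{itemize}
\item On $X$, solving $Xm+mX^{T}=0$ with $X$ upper triangular and $m$ the monomial antisymmetric matrix gives a stabilizer of dimension $2n+C+2N$. Hence $\dim y=2n^2-n-C-2N$.
\item On $G^\vee/H$, counting ordered pairs $j\neq k$ gives $\dim(\mathfrak b^\vee\cap\mathrm{Ad}_\sigma\mathfrak h)=n+2D+C$. Hence $\dim B^\vee\sigma H/H=2n^2-2D-C$.
\item Adding, $\dim y+\dim\Phi_X(y)=3n^2$ for every $y$. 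This constant is not the dimension $4n^2-n$ of the components of $\Lambda$.
\end{itemize}
The step $s_i$ changes only the relation between the two pairs through $i$ and $i+1$. It moves them disjoint $\leftrightarrow$ crossing or crossing $\leftrightarrow$ nested. In your notation, $\dim sy=\dim y+1$ exactly when $\alpha>\beta$, and then $\dim\Phi_X(sy)=\dim\Phi_X(y)-1$.

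To turn dimensions into closure relations: $sy$ lies in the $P_s$-orbit of $y$, which is irreducible of dimension at most $\dim y+1$. So a dimension difference of one forces the larger $B$-orbit to be open in it and to contain the other in its closure. The same holds for $s\sigma H\in P_s\sigma H$ on the $G^\vee/H$ side.

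The paper's proof avoids all dimension counts. It uses that $\Phi_X$ is an isomorphism of the $s$-edge graphs (Proposition \ref{group map}) which swaps the extremal elements. It then reads ``up'' or ``down'' along an edge from the graph distance to an extremal element. That last step tacitly needs both orders to be graded by this distance. Your completed computation proves exactly this through the identity $\dim y+\dim\Phi_X(y)=3n^2$, so it is the more self-contained argument once the gap is filled.
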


\begin{proof}

It is enough to show that for $s$ a simple reflection and for $x\in B\backslash X$ we have

$\Phi_X(sx)<\Phi_X(x)$ if and only if $sx>x$.

First, we notice that $\Phi_X$ switches the maximal and minimal elements in the weak Bruhat order. The result follows because to check whether $sx>x$ or $sx<x$ it is enough to know their distance from either the maximal or minimal element.

\end{proof}
\end{section}

\begin{section}{Borel Moore homology and the $W$ action}\label{s4}

In this section, we prove part 2 of Theorem \ref{relative KL}, using the results of the previous section.

We consider $H_{top}^{BM}(\Lambda)$ as a module over $H_{top}^{BM}(St)$. The module structure is given by the usual convolution construction as in Subsection \ref{convultion}. 

By Theorem \ref{classic} we know that $H_{top}^{BM}(St)\cong \C[W]$. The algebra $C[W]$ also acts on $\C[B\backslash X]$ and we will prove that these two modules are isomorphic after multiplication by $sgn$. 

In \cite{finkelberg2023lagrangiansubvarietieshypersphericalvarieties} a different conjecture is made about the module $H_{top}^{BM}(\Lambda)$. They conjecture that it is isomorphic to the module $H_{top}^{BM}(T^*X\times_{\mathfrak{g}^{*}}T^*(G/B))$. We also prove a version of this conjecture by showing that the module $H_{top}^{BM}(T^*X\times_{\mathfrak{g}^{*}}T^*(G/B))$ is isomorphic to $\C[B\backslash X]$. This isomorphism does  not play a role in the rest of the paper.

Let $\s_0\subset G^\vee/H\times \B$ be the unique minimal $G^\vee$ relevant orbit. The existence of $\s_0$ is guaranteed by Proposition \ref{reverse order} and the fact that $X$ has a unique open Borel orbit.

Recall that $\tau:\Lambda\rightarrow G^\vee/H\times \B$ is the projection on the first two coordinates. 

Let $\Lambda_0=\tau^{-1}(\s_0)\subset \Lambda$. The variety $\Lambda_0$ is one of the irreducible components of $\Lambda$. We denote by $\Lambda_0$ also the corresponding element of $H_{top}^{BM}(\Lambda)$.

We chose $\s_0$ to correspond (under $\Phi_X$) to the open Borel orbit of $X$, denote this open orbit by $\mathcal{U}_{max}$. The module $\C[B\backslash X]$ is generated over $\C[W]$ by $\mathcal{U}_{max}$. Another way to say this is to say that the map from $\C[W]$ to $\C[B\backslash X]$ given by action on  $\mathcal{U}_{max}$ is a surjective map of $C[W]$ modules.

\begin{prop}\label{map BM}
    Consider the map $\C[W]\rightarrow H_{top}^{BM}(\Lambda)$ given by acting on $\Lambda_0$. It factors through the map $\C[W]\rightarrow \C[B\backslash X]\otimes sgn$ given by acting on $\mathcal{U}_{max}$ with a sign twist. 
\end{prop}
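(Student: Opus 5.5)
To prove the factorization, it suffices to show that the kernel of $\C[W]\rightarrow \C[B\backslash X]\otimes sgn$, $w\mapsto sgn(w)\,w\cdot\mathcal{U}_{max}$, is contained in the kernel of $w\mapsto [\Lambda_0]*w$. First I would identify this kernel. Since $W$ acts transitively on $B\backslash X$, we have $\C[B\backslash X]\cong \C[W/W_0]$, where $W_0$ is the stabilizer of $\mathcal{U}_{max}$. After the sign twist, the kernel of $\C[W]\to\C[B\backslash X]\otimes sgn$ is the left ideal generated by the elements $sgn(w)w-1$, $w\in W_0$. Because $W_0$ is generated by the appropriate reflections, this becomes the left ideal generated by $s+1$ for simple reflections $s$ fixing $\mathcal{U}_{max}$, together with generators for the non-simple part of $W_0$ if it is not standard parabolic. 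For the open orbit, the simple reflections fixing it are exactly those $s$ with $s\,\mathcal{U}_{max}=\mathcal{U}_{max}$, which is the case $\alpha=i+1$ in the proof of Proposition \ref{group map}: the pair $\{i,i+1\}$ is in the partition. More generally, $W_0$ is the centralizer of the fixed-point-free involution attached to $\mathcal{U}_{max}$, and I would write explicit generators for it as products of simple reflections.

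Next I would compute $[\Lambda_0]*(s+1)$ in $H^{BM}_{top}(\Lambda)$. In the classical Chriss--Ginzburg description, the element $s+1$ (up to the sign convention of $\Phi_f$) corresponds to the fundamental class of $T^*_{Y_s}(\B\times\B)$, where $Y_s$ is the closure of the $G^\vee$-orbit of pairs in relative position $\le s$. Convolving a Lagrangian component with this class amounts to pulling back along the $\p^1$-fibration $\B\rightarrow\B_s$ to the partial flag variety $\B_s$ and pushing forward again. Thus $[\Lambda_0]*[T^*_{Y_s}]$ is supported on $\tau^{-1}$ of the saturation of $\s_0$ under $\B\rightarrow \B_s$. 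When $s\s_0$, computed inside $B^\vee\backslash G^\vee/H$, is not relevant, this saturation contains no new relevant orbit of top dimension. A dimension count then forces the convolution to be $0$ or a multiple of $[\Lambda_0]$, and the sign convention should make it vanish. The same picture also matches the statement that $\Lambda_0$ is minimal, i.e. that $\mathcal{U}_{max}$ is maximal (Proposition \ref{reverse order}).

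It is cleaner to organize all of this as a computation of the isotropy of $\Lambda_0$. Since $\Lambda_0=\tau^{-1}(\s_0)$ and $\s_0\cong G^\vee\times^{H} (H\text{-orbit of a Borel})$, its fibre over a point of $G^\vee/H$ is a conormal-type bundle. Hence $\Lambda_0$ behaves like $T^*_{H}$ of the closed $H$-orbit on $\B$ twisted by $\psi$. The $W$-action on components of $\Lambda$ then reduces, via the $GL_n\times GL_n/\Delta GL_n$ induction used above, to the group case together with the Whittaker twist. The group case contributes the trivial-type stabilizer, and the Whittaker twist contributes the sign, as in the Remark on the Whittaker model. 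This is how I would produce the $sgn$ and check the non-simple generators of $W_0$.

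I expect the main obstacle to be the precise sign and vanishing computation of $[\Lambda_0]*(s+1)$. This requires pinning down the normalization of $\Phi_f$ and checking that the $\p^1$-fibre integration gives exactly $-[\Lambda_0]$ rather than $+[\Lambda_0]$. It also requires handling elements of $W_0$ that are not products of simple reflections lying in $W_0$. For these I would first try to verify the relation directly for the generators of $W_0\cong (\Z/2)^n\rtimes S_n$: adjacent-pair swaps reduce to the simple case, and pair permutations are checked by a conjugation argument.
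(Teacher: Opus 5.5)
Your reduction matches the paper's. The kernel of $\mathbb{C}[W]\to\mathbb{C}[B\backslash X]\otimes \mathrm{sgn}$ is the left ideal generated by $\mathrm{sgn}(w)w-1$ for $w\in W_0$, and $W_0$ is generated by the $s_{2i-1}$ together with the pair swaps $s_{2i}s_{2i-1}s_{2i+1}s_{2i}$. However, you do not carry out either of the two verifications that make up the proof. For the first one, the mechanism you propose would give the wrong answer.

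\textbf{The simple generators.} For $s=s_{2i-1}$ the support argument only gives $St_s*[\Lambda_0]=a[\Lambda_0]$. Whether $a=0$ (so $s$ acts by $-1$) or $a=2$ (so $s$ acts trivially) is a geometric fact, not a normalization of $\Phi_f$. If the fibre product were a $\p^1$-bundle over $\Lambda_0$, as your pull--push picture suggests, you would get $a=\chi(\p^1)=2$. That is exactly what happens in the paper's computation for $T^*X\times_{\mathfrak{g}^*}T^*(G/B)$, where the stabilizer acts trivially.

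The missing idea is that here $\pi_1^{-1}\Lambda_0\cap\pi_3^{-1}St_s=\emptyset$:
\begin{enumerate}
\item A point of this intersection gives $(g_1H,g_2B^\vee,\phi)\in\Lambda_0$ with $(g_1^{-1}\phi)|_{\mathfrak{h}}=\psi$ and with $\phi$ vanishing on all of $\mathrm{Ad}(g_2)\mathfrak{p}^\vee_s$.
\item Then every Borel subgroup contained in $g_2P^\vee_sg_2^{-1}$ gives a point of $\Lambda$ over $g_1H$.
\item This $\p^1$ of Borels meets the orbit $s\s_0$. It is the fibre of $G^\vee/H\times\B\to G^\vee/H\times G^\vee/P^\vee_s$ through a point of $\s_0$, and $s\s_0$ lies over the image of $\s_0$. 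So $s\s_0$ would be relevant.
\item It is not relevant. A representative $\sigma$ of $\s_0$ has $\{\sigma(j),\sigma(j+n)\}=\{2i-1,2i\}$ for some $j$, and interchanging the values $2i-1$ and $2i$ violates $\sigma(j)>\sigma(j+n)$.
\end{enumerate}
Hence $(s+1)[\Lambda_0]=0$. You are right that the $\psi$-twist is what produces the sign, and this emptiness is the precise form of that intuition. The induction-to-$GL_n\times GL_n/\Delta GL_n$ heuristic does not replace it.

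\textbf{The pair swaps.} As stated, ``a conjugation argument'' has no content, because the $W$-action on $H^{BM}_{top}(\Lambda)$ comes from convolution, not from automorphisms of $\Lambda$. The paper proceeds as follows:
\begin{enumerate}
\item It reduces $s_{2i}s_{2i-1}s_{2i+1}s_{2i}[\Lambda_0]=[\Lambda_0]$ to $s_{2i-1}s_{2i}[\Lambda_0]=s_{2i+1}s_{2i}[\Lambda_0]$.
\item By the support argument, $s_{2i}[\Lambda_0]=a[\Lambda_0]+b[\Lambda_{s_{2i}}]$.
\item Since $s_{2i\pm1}$ both act by $-1$ on $[\Lambda_0]$, it remains to compare $s_{2i-1}$ and $s_{2i+1}$ on $[\Lambda_{s_{2i}}]$.
\item Both results are supported on $\Lambda_0$, $\Lambda_{s_{2i}}$ and the single further component $\overline{\tau^{-1}(s_{2i-1}s_{2i}\s_0)}=\overline{\tau^{-1}(s_{2i+1}s_{2i}\s_0)}$.
\item The two computations are exchanged by the symmetry interchanging the pairs $\{2i-1,2i\}$ and $\{2i+1,2i+2\}$.
\end{enumerate}
You need this step, or an explicit computation that replaces it, to finish the proof.
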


\begin{proof}
    To prove the proposition it is enough to show that the stabilizer of $\mathcal{U}_{max}$ acts trivially on $\Lambda_0$ up to a sign. Let $s_1,...,s_{2n-1}\in W$ be the usual simple reflections. It is not hard to see that the stabilizer of  $\mathcal{U}_{max}$ is generated by $s_{2i-1},s_{2i}s_{2i-1}s_{2i+1}s_{2i}$ for $1\leq i\leq n$. One way to see this is by noting that under the correspondence between $B\backslash X$ and partitions of $1,...,2n$ into pairs, $\mathcal{U}_{max}$ corresponds to the pairing $\{1,2\},\{3,4\},...,\{2n-1,2n\}$.

    Thus, it is enough to show that $s_{2i-1}$ acts as $-1$ on $\Lambda_0$ and that the actions of $s_{2i-1}s_{2i}$ and $s_{2i+1}s_{2i}$ on $\Lambda_0$ agree.

    First let us recall how $s_i$ acts. We have in $\B\times \B$ the orbit $\s_s$ of Borel subgroups in relative position $s$. We have the projection $\pi$ from $St$ to $\B\times \B$ and we have $St_s=\overline{\pi^{-1}(\s_s)}\subset St$. Taking convolution with $St_s$ coincides with acting by $s+1\in \C[W]$.

    Consider $M^\vee\times_{\mathfrak{g}^{\vee*}}T^*(G/B)\times_{\mathfrak{g}^{\vee*}}T^*(G/B)$, there are two projections $\pi_1,\pi_2$ to $\Lambda$ and one projection $\pi_3$ to $St$.

    By definition $St_s*\Lambda_0=\pi_{2*}(\pi^*_1\Lambda_0\cap \pi_3^*St_s)$.

    Write $St_s*\Lambda_0=\sum a_iC_i$ as a sum of irreducible components of $\Lambda$. For every $C_i$ with $a_i\neq 0$, $\tau(C_i)\subset \overline{s\s_0}$.

 For $s=s_{2i-1}$ we have $s\s_0=\s_0$ so $\tau(C_i)= \s_0$. Thus, we have  $St_s*\Lambda_0=a\Lambda_0$ for some $a\in \C$.
 
 In fact, we claim that in this case $\pi^{-1}_1\Lambda_0\cap \pi_3^{-1}St_s=\emptyset$ so $a=0$ and $s\Lambda_0=-\Lambda_0$.

 Denote by $P^\vee_s$ the parabolic subgroups containing $B^\vee$ that corresponds to $s$, let $\mathfrak{p}^\vee_s$ be its Lie algebra. We have:

 $$\pi^{-1}_1\Lambda_0\cap \pi_3^{-1}St_s=\{(g_1H,g_2B^\vee,g_3B^\vee,\phi)|Hg_1^{-1}g_2B^\vee=\s_0,(g_2B^\vee,g_3B^\vee)\in \overline{\s_s},g_1^{-1}\phi|_{\mathfrak{h}}=\psi,g_2^{-1}\phi|_{\mathfrak{p}^\vee_s}=0 \}$$

We claim that for $g_1,g_2\in G^\vee$ with $Hg_1^{-1}g_2B^\vee=\s_0$ there is no $\phi\in \mathfrak{g}^{\vee*}$ such that $g_1^{-1}\phi|_{\mathfrak{h}}=\psi$ and $g_2^{-1}\phi|_{\mathfrak{p}^\vee_s}=0$. If there were such $\phi$ then $s\s_0$ would have been relevant but we know that $s\s_0<\s_0$ and $\s_0$ is a minimal relevant orbit.

  Now we need to show that the actions of $s_{2i-1}s_{2i}$ and $s_{2i+1}s_{2i}$ on $\Lambda_0$ agree.

 Let $\Lambda_{s_{2i}}=\overline{\tau^{-1}(s_{2i}\s_0)}$ be the irreducible component of $\Lambda$ that correspond to $s_{2i}$. We have that $s_{2i}\Lambda_0=a\Lambda_0+b\Lambda_{s_{2i}}$ for some $a,b\in \C$. It is enough to show that $s_{2i-1}$ and $s_{2i+1}$ act the same on $\Lambda_{s_{2i}}$. Both $s_{2i-1}\Lambda_{s_{2i}}$ and $s_{2i+1}\Lambda_{s_{2i}}$ are supported on $\Lambda_0,\Lambda_{s_{2i}}$ and on the irreducible componenet $\overline{\tau^{-1}(s_{2i+1}s_{2i}\s_0)}=\overline{\tau^{-1}(s_{2i-1}s_{2i}\s_0)}$. Both calculations are entirely symmetric so the actions of $s_{2i-1}$ and $s_{2i+1}$ on $\Lambda_{s_{2i}}$ are equal.  
\end{proof}

Now we have a map of $\C[W]$ modules $\Phi_f:\C[B\backslash X]\otimes sgn\rightarrow H_{top}^{BM}(\Lambda)$. We will show that it is an isomorphism. Both modules have the same dimension over $\C$ so it is enough to show that the map is surjective. This follows from the next proposition.

\begin{prop}\label{bm cyclic}
    The module $H_{top}^{BM}(\Lambda)$ is generated by $\Lambda_0$ over $H_{top}^{BM}(St)$. 
\end{prop}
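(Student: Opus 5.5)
The plan is a triangularity argument along the weak Bruhat order on relevant orbits. For each relevant $G^\vee$ orbit $\s$ on $G^\vee/H\times\B$, let $\Lambda_\s=\overline{\tau^{-1}(\s)}$ be the corresponding irreducible component, so that the classes $[\Lambda_\s]$ form a basis of $H_{top}^{BM}(\Lambda)$. I will show by induction on the weak Bruhat order, starting from the minimal relevant orbit $\s_0$, that every $[\Lambda_\s]$ lies in the $H_{top}^{BM}(St)$ submodule generated by $\Lambda_0$. By Proposition \ref{reverse order} and Proposition \ref{group map}, every relevant orbit $\s\neq\s_0$ can be written as $\s=s*\s'$ for a simple reflection $s$ and a relevant orbit $\s'<\s$ with $s\s'=\s$ relevant. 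Indeed, on the $X$ side every Borel orbit is reached from $\mathcal{U}_{max}$ by a chain of simple reflections going down in the order, and $\Phi_X$ transports such a chain to a chain going up among relevant orbits.

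The key step is to compute $St_s*\Lambda_{\s'}$ when $s\s'=\s>\s'$ and both orbits are relevant. As in the proof of Proposition \ref{map BM}, this convolution is supported on components $C$ with $\tau(C)\subset\overline{P^\vee_s\cdot\s'}$. The $G^\vee$ orbits in $\overline{P_s^\vee\s'}$ are $\s'$, $\s$ and orbits that are smaller than $\s$ in the closure order. So
$$St_s*\Lambda_{\s'}=b[\Lambda_\s]+\sum_{\s''<\s}a_{\s''}[\Lambda_{\s''}],$$
where the sum runs over relevant $\s''$ strictly below $\s$, which is harmless by induction once the order used is a refinement of the closure order. The main obstacle is proving $b\neq 0$. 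I would do this by a local computation over the open orbit $\s$. There the fiber product $\pi_1^{-1}\Lambda_{\s'}\cap\pi_3^{-1}St_s$ maps, via $\pi_2$, generically onto $\tau^{-1}(\s)$ with finite degree, indeed degree one. Concretely, over a point $(g_1H,g_3B^\vee)\in\s$ there is a unique $g_2B^\vee$ in the $\p^1$-fiber of $\B\to G^\vee/P_s^\vee$ with $(g_1H,g_2B^\vee)\in\s'$. Moreover, the covector conditions $g_2^{-1}\phi|_{\mathfrak{b}^\vee}=0$ and $g_3^{-1}\phi|_{\mathfrak{b}^\vee}=0$ cut out generically the same space as $g_3^{-1}\phi|_{\mathfrak{p}_s^\vee}=0$. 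This dimension count is the delicate point. I would check it explicitly using the pair-partition description: $s$ swaps $i,i+1$ lying in different pairs, and I would verify directly on the explicit form of $\Lambda$ that the intersection is transverse of the expected dimension, so that it has multiplicity one. If transversality is hard to confirm, it suffices to show that $\pi_2$ restricted to the relevant component is generically finite and dominant onto $\Lambda_\s$, which gives $b\neq0$ up to a nonzero intersection multiplicity.

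Given $b\neq0$, the induction hypothesis puts $[\Lambda_{\s'}]$ and all $[\Lambda_{\s''}]$ with $\s''<\s$ in the submodule generated by $\Lambda_0$, hence $[\Lambda_\s]$ lies there too. Since the components span $H_{top}^{BM}(\Lambda)$, $\Lambda_0$ generates. To make the induction well-founded, I would order the relevant orbits by dimension, which refines both the closure and weak orders.
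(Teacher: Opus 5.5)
Your proposal is correct and is essentially the paper's proof. Both argue by induction along the Bruhat order on relevant orbits, expand $St_s*\Lambda_{\s'}$ (with $\s=s\s'>\s'$ both relevant) in components lying over $\overline{\s}$, and get coefficient $1$ on $\Lambda_\s$ because $\pi_2$ is generically one-to-one over $\Lambda_\s$. The paper simply asserts that last point, and the dimension count you flag does close: if $vB^\vee\in\s$ and $uB^\vee$ is the unique point of $\s'$ on the same $\p^1$-fiber of $\B\to G^\vee/P^\vee_s$, then $H\cap vP^\vee_sv^{-1}=H\cap uB^\vee u^{-1}$, so $\psi$ vanishes on $\mathfrak{h}\cap v\mathfrak{p}^\vee_s$ by relevance of $\s'$, and since $\dim\s=\dim(HvP^\vee_s/P^\vee_s)+1$ one gets $v\mathfrak{p}^\vee_s+\mathfrak{h}=v\mathfrak{b}^\vee+\mathfrak{h}$, whence the covector fibers of $\Lambda_\s$ and of the convolution image over $(H,vB^\vee)$ coincide.
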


\begin{proof}
    Recall that every irreducible component of $\Lambda$ is of the form $\Lambda_\s=\overline{\tau^{-1}(\s)}$ for some relevant orbit $\s\subset G^\vee/H\times \B$.
    We will show that the class of every irreducible component of $\Lambda$ is in $H_{top}^{BM}(St)\Lambda_0$ by induction on the Bruhat order.

    Let $\s$ be some relevant orbit. Assume that for any relevant orbit $\s'\subset\overline{\s}$ we have 
    
    $\Lambda_{\s'}\in H_{top}^{BM}(St)\Lambda_0$. Let $s\in W$ be a simple reflection such that $s\s$ is relevant and smaller than $\s$. 

    Consider $St_s*\Lambda_{s\s}$, it can be written as $St_s*\Lambda_{s\s}=\sum_{\s'}a_{\s'}\Lambda_{\s'} $ for $\s'$ relevant and contained in $\overline{\s}$.

    It is enough to show that $a_\s\neq 0$. 
    
    In fact, we have $a_\s=1$ as the map $\pi_2:\pi^{-1}_1\Lambda_{s\s}\cap \pi_3^{-1}St_s\rightarrow \Lambda$ is one to one over a generic point in $\Lambda_\s$.

\end{proof}

We deduce the following.
\begin{cor} The map given by Proposition \ref{map BM} $\Phi_f:\C[B\backslash X]\otimes sgn\rightarrow H_{top}^{BM}(\Lambda)$ is an isomorphism of $\C[W]$ modules.
\end{cor}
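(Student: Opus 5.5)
The plan is to combine Proposition \ref{map BM} with Proposition \ref{bm cyclic} and a dimension count. First I make the map precise. The orbit map $\C[W]\rightarrow\C[B\backslash X]$, $w\mapsto w\,\mathcal{U}_{max}$, is surjective because $W$ acts transitively on $B\backslash X$. Its kernel is the left ideal generated by the elements $w-1$ with $w$ in the stabilizer $\mathrm{Stab}_W(\mathcal{U}_{max})$. Twisting by $sgn$ replaces this by the map $w\mapsto sgn(w)\, w\,\mathcal{U}_{max}$ into $\C[B\backslash X]\otimes sgn$, whose kernel is spanned by the elements $x(w-sgn(w))$ with $w\in \mathrm{Stab}_W(\mathcal{U}_{max})$ and $x\in\C[W]$.

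Proposition \ref{map BM} says that each generator $s_{2i-1}$ acts on $\Lambda_0$ by $-1$, and that the two elements $s_{2i}s_{2i-1}s_{2i+1}s_{2i}$ (an even element) act trivially on $\Lambda_0$. Since the stabilizer is generated by these elements and $sgn$ is a character, every $w$ in the stabilizer satisfies $w\Lambda_0=sgn(w)\Lambda_0$. Hence the action map $x\mapsto x*\Lambda_0$ kills the kernel above and descends to a well-defined $\C[W]$-module map $\Phi_f:\C[B\backslash X]\otimes sgn\rightarrow H^{BM}_{top}(\Lambda)$. This map sends $\mathcal{U}_{max}$ to $\Lambda_0$, and $\C[W]$-equivariance is automatic from the construction.

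For surjectivity, the image of $\Phi_f$ equals $\C[W]*\Lambda_0$, which by Proposition \ref{bm cyclic} is all of $H^{BM}_{top}(\Lambda)$. Here I use $H^{BM}_{top}(St)\cong\C[W]$ from Theorem \ref{classic}. The source has dimension $|B\backslash X|=(2n-1)!!$. The target has a basis given by the classes of the irreducible components of $\Lambda$, and by the Corollary of Section \ref{s3} there are also $(2n-1)!!$ of them. A surjective linear map between spaces of equal finite dimension is an isomorphism.

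The only delicate point is the bookkeeping in the descent step. One must check that the convolution action of $H^{BM}_{top}(St)$ corresponds to $\C[W]$ with the normalization under which $St_s$ acts as $s+1$, so that "acting by $-1$" really means $s\Lambda_0=-\Lambda_0$. One must also check that the stabilizer is generated by the listed elements, which follows from the description of the stabilizer as the centralizer of the standard pairing $\{1,2\},\dots,\{2n-1,2n\}$. Everything else is formal.
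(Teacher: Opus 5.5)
Your proposal is correct and is essentially the paper's argument: the map descends from the orbit map by Proposition \ref{map BM}, surjectivity comes from Proposition \ref{bm cyclic}, and bijectivity follows because both sides have dimension $(2n-1)!!$ by Section \ref{s3}. Like the paper, you implicitly use that all irreducible components of $\Lambda$ have the same dimension, which is what makes their classes a basis of $H^{BM}_{top}(\Lambda)$.
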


\begin{subsection}{A version of Conjecture 1.2 of \cite{finkelberg2023lagrangiansubvarietieshypersphericalvarieties}}

In this subsection we prove that $H_{top}^{BM}(T^*X\times_{\mathfrak{g}^{*}}T^*(G/B))\cong \C[B\backslash X]$ as $\C[W]$ modules. The proof is very similar to what we did before.

For this subsection we denote  $\Lambda^\vee=T^*X\times_{\mathfrak{g}^{*}}T^*(G/B)$. In this subsection we also use $St$ for the Steinberg variety of $G$ and not of $G^\vee$.

We have a map which we also denote by $\tau$, $\tau:\Lambda^\vee\rightarrow X\times G/B$.

The irreducible components of $\Lambda^\vee$ are given by $\overline{\tau^{-1}(\mathcal{U})}$ for $\mathcal{U}$ some $G$ orbit in $X\times G/B$. 

We have the minimal orbit in $X\times G/B$ which we denote by $\mathcal{U}_0$ and we have $\Lambda^\vee_0=\tau^{-1}(\mathcal{U}_0)$.

\begin{prop}
    Consider the map $\C[W]\rightarrow H_{top}^{BM}(\Lambda^\vee)$ given by acting on $\Lambda^\vee_0$. It factors through the map $\C[W]\rightarrow \C[B\backslash X]$ given by acting on $\mathcal{U}_0$. 
\end{prop}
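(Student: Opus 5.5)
We mirror the proof of Proposition \ref{map BM}, now on the $G$ side with no sign twist. It suffices to show that the stabilizer of $\mathcal{U}_0$ in $W$ fixes $\Lambda^\vee_0$.

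The first step is to identify $\mathcal{U}_0$ and $\Lambda^\vee_0$. The minimal $G$ orbit in $X\times G/B$ corresponds to the closed Borel orbit on $X$. In the anti-symmetric matrix model this is the orbit of the pairing $\{1,2n\},\{2,2n-1\},\dots,\{n,n+1\}$. For a nested pairing of this kind, $s_i$ with $i\neq n$ moves the pairing to a larger orbit, while $s_n$ fixes the closed orbit, since it swaps the two members of the pair $\{n,n+1\}$. The stabilizer of $\mathcal{U}_0$ is then a conjugate of the hyperoctahedral subgroup attached to this pairing. It is generated by $s_n$ together with elements of the form $w s_{j} w^{-1}$, where $w$ runs over products of simple reflections that move the pairing upward and back. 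Concretely, I expect the generators to be $s_n$ and the length-$3$ elements $s_{n-1}s_{n}s_{n+1}s_{n}\cdots$ analogous to $s_{2i}s_{2i-1}s_{2i+1}s_{2i}$ in Proposition \ref{map BM}, transported to the nested pairing. I would write them down explicitly, for instance by conjugating the generators from Proposition \ref{map BM} by a permutation that takes $\{1,2\},\{3,4\},\dots$ to the nested pairing. That conjugation does not preserve lengths in an obvious way, so I would check directly which words realize each generator.

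Next, I would show that $s=s_n$ acts by $+1$ on $\Lambda^\vee_0$. We have $St_s*\Lambda^\vee_0=\pi_{2*}(\pi_1^*\Lambda^\vee_0\cap\pi_3^*St_s)$. Because $s\mathcal{U}_0=\mathcal{U}_0$ and the only orbit in $\overline{s\,\mathcal{U}_0}$ of top type is $\mathcal{U}_0$, this equals $a\Lambda^\vee_0$. Here, unlike the dual side, the intersection is nonempty. The fibre over a generic point of $\Lambda^\vee_0$ is the $\p^1$ of Borels in relative position $s$, which lie in the same $H$-orbit. The conormal condition $g_2^{-1}\phi|_{\mathfrak{p}_s}=0$ holds automatically on $\Lambda^\vee_0$, because $\phi$ annihilates $\mathfrak{sp}_{2n}$-translates and the tangent directions of the orbit, and the orbit is stable under $P_s$. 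Computing the convolution as in Chriss--Ginzburg for $St_s*\Lambda=$ pullback-pushforward along $G/B\to G/P_s$ should give $a=2$, hence $s=a-1=+1$. Getting the sign right here is the main obstacle. It must be the opposite of the vanishing in Proposition \ref{map BM}, and I would verify it via the Euler characteristic of the $\p^1$ fibre, or directly in the rank-one case $GL_2/Sp_2$, where $\Lambda^\vee=T^*(G/B)$ fibred over a point.

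Finally, for the remaining generators I would argue as in the end of Proposition \ref{map BM}. Write the relevant intermediate orbit $s'\mathcal{U}_0$. Then $s'\Lambda^\vee_0=a\Lambda^\vee_0+b\Lambda^\vee_{s'}$, and the two reflections in question act on $\Lambda^\vee_{s'}$ through components lying over the same orbit. By the symmetry $i\mapsto 2n+1-i$ of the pairing, which is induced by an automorphism preserving $Sp_{2n}$ and $B$ up to the longest element, the two computations coincide. This yields the factorization through $\C[W]\to\C[B\backslash X]$.
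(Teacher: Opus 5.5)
Your reduction to showing that the stabilizer of $\mathcal{U}_0$ acts trivially is the paper's, and so is your treatment of $s_n$. The conormal condition is automatic because the closed orbit is $P_{s_n}$-stable. Hence every fibre of $\pi_2$ over $\Lambda^\vee_0$ is a $\mathbb{P}^1$, and $St_{s_n}*\Lambda^\vee_0=\chi(\mathbb{P}^1)\Lambda^\vee_0=2\Lambda^\vee_0$.

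The gap is in identifying $\mathrm{Stab}_W(\mathcal{U}_0)$. You leave the generators undetermined, and the word you do write down is not in the stabilizer. For $n=2$, apply $s_2$, then $s_3$, then $s_2$, then $s_1$ to the nested pairing $\{1,4\},\{2,3\}$. You pass through $\{1,3\},\{2,4\}$ and end at $\{1,2\},\{3,4\}$, so $s_1s_2s_3s_2\,\mathcal{U}_0$ is the open orbit. Your third step imitates the length-four generators $s_{2i}s_{2i-1}s_{2i+1}s_{2i}$ of Proposition \ref{map BM} and an intermediate orbit $s'\mathcal{U}_0$. It therefore checks relations that do not come from the actual stabilizer. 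As written, the proposal does not show that the whole stabilizer acts trivially.

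The missing observation is that the stabilizer of the pairing $\{i,2n+1-i\}$ is the centralizer of $w_0$ in $S_{2n}$. This is a Weyl group of type $C_n$, generated by $s_n$ and the products of commuting reflections $s_is_{2n-i}$ for $1\le i\le n-1$. This is the generating set the paper uses. It is printed there as $s_is_{i+n}$, which agrees with $s_is_{2n-i}$ only for $n=2$; for $n\ge 3$, $s_1s_{n+1}$ sends $\{1,2n\}$ to $\{2,2n\}$ and so does not preserve the nested pairing.

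With these generators no intermediate orbit is needed:
\begin{itemize}
\item We have $s_i\mathcal{U}_0=s_{2n-i}\mathcal{U}_0$, so both $s_i\Lambda^\vee_0$ and $s_{2n-i}\Lambda^\vee_0$ are combinations of $\Lambda^\vee_0$ and $\Lambda^\vee_{s_i\mathcal{U}_0}$.
\item The symmetry you already invoke is the automorphism $g\mapsto w_0(g^{T})^{-1}w_0$. It preserves $B$, exchanges $s_i$ and $s_{2n-i}$, preserves $Sp_{2n}$ for an antidiagonal symplectic form, and fixes both orbits.
\item This symmetry therefore gives $s_i\Lambda^\vee_0=s_{2n-i}\Lambda^\vee_0$, and hence $s_is_{2n-i}\Lambda^\vee_0=s_i^2\Lambda^\vee_0=\Lambda^\vee_0$.
\end{itemize}
With this correction your argument becomes the paper's.
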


\begin{proof}
    The proof is very similar to the proof of Proposition \ref{map BM}. The stabilizer of $\mathcal{U}_0$ is generated by $s_is_{i+n}$ for $1\leq i\leq n-1$ and by $s_{n}$.

    It is enough to check that $s_n$ acts trivially on $\Lambda^\vee_0$ and that $s_i,s_{i+n}$ act in the same way. The fact that $s_i$ and $s_{i+n}$ act the same on $\Lambda^\vee_0$ follows from symmetry as in the proof of Proposition \ref{map BM}. 

    We check that $s=s_n$ acts trivially. Like in the proof of Proposition \ref{map BM}, the fact that $s\mathcal{U}
    _0=\mathcal{U}
    _0$ implies that there is $a\in \C$ such that $St_s*\Lambda^\vee_0=a\Lambda^\vee_0$. We claim that $a=2$.

 Denote by $P_s$ the parabolic subgroups containing $B$ that corresponds to $s$. Let $\mathfrak{p}_s$ be the Lie algebra of $P_s$. We have:
     $$\pi^{-1}_1\Lambda^\vee_0\cap \pi_3^{-1}St_s=\{(g_1Sp_{2n},g_2B,g_3B,\phi)|Sp_{2n}g_1^{-1}g_2B=\mathcal{U}_0,(g_2B,g_3B)\in \overline{\s_s},g_1^{-1}\phi|_{\mathfrak{sp}_{2n}=0},g_3^{-1}\phi|_{\mathfrak{p}_s}=0 \}$$

     The map $\pi_2$ to $\Lambda^\vee$ is forgetting the second coordinate, meaning forgetting $g_2B$. 

     Notice that for every $(g_1Sp_{2n},g_2B,g_3B,\phi)\in \pi^{-1}_1\Lambda^\vee_0\cap \pi_3^{-1}St_s$ we have $Sp_{2n}g_1^{-1}g_3B=\mathcal{U}_0$.
     
     Fix $g_1Sp_{2n}\in X$ and $g_3B$ such that $Sp_{2n}g_1^{-1}g_3B=\mathcal{U}_0$. For any $g_2B\in G/B$ with $g_2B$ and $g_3B$ in relative position $s$ we have $Sp_{2n}g_1^{-1}g_2B=\mathcal{U}_0$. Thus the fibers of $\pi_2:\pi^{-1}_1\Lambda^\vee_0\cap \pi_3^{-1}St_s\rightarrow \Lambda^\vee_0$ are all isomorphic to $\p^1$. The Euler characteristic of $\p^1$ is 2 and thus by Theorem 2.7.26 of \cite{Chriss1997RepresentationTA} we have $a=2$. 
    
\end{proof}

We also need to show that $\Lambda^\vee_0$ generates $H_{top}^{BM}(\Lambda^\vee)$. Again the proof is the same as in the pervious case, see Proposition \ref{bm cyclic}. 

We deduce a special case of a version of conjecture 1.1.2 of \cite{finkelberg2023lagrangiansubvarietieshypersphericalvarieties}.

\begin{prop}
    $H_{top}^{BM}(\Lambda^\vee)\cong H_{top}^{BM}(\Lambda)\otimes sgn$ as $\C[W]$ modules.
\end{prop}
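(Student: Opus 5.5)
The plan is to combine the two parallel results already established: the isomorphism $\Phi_f:\C[B\backslash X]\otimes sgn\rightarrow H_{top}^{BM}(\Lambda)$ from the Corollary following Proposition \ref{bm cyclic}, and an analogous isomorphism $\C[B\backslash X]\cong H_{top}^{BM}(\Lambda^\vee)$ obtained in this subsection. Tensoring the first with $sgn$ and using $sgn\otimes sgn$ trivial gives $H_{top}^{BM}(\Lambda)\otimes sgn\cong \C[B\backslash X]$. So the main work is to upgrade the preceding proposition, a factorization of $\C[W]\rightarrow H_{top}^{BM}(\Lambda^\vee)$ through $\C[W]\rightarrow\C[B\backslash X]$ via $\mathcal{U}_0$, to an isomorphism, and then compose.

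First I would record that $\C[B\backslash X]$ is cyclic over $\C[W]$, generated by $\mathcal{U}_0$. Here $\mathcal{U}_0$ corresponds to the minimal $G$-orbit on $X\times G/B$, i.e.\ the closed Borel orbit of $X$; by Section \ref{s3} the $W$-action on $B\backslash X$ is transitive, so $\C[W]\cdot\mathcal{U}_0$ is everything. This produces a surjection $\C[W]\rightarrow\C[B\backslash X]$ and, by the previous proposition, a well-defined $\C[W]$-map $\C[B\backslash X]\rightarrow H_{top}^{BM}(\Lambda^\vee)$ sending $\mathcal{U}_0\mapsto\Lambda^\vee_0$. Next, I would show surjectivity by the argument of Proposition \ref{bm cyclic}: induct along the weak Bruhat order on $G$-orbits of $X\times G/B$ (equivalently $B\backslash X$). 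For an orbit $\mathcal{U}$ and a simple $s$ with $s\mathcal{U}<\mathcal{U}$, write $St_s*\Lambda^\vee_{s\mathcal{U}}$ as a combination of components over $\overline{\mathcal{U}}$ and check that the coefficient of $\Lambda^\vee_{\mathcal{U}}$ is $1$, because $\pi_2$ is generically one-to-one over $\tau^{-1}(\mathcal{U})$ when the orbit strictly grows.

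Then compare dimensions: the components of $\Lambda^\vee$ are in bijection with $G$-orbits on $X\times G/B$, i.e.\ with $B\backslash X$, so both sides have dimension $(2n-1)!!$ and the surjection is an isomorphism. Composing with $\Phi_f\otimes sgn$ gives $H_{top}^{BM}(\Lambda^\vee)\cong\C[B\backslash X]\cong H_{top}^{BM}(\Lambda)\otimes sgn$.

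The step I expect to be most delicate is the generic injectivity check in the induction. When $s\mathcal{U}$ and $\mathcal{U}$ are of ``type $U$/$T$'' (e.g.\ $\mathcal{U}=s\mathcal{U}$ but with a noncompact orbit structure) the fiber of $\pi_2$ could have more points or be a $\p^1$, as happened for $s_n$. I would handle this by only using pairs with $s\mathcal{U}\neq\mathcal{U}$ and $\dim\mathcal{U}=\dim s\mathcal{U}+1$, which exist at each step because every orbit other than the closed one has such a descending reflection (weak order with a unique minimum). Over a generic point of $\tau^{-1}(\mathcal{U})$ the line through $g_3B$ in direction $s$ meets $s\mathcal{U}$ in exactly one point, which gives coefficient one.
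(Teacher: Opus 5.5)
Your proposal is correct and follows essentially the same route as the paper. You identify $H^{BM}_{top}(\Lambda^\vee)\cong\mathbb{C}[B\backslash X]$ via the factorization through the action on $\mathcal{U}_0$, generation by $\Lambda^\vee_0$ as in Proposition \ref{bm cyclic}, and the dimension count $(2n-1)!!$, then combine with $\Phi_f$ and $sgn\otimes sgn\cong 1$ (one small correction: transitivity of $W$ on $B\backslash X$ comes from the description of $B\backslash X$ by pairings of $\{1,\dots,2n\}$, not from the proposition in Section \ref{s3}, which concerns Borel orbits on $G^\vee/H$).
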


\end{subsection}
    
\end{section}

\begin{section}{Iwahori orbits and the module $S(X)^I$}\label{s5}

In this Section, we describe the $I$ orbits on $X$. We use them to give an explicit discreption of the module $S(X)^I$.

Let $F$ be a non-archimedean local field, let $O$ be its ring of integers, and let $u\in O$ be a uniformaizer. Denote by $\nu$ the valuation of $F$.

We have $I\subset GL_{2n}(O)$ the Iwahori subgroup of integer matrices whose entries below the diagonal are divisible by $u$.

\begin{prop}\label{I orbits}
    We already saw that $X$ is isomorphic to the space of anti-symmetric invertible matrices. Under this isomorphism, the $I$ orbits on $X$ are represented by monomial matrices whose non zero entries above the main diagonal are powers of $u$.
\end{prop}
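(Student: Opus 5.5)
We identify $X=GL_{2n}(F)/Sp_{2n}(F)$ with the invertible antisymmetric matrices via $g\mapsto gJg^{T}$, where $J$ is the standard symplectic form. The action is $g\cdot m=gmg^{T}$. The claim is that every $I$ orbit contains a matrix $m$ of the following shape: it is monomial, so it has exactly one nonzero entry in each row and column, and its entries above the diagonal are $u^{k}$ for integers $k$. Such a matrix is determined by a fixed-point-free involution (a perfect matching $\{i,j\}$, $i<j$) together with an exponent $k_{ij}\in\Z$ on each pair. It is natural to prove existence and uniqueness separately. Existence is a reduction algorithm. Uniqueness is a set of $I$-invariants that recover the matching and the exponents.

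\textbf{Existence.} I would argue by induction on $n$, doing symplectic Gaussian elimination with elementary matrices in $I$. These are the diagonal matrices in $T(O)$, the upper unipotents with $O$ entries, and the lower unipotents with entries in $uO$. Let $m$ be antisymmetric. Look at the last row. Among its entries, choose the one of minimal valuation. Break ties by preferring the column index that lets the available operations clear the rest of the row: the leftmost column, with entries below the diagonal penalized by one extra unit of valuation. Concretely, take the entry $m_{2n,j}$ minimizing $(\nu(m_{2n,j})+\epsilon_j,\,j)$, where $\epsilon_j$ accounts for the $u$ in lower-triangular entries of $I$. This is the usual argument that produces Iwahori–Bruhat representatives by mixing valuation with position. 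Acting by column operations $c_l\mapsto c_l+xc_j$ together with the simultaneous row operations forced by $m\mapsto gmg^{T}$, we clear every other entry of row $2n$ and of column $2n$. We also clear the rest of row $j$ and column $j$; antisymmetry makes this the same elimination done twice. Finally we rescale by $T(O)$ to make the pivot a power of $u$. The remaining matrix, with rows and columns $\{j,2n\}$ deleted, is antisymmetric of size $2n-2$. The Iwahori of $GL_{2n}$ restricts to the Iwahori of the complementary $GL_{2n-2}$, with positions reindexed in order-preserving fashion, so induction applies.

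The main obstacle is this bookkeeping. We must check that each elimination step really uses an element of $I$. A lower-triangular operation needs a multiplier divisible by $u$, and the choice of pivot by the pair (valuation, position) guarantees exactly this. We must also check that clearing row $j$ does not reintroduce entries in row $2n$.

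\textbf{Uniqueness.} I would use invariants. For index sets $A,B$ compatible with the Iwahori filtration, consider the minimal valuation of the $r\times r$ minors of $m$ on rows in $A$ and columns in $B$. Here $A$ and $B$ are intervals of the form $\{i,\dots,2n\}$ and similar, and entries below the diagonal are weighted by $u$. These quantities are invariant under $I$, by the same argument that shows Iwahori double cosets in $GL_{2n}(F)$ are determined by such minor valuations. For this, view $m$ as an element of $GL_{2n}(F)$: its $I$-orbit lies in the $I\times I$ double coset of $m$. The double cosets $I\backslash GL_{2n}/I$ are indexed by $W_{aff}$, and the monomial representative already determines the permutation and the exponents. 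So two distinct normal forms of the stated type lie in distinct double cosets, hence in distinct $I$ orbits. This reduces uniqueness to the affine Bruhat decomposition and avoids the need for new invariants. The only point to verify is that every normal form is itself monomial with powers of $u$, and this holds because antisymmetry gives the entry $-u^{k}$ below the diagonal.
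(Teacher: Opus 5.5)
\textbf{Gap in the existence argument.} The pivot cannot be read off from the last row. The partner of the index $2n$ in the normal form is an orbit invariant, but it is not visible in row $2n$, so any rule based only on row $2n$ fails, whatever the tie-break.

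Take $n=2$ and the antisymmetric matrix $m$ with $m_{12}=m_{23}=1$, $m_{34}=-u$ and $m_{13}=m_{14}=m_{24}=0$. Its Pfaffian is $-u\neq 0$, so $m$ is invertible. Its last row is $(0,0,u,0)$, so you are forced to pivot at $j=3$. Clearing $m_{32}=-1$ against $m_{34}=-u$ then means adding $-u^{-1}$ times index $4$ to index $2$, which is not integral.

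No other sequence of operations rescues this choice. Let $e_{kl}$ be the matrix units and take $a=1+e_{13}-ue_{42}\in I$, i.e.\ $v_1\mapsto v_1+v_3$ and $v_4\mapsto v_4-uv_2$. Then $ama^{T}$ is monomial with pairs $\{2,3\}$ (entry $1$) and $\{1,4\}$ (entry $-u$). So the normal form of this orbit pairs $4$ with $1$, not with $3$. In fact $m=gm_0g^{T}$ for $g=1+ue_{42}\in I$, where $m_0$ (entries $m_{12}=m_{23}=1$, $m_{14}=-u$) has last row $(u,0,0,0)$. So the last row is not stable under $I$.

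The underlying issue is this. To clear row $j$ with the pivot $m_{j,2n}$ you need $\nu(m_{j,2n})\le \nu(m_{jl})$ for all $l$. Minimality within row $2n$ does not give this. Separately, even for clearing row $2n$ your tie-break is backwards. Clearing $m_{2n,l}$ with $l>j$ is a lower-triangular operation, which needs $\nu(m_{2n,l})>\nu(m_{2n,j})$. So ties must go to the largest column index, not the leftmost.

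\textbf{Fix (the paper's argument).} Choose the pivot $(i,j)$ with $\nu(m_{ij})$ minimal over the \emph{whole} matrix. Among such pairs, take one that is maximal for the product order, i.e.\ there is no other minimal pair $(i',j')$ with $i'\ge i$ and $j'\ge j$. For $k\neq i,j$, make the single change of basis $v_k\mapsto v_k-\frac{\langle v_j,v_k\rangle}{\langle v_j,v_i\rangle}v_i+\frac{\langle v_i,v_k\rangle}{\langle v_j,v_i\rangle}v_j$.
\begin{itemize}
\item Its coefficients are integral by global minimality.
\item The coefficient of $v_i$ lies in $uO$ when $k>i$, and the coefficient of $v_j$ lies in $uO$ when $k>j$, by the tie-break.
\end{itemize}
Hence the change of basis is given by an element of $I$. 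It makes $v_i$ and $v_j$ orthogonal to all the other vectors in one step, and your induction on $n$ then goes through.

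\textbf{Correction to uniqueness.} Your uniqueness part goes beyond the statement; the paper only proves existence. It also needs a fix. For $g\in I$, the transpose $g^{T}$ lies in the opposite Iwahori $I^{T}=w_0Iw_0$ ($w_0$ the antidiagonal permutation matrix), not in $I$. So the orbit of $m$ lies in $I\,m\,w_0Iw_0$, not in $ImI$. For example, $g=1+e_{13}$ moves the monomial matrix with pairs $\{1,2\},\{3,4\}$ (entries $1$) out of its $I$ double coset. Apply the affine Bruhat decomposition to $mw_0$ instead. Since $mw_0$ is again monomial with entries $\pm u^k$, distinct normal forms give distinct elements of $W_{aff}$, and the argument works.
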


\begin{proof}
    This is a simple computation, we present it for completion.

    Let $\<,\>:F^{2n}\times F^{2n}$ be the anti-symmetric bilinear form preserved by $Sp_{2n}(F)$.

    Let $x\in X$, we consider $x$ as Gram matrix of some basis $v_1,...,v_{2n}\in F^{2n}$. 
    
    We have $x_{ij}=\<v_i,v_j\>$. Any element $g\in G$ acts by $v_i\mapsto gv_i$ which changes $x$ accordingly. 

    Let $1\leq i,j\leq 2n$ be such that $\nu(\<v_i,v_j\>)$ is minimal and such that there are no other pair $i',j'$ with $\nu(\<v_i,v_j\>)=\nu(\<v_{i'},v_{j'}\>)$ and $i'\geq i,j'\geq j$. We claim that acting by $I$ we can change the vectors such that $v_i,v_j$ become orthogonal to all other vectors. Let $1\leq k\leq 2n$ with $k\neq i,j$.  Consider $v_k'=v_k-v_i\frac{\<v_j,v_k\>}{\<v_j,v_i\>}+v_j\frac{\<v_i,v_k\>}{\<v_j,v_i\>}$. Notice that $\<v_k',v_i\>=\<v_k',v_j\>=0$. The coefficients of $v_i,v_j$ are integers. If $k>i$ then $\nu(\frac{\<v_j,v_k\>}{\<v_j,v_i\>})>0$, similarly for $j$. Set $v_i'=v_i$ and $v_j'=v_j$. The coordinate change $v_1,...,v_{2n}\rightarrow v_1',...,v_{2n}'$ can be obtained by an action of $I$. 

    We can also change $v_i,v_j$ such that if $i<j$ then $\<v_i,v_j\>$ is a power of $u$.

    After the coordinate change we can ignore $v_i,v_j$ as they are orthogonal to the other vectors and continue in the same way to get our desired result.

\end{proof}

Using this result we embed $B\backslash X$ into $I\backslash X$ as anti-symmetric monomial matrices whose entries are either $1$ or $-1$.

Let $T\subset G$ be the torus of diagonal elements, let $T^0\subset T$ be the subgroup of integer elements. 

Let $W_{aff}=N_G(T)/T^0$ be the extended affine Weyl group of $G$. The group $W_{aff}$ is isomorphic to the group of monomial matrices whose non zeros entries are powers of $u$.  

In \cite{my} an action of $W_{aff}$ on the $I$ orbits on $X$ is constructed. With our model of $W_{aff}$ and the $I$ orbits on $X$ it is easy to describe the action explicitly. Let $w\in W_{aff}$ and $x\in X$ represented by a monomial matrix as in Proposition \ref{I orbits}, the action of $w$ on $x$ is given by $w\times x= wxw^t$ which is again a monomial matrix like in Proposition \ref{I orbits}. 

\begin{defn}
    Let $\C[I\backslash X]$ be the free vector space generated by $I\backslash X$. The action of $W_{aff}$ on $I\backslash X$ induces an action of $\C[W_{aff}]$ on $\C[I\backslash X]$.
\end{defn}

Let $H(G,I)$ be the affine Hecke algebra of $G$, here we consider the generic version where the variable $q$ is just a formal variable and not the size of a residue field. 

We also have the generic version of the module $S(X)^I$ over $H(G,I)$ constructed in \cite{my}. We will recall the construction for the specific case of $X=GL_{2n}/Sp_{2n}$.

\begin{prop}
 $S(X)^I$ is generated over $\C[q,q^{-1}]$ by characteristic functions of $I$ orbits, $I\backslash X$. Let $l$ be the usual length function on $W_{aff}$. We have a length function $l_\sigma$ on $I\backslash X$ (Constructd in \cite{my}). By Proposition \ref{I orbits} every $I$ orbit is represented by an anti-symmetric element of $W_{aff}$, we have $l_\sigma(w)=l(w)-1$ (see Proposition 9.3 in \cite{my}).

Let $s\in W_{aff}$ be a simple reflection, we have a corresponding element $T_s\in H(G,I)$. Let $x\in I\backslash X$ be an $I$ orbit. We have either $sx=x$, $l_\sigma(sx)<l_\sigma(x)$, or $l_\sigma(sx)>l_\sigma(x)$. If $sx=x$ we have $T_s1_x=q1_x$, if $l_\sigma(sx)<l_\sigma(x)$ we have  $T_s1_x=(q-1)1_x+q1_{sx}$, and if $l_\sigma(sx)>l_\sigma(x)$ we have $T_s1_x=1_{sx}$. 

Again $q$ is just a formal variable. This gives a well defined module over $H(G,I)$ as was proven in \cite{my}.
   
\end{prop}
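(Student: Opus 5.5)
The plan is to obtain the statement by specializing the general construction of \cite{my} to $X=GL_{2n}/Sp_{2n}$, using the explicit model of $I\backslash X$ from Proposition \ref{I orbits}. The first claim is immediate: $S(X)^I$ has as basis the characteristic functions $1_x$ for $x\in I\backslash X$, since an $I$ invariant compactly supported function is a finite combination of them. The generic module is then the free $\C[q,q^{-1}]$ module on $I\backslash X$ with the action given by the displayed formulas. So the real content is threefold: identify $l_\sigma$, verify the three formulas at $q=q_r$, and check that the generic action is well defined.

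For the length function I will cite Proposition 9.3 of \cite{my}. As a sanity check in our model, an $I$ orbit is an anti-symmetric monomial matrix $x\in W_{aff}$, and the action is $w\times x=wxw^t$. For a simple affine reflection $s$, I expect to show that $l(sxs^t)-l(x)\in\{0,\pm 2\}$, matching the change of $l_\sigma$ on the orbit side. The case $sxs^t=\pm x$ occurs exactly when $s$ swaps the two indices of one pair of $x$. (For the affine reflection, the pair is read modulo the $u$-twist.) In that case the sign is absorbed by the torus of $I$, so $sx=x$.

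Next I compute $T_s1_x$ at $q=q_r$ by convolution: $T_s=1_{IsI}$ and $IsI=\bigsqcup_{a\in k}u_a sI$ with $q_r$ cosets, where $u_a$ runs over the root subgroup elements. Then $(T_s1_x)(y)=\#\{a: (u_as)^{-1}y\in x\}$. Everything happens inside the rank one Levi $GL_2$ (or its affine analogue) attached to $s$, acting on the $2\times 2$ block of rows and columns $\{i,i+1\}$ of the anti-symmetric matrix. This reduces the problem to three local computations, one per case.

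\begin{enumerate}
\item If $\{i,i+1\}$ is a pair of $x$, the block is $\begin{pmatrix}0&c\\-c&0\end{pmatrix}$. It is stable under $u_a$ and $s$ up to the torus, so all $q_r$ cosets land in $x$ and $T_s1_x=q1_x$.
\item If $i,i+1$ lie in distinct pairs and $x$ is the shorter of $x,sx$, then $sIx$ is a single $I$ orbit and $T_s1_x=1_{sx}$.
\item If $x$ is the longer one, I will use the previous case together with the quadratic relation $T_s^2=(q-1)T_s+q$. This gives $T_s1_x=T_s^21_{sx}=(q-1)1_x+q1_{sx}$.
\end{enumerate}

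Finally, I would argue that the generic module is well defined in one of two ways. The first is to quote \cite{my}. The second is to note that the braid relations and quadratic relations hold after specializing $q$ to $q_r$ for infinitely many residue fields, and the structure constants are polynomials in $q$, so the relations hold identically.

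The main obstacle I expect is case 2: proving that when $x$ is shorter, the orbit $IsIx$ meets only $sx$ and not $x$. This requires the correct comparison between the monomial matrix and the Iwahori's valuation conditions, especially for the affine simple reflection $s_0$ involving $u$. I would first handle the finite simple reflections by the $GL_2$ calculation, and then treat $s_0$ by conjugating with the length zero element of $W_{aff}$ that rotates the affine Dynkin diagram. That element normalizes $I$ and preserves anti-symmetric monomial matrices, so it reduces $s_0$ to a finite simple reflection.
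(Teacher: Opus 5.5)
The paper gives no argument for this proposition; it imports it from \cite{my} (Proposition 9.3 there and the well-definedness of the module). Your handling of the basis claim and of well-definedness agrees with that and is fine, whether by citation or by specializing at infinitely many $q_r$. \textbf{The genuine gap is in the hands-on verification you present as the real content: the split between your cases 2 and 3 cannot be decided by $l$.}

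You read ``shorter'' through $l_\sigma=l-1$ and expect changes of $l_\sigma$ to match $l(sxs^t)-l(x)$. This fails already for a finite reflection. Take $n=2$, $x=e_1\wedge e_2+e_3\wedge e_4$ and $s=s_2$. Then $l(x)=2<4=l(s_2xs_2)$. But for every $a\not\equiv 0$ the coset $u_as_2I$, with $u_a=1+aE_{23}$, satisfies
\[
s_2^{-1}u_a^{-1}\cdot x=e_1\wedge e_3+e_2\wedge e_4-a\,e_3\wedge e_4\in Ix .
\]
Indeed its reduction has nonzero $(3,4)$ entry, so it lies in the open $B(k)$-orbit of $\bar x$. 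Hence $(T_{s_2}1_x)(x)=q-1$ and $T_{s_2}1_x\neq 1_{s_2x}$.

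Your sanity check fails as well.
\begin{itemize}
\item Since $s_0^t\neq s_0^{-1}$ in $W_{aff}$, twisted conjugation by $s_0$ does not respect $l$. For the same $x$ one gets $l(s_0xs_0^t)=10$.
\item Let $\omega$ be the length-zero element with entries $1$ at $(j,j+1)$ and $u$ at $(4,1)$. Then $l(\omega x\omega^t)=6\neq l(x)$.
\item For $y=\omega x\omega^t$ one has $l(s_3ys_3)=l(y)$ although $s_3y\neq y$.
\end{itemize}
So your $\omega$-reduction of $s_0$ to a finite reflection cannot transport an $l$-based case distinction either.

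What does work is the intrinsic length $l_\sigma(\mathcal{C})=d(\mathcal{C},\sigma(\mathcal{C}))$ recalled in Appendix~\ref{A1}. Since $\sigma(I)=JI^tJ^{-1}=Jw_0Iw_0J^{-1}$, the chamber of $IgH$ is $g^{-1}\mathcal{C}_0$, and one finds $l_\sigma=l(xw_0)$ for $x=gJg^t$. This quantity is $\omega$-invariant, because $w_0\omega^tw_0=\omega$. On finite orbits it equals $l(w_0)-l(x)$, and it agrees with $l(x)-1$ only for $n=1$. So the quoted identity cannot serve as the working criterion, and your verification must use the intrinsic $l_\sigma$.

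Also, in case 2 the $\{i,i+1\}$ block of $x$ is zero, so no $2\times 2$ block computation can decide anything. The correct local statement is as follows. Put $P_s=I\cup IsI$ and consider the image of $\mathrm{Stab}_{P_s}(x)$ acting on $P_s/I\cong\mathbb{P}^1(k)$.
\begin{itemize}
\item Either the image is transitive; this is your case 1, which is fine.
\item Or the image is a Borel subgroup with a unique fixed point. Then $T_s1_x=1_{sx}$ exactly when that fixed point is the base point.
\end{itemize}
Which alternative occurs depends on the partners of $i,i+1$ and their $u$-exponents. For finite $s=s_i$ and $x=u^ae_i\wedge e_\alpha+u^be_{i+1}\wedge e_\beta+\cdots$, the base point is fixed if and only if $b>a$, or $b=a$ and $\beta<\alpha$. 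This is consistent with $l(xw_0)$ on both examples above.

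The missing step is to prove that this fixed-point condition is equivalent to $l_\sigma(sx)>l_\sigma(x)$ for the intrinsic $l_\sigma$. That is exactly what \cite{my} supplies. Once you have it, your case 3 via $T_s^2=(q-1)T_s+q$ and your $\omega$-reduction for $s_0$ go through.
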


The group $W_{aff}$ acts transitively on $I\backslash X$. This implies that the module $S(X)^I$ is generated over $H(G,I)$ by the characteristic function of $\mathcal{U}_{max}\in B\backslash X$, considered as an element of $I\backslash X$ via the defined embedding. We denote this function by $f_{max}$.

We recall the Bernstein description of $H(G,I)$.

\begin{defn}
    For any $w\in W_{aff}$ we denote by $T_w\in H(G,I)$ the element corresponding to the characteristic function of $w$. 
    Let $X_*(T)$ be the lattice of coroots of $T$.
    The choice of a Borel $B$ that contains $T$ gives a choice of dominant coroots. Specifically, for our choice of $B$ being the Borel of upper triangular matrices in $G$, a coroot $\lambda(t)=diag(t^{\lambda_1},..., t^{\lambda_{2n}})$ is dominant if and only if $\lambda_1\geq ...\geq \lambda_{2n}$. 

    Every $\lambda\in X_*(T)$ we be written as a ratio of two dominant elements $\lambda^+,\lambda^-$, $\lambda=\lambda^+\cdot (\lambda^-)^{-1}$. We then define $\theta_\lambda=q^\frac{l(\lambda^-)-l(\lambda^+)}{2}(T_{\lambda^+(u)})(T_{\lambda^-(u)})^{-1}$.
\end{defn}

The elements of the form $\theta_\lambda$ generate a commutative sub algebra of $H(G,I)$ called the Bernstein sub algebra, we denote it by $\Theta$.

The following result due to Bernstein was first mentioned in \cite{Bernsteins_presentation}.

\begin{theorem}\label{Ber_presentation}
    Let $H_f$ be the finite Hecke algebra, it is the sub algebra of $H(G,I)$ generated by $T_w$ for $w\in W$ in the finite Weyl group. 

    $H(G,I)$ is isomorphic to the algebra generated by $H_f$ and $\Theta$ with the following relations.

    For $s=s_\alpha\in W$ a simple reflection and $\lambda\in X_*(T)$ we have $$T_s\theta_{s(\lambda)}-\theta_\lambda T_s=(1-q)\cdot \frac{\theta_\lambda-\theta_{s(\lambda)}}{1-\theta_{-\alpha^\vee}}=(1-q)\sum^{\alpha(\lambda)-1}_{i=0} \theta_\lambda\theta^i_{-\alpha^\vee}$$
\end{theorem}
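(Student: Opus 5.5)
The statement is Bernstein's presentation of $H(G,I)$, so I would follow the standard route (Lusztig's ``Affine Hecke algebras and their graded version'') rather than anything specific to $X$. There are three steps: build a PBW-type basis from $H_f$ and $\Theta$, derive the commutation relation, and then show that these relations give a presentation.

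\emph{Step 1: the subalgebra $\Theta$.} Take $\lambda,\mu$ dominant. Then $l(\lambda(u)\mu(u))=l(\lambda(u))+l(\mu(u))$, so $T_{\lambda(u)}T_{\mu(u)}=T_{(\lambda+\mu)(u)}$ by the braid relations in the Iwahori--Matsumoto presentation. Hence the $T_{\lambda(u)}$ for dominant $\lambda$ commute and are invertible. It follows that $\theta_\lambda$ is independent of the decomposition $\lambda=\lambda^+-\lambda^-$, and that $\lambda\mapsto\theta_\lambda$ is a homomorphism from $\C[q^{\pm1/2}][X_*(T)]$ into $H(G,I)$.

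Next I would prove that $\{T_w\theta_\lambda\}_{w\in W,\lambda\in X_*(T)}$ is a basis of $H(G,I)$. For dominant $\lambda$ one has $l(w\lambda(u))=l(w)+l(\lambda(u))$ for $w$ minimal in its coset $wW_\lambda$. Combining this with a triangularity argument with respect to the Bruhat order on $W_{aff}$ shows that $T_w\theta_\lambda$ equals $T_{w\lambda(u)}$ up to a unit, modulo terms of lower length. This gives both spanning and linear independence.

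\emph{Step 2: the commutation relation.} I would first treat the case $s(\lambda)=\lambda$, that is $\alpha(\lambda)=0$. Choose $\lambda$ with $\alpha(\lambda)=0$; a dominant representative then exists after adding a large $W$-invariant-ish dominant weight. Since $s\lambda(u)s=\lambda(u)$ and lengths add on both sides, $T_sT_{\lambda(u)}=T_{\lambda(u)}T_s$, so $T_s$ commutes with $\theta_\lambda$.

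Next take $\lambda$ with $\alpha(\lambda)=1$, for instance a fundamental coweight adjusted by a central element. Here I would directly verify $T_s\theta_{s\lambda}-\theta_\lambda T_s=(1-q)\theta_\lambda$, using $s\lambda=\lambda-\alpha^\vee$. This comes down to the length identities $l(s\,\lambda(u))=l(\lambda(u))+1$ and the quadratic relation $T_s^2=(q-1)T_s+q$. This rank-one computation is the main obstacle: one has to track the normalisations $q^{(l(\lambda^-)-l(\lambda^+))/2}$ correctly. I would do it first in $GL_2$, and then reduce the general case to it through the element $\lambda(u)s$ of length $l(\lambda(u))-1$ or $l(\lambda(u))+1$.

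Finally, both sides of the relation are additive in the following sense: the map $\lambda\mapsto T_s\theta_{s\lambda}-\theta_\lambda T_s$ satisfies a twisted Leibniz rule. Indeed,
\[T_s\theta_{s(\lambda+\mu)}-\theta_{\lambda+\mu}T_s=(T_s\theta_{s\lambda}-\theta_\lambda T_s)\theta_{s\mu}+\theta_\lambda(T_s\theta_{s\mu}-\theta_\mu T_s).\]
The right-hand expression $(1-q)(\theta_\lambda-\theta_{s\lambda})/(1-\theta_{-\alpha^\vee})$ satisfies the same rule. Since $X_*(T)$ is generated by the $\lambda$ with $\alpha(\lambda)\in\{0,1\}$, the relation holds for all $\lambda$. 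The finite geometric sum $\sum_{i=0}^{\alpha(\lambda)-1}\theta_\lambda\theta_{-\alpha^\vee}^i$ is just the polynomial division, valid for $\alpha(\lambda)\ge 0$, with the obvious sign convention otherwise.

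\emph{Step 3: the presentation.} Let $\tilde H$ be the abstract algebra generated by $H_f$ and $\Theta$ subject to the stated relations. The relations allow any word to be rewritten in the form $\sum T_w\theta_\lambda$, so $\tilde H$ is spanned by these elements. By Step 2, the natural map $\tilde H\to H(G,I)$ is well defined, and by Step 1 it sends this spanning set to a basis. Therefore the map is an isomorphism.
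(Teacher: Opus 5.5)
Your three-step plan (PBW basis; the relation for $\alpha(\lambda)\in\{0,1\}$ extended by the twisted Leibniz rule; then the presentation) is exactly Lusztig's proof in \cite{Bernsteins_presentation}. That citation is all the paper relies on; it gives no proof of its own, so the strategy is right. The problem is that the length facts you feed into it are backwards for the paper's normalization, and this breaks Step~1 as written. Take $I$ upper triangular modulo $u$ and call $\lambda$ dominant when $\lambda_1\ge\dots\ge\lambda_{2n}$. Then $l(\lambda(u)w)=l(\lambda(u))+l(w)$ for every $w\in W$, whereas $l(w\lambda(u))=l(\lambda(u))-l(w)$ for $w$ minimal in $wW_\lambda$. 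Already for $GL_2$ with $\lambda=(1,0)$ this fails: $\omega=\begin{pmatrix}0&1\\u&0\end{pmatrix}$ normalizes $I$ and $\lambda(u)=s\omega$. Hence $s\lambda(u)=\omega$ has length $0$, and $T_s\theta_\lambda=q^{-1/2}\big((q-1)T_{\lambda(u)}+qT_{\omega}\big)$, whose longest term is $T_{\lambda(u)}$, not $T_{s\lambda(u)}$. So the claim that $T_w\theta_\lambda$ is a unit times $T_{w\lambda(u)}$ modulo shorter terms is false. Run the triangularity on $\theta_\lambda T_w$ instead. For dominant $\lambda$ it equals $q^{-l(\lambda(u))/2}T_{\lambda(u)w}$ exactly. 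Independence for arbitrary $\lambda$ follows by multiplying on the left by $\theta_\mu$ with $\mu$ very dominant. The relation gives $T_w\theta_\lambda\equiv\theta_{w\lambda}T_w$ modulo $\sum_{w'<w}\Theta T_{w'}$, which transfers the basis to $\{T_w\theta_\lambda\}$.

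The same slip appears in the case $\alpha(\lambda)=1$: here, for dominant $\lambda$, $l(s\lambda(u))=l(\lambda(u))-1$. More importantly, that case is the heart of the proof and it is not actually carried out. Since $s\lambda$ is not dominant, the quadratic relation plus a length identity are not enough until you have a usable expression for $\theta_{s\lambda}$. Doing $GL_2$ first does not formally give the general case either. The missing idea is this: for dominant $\lambda$ with $\alpha(\lambda)=1$, the cocharacter $\lambda+s\lambda$ is dominant and $s$-fixed. Put $z=s\lambda(u)$ and $l=l(\lambda(u))$. Then $(\lambda+s\lambda)(u)=z^2$ with $l(z^2)=2l-2=2l(z)$. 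Hence
\[
\theta_\lambda=q^{-l/2}T_sT_z,\qquad \theta_{s\lambda}=\theta_{\lambda+s\lambda}\,\theta_\lambda^{-1}=q^{1-l}T_z^2\cdot q^{l/2}T_z^{-1}T_s^{-1}=q^{1-l/2}\,T_zT_s^{-1},
\]
so $T_s\theta_{s\lambda}=q\,\theta_\lambda T_s^{-1}=\theta_\lambda(T_s+1-q)$, which is the required identity. Two smaller points in the Leibniz step. First, the set of $\lambda$ satisfying the relation is a subgroup (apply the rule to $\lambda+(-\lambda)=0$), and you need this because you must subtract. Second, generation by $\ker\alpha$ together with one $\lambda$ having $\alpha(\lambda)=1$ uses $G=GL_{2n}$. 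For a general split $G$ (e.g.\ $SL_2$, where $\alpha(\lambda)$ is always even) the case $\lambda=\alpha^\vee$ must be checked separately, as in Lusztig's paper.
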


Now back to $S(X)^I$, we describe the annihilator of $f_{max}$. First we need a notation.
\begin{Not}
    Let $L_{max}$ be the lattice of coroots of the form $\lambda(t)=(t^{a_1},t^{-a_1},t^{a_2},t^{-a_2},...,t^{a_n},t^{-a_n})$ for some $(a_1,...,a_n)\in\Z^n$.
\end{Not}

\begin{prop}\label{ideal generaotrs}
    Consider the action of $H(G,I)$ on $f_{max}$. The annihilator of $f_{max}$ is generated by the elements of the form:
    \begin{enumerate}
        \item $T_{s_{2i-1}}-q$.
        \item $T_{s_{2i-1}}T_{s_{2i}}-T_{s_{2i+1}}T_{s_{2i}}$.
        \item $\theta_{\lambda}-q^\frac{l(\lambda^-)-l(\lambda^+)}{2}$ for $\lambda\in L_{max}$.
    \end{enumerate}
\end{prop}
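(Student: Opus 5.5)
The plan is to prove two inclusions. First, every listed element annihilates $f_{max}$. Second, the left ideal $J\subset H(G,I)$ they generate is already the full annihilator. For the second part I compare $H(G,I)/J$ with $S(X)^I$, which is free over $\C[q,q^{-1}]$ with basis $I\backslash X$.

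\textbf{Step 1: the generators kill $f_{max}$.} Under Proposition \ref{I orbits}, $\mathcal{U}_{max}$ corresponds to the antisymmetric monomial matrix $x_0$ pairing $\{2i-1,2i\}$.
\begin{enumerate}
\item For (1): $s_{2i-1}x_0s_{2i-1}^t=x_0$, so the rule $T_s1_x=q1_x$ for $sx=x$ gives $T_{s_{2i-1}}f_{max}=qf_{max}$.
\item For (2): I compute directly with the explicit rules for $T_s1_x$. First, $s_{2i}x_0$ is a different orbit, and the length comparison $l_\sigma(s_{2i}x_0)$ versus $l_\sigma(x_0)$ tells which formula applies (using $l_\sigma(w)=l(w)-1$). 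Then $s_{2i-1}s_{2i}x_0$ and $s_{2i+1}s_{2i}x_0$ are the same orbit, since both correspond to the pairing $\{2i-1,2i+2\},\{2i,2i+1\}$ up to the antisymmetric sign. Their lengths relative to $s_{2i}x_0$ also agree, so the two sides coincide term by term.
\item For (3): for $\lambda\in L_{max}$ we have $\lambda(u)x_0\lambda(u)^t=x_0$, because the entries $u^{a_i}u^{-a_i}$ cancel. It remains to show $T_{\lambda^{\pm}(u)}f_{max}=1_{\lambda^{\pm}(u)x_0}$ with the right power of $q$, i.e. that $l_\sigma$ behaves additively along dominant translations from $x_0$. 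If the length bookkeeping is messy, I would instead use the Bernstein morphism $e:S(X_\emptyset)^I\to S(X)^I$. On $X_\emptyset$, $\Theta$ acts through the torus, so the eigenvalue of $\theta_\lambda$ on the preimage of $f_{max}$ is transparent.
\end{enumerate}

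\textbf{Step 2: the specialization $q=1$.} At $q=1$, $H(G,I)$ becomes $\C[W_{aff}]$ and $S(X)^I$ becomes $\C[I\backslash X]$ with the permutation action $w\times x=wxw^t$. The annihilator of $x_0$ is the left ideal generated by $g-1$ for $g$ running over generators of $\mathrm{Stab}_{W_{aff}}(x_0)$. This stabilizer is $L_{max}\rtimes \mathrm{Stab}_W(x_0)$. By the proof of Proposition \ref{map BM}, $\mathrm{Stab}_W(x_0)$ is generated by the $s_{2i-1}$ and the $s_{2i}s_{2i-1}s_{2i+1}s_{2i}$. These are exactly the specializations of (1)–(3). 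It also shows $I\backslash X\cong W_{aff}/\mathrm{Stab}(x_0)$.

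\textbf{Step 3: lifting to generic $q$ (main obstacle).} I use the Bernstein basis $\{T_w\theta_\mu\}$ of $H(G,I)$ together with Theorem \ref{Ber_presentation}. The aim is to show that $H(G,I)/J$ is spanned over $\C[q,q^{-1}]$ by elements $T_w\theta_\mu$, where $w$ runs over minimal representatives of $W/\mathrm{Stab}_W(x_0)$ and $\mu$ over a fixed set of representatives of $X_*(T)/L_{max}$.
\begin{enumerate}
\item Relations (3), multiplied on the left, reduce $\theta$'s modulo $L_{max}$.
\item To reduce the $T_w$ part, I move $T_{s_{2i-1}}$ and $T_{s_{2i}}(\cdots)$ to the right past $\theta$'s using the Bernstein commutation relation. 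This produces $(T_s-q)$-type terms plus correction terms of lower degree in a suitable filtration (by $l(w)$ and the size of $\mu$). The corrections are handled by induction.
\end{enumerate}
I expect this reduction to be the hardest step, because the commutation relations mix $\Theta$ and $H_f$ in a way that has to be controlled carefully.

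Once spanning is established, the spanning set is in bijection with $W_{aff}/\mathrm{Stab}(x_0)\cong I\backslash X$. Its images in $S(X)^I$ are unitriangular with respect to the basis $1_x$, ordered by $l_\sigma$, which is the $q$-analogue of Step 2. Hence the surjection $H(G,I)/J\to S(X)^I$, $h\mapsto hf_{max}$, sends a spanning set to a basis. It is therefore an isomorphism, and $J$ is the annihilator.
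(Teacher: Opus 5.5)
The overall shape of your argument is a legitimate alternative to the paper's route through $X_\emptyset$, and Step 2 is correct. That shape is: present $H(G,I)/J$ by a spanning set indexed by $W_{aff}/\mathrm{Stab}(x_0)\cong I\backslash X$, then compare with $S(X)^I$. But Step 3, which carries all of the content, is missing its key ingredient.

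\textbf{The gap in Step 3.} Reducing the $T_w$-part at generic $q$ amounts to a Hecke-level presentation of the finite module: $H_f$ modulo the left ideal generated by (1) and (2) must be spanned by $(2n-1)!!$ elements.
\begin{enumerate}
\item Since $\mathrm{Stab}_W(x_0)\cong S_2\wr S_n$ is not a standard parabolic subgroup, minimal coset representatives give no factorization $T_w=T_{w^{\min}}T_v$ with $v\in\mathrm{Stab}_W(x_0)$. Already for $n=2$, the element $s_2s_1s_3s_2\in\mathrm{Stab}_W(x_0)$ has $s_2$ as its only right descent. Reducing $T_{s_2s_1s_3s_2}$ therefore requires inserting (2) inside the word and then using the quadratic relation.
\item What you need is a $q$-analogue of Proposition \ref{group map}: elements $v_x$ for all pairings $x$, shown to be well defined and stable under every $T_s$ by a rank-two case check.
\item This cannot be recovered from Step 2 by specialization. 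An isomorphism at $q=1$ only shows that the kernel $K$ of $H(G,I)/J\to S(X)^I$ satisfies $K=(q-1)K$, which still allows torsion at other values of $q$.
\item Relation (2) also does not commute with $\Theta$ modulo $J$. For $n=2$ and $\mu=(1,0,0,0)$, $s_1s_2\mu(u)$ and $s_3s_2\mu(u)$ send $x_0$ to different orbits. So $T_{s_{2i+1}}T_{s_{2i}}\theta_\mu$ must be matched with $T_{s_{2i-1}}T_{s_{2i}}\theta_{g\mu}$, where $g=s_{2i}s_{2i-1}s_{2i+1}s_{2i}$, and your fixed representatives of $X_*(T)/L_{max}$ get permuted.
\end{enumerate}
The paper's detour through $X_\emptyset$ is designed to avoid most of this. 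There $\Theta$ acts through a right $T$-action commuting with $H(G,I)$, so (1) and (3) reduce the finite part to induction from the standard parabolic $\langle s_1,s_3,\dots,s_{2n-1}\rangle$, where the generic-$q$ statement is classical. The non-parabolic relation (2) is isolated as generating $\ker e$; injectivity on a complement is checked at $q=1$ and lifted by dividing by $q-1$. The paper still needs a finite-level input when it chooses the representatives $W^X$, but only for (2).

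\textbf{Step 1(3) and the normalization in (3).} Your mechanism here is wrong, and carrying it out would expose a normalization problem. Dominant translations do not move $x_0$ by length-increasing steps; many simple steps fix the orbit and contribute factors of $q$.
\begin{enumerate}
\item Take $n=1$, so $X=GL_2/SL_2$. Both $s_0$ and $s_1$ fix every $I$-orbit. For $\lambda=(a,-a)$ with $a\ge 0$, this gives $T_{\lambda(u)}f_{max}=q^{2a}f_{max}$, hence $\theta_\lambda f_{max}=q^{a}f_{max}$.
\item So the eigenvalue is $q^{(l(\lambda^+)-l(\lambda^-))/2}$, with the opposite sign to the exponent printed in (3). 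The paper's claim $1_{I\lambda(u)I}1_{It'S_\emptyset}=1_{I\lambda(u)t'S_\emptyset}$ drops this factor. For $n=1$, where $X_\emptyset=X$, the factor is $q^{2a}$.
\item Your own Step 3 forces the same sign. Commuting $T_{s_{2i-1}}-q$ past $\theta_\mu$ via Theorem \ref{Ber_presentation} leaves a multiple of $1-q\theta_{-\alpha^\vee_{2i-1}}$. This is a multiple of a type-(3) generator only if the constant attached to $-\alpha^\vee_{2i-1}\in L_{max}$ is $q^{-1}$, whereas the printed formula gives $q$.
\end{enumerate}

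\textbf{Final step.} Drop the unitriangularity claim: a descent $T_s1_x=(q-1)1_x+q1_{sx}$ puts the non-unit coefficient $q-1$ on the higher orbit. Once spanning is known, linear independence of the images follows from Step 2 by specializing at $q=1$ and dividing by $q-1$, as the paper does.
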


    To prove Proposition \ref{ideal generaotrs} we use the boundary degeneration of $X$. In Appendix \ref{A1} we deal with the general theory of boundary degenerations of a symmetric space and its relation to the action of the Hecke algebra.
    
    Let $X_\emptyset=G/S_\emptyset$ be the most degenerate boundary degeneration of $X$. Let $P_X$ be the standard parabolic corresponding to the the set of simple roots $\{s_1,s_3,...,s_{2n-1}\}$. Let $U^-_X$ be the unipotent radical of the opposite parabolic of $P_X$.  Explicitly, it is the group of matrices of the form $U_X^-=\{\begin{pmatrix}
        1 & 0 & 0& 0&...\\
        0 & 1& 0 & 0&...\\
        * & * & 1& 0&...\\
        * & * & 0& 1&...\\
        ...& ...& ...& ...&...
    \end{pmatrix}\}$ .
    
    We can embed $SL_2^n$ into the Levi of $P_X$ as diagonal $2\times 2$ block matrices with determinant 1. Denote this group by $L_X$. We have $S_\emptyset=U^-_XL_X$. 
    
    There is a Bernstein map $e:S(X_\emptyset)\rightarrow S(X)$ (see Theorem \ref{theorem3}). It induces a map of $H(G,I)$ modules which we also denote by $e:S(X_\emptyset)^I\rightarrow S(X)^I$. By Proposition \ref{prop:matching} we can find $w\in W$ such that $e(1_{IwS_\emptyset})=f_{max}$, by conjugating $Sp_{2n}$ we can assume that $w=1$. We take $Sp_{2n}$ to be the group that fixes the symplectic form $$\omega(v,w)=v_1w_2-v_2w_1+v_3w_4-v_4w_3+...+v_{2n-1}w_{2n}-v_{2n}w_{2n-1}$$ 
    
    Denote $f_\emptyset=1_{IwS_\emptyset}=1_{IS_\emptyset}$.

    We compute the annihilator of $f_\emptyset$ and the kernel of $e$ and together we get the annihilator of $f_{max}$.

    \begin{prop}
    The annihilator of $f_\emptyset\in S(X_\emptyset)^I$ is generated by the elements of the form:
    \begin{enumerate}
        \item $T_{s_{2i-1}}-q$.
        \item $\theta_{\lambda}-q^\frac{l(\lambda^-)-l(\lambda^+)}{2}$ for $\lambda\in L_{max}$.
    \end{enumerate}
    \end{prop}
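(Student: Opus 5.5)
We want the annihilator of $f_\emptyset=1_{IS_\emptyset}$ in $S(X_\emptyset)^I$, with $X_\emptyset=G/S_\emptyset$ and $S_\emptyset=U^-_XL_X$. The plan has two halves. First we show the listed elements kill $f_\emptyset$. Then we compare the quotient of $H(G,I)$ by the left ideal $J$ they generate with the cyclic submodule $H(G,I)f_\emptyset$, and show the two have the same size.

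\textbf{Step 1: the generators annihilate $f_\emptyset$.} For $T_{s_{2i-1}}-q$, note that the simple reflection $s_{2i-1}$ has a representative in $L_X\cong SL_2^n$, which lies in $S_\emptyset$. Consequently the double coset $Is_{2i-1}I$ maps onto $IS_\emptyset$ with fibers of size $q$. Convolving then gives $T_{s_{2i-1}}f_\emptyset=q f_\emptyset$; this is the same computation as the case $sx=x$ in the definition of the module structure. For $\lambda\in L_{max}$, the cocharacter $\lambda(u)=\mathrm{diag}(u^{a_1},u^{-a_1},\dots)$ lies in the diagonal torus of $L_X$, hence in $S_\emptyset$. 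Write $\lambda=\lambda^+(\lambda^-)^{-1}$ with both parts dominant. Then $T_{\lambda^\pm(u)}f_\emptyset$ can be computed directly: since $U^-_X\subset S_\emptyset$ and dominant elements contract the positive unipotent part of $I$, the double coset $I\lambda^\pm(u)I$ acts on $1_{IS_\emptyset}$ by pushing forward along the Iwahori decomposition $I=I^+I^0I^-$. The result is $T_{\lambda^\pm(u)}f_\emptyset=c_\pm 1_{I\lambda^\pm(u)S_\emptyset}$, with $c_\pm$ an explicit power of $q$. From this, $\theta_\lambda f_\emptyset$ is a power of $q$ times $1_{I\lambda(u)S_\emptyset}=f_\emptyset$, and the normalisation forces the scalar to be $q^{(l(\lambda^-)-l(\lambda^+))/2}$. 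Alternatively, one can use the general description in Appendix~\ref{A1} of how $\Theta$ acts on $S(X_\emptyset)^I$ through the $A_X$-action (normalized by the modular character), which gives this directly.

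\textbf{Step 2: the upper bound on the quotient.} Using the Bernstein presentation (Theorem~\ref{Ber_presentation}), $H(G,I)=H_f\otimes\Theta$ as vector spaces. Let $H_{P}$ be the parabolic subalgebra generated by the $T_{s_{2i-1}}$ and by $\Theta$. Modulo $J$, the algebra $\Theta$ acts on the image of $1$ through $\Theta/\langle\theta_\lambda-\text{scalar},\ \lambda\in L_{max}\rangle\cong\C[X_*(T)/L_{max}]$, which is a free rank-$n$ lattice group algebra. The relations in Theorem~\ref{Ber_presentation} show that $J$ is closed under the commutation moves needed. Hence $H(G,I)/J$ is spanned by $T_w\theta_\mu$, with $w$ running over minimal coset representatives of $W/\langle s_1,s_3,\dots,s_{2n-1}\rangle$ and $\mu$ over $X_*(T)/L_{max}$.

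\textbf{Step 3: the lower bound, and the main obstacle.} We show the images $T_w\theta_\mu f_\emptyset$ are linearly independent in $S(X_\emptyset)^I$. To do this, compute that $\theta_\mu f_\emptyset$ is a nonzero multiple of $1_{I\mu(u)S_\emptyset}$ plus terms supported on smaller orbits. Then $T_w$, for $w$ a minimal coset representative, moves $1_{I\mu(u)S_\emptyset}$ to $1_{Iw\mu(u)S_\emptyset}$ plus lower terms. The orbits $Iw\mu(u)S_\emptyset$ are pairwise distinct, because $I\backslash G/S_\emptyset$ is parametrized by $W_{aff}/(W_{aff}\cap S_\emptyset)$, and this reduces to $W/\langle s_{2i-1}\rangle$ times $X_*(T)/L_{max}$. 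A triangularity argument with respect to a length function then gives independence, so $J$ is exactly the annihilator.

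The main obstacle is Step 3. It requires a careful description of the $I$-orbits on $X_\emptyset$ together with a suitable length or order, so that the triangularity holds despite the $\theta$'s introducing lower terms. To get this, I would first try to use the $A_X$-equivariance of $X_\emptyset$, which lets the $\theta_\mu$-translates be handled as genuine translations.
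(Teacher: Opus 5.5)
Your plan has the same architecture as the paper's proof. In both, one checks the generators, computes $\theta_\lambda f_\emptyset$ as a multiple of a single orbit function, and then combines a Bernstein-relation reduction with an orbit count. Your Step 2 is the paper's identity $(T_s-q)\theta_\lambda=\theta_{s(\lambda)}(T_s-q)+(\text{element of }\Theta)$ phrased as a spanning statement, and it is fine. Your direct check of $T_{s_{2i-1}}f_\emptyset=qf_\emptyset$ is also fine: all coset representatives $u_{\alpha}(c)\dot s_{2i-1}$ of $Is_{2i-1}I/I$ lie in $L_X\subset S_\emptyset$. This is cleaner than the paper's detour through Proposition \ref{action}. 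There are, however, two problems.

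\textbf{The scalar in Step 1 is asserted, not computed, and it is wrong.} You never determine $c_\pm$, and ``the normalisation forces the scalar'' is not an argument. Carry out your own count for $\mu$ dominant. The cosets of $I\mu(u)I/I$ are $x\mu(u)$ with $x\in U(\mathcal{O})/\mu(u)U(\mathcal{O})\mu(u)^{-1}$. The translate $x\mu(u)IS_\emptyset$ contains the base point $\mu(u)S_\emptyset$ iff $\mu(u)^{-1}x^{-1}\mu(u)\in IS_\emptyset$. The root groups $U_{\alpha_{2i-1}}(F)$ lie in $L_X\subset S_\emptyset$, while $U_{P_X}(F)\cap IP_X^-=U_{P_X}(\mathcal{O})$ by the Iwahori factorization for $P_X$. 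Hence exactly $[U_L(\mathcal{O}):\mu(u)U_L(\mathcal{O})\mu(u)^{-1}]=q^{\sum_i\langle\alpha_{2i-1},\mu\rangle}$ cosets qualify, where $U_L=\prod_iU_{\alpha_{2i-1}}$. For $\lambda\in L_{max}$ we have $\frac{l(\lambda^+)-l(\lambda^-)}{2}=\langle\rho,\lambda\rangle$ and $\sum_i\langle\alpha_{2i-1},\lambda\rangle=2\langle\rho,\lambda\rangle$, so
\[
\theta_\lambda f_\emptyset=q^{-\langle\rho,\lambda\rangle+\sum_i\langle\alpha_{2i-1},\lambda\rangle}f_\emptyset=q^{\frac{l(\lambda^+)-l(\lambda^-)}{2}}f_\emptyset .
\]
Check this for $n=1$, where $X_\emptyset=GL_2/SL_2\cong F^\times$ via $\det$. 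Then $T_{\mathrm{diag}(u,u^{-1})}f_\emptyset=\mathrm{vol}(I\,\mathrm{diag}(u,u^{-1})I)\,f_\emptyset=q^2f_\emptyset$, so $\theta_{(1,-1)}f_\emptyset=qf_\emptyset$, not $q^{-1}f_\emptyset$.

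So the exponent in the statement is inverted, and the literal statement cannot be proved. Take Theorem \ref{Ber_presentation} with $\lambda=\alpha^\vee=\alpha^\vee_{2i-1}$: it gives $T_s\theta_{-\alpha^\vee}-\theta_{\alpha^\vee}T_s=(1-q)(\theta_{\alpha^\vee}+1)$. On any vector where $T_s$ acts by $q$ and $\theta_{\pm\alpha^\vee}$ by $c^{\pm1}$, this forces $(c+1)(c-q)=0$. With $c=q^{-1}$, the left ideal generated by the listed elements contains the nonzero constant $(q+1)^2(q-1)q^{-1}$, which cannot annihilate $f_\emptyset$.

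The paper's proof makes the same slip: it claims $1_{I\lambda(u)I}*1_{It'S_\emptyset}=1_{I\lambda(u)t'S_\emptyset}$ with coefficient $1$. Its last step, however, uses $1-q\theta_{-\alpha^\vee}\in\Theta_X$, which is the corrected sign. So you need to redo Step 1 with explicit $c_\pm$; what your argument then proves is the statement with $q^{\frac{l(\lambda^+)-l(\lambda^-)}{2}}$.

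\textbf{Step 3 is left open, but there are no lower terms to fight.} Since $T$ normalizes $S_\emptyset$ (Proposition \ref{normalizing}), right translation by $T$ commutes with $H(G,I)$. So the count above gives $T_{\mu}1_{ItS_\emptyset}=c_\mu 1_{I\mu(u)tS_\emptyset}$ for every $t\in T$. Hence $\theta_\mu f_\emptyset$ is exactly a power of $q$ times $1_{I\mu(u)S_\emptyset}$.

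Consequently $T_w\theta_\mu f_\emptyset$ is supported on $G(\mathcal{O})\mu(u)S_\emptyset$, and these sets are pairwise disjoint for $\mu\not\equiv\mu'$ modulo $L_{max}$. Indeed, an element of $G(\mathcal{O})\cap\mu'(u)\mu(u)^{-1}S_\emptyset\subset P_X^-$ has $2\times 2$ diagonal blocks with unit determinants. But the blocks of elements of $\mu'(u)\mu(u)^{-1}S_\emptyset$ have determinants $u^{\nu_{2i-1}+\nu_{2i}}$, with $\nu=\mu'-\mu$. This replaces your unproved parametrization of $I\backslash G/S_\emptyset$.

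Within one class you are left with a finite statement. Let $W_J=\langle s_1,s_3,\dots,s_{2n-1}\rangle$. The $I$-orbits in $G(\mathcal{O})\mu(u)S_\emptyset$ correspond to $B(k)\backslash G(k)/U_X^-(k)L_X(k)$, a set of size $|W/W_J|$. The module $H_f/H_f(T_{s_{2i-1}}-q)$ is free of rank $|W/W_J|$ and maps to the free module on these orbits. At $q=1$ this map is the identity of the permutation module $\mathbb{C}[W/W_J]$, and the divide-by-$(q-1)$ argument then gives injectivity over $\mathbb{C}[q^{\pm1}]$. This is exactly the paper's grouping by $\lambda$-classes plus its dimension count; no triangularity with respect to a length function is needed.
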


    \begin{proof}
        We begin with checking that these elements indeed annihilate $f_\emptyset$.

        Let $s=s_{2i-1}$, by Proposition \ref{action} to see that $T_sf_\emptyset=qf_\emptyset$ we need to find a sequence $t_m\in T$, $t_m\rightarrow\infty$ such that $Ist_mSp_{2n}=It_mSp_{2n}$. This holds for any $t\in T$.

        Now, let $\lambda\in L_{max}$, we need to show that $\theta_{\lambda}f_\emptyset= q^\frac{l(\lambda^-)-l(\lambda^+)}{2}f_\emptyset$. We claim that for every $\lambda\in X_*(T)$, $\theta_{\lambda}f_\emptyset=\theta_{\lambda}1_{IS_\emptyset}= q^\frac{l(\lambda^-)-l(\lambda^+)}{2}1_{I\lambda(u)S_\emptyset}$. 
        
        In order to prove this, it is enough to show that if $\lambda\in X_*(T)$ is dominant and $t'\in T$ is arbitrary then $1_{I\lambda(u)I}1_{It'S_\emptyset}=1_{I\lambda(u)t'S_\emptyset}$. We need to find $t_m\rightarrow\infty$ such that $1_{I\lambda(u)I}1_{It't_mSp_{2n}}=1_{I\lambda(u)t't_mSp_{2n}}$. We can take $t_m\rightarrow\infty$ such that $t't_m$ is dominant and then this holds.

        Denote the sub algebra of $\Theta$ generated by $\theta_{\lambda}-q^\frac{l(\lambda^-)-l(\lambda^+)}{2}$ for $\lambda\in L_{max}$ by $\Theta_X$.

        We need to show that the mentioned elements generate the annihilator of $f_\emptyset$.

        Assume that $h\in H(G,I)$ satisfies $hf_\emptyset=0$. We can write $h=\sum_{w,\lambda} a_\lambda^wT_w\theta_\lambda$ for $w\in W$, $\lambda\in X_*(T)$ and coefficients $a_\lambda^w\in \C[q^{\pm 1}]$. After changing the coefficients by a factor of a power of $q$ we have $hf_\emptyset= \sum a_\lambda^w T_w1_{I\lambda(u)S_\emptyset}$.

        For fixed $\lambda$ and for every $w\in W$, $T_w1_{I\lambda(u)S_\emptyset}$ is supported on $IW\lambda(u)S_\emptyset$. We claim that for $\lambda,\lambda'$ such that $I\lambda(u)S_\emptyset\neq I\lambda(u)'S_\emptyset$ we have $IW\lambda(u)S_\emptyset\cap IW\lambda(u)'S_\emptyset=\emptyset$. To prove this it is enough to show that for every $\lambda''$ dominant enough if $I\lambda(u)\lambda(u)''Sp_{2n}\neq I\lambda(u)'\lambda(u)''Sp_{2n}$ then $IW\lambda(u)\lambda(u)''Sp_{2n}\cap IW\lambda(u)'\lambda(u)''Sp_{2n}=\emptyset$. 
        
        In any $I$ orbit in $IW\lambda(u)\lambda(u)''Sp_{2n}$ there is a unique anti-symmetric matrices whose non zero entries are equal to (up to a sign) $\lambda_1(u)\lambda''_1(u)\lambda_2(u)\lambda''_2(u),\lambda_3(u)\lambda''_3(u)\lambda_4(u)\lambda''_4(u),...$. 
        
        For $IW\lambda(u)\lambda(u)''Sp_{2n}$ the same is true with $\lambda_i$ replaced by $\lambda'_i$. By our assumption not all of these values are the same, and thus the $I$ orbits are different.

        Thus, in the expression $\sum_{w,\lambda} a_\lambda^w T_w1_{I\lambda(u)S_\emptyset}=0$ we can group the elements by the functions $1_{I\lambda(u)S_\emptyset}$ and for every fixed $\lambda$ we have $\sum_w a_\lambda^w T_w1_{I\lambda(u)S_\emptyset}=0$. 

        We claim that if $h'\in H_f$ and $h'1_{I\lambda(u)S_\emptyset}=0$ then $h'$ is in the ideal generated by $T_{s_{2i-1}}-q$. It is easy to see that $T_{s_{2i-1}}-q$ acts as zero on $1_{I\lambda(u)S_\emptyset}$. The dimension of $H_f1_{I\lambda(u)S_\emptyset}$ is equal to the number of $I$ orbits in $IW\lambda(u)S_\emptyset$. The number of such orbits is equal to the size of the coset space $W/\langle s_{2i-1}|i=1,...,2n\rangle$ and the result follows. 

        We proved that an element in the annihilator of $f_\emptyset$ modulo $\Theta_X$ can be written as sum of elements in $H_f(T_{s_{2i-1}}-q)\Theta$. To complete the proof we use Theorem \ref{Ber_presentation}.

        For $s=s_{2i-1}$ and $\alpha=\alpha_{2i-1}$ we have $$(T_s-q)\theta_\lambda=\theta_{s(\lambda)}T_s+(1-q)\frac{\theta_{s(\lambda)}-\theta_{\lambda}}{1-\theta_{-\alpha^\vee}}-q\theta_\lambda=\theta_{s(\lambda)}(T_s-q)+q\theta_{s(\lambda)}+(1-q)\frac{\theta_{s(\lambda)}-\theta_{\lambda}}{1-\theta_{-\alpha^\vee}}-q\theta_\lambda$$
        
        We have $q\theta_{s(\lambda)}+(1-q)\frac{\theta_{s(\lambda)}-\theta_{\lambda}}{1-\theta_{-\alpha^\vee}}-q\theta_\lambda=\frac{\theta_{s(\lambda)}-\theta_{\lambda}}{1-\theta_{-\alpha^\vee}}(q-q\theta_{-\alpha^\vee}+1-q)=\frac{\theta_{s(\lambda)}-\theta_{\lambda}}{1-\theta_{-\alpha^\vee}}(-q\theta_{-\alpha^\vee}+1)$. 

        Notice that $(-q\theta_{-\alpha^\vee}+1)\in \Theta_X$.
        
        We see that $H_f(T_{s_{2i-1}}-q)\Theta$ is in $H(G,I)\Theta_x+H(G,I)(T_{s_{2i-1}}-q)$ which completes the proof.
        
    \end{proof}

    Next, we describe the kernel of $e:S(X_\emptyset)^I\rightarrow S(X)^I$.

    \begin{prop}
        Let $e:S(X_\emptyset)^I\rightarrow S(X)^I$ be the Bersntein morphism. Its kernel is generated by the elements of the form $(T_{s_{2i-1}}T_{s_{2i}}-T_{s_{2i+1}}T_{s_{2i}})f_\emptyset$. 
    \end{prop}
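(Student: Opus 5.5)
The plan is to prove the two inclusions between $\ker e$ and the submodule $K$ generated by the elements $(T_{s_{2i-1}}T_{s_{2i}}-T_{s_{2i+1}}T_{s_{2i}})f_\emptyset$, using dimension counting over the field of fractions of $\C[q^{\pm1}]$ (or a specialization at generic $q$) together with the orbit descriptions already at hand.

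\textbf{Step 1: these elements lie in the kernel.} By Proposition \ref{action} (the appendix description of the Hecke action compatible with $e$), $e$ is a morphism of $H(G,I)$-modules with $e(f_\emptyset)=f_{max}$. So it suffices to check that $T_{s_{2i-1}}T_{s_{2i}}f_{max}=T_{s_{2i+1}}T_{s_{2i}}f_{max}$ in $S(X)^I$. This is a direct computation with the explicit formulas for $T_s1_x$ from the proposition describing $S(X)^I$.
\begin{itemize}
\item The orbit $f_{max}$ is the pairing $\{1,2\},\{3,4\},\dots$, and $s_{2i}$ raises $l_\sigma$, so $T_{s_{2i}}f_{max}=1_{s_{2i}\mathcal{U}_{max}}$.
\item Both $s_{2i-1}$ and $s_{2i+1}$ send the pairing $\{2i-1,2i+1\},\{2i,2i+2\}$ to the same pairing $\{2i-1,2i+2\},\{2i,2i+1\}$, and both raise the length.
\end{itemize}
Hence the two products agree. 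This is the same symmetry as in Proposition \ref{map BM}.

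\textbf{Step 2: the kernel is no larger.} By the previous proposition, $S(X_\emptyset)^I\cong H(G,I)/J$, where $J$ is generated by the $T_{s_{2i-1}}-q$ and by $\Theta_X$. By its proof, $S(X_\emptyset)^I$ has basis $1_{Iw\lambda(u)S_\emptyset}$, with $\lambda$ running over $X_*(T)/L_{max}$ and $w$ over $W/\langle s_{2i-1}\rangle$. The action of $\Theta$ makes $S(X_\emptyset)^I$ free over $\Theta/\Theta_X\cong\C[X_*(T)/L_{max}]$, and likewise we decompose $S(X)^I$ by "$\Theta$-weight". For a fixed coset of $\lambda$ we compare:
\begin{itemize}
\item the fibre $H_f\cdot 1_{I\lambda(u)S_\emptyset}\cong \C[W/W_{P}]$, of dimension $|S_{2n}|/2^n$;
\item its image in $S(X)^I$, which should be spanned by the $I$-orbits in $IW\lambda(u)Sp_{2n}$, i.e.\ anti-symmetric monomial matrices with the entries prescribed by $\lambda$. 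For generic $\lambda$ these number $(2n-1)!!\cdot$ (choices of which pair receives which value), which equals $|W|/(2^n n!)$ times the appropriate $S_n$-symmetry.
\end{itemize}

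\textbf{Step 3: matching dimensions.} The quotient of $\C[W/\langle s_{2i-1}\rangle]$ by the $H_f$-submodule generated by the elements $T_{s_{2i-1}}T_{s_{2i}}-T_{s_{2i+1}}T_{s_{2i}}$ is the module $\C[B\backslash X]$-type quotient $\C[W/\mathrm{Stab}(\mathcal{U}_{max})]$, deformed to $q$. Its dimension is $(2n-1)!!$, exactly as in the proof of Proposition \ref{map BM}, where these generators of the stabilizer were identified. Adding the $\Theta$ direction, the quotient $S(X_\emptyset)^I/K$ has the same "size" as $S(X)^I$, namely the $I$-orbits of Proposition \ref{I orbits}. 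Since $e$ is surjective (its image contains $f_{max}$, which generates), the containment $K\subset\ker e$ then forces equality.

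\textbf{Main obstacle.} The hard part is making "same size" rigorous, since these are infinite-rank modules. My plan is to filter both sides by the $\Theta$-weights modulo $W$. On $S(X)^I$ the natural index is the multiset of valuations of the matrix entries; on $S(X_\emptyset)^I$ it is the $W$-orbit of $\lambda$ modulo $L_{max}$. I would then check that $e$ and $K$ respect these filtrations, so that it suffices to compare the finite-rank graded pieces.

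In the graded pieces the subtle point is degenerate $\lambda$, where several pairs carry equal valuations. There the stabilizer grows from $\mathrm{Stab}(\mathcal{U}_{max})$ to include permutations of pairs. I expect those extra relations to arise by combining $K$ with the $\Theta$-relations through the Bernstein relation of Theorem \ref{Ber_presentation}, just as in the last step of the previous proposition. Verifying this is the step I expect to be the real work.
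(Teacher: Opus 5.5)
Your Step 1 is correct and matches the paper's first line. A small attribution point: $H(G,I)$-linearity of $e$ comes from the $G$-equivariance in Theorem~\ref{theorem3}, and $e(f_\emptyset)=f_{max}$ from Proposition~\ref{prop:matching}, not from Proposition~\ref{action}.

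The gap is the reverse inclusion $\ker e\subseteq K$, which Steps 2--3 do not prove. A surjection between two $\mathbb{C}[q^{\pm1}]$-modules of infinite rank that have ``the same size'' need not be injective. The counting only works on finite-dimensional pieces preserved by both $e$ and $K$, together with an upper bound on each piece of $S(X_\emptyset)^I/K$ coming from an explicit spanning set. You supply neither, and the grading you invoke is not available:
\begin{itemize}
\item The ideal $\Theta_X$ kills $f_\emptyset$ but not $T_{s_2}f_\emptyset$. The Bernstein relation turns $\theta_\lambda T_{s_2}$ into $T_{s_2}\theta_{s_2(\lambda)}$ plus $(q-1)$-terms, and $s_2(\lambda)\notin L_{max}$. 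So $S(X_\emptyset)^I$ is not a $\Theta/\Theta_X$-module.
\item Its free structure over $\mathbb{C}[q^{\pm1}][X_*(T)/L_{max}]$ comes from the \emph{right} action of $T$ (Proposition~\ref{normalizing}). That action has no counterpart on $S(X)^I$ and is not intertwined by $e$.
\item $S(X)^I$ is finitely generated over $\Theta$ but of infinite rank, so it has no decomposition into finite-dimensional $\Theta$-weight spaces.
\end{itemize}
A filtration by valuation multisets might be arranged, but compatibility of $e$ with it is unclear, since $e(1_{Iw\lambda(u)S_\emptyset})=1_{Iw\lambda(u)Sp_{2n}}$ is only guaranteed deep in the cone. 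Computing the graded pieces of $S(X_\emptyset)^I/K$ is then the whole problem, which you leave as ``the real work''.

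Your worry about degenerate $\lambda$ is a red herring. At $q=1$, the $I$-orbits on $X_\emptyset$ are ordered lists of (pair, integer) and those on $X$ are the unordered ones, so every fibre has $n!$ elements. The specialised relations in $K$ identify lists differing by an adjacent swap (right multiplication by $s_{2i}s_{2i-1}s_{2i+1}s_{2i}$), and these act transitively on each fibre whether or not valuations coincide.

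The missing idea is to avoid counting at generic $q$ altogether. It suffices to find a spanning set $\{b_j\}$ of $S(X_\emptyset)^I$ modulo $K$ whose specialisations at $q=1$ are orbits with pairwise distinct images in $I\backslash X$. Since $S(X)^I$ is free over $\mathbb{C}[q^{\pm1}]$ on $I\backslash X$, a relation $e(\sum_j a_j(q)b_j)=0$ with polynomial coefficients forces $(q-1)\mid a_j$ for all $j$. Dividing out and repeating shows all $a_j$ vanish.

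The paper obtains such a spanning set from $H(G,I)=\Theta H_f$. It chooses $W^X\subset W$ so that the $1_{IwS_\emptyset}$, $w\in W^X$, represent a basis of $H_ff_\emptyset/H_f(T_{s_{2i-1}}T_{s_{2i}}-T_{s_{2i+1}}T_{s_{2i}})f_\emptyset$ with $e(1_{IwS_\emptyset})=1_{IwSp_{2n}}$, and lets $V$ be their span. Then $S(X_\emptyset)^I=\Theta H_ff_\emptyset=\Theta V+K$, and at $q=1$ the orbit map $I\backslash G/S_\emptyset\to I\backslash X$ is injective on the orbits $ItwS_\emptyset$, $w\in W^X$. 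Your exact count at $q=1$ is essentially this base case. What you lack is this lifting mechanism, which is what removes the infinite-rank issue.
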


    \begin{proof}
        It is clear that these elements are in the kernel as $(T_{s_{2i-1}}T_{s_{2i}}-T_{s_{2i+1}}T_{s_{2i}})f_{max}=0$.

        Consider $H_ff_\emptyset$ modulo the space of functions in $H_f(T_{s_{2i-1}}T_{s_{2i}}-T_{s_{2i+1}}T_{s_{2i}})f_\emptyset$. We can find a basis of this quotient represented by characteristic functions $1_{IwS_\emptyset}$ for $w\in W$ such that $e(1_{IwS_\emptyset})=1_{IwSp_{2n}}$. Choose for every element in this basis some $w\in W$ such that $1_{IwS_\emptyset}$ represents the basis element. Denote by $W^X\subset W$ the collection of these representatives. Denote $V=span\{1_{IwS_\emptyset}|w\in W^X\}$. We claim that $e$ is injective on $\Theta V$. This implies the desired result.

        Reducing mod $q-1$ we get a map of $I$ orbits $I\backslash G/S_\emptyset\rightarrow I\backslash X$. The image of $\Theta V$ under this reduction is the space of orbits of the form $ItwS_\emptyset$ for $w\in W^X$. It is easy to check that on this set the map $I\backslash G/S_\emptyset\rightarrow I\backslash X$ is injective.

        Now assume that $f\in \Theta V$ satisfies $e(f)=0$. Write $f=\sum_{w\in W^X,\lambda\in X_*(T)} a_\lambda^w(q)\theta_\lambda1_{IwS_\emptyset}$ for some $a_\lambda^w\in \C[q^{\pm 1}]$. We may assume that $a_\lambda^w$ are polynomial in $q$. We know that reducing mod $q-1$ the map $e$ is injective, this means that $q-1|a_\lambda^w(q)$. We can divide by $q-1$ and get an element in the kernel of $e$ whose coefficients have lower degree. We can continue like this and get that $f=0$. 
    \end{proof}

Now we can prove Proposition \ref{ideal generaotrs}.

\begin{proof}[Proof of Proposition \ref{ideal generaotrs}]

Let $h\in H(G,I)$ be such that $hf_{max}=0$, we have $e(hf_\emptyset)=0$. Thus, we can find $h'\in H(G,I)$ in the ideal generated by the elements of the form $(T_{s_{2i-1}}T_{s_{2i}}-T_{s_{2i+1}}T_{s_{2i}})$ such that $hf_\emptyset=h'f_\emptyset$. This means that $h-h'$ is in the annihilator of $f_\emptyset$, so it is in the ideal generated by $\Theta_X$ and the elements of the form $T_{s_{2i-1}}-q$.  
    
\end{proof}

Let us recall here the definition of the Iwahori Matsumoto involution on $H(G,I)$. It will be used in Section \ref{s7}.

\begin{defn}\label{involution}
    There is a unique involution $IM:H(G,I)\rightarrow H(G,I)$ which satisfies
    \begin{enumerate}
        \item $IM(T_s)=-T^{-1}_s=-\frac{T_s+1}{q}+1$ for $s\in \Delta$ a simple reflection in the finite Weyl group.
        \item $IM(\theta_\lambda)=\theta_{\lambda^{-1}}$ for $\lambda\in X_*(T)$.
    \end{enumerate}

    For any $H(G,I)$ module $V$ we denote by $IM(V)$ the $H(G,I)$ module with the same vector space as $V$ and with the action given by $h\cdot_{IM(V)} v:=IM(h)\cdot_Vv$.
\end{defn}

The Iwahori Matsumoto involution specialized to $q=1$ is given by tensoring with the extended sign character $sgn_f$ defined below.

\begin{defn}
    Let $w\in W_{aff}$, we can write it as $w=w_0t$ for $w_0\in W$ and $t\in T/T^0$.
    Define $sgn_f:W_{aff}\rightarrow\{\pm 1\}$ by $sgn_f(w)=sgn_f(w_0t)=sgn(w_0)$.
\end{defn}

\end{section}

\begin{section}{Cellular fibration argument}\label{s6}

In this Section, we describe a filtration on $\Lambda$ and relate it to a filtration on a smaller variety $\Lambda_H$ which satisfies the cellular fibration lemma (see Lemma \ref{cfl}).

For any integer $m$ we denote by $B_{m}$ the Borel subgroup of $GL_{m}$ consisting of upper triangular matrices. We also denote by $\B_m=GL_{m}/B_{m}$ the flag variety of $GL_m$. 

Let $\Lambda_H=\{(gB^\vee,\phi)|gB^\vee\in \B,\phi\in \mathfrak{g}^{\vee*},\phi|_\mathfrak{h}=\psi,(g^{-1}\phi)|_\mathfrak{b^\vee}=0\}$.

\begin{prop}
    We have $\Lambda\cong G^\vee\times^H \Lambda_H$ as $G^\vee$ varieties.
\end{prop}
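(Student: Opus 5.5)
We prove this by a standard associated-bundle argument applied to the projection of $\Lambda$ onto its first factor. Consider the $G^\vee$-equivariant map $p:\Lambda\rightarrow G^\vee/H$, $(g_1H,g_2B^\vee,\phi)\mapsto g_1H$. Any $G^\vee$-variety $Y$ with an equivariant map to $G^\vee/H$ is canonically $G^\vee\times^H p^{-1}(eH)$, via $[g,y]\mapsto gy$. So it suffices to identify the fiber $p^{-1}(eH)$ with $\Lambda_H$ as $H$-varieties.

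\textbf{Step 1: the fiber over $eH$.} The fiber consists of triples $(H,g_2B^\vee,\phi)$ with $\phi|_{\mathfrak{h}}=\psi$ and $(g_2^{-1}\phi)|_{\mathfrak{b}^\vee}=0$. This is exactly $\Lambda_H$, and the $H$-action by left multiplication on $g_2B^\vee$ and coadjoint action on $\phi$ preserves it. The point is that $h\phi|_{\mathfrak{h}}=h\cdot(\phi|_{\mathfrak{h}})=\psi$, because $\psi\in(\mathfrak{h}^*)^H$.

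\textbf{Step 2: the comparison map.} Define $\alpha:G^\vee\times^H\Lambda_H\rightarrow\Lambda$ by $[g,(g_2B^\vee,\phi)]\mapsto(gH,gg_2B^\vee,g\phi)$.
\begin{itemize}
\item It is well defined on $H$-classes, since replacing $(g,y)$ by $(gh^{-1},hy)$ gives the same triple.
\item It lands in $\Lambda$: we have $(g^{-1}g\phi)|_{\mathfrak{h}}=\psi$ and $((gg_2)^{-1}g\phi)|_{\mathfrak{b}^\vee}=(g_2^{-1}\phi)|_{\mathfrak{b}^\vee}=0$.
\item It is $G^\vee$-equivariant.
\item The inverse is $(g_1H,g_2B^\vee,\phi)\mapsto[g_1,(g_1^{-1}g_2B^\vee,g_1^{-1}\phi)]$, which is independent of the choice of representative $g_1$ by the computation in Step 1.
\end{itemize}

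\textbf{Step 3: algebraicity.} The only nontrivial point is that this inverse is a morphism, which needs local sections of $G^\vee\rightarrow G^\vee/H$. Since $H$ is connected solvable-by-$GL_n$, we have $H=GL_n\ltimes M_n$, and $GL_n$ is special, so $G^\vee\rightarrow G^\vee/H$ is Zariski locally trivial. Locally the inverse is therefore given by the formula above with a regular section in place of $g_1$. Alternatively, one can simply observe that $\alpha$ is a bijective equivariant morphism between varieties fibered over $G^\vee/H$ with isomorphic fibers, and conclude by the standard fact $Y\cong G^\vee\times^H Y_{eH}$.

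I expect no real obstacle. The only care needed is in Step 3, where local triviality of $G^\vee\rightarrow G^\vee/H$ gives regularity of the inverse.
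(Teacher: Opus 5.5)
Your proposal is correct and uses the same map $[g,(g_2B^\vee,\phi)]\mapsto(gH,gg_2B^\vee,g\phi)$ as the paper, which simply asserts that this map is well defined and an isomorphism. Your explicit inverse and your regularity argument, via Zariski local triviality of $G^\vee\rightarrow G^\vee/H$ (using that $H\cong GL_n\ltimes M_n$ is special), just supply details the paper leaves implicit.
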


\begin{proof}
    Consider the map from $G^\vee\times^H \Lambda_H$ to $\Lambda$ that sends $(g_1,g_2B^\vee,\phi)$ to $(g_1H,g_1g_2B^\vee,g_1\phi)$. Clearly, it is well defined and is an isomorphism.
\end{proof}

We will show that $\Lambda_H$ satisfies the cellular fibration lemma over $GL_n/B_n$.

We have the projection $\tau:\Lambda_H\rightarrow \B_{2n}$. Let $W$ be the $n$ dimensional vector space inside $\C^{2n}$ of vectors $v$ such that $v_i=0$ for $i>n$. Let $U_H$ be the unipotent radical of $H$. The group $H$ acts on $W$ and its action factors through $H/U_H\cong GL_n$.

We define a map $f:\B_{2n}\rightarrow\B_n$. 

We think about an element of $\B_{2n}$ as a full flag $\mathcal{F}=(0=V_0\subset V_1\subset...\subset V_{2n}=\C^{2n})$. We define a flag by erasing repetition in $0=V_0\cap W\subset V_1\cap W\subset...\subset V_{2n}\cap W=W$. This is the element $f(\mathcal{F})$ of $\B_n$.

\begin{prop}\label{affine fibers}
    Let $\s$ be a $H$ orbit on $\B_{2n}$. The map $f:\s\rightarrow\B_n$ is an affine fibration. Meaning, it is a fibration and the fibers are isomorphic to some $\mathbb{A}^k$.
\end{prop}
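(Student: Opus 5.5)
Write $W$ for the $n$-dimensional subspace (as in the text), $H=LU_H$ with $L\cong GL_n$ the diagonal block group $\{\mathrm{diag}(g,g)\}$ and $U_H=\{\begin{pmatrix}1&a\\0&1\end{pmatrix}\}$. The map $f$ is $H$-equivariant, where $H$ acts on $\B_n$ through $H\to H/U_H\cong GL_n$, and $GL_n$ acts transitively on $\B_n$. So $f|_{\s}:\s\to\B_n$ is an $H$-equivariant map onto a homogeneous space. It is therefore a locally trivial fibration, namely $\s\cong H\times^{H_0}f^{-1}(F_0)\cap\s$ for a base point $F_0$ with stabilizer $H_0\subset H$. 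Local triviality in the Zariski topology comes from the Bruhat big cell sections of $GL_n\to GL_n/B_n$. It then suffices to show that a single fiber $f^{-1}(F_0)\cap\s$ is an affine space.

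Take $F_0$ to be the standard flag of $W$, so that $H_0=\{\begin{pmatrix}g&a\\0&g\end{pmatrix}: g\in B_n\}$. The fiber $Y=f^{-1}(F_0)\cap\s$ is a single $H_0$-orbit. To see this, note that $\s=H\cdot x$ for some $x\in Y$, and $h x\in Y$ forces $h\in H_0\,\mathrm{Stab}$-coset. Hence $Y=H_0/(H_0\cap \mathrm{Stab}_H(x))$. Since $H_0$ is connected and solvable, $Y$ is a homogeneous space of a connected solvable group.

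To get an affine space I will use the standard fact that an orbit of a connected solvable group $R=TU$ is isomorphic to $T/T_x\times\mathbb{A}^k$. Concretely, $Y\cong U_0/(U_0\cap\mathrm{Stab})\times (\text{torus orbit})$, with $U_0$ the unipotent radical of $H_0$, and unipotent orbits in affine varieties are isomorphic to affine spaces. The key step, and the main obstacle, is showing that the torus part is trivial. This means showing that the maximal torus $T_n=\{\mathrm{diag}(t,t)\}$ of $H_0$ has a fixed point in $Y$, or equivalently that $Y$ contains a $T_n$-fixed flag whose stabilizer contains all of $T_n$. I would check this using the explicit representatives from the Proposition describing $B^\vee$-orbits on $G^\vee/H$ (the inverse picture of $H$-orbits on $\B_{2n}$ indexed by $S_{2n}/S_n$). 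Each $H$-orbit contains a coordinate flag $wB_{2n}$. Such a flag is fixed by all diagonal matrices, in particular by $T_n$. After translating by $L$ we may assume $f(wB_{2n})=F_0$, since $W\cap(\text{coordinate flag})$ gives a coordinate flag of $W$, which is a $W$-permutation of the standard one, and $L$ contains the permutation matrices $\mathrm{diag}(\sigma,\sigma)$. Thus $Y$ contains a $T_n$-fixed point $y_0$, and $Y=U_0\cdot(T_n y_0)=U_0\cdot y_0$ is a single $U_0$-orbit.

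Finally, an orbit of a unipotent group acting on the quasi-projective variety $\B_{2n}$ is isomorphic to $U_0/\mathrm{Stab}_{U_0}(y_0)$. A quotient of a unipotent group by a closed (connected, in characteristic $0$) subgroup is isomorphic to an affine space $\mathbb{A}^k$, by the Rosenlicht/Kostant–Rosenlicht theorem. If necessary, connectedness of the stabilizer follows because stabilizers in unipotent groups over $\C$ are unipotent, hence connected. Combining this with the equivariant local triviality gives that $f:\s\to\B_n$ is a locally trivial fibration with fibers $\mathbb{A}^k$.
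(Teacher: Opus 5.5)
Your proof is correct, and after the common first step it takes a different route from the paper.

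\textbf{Common first step.} Both arguments use transitivity of $H$ on $\mathcal{B}_n$ to reduce to the fiber over the standard flag $F_0$ of $W$. Both identify that fiber with a single orbit of $H_0=\{\left(\begin{smallmatrix} b&a\\ 0&b\end{smallmatrix}\right): b\in B_n\}$, which is the paper's $L$.

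\textbf{Where the routes diverge.} The paper fibers this orbit a second time, using the map $g$ that sends a flag to its image in $\mathbb{C}^{2n}/W$. The base is a $B_n$-orbit in $\mathcal{B}_n$, i.e.\ a Bruhat cell. The fibers are asserted to be copies of $W^n$, coming from the $a$-block, and the orbit is declared to be a product. You instead find a $T_n$-fixed point in the fiber: a coordinate flag, moved into $f^{-1}(F_0)$ by some $\mathrm{diag}(\tau,\tau)$. This makes the fiber a single orbit of the unipotent radical $U_0$, and you then use that $U_0/\mathrm{Stab}_{U_0}(y_0)\cong\mathbb{A}^k$ in characteristic $0$.

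\textbf{What each approach buys.} The paper's route is more explicit about the shape of the fiber, but as written it is imprecise. The fiber of $g$ over a quotient flag $y$ is an orbit of $\mathrm{Stab}_{H_0}(y)$, which contains a torus and part of $U_n$ besides the $a$-block. So it is not always $W^n$: already for $n=1$ the closed orbit $\{\mathbb{C}e_1\}$ gives a point. The product decomposition is also left unjustified, and making that step rigorous essentially needs your fixed-point/unipotent argument anyway.

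Your version treats all orbits uniformly and makes Zariski local triviality explicit via big-cell sections. The cost is citing the theorem on unipotent homogeneous spaces and the existence of a coordinate flag in every $H$-orbit. The latter is true: it follows from $P\backslash G^\vee/B^\vee$ together with the fact that $\Delta GL_n$-orbits on $\mathcal{B}_n\times\mathcal{B}_n$ contain pairs of coordinate flags, which is what underlies the $S_{2n}/S_n$ parametrization in Section 3. Your passing claim that solvable-group orbits are $T/T_x\times\mathbb{A}^k$ is never actually used, so you can drop it.
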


\begin{proof}
    The group $H$ acts acts transitively on $\B_n$ so it is enough to compute the fiber over a specific point.

    Let $e_1,...,e_{2n}$ be the standard basis of $\C^{2n}$.
    
    Consider the flag $\mathcal{G}=(0\subset \C e_1\subset \C e_1\oplus \C e_2\subset ...\subset \C e_1\oplus... \oplus \C e_n=W)\in \B_n$.

    Let $B\in \B_{2n}$ the Borel subgroup which corresponds to the flag $$(0\subset \C e_1\subset \C e_1\oplus \C e_2\subset ...\subset \C e_1\oplus... \oplus \C e_{2n})$$
        
    Let $\mathcal{F}=(0=V_0\subset V_1\subset...\subset V_{2n}=\C^{2n})\in \B_{2n}$ be a flag such $f(\mathcal{F})=\mathcal{G}$.

    It is enough to show that the orbit of $\mathcal{F}$ under the group of matrices of the form
    
    $L=\{\begin{pmatrix}
        b & a \\
        0 & b
    \end{pmatrix}|b\in B_n,a\in M_{n\times n}\}$ is an affine space. Denote this space by $L\mathcal{F}$.

    There is another map $g:\B_{2n}\rightarrow \B_n$ given by taking the flag mod $W$ to get a full flag in $V/W$. The fibers of $L\mathcal{F}$ under this map are all isomorphic to $W^n$. The action of $H$ on $g(L\mathcal{F})$ factors through $B_n$. It is well known that the orbits of $B_n$ on $\B_n$ are affine. Overall, we get that $L\mathcal{F}$ is affine, it is the product of $W^n$ and some orbit of $B_n$ on $\B_n$. 
\end{proof}

\begin{Remark}
    Proposition \ref{affine fibers} is an analogue of Lemma 6.2.5 in \cite{Chriss1997RepresentationTA}. 
\end{Remark}

\begin{defn}
    Chose some linear order on the relevant $H$ orbits on $\B_{2n}$ that is compatible with the Bruhat order, $\s_0<\s_1<...<\s_m=\s_{max}$. Here, $\s_0$ is the minimal relevant orbit and $\s_{max}$ is the open orbit. 

     Define $\Lambda^i_H=\cup_{j\leq i} \tau^{-1}(\s_j)$ and $\Lambda_i=G^\vee\times^H \Lambda^i_H$.

\end{defn}

\begin{prop}
    The map $f\circ \tau:\Lambda_H\rightarrow \B_n$ is a $H$ equivariant map that satisfies the cellular fibration lemma.
\end{prop}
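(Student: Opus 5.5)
We verify the three conditions of Lemma \ref{cfl} for the filtration $\emptyset\subset\Lambda^0_H\subset\Lambda^1_H\subset\dots\subset\Lambda^m_H=\Lambda_H$, with base $\B_n$ and group $H$. Here $R(H)=R(GL_n)$ because $U_H$ is unipotent. Condition 3 is classical: $K^{GL_n}(\B_n)\cong K^{B_n}(pt)=R(T_n)$ is free over $R(GL_n)$, and $K^H(\B_n)\cong K^{GL_n}(\B_n)$ since $H$ acts through $H/U_H\cong GL_n$. The real work is conditions 1 and 2.

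\emph{Closedness and stability.} Each $\Lambda^i_H=\tau^{-1}(\s_0\cup\dots\cup\s_i)$ is $H$-stable because the $\s_j$ are $H$-orbits and $\tau$ is $H$-equivariant. For closedness I use that the order is compatible with the Bruhat order, so $\overline{\s_i}\subset\s_0\cup\dots\cup\s_i\cup(\text{non-relevant orbits})$. Non-relevant orbits have empty preimage under $\tau$, so $\tau^{-1}(\overline{\s_0\cup\dots\cup\s_i})=\Lambda^i_H$, and this set is closed. Local triviality of $f\circ\tau$ on each $\Lambda^i_H$ is automatic: $f\circ\tau$ is $H$-equivariant and $H$ acts transitively on $\B_n$, so $\Lambda^i_H\cong H\times^{Q}(f\circ\tau)^{-1}(pt)$, where $Q$ is the stabilizer of a flag in $\B_n$. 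This is locally trivial because $H\to H/Q$ is.

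\emph{Affine fibration on strata.} The stratum is $\Lambda^{i}_H\setminus\Lambda^{i-1}_H=\tau^{-1}(\s_i)$, and $f\circ\tau$ factors as $\tau^{-1}(\s_i)\to\s_i\to\B_n$. Proposition \ref{affine fibers} already shows $f:\s_i\to\B_n$ is an affine fibration, so it remains to show $\tau:\tau^{-1}(\s_i)\to\s_i$ is a vector bundle, or at least an affine bundle. Fix $gB^\vee\in\s_i$. The fiber is
\[\{\phi\in\mathfrak{g}^{\vee*}:\phi|_{\mathfrak h}=\psi,\ \phi|_{g\mathfrak b^\vee}=0\}.\]
This is nonempty because $\s_i$ is relevant, so it is a torsor over the vector space $(\mathfrak h+g\mathfrak b^\vee)^\perp$. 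The dimension of that space is constant along the $H$-orbit $\s_i$. The fibration is $H$-equivariant over a homogeneous base, so it is an affine bundle $H\times^{H_x}(\text{affine space})$. The composite of two affine fibrations over $\B_n$ is again Zariski locally an affine space bundle, hence an affine fibration. The same equivariant argument over the homogeneous base $\B_n$ gives local triviality.

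\emph{Main obstacle.} The delicate point is the closedness step: the ordering must make every relevant orbit in $\overline{\s_i}$ come earlier. Choosing a linear extension of the closure order on $H$-orbits in $\B_{2n}$, restricted to relevant ones, handles this. A second technical point is checking that the composite fibration is locally trivial with affine-space fibers in the sense required by Lemma \ref{cfl}. I would handle this by restricting to the fiber over the base point $\mathcal G\in\B_n$. That fiber is the preimage under $\tau$ of the affine space $L\mathcal F$ from the proof of Proposition \ref{affine fibers}. It is an affine bundle over an affine space, hence trivial as a bundle, hence itself an affine space.
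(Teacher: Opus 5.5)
\textbf{Same route as the paper, with a genuine gap shared by both.} You follow the paper's proof step for step: closed $H$-stable pieces, local triviality from transitivity of $H$ on $\mathcal{B}_n$, each stratum as the affine bundle $\tau^{-1}(\mathcal{O}_j)\to\mathcal{O}_j$ composed with the affine fibration $f|_{\mathcal{O}_j}$ of Proposition \ref{affine fibers}, and freeness of $K^{GL_n}(\mathcal{B}_n)$. Your handling of closedness and of the fibre of $\tau$ is more careful than the paper's.

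\textbf{Where it fails.} The step you call automatic --- that $\Lambda_H^i\cong H\times^{Q}(f\circ\tau)^{-1}(pt)$ is locally trivial --- presupposes that $f\circ\tau$ is a morphism on $\Lambda_H^i$. For $n\ge 3$ it is not. The map $f$ is algebraic only where the jump pattern $i\mapsto\dim(V_i\cap W)$ is constant, i.e.\ on each $H$-orbit separately, and this pattern changes between relevant orbits so that $f$ jumps.

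\textbf{Counterexample.} Let $\phi_0(Z)=\mathrm{tr}(Z_{12})$, the trace of the upper right $n\times n$ block. Then $\phi_0|_{\mathfrak{h}}=\psi$. Under the trace form, $\phi_0$ is the nilpotent $Y_0$ with $Y_0e_j=e_{n+j}$ and $Y_0e_{n+j}=0$ for $j\le n$, so $(\mathcal{F},\phi_0)\in\Lambda_H$ iff $Y_0V_i\subset V_{i-1}$ for all $i$.

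Take $n=3$, and let $\mathcal{F}_s$ be the flag spanned successively by $e_4,\ e_1+se_6,\ e_5,\ e_2,\ e_6,\ e_3$. Since $Y_0$ sends these vectors to $0,e_4,0,e_5,0,e_6$, we have $(\mathcal{F}_s,\phi_0)\in\Lambda_H$ for every $s$. The flags $\mathcal{F}_s$ with $s\neq 0$ are $H$-conjugate via $\mathrm{diag}(t,t)$ with $t=\mathrm{diag}(1,1,\lambda)$, so they all lie in one stratum.

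\begin{itemize}
\item For $s\neq0$ the intersections $V_i\cap W$ are $0,0,0,\langle e_2\rangle,\langle e_1,e_2\rangle,W$, so $f(\mathcal{F}_s)=(\langle e_2\rangle\subset\langle e_1,e_2\rangle)$.
\item For $s=0$ they are $0,\langle e_1\rangle,\langle e_1\rangle,\langle e_1,e_2\rangle,\langle e_1,e_2\rangle,W$, so $f(\mathcal{F}_0)=(\langle e_1\rangle\subset\langle e_1,e_2\rangle)$.
\end{itemize}

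Thus $f\circ\tau$ jumps along an algebraic curve in $\Lambda_H$ that passes from one stratum into a smaller stratum in its closure. It is therefore not a morphism on $\Lambda_H$, nor on any closed $\Lambda^i_H$ containing the stratum of $\mathcal{F}_1$. For $n=2$ the problem does not arise: $V_3\cap W$ is a line for every relevant flag, and $f(\mathcal{F})=V_3\cap W$.

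\textbf{What you have actually proved, and what is missing.} Your argument correctly establishes a stratum-wise statement. Each $\tau^{-1}(\mathcal{O}_j)$, with its own map $f\circ\tau|_{\tau^{-1}(\mathcal{O}_j)}$, is an $H$-equivariant affine fibration over $\mathcal{B}_n$, and $K^H(\mathcal{B}_n)$ is free over $R(H)$. Passing from this to the conclusions of Lemma \ref{cfl} needs one of two further ideas, which you do not supply:
\begin{itemize}
\item a version of the lemma in which the map to the base may change from stratum to stratum, together with a check that its proof survives without a global map (vanishing of the connecting maps in the localization sequences, and base change to subgroups $A$); or
\item a different, genuinely algebraic $H$-equivariant base map.
\end{itemize}
The paper's own proof has the same gap: it only asserts that the stratum-wise affine fibrations ``have to glue to a locally trivial fibration over $\mathcal{B}_n$.''
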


\begin{proof}

    We claim that $\Lambda_H^0\subset ...\subset \Lambda^m_H=\Lambda_H$ is a cellular fibration. We need to check the three conditions appearing in Lemma \ref{cfl}.

    It is clear that each $\Lambda^i_H$ is $H$ stable and closed. We need to check that $f\circ \tau:\Lambda^i_H\rightarrow \B_n$ is a locally trivial fibration. $H$ acts transitively on $\B_n$ so it is enough to show the second conditions, meaning that each $\tau^{-1}(\s_j)$ is an affine fibration over $\B_n$. All these affine fibrations have to glue to a locally trivial fibration over $\B_n$.

    Now we turn to looking at $f\circ\tau :\tau^{-1}(\s_j)\rightarrow \B_n$. The map $\tau:\tau^{-1}(\s_j)\rightarrow \s_i$ is an affine fibartion. Fix $g\in G^\vee$ such that $\s=HgB^\vee$. The fibers of $\tau:\tau^{-1}(\s_j)\rightarrow \s_i$ are isomorphic to $\{\phi\in \mathfrak{g}^{\vee *}|(g^{-1}\phi)|_\mathfrak{h}=\psi,\phi|_\mathfrak{h}=\psi\}$ which is an affine space.

    By Proposition \ref{affine fibers}, the map $f:\s_j\rightarrow\B_n$ is an affine fibration. The composition of two affine fibrations is an affine fibration.
    
    Lastly, we need to show that $K^H(\B_n)$ is a free $R(H)$ module. 

    Let $U_H$ be the unipotent radical of $H$. By Subsection 5.2.18 of \cite{Chriss1997RepresentationTA} we may replace $H$ by $H/H_U\cong GL_n$. We know that $K^{GL_n}(\B_n)=K^{GL_n}(GL_n/B_n)=R(B_n)$ which is a free $R(GL_n)$ module.
    
\end{proof}

From the cellular fibration Lemma we deduce the following.

\begin{cor}
    $K^H(\Lambda_H)$ is a free $R(H)$ module. In fact, it is a freely   generated by the structure sheaves of its irreducible components as a $K^H(\B_n)=R(B_n)$ module.

    We also have that $K^{G^\vee}(\Lambda)=K^H(\Lambda_H)$ is a $R(B_n)$ module freely generated by the structure sheaves of its irreducible components.
\end{cor}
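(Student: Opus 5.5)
The corollary should follow directly from the cellular fibration lemma (Lemma \ref{cfl}) applied to the $H$-equivariant map $f\circ\tau:\Lambda_H\rightarrow\B_n$ with the filtration $\Lambda_H^0\subset\dots\subset\Lambda_H^m$ established in the preceding proposition. Part 2 of that lemma already gives that $K^H(\Lambda_H)$ is a free $R(H)$ module. For the finer statement, I would use the canonical short exact sequences of part 1,
$$0\rightarrow K^H(\Lambda_H^{i-1})\rightarrow K^H(\Lambda_H^{i})\rightarrow K^H(\tau^{-1}(\s_i))\rightarrow 0,$$
and then compute each graded piece $K^H(\tau^{-1}(\s_i))$.

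Since $\tau^{-1}(\s_i)\rightarrow\B_n$ is an affine fibration, Thom isomorphism (homotopy invariance of equivariant $K$-theory for affine bundles) gives that pullback $K^H(\B_n)\rightarrow K^H(\tau^{-1}(\s_i))$ is an isomorphism. Hence $K^H(\tau^{-1}(\s_i))$ is free of rank one over $K^H(\B_n)$, generated by the structure sheaf $\s_{\tau^{-1}(\s_i)}$. The structure sheaf of the closure $\overline{\tau^{-1}(\s_i)}$ (an irreducible component of $\Lambda_H$, using the description of components via relevant orbits from Section \ref{s3}) is supported in $\Lambda_H^i$ because the order is compatible with Bruhat order. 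It restricts to this generator on the open stratum. So lifting generators stratum by stratum and inducting on $i$ along the split exact sequences shows that $K^H(\Lambda_H)$ is freely generated over $K^H(\B_n)$ by the classes $[\s_{\overline{\tau^{-1}(\s_i)}}]$. To make these classes live in $K^H(\Lambda_H^i)$, one uses that the closure is $H$-stable and closed and contained in $\Lambda_H^i$. The one point to watch is that the module structure over $K^H(\B_n)$ is via pullback along $f\circ\tau$, and the extensions are of $K^H(\B_n)$-modules, so freeness passes through.

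Next I would identify $K^H(\B_n)$ with $R(B_n)$. Following Subsection 5.2.18 of \cite{Chriss1997RepresentationTA}, the unipotent radical $U_H$ can be dropped, so $K^H(\B_n)=K^{GL_n}(GL_n/B_n)=R(B_n)$ by induction equivalence. This is free over $R(GL_n)=R(H)$, which recovers freeness over $R(H)$ as well.

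Finally, the isomorphism $\Lambda\cong G^\vee\times^H\Lambda_H$ together with the induction isomorphism $K^{G^\vee}(G^\vee\times^H Y)\cong K^H(Y)$ gives $K^{G^\vee}(\Lambda)=K^H(\Lambda_H)$. Under this isomorphism the structure sheaf of $G^\vee\times^H\overline{\tau^{-1}(\s_i)}$ corresponds to that of $\overline{\tau^{-1}(\s_i)}$, and these are exactly the irreducible components of $\Lambda$. I expect the main obstacle to be the bookkeeping in the second paragraph: checking that the closure classes, rather than extensions by zero of open strata (which are not defined), restrict correctly and give a basis. This works because restriction to the open stratum is the surjection in the exact sequence, and any lift of a basis of the quotient, combined with a basis of the sub, is a basis.
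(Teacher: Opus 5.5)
Your proposal is correct and is essentially the paper's argument. The paper deduces the corollary in one line from the cellular fibration lemma, applied to $f\circ\tau:\Lambda_H\rightarrow\mathcal{B}_n$ with the filtration $\Lambda_H^i$. You have made explicit the steps it leaves implicit: the Thom isomorphism on each stratum $\tau^{-1}(\mathcal{O}_i)$, lifting the rank-one generators by the structure sheaves of the closures, and the identifications $K^H(\mathcal{B}_n)=R(B_n)$ (by dropping $U_H$) and $K^{G^\vee}(\Lambda)=K^H(\Lambda_H)$ (by induction equivalence).
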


We have a version of the above corollary also for $G^\vee\times \C^\times$ equivariant $K$ theory.

First we need to describe the $\mathbb{G}_m$ action on $M^\vee$.

The action of $\mathbb{G}_m$ on $\mathfrak{g}^{\vee*}$ is given by $z,v\rightarrow z^2v$.  

\begin{Remark}\label{linear action}
    This differs from the action defined in \cite{Chriss1997RepresentationTA} and \cite{Kazhdan1987ProofOT}. The action defined in these works is $z,v\rightarrow zv$. 
\end{Remark}

\begin{defn}
    Consider the map $\rho:\mathbb{G}_m\rightarrow G^\vee$ given by $z\rightarrow diag(z,...,z,z^{-1},...,z^{-1})$ where $diag(z,...,z,z^{-1},...,z^{-1})$ is a diagonal matrix whose first $n$ elements are $z$ and last $n$ elements are $z^{-1}$.
    
    Notice that $\rho(z)$ normalizes $H$. For $h=\begin{pmatrix}
        g & a  \\
        0 & g
        \end{pmatrix}\in H$,  $\rho(z)h\rho(z)^{-1}=\begin{pmatrix}
        g & z^2a  \\
        0 & g
        \end{pmatrix}$. 

    We define an action of $\mathbb{G}_m$ on $M^\vee$, $z\in \mathbb{G}_m$ sends $(gH,\phi)\in M^\vee$ to $(g\rho(z)H,z^2\phi)\in M^\vee$.

    This action obviously commutes with the action of $G^\vee$.
\end{defn}

We gave a description of $K^{G^\vee}(\Lambda)=K^H(\Lambda_H)$ using the cellular fibration lemma. Now we have a $G^\vee\times\mathbb{G}_m$ action on $\Lambda$ and we want to have a similar description for $K^{G^\vee\times\mathbb{G}_m}(\Lambda)$. For that we need to enlarge $H$. 

The proof of the following proposition is straightforward.

\begin{prop}\label{semi direct product}
    We have a $\mathbb{G}_m$ action on $H$, $z\in \mathbb{G}_m$ acts as conjugation by $\rho(z)$. Define $\Tilde{H}=H\rtimes \mathbb{G}_m$ with the described action. 

    There is a $\Tilde{H}$ action on $\Lambda_H$ extending the $H$ action such that $$(1,z)\cdot(gB,\phi):=(\rho(z^{-1})g\rho(z)B,z^2\phi)$$

    We can embed $\Tilde{H}$ into $G^\vee\times\mathbb{G}_m$ and we have  $\Lambda=\Lambda_H\times^{\Tilde{H}}(G^\vee\times\mathbb{G}_m)$
\end{prop}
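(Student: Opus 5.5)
The plan is to obtain everything from the general principle that a space fibered equivariantly over a homogeneous space is induced from its fiber. Let $G^\vee\times\mathbb{G}_m$ act on $\Lambda$ diagonally: $G^\vee$ acts in the obvious way, and $\mathbb{G}_m$ acts through the action on $M^\vee$ defined above together with scaling $\phi\mapsto z^2\phi$. The projection $\Lambda\rightarrow G^\vee/H$, $(g_1H,g_2B^\vee,\phi)\mapsto g_1H$, is then equivariant. Here $(g',z)$ acts on $G^\vee/H$ by $gH\mapsto g'g\rho(z)H$, which is transitive. The first step is to compute the stabilizer of the base point $H$. We have $g'\rho(z)\in H$ exactly when $g'=h\rho(z)^{-1}$ with $h\in H$. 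So the stabilizer is the image of
$$\iota:\Tilde{H}\rightarrow G^\vee\times\mathbb{G}_m,\qquad (h,z)\mapsto (h\rho(z)^{-1},z).$$

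The second step is to check that $\iota$ is an injective homomorphism. This is a one-line computation:
$$h_1\rho(z_1)^{-1}h_2\rho(z_2)^{-1}=h_1\bigl(\rho(z_1)^{-1}h_2\rho(z_1)\bigr)\rho(z_1z_2)^{-1}.$$
This matches the group law of the semidirect product, provided the twisting action of $\mathbb{G}_m$ on $H$ is taken as $z\cdot h=\rho(z)^{-1}h\rho(z)$, i.e. conjugation by $\rho(z)^{\mp1}$. That is consistent with the statement up to replacing $z$ by $z^{-1}$, and I will fix the convention so that the formulas in the statement hold. Injectivity is clear from the second coordinate together with $H\cap\{\rho(z)\}=1$.

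The third step is to identify the fiber of $\Lambda\rightarrow G^\vee/H$ over $H$ with $\Lambda_H$, via $(H,gB^\vee,\phi)\leftrightarrow(gB^\vee,\phi)$, and to transport the stabilizer action to it. The element $\iota(1,z)=(\rho(z)^{-1},z)$ sends $(H,gB^\vee,\phi)$ to $(H,\rho(z)^{-1}gB^\vee,z^2\,\mathrm{Ad}^*_{\rho(z)^{-1}}\phi)$. Since $\rho(z)\in B^\vee$, we have $\rho(z)^{-1}gB^\vee=\rho(z)^{-1}g\rho(z)B^\vee$, which is the formula in the statement. In the $\phi$ coordinate the coadjoint factor is also present, and I read the statement's $z^2\phi$ as including it.

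The one real check, and the main obstacle, is that the fiber is preserved, i.e. that the condition $\phi|_{\mathfrak h}=\psi$ survives. For $x=\begin{pmatrix}g&a\\0&g\end{pmatrix}\in\mathfrak h$, conjugation by $\rho(z)$ scales the block $a$ by $z^{2}$, so $\mathrm{Ad}_{\rho(z)}x$ has off-diagonal block $z^2a$. Hence $(\mathrm{Ad}^*_{\rho(z)^{-1}}\phi)(x)=\phi(\mathrm{Ad}_{\rho(z)}x)$ equals $z^{2}\mathrm{tr}(a)$ when $\phi|_{\mathfrak h}=\psi$. This is the wrong power to cancel against the $z^2$ scaling. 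So I will have to choose the sign of the exponent in $\rho$ (equivalently, use $\rho(z)$ instead of $\rho(z)^{-1}$ in $\iota$, or $z^{-2}$ scaling) so that the factor $z^{\pm2}$ from $\mathrm{Ad}$ cancels the $z^2$ scaling and $\psi$ is fixed. The conditions $(g^{-1}\phi)|_{\mathfrak b^\vee}=0$ are clearly preserved, because they are linear and stable under the action.

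With these conventions fixed, the standard map
$$(G^\vee\times\mathbb{G}_m)\times^{\Tilde H}\Lambda_H\rightarrow\Lambda,\qquad [(g',z),(gB^\vee,\phi)]\mapsto (g',z)\cdot(H,gB^\vee,\phi)$$
is well defined, equivariant, and bijective onto $\Lambda$, since the action on the base is transitive with stabilizer $\iota(\Tilde H)$. It is an isomorphism of varieties because $G^\vee\times\mathbb{G}_m\rightarrow G^\vee/H$ is locally trivial, exactly as in the proof that $\Lambda\cong G^\vee\times^H\Lambda_H$. Restricting $\Tilde H$ to $H\times\{1\}$ recovers that earlier isomorphism, so the $\Tilde H$ action extends the $H$ action.
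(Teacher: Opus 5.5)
Your argument is correct and is the same induction-from-the-stabilizer argument the paper has in mind (it only calls the proof straightforward, and handles the $H$-version by the explicit map $(g_1,g_2B^\vee,\phi)\mapsto(g_1H,g_1g_2B^\vee,g_1\phi)$). The sign problem you found is also real: already on $M^\vee$, the paper's action $(g\rho(z)H,z^2\phi)$ turns $\psi$ into $z^4\psi$. Of your options, only flipping $\rho$, i.e.\ using $(g\rho(z)^{-1}H,z^2\phi)$, keeps the weight-$2$ action on $\mathfrak{g}^{\vee*}$ that $St$ carries and that the convolution needs. With that fix the twist is conjugation by $\rho(z)$ as stated, but the fiber action becomes $(gB^\vee,\phi)\mapsto(\rho(z)gB^\vee,\,z^2\mathrm{Ad}^*_{\rho(z)}\phi)$ rather than the displayed formula, so not all of the statement's formulas can hold at once. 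One harmless slip: $H\cap\rho(\mathbb{G}_m)=\{\pm I\}$ is not trivial, but injectivity of $\iota$ already follows from the second coordinate alone.
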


Using the same cellular fibration argument and the identity $K^{G^\vee\times\mathbb{G}_m}(\Lambda)=K^{\Tilde{H}}(\Lambda_H)$ we get that $K^{G^\vee\times\mathbb{G}_m}(\Lambda)$ is a free $R(B_n\times \mathbb{G}_m)$ module generated by the structure sheaves of the irreducible components of $\Lambda$.

\end{section}

\begin{section}{The module $K^{G^\vee\times \C^\times}(\Lambda)$}\label{s7}

In this section, we study $K^{G^\vee\times \C^\times}(\Lambda)$ and prove part four of Theorem \ref{relative KL}. The same arguments also give part three of Theorem \ref{relative KL}.

Denote by $T^\vee$ the complex torus dual to $T$. We think about $T^\vee$ as the torus of complex diagonal matrices inside $G^\vee$.

First, we consider the action of $\Theta\subset H(G,I)$ on $K^{G^\vee\times \C^\times}(\Lambda)$.

\begin{defn}
    Let $\lambda$ be a character of $T^\vee$. We denote by $L_\lambda$ the corresponding line bundle on $\B$.
\end{defn}

Let $St_0\subset St$ be the preimage of the diagonal $\Delta \B$ under the projection $\tau:St\rightarrow \B\times \B$. Restricting $\tau$ to $St_0$ we have a map to $\tau:St_0\rightarrow\B$. Let $L_\lambda^{St}=\tau^*L_\lambda$. We can extend this to the entire $St$ and get an element of $K^{G^\vee\times\C^\times}(St)$ which we also denote by $L_\lambda^{St}$. We can think about the character $\lambda$ as a cocharacter of $T$. The element $\theta_\lambda^{-1}$ acts on $K^{G^\vee\times\C^\times}(\Lambda)$ by convolution with $L^{St}_\lambda$ (see Subsection 7.6 of \cite{Chriss1997RepresentationTA}).

We perform a similar construction for $\Lambda$.

\begin{defn}
We have $\tau:\Lambda\rightarrow G^\vee/H\times \B$. Consider the pull back of $L_\lambda$ to the product $G^\vee/H\times \B$ and then using $\tau$ to $\Lambda$, denote the resulting sheaf by $L_\lambda^\Lambda\in K^{G^\vee\times\C^\times}(\Lambda)$.

We can do something similar for every  relevant $G^\vee$ orbit separately. For every $G^\vee$ orbit $\s_j$ on $G^\vee/H\times \B$,  we consider the projection $\tau^{-1}(\s_j)\rightarrow \B$. Pulling back $L_\lambda$ we get an element $L^{j}_\lambda\in K^{G^\vee\times\C^\times}(\tau^{-1}(\s_j))$. Notice that many $\lambda$ will give the same element. 
\end{defn}

\begin{prop}\label{torus action}
Let $\lambda\in X_*(T)=X^*(T^\vee)$.
\begin{enumerate}
    \item The action of $\theta_\lambda$ on $K^{G^\vee\times\C^\times}(\Lambda)$ is given by tensoring with  $L_{\lambda^{-1}}^\Lambda$.
    \item For every $i$, the action of $\theta_\lambda$ preserves $K^{G^\vee\times\C^\times}(\Lambda_i)$.
    \item The action of $\theta_\lambda$ commutes with the map $K^{G^\vee\times\C^\times}(\Lambda_i)\rightarrow K^{G^\vee\times\C^\times}(\Lambda_i\setminus \Lambda_{i-1})$, where the action on $K^{G^\vee\times\C^\times}(\Lambda_i\setminus \Lambda_{i-1})=K^{G^\vee\times\C^\times}(\tau^{-1}(\s_j))$ is given by tensoring with $L^{i}_{\lambda^{-1}}$.
\end{enumerate}
    
\end{prop}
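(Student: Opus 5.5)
The plan is to reduce everything to the convolution formalism of Subsection \ref{convultion} and the standard computation for $St$ (Subsection 7.6 of \cite{Chriss1997RepresentationTA}). By that computation, $\theta_\lambda^{-1}$ acts on $K^{G^\vee\times\C^\times}(St)$-modules by convolution with $L^{St}_\lambda$. Since $\theta_\lambda^{-1}=\theta_{\lambda^{-1}}$, it suffices to prove the following: for any $\mathcal{F}\in K^{G^\vee\times\C^\times}(\Lambda)$, one has $\mathcal{F}*L^{St}_\lambda=\mathcal{F}\otimes L^\Lambda_\lambda$. Replacing $\lambda$ by $\lambda^{-1}$ then gives part 1.

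For this identity we compute $\pi_{2*}(\pi_1^*\mathcal{F}\otimes\pi_3^*L^{St}_\lambda)$. The class $L^{St}_\lambda$ is the pushforward from $St_0\cong\Tilde{N}$ (embedded diagonally) of $\tau^*L_\lambda$. By base change (Tor-independence holds because $St_0\hookrightarrow St$ is the pullback of the diagonal $\Delta\B\hookrightarrow\B\times\B$, and we use the restriction-with-supports formalism of \cite{Chriss1997RepresentationTA}), we get
\[
\pi_3^{*}L^{St}_\lambda=\iota_*\big(p^*L_\lambda\big),
\]
where $\iota:\Lambda\hookrightarrow M^\vee\times_{\mathfrak{g}^{\vee*}}\Tilde{N}\times_{\mathfrak{g}^{\vee*}}\Tilde{N}$ is the partial diagonal $(x,B,\phi)\mapsto(x,B,B,\phi)$, and $p$ is the projection to $\B$. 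The maps $\pi_1\circ\iota$ and $\pi_2\circ\iota$ are both the identity of $\Lambda$. The projection formula therefore gives $\pi_{2*}(\pi_1^*\mathcal{F}\otimes\iota_*p^*L_\lambda)=\mathcal{F}\otimes p^*L_\lambda$. Finally, $p^*L_\lambda$ is exactly $L^\Lambda_\lambda$, since $p$ factors through $\tau$ followed by the projection $G^\vee/H\times\B\to\B$. The main obstacle is making this base-change step rigorous with the correct Tor-independence and supports, because the fiber products are singular. I would follow the argument of Lemma 7.6.? in \cite{Chriss1997RepresentationTA} verbatim, since it uses only that the first factor is any $G^\vee\times\C^\times$-variety over $\mathfrak{g}^{\vee*}$.

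Parts 2 and 3 are then formal consequences of part 1. Tensoring with a line bundle pulled back from $G^\vee/H\times\B$ is compatible with the closed embedding $\Lambda_i\hookrightarrow\Lambda$ via the projection formula $j_*(j^*L\otimes\mathcal{G})=L\otimes j_*\mathcal{G}$. Hence the subspace $K^{G^\vee\times\C^\times}(\Lambda_i)$, whose image is injective by the cellular fibration short exact sequences of Lemma \ref{cfl}, is preserved. Similarly, restriction to the open subset $\Lambda_i\setminus\Lambda_{i-1}=\tau^{-1}(\s_i)$ commutes with tensoring by line bundles, because pullback is monoidal. The restriction of $L^\Lambda_{\lambda^{-1}}$ to $\tau^{-1}(\s_i)$ is, by definition, $L^i_{\lambda^{-1}}$.
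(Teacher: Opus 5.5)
Your proposal is correct and follows the paper's own argument. Since $L^{St}_\lambda$ is supported on $St_0$, the convolution localizes to the partial diagonal $\{(x,B,B,\phi)\}\cong\Lambda$, which gives part 1; parts 2 and 3 follow because tensoring with a line bundle preserves supports and pullback to the open stratum is monoidal. You also spell out the projection-formula step that the paper calls clear, but the justification for your base change should come from the smooth ambient spaces, not from the fact that $St_0$ is the preimage of the diagonal (that alone does not give Tor-independence): use flat base change along the smooth projection $M^\vee\times\Tilde{N}\times\Tilde{N}\to\Tilde{N}\times\Tilde{N}$ and then the projection formula for the closed embedding $M^\vee\times\Tilde{N}\hookrightarrow M^\vee\times\Tilde{N}\times\Tilde{N}$, as in the convolution-with-supports formalism of \cite{Chriss1997RepresentationTA}.
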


\begin{proof}
    As $L^{St}_\lambda$ is supported on $St_0$ we can compute the convolution with it over 
    $$\{(g_1H,g_2B,g_2B,\phi)|(g_1H,g_2B,\phi)\in \Lambda\}\cong \Lambda$$

    Now, the first item is clear. Tensoring with $L^{\Lambda}_\lambda$ does not change the support and this implies item two.

    The map $K^{G^\vee\times\C^\times}(\Lambda_i)\rightarrow K^{G^\vee\times\C^\times}(\Lambda_i\setminus \Lambda_{i-1})$ is a pullback map and the fact that the pullback of a tensor product is a tensor product of the pullbacks implies the third item. 
\end{proof}

\begin{prop}\label{kernel on zero level}
Let $\s_j$ be some relevant $H$ orbit on $\B$, let $u\in W$ so that $HuB^\vee=\s_j$. 

Let $\lambda_1,\lambda_2\in X^*(T^\vee)$, we have $L^j_{\lambda_1}=L^j_{\lambda_2}$ in $K^{G^\vee}(\Lambda_j\setminus\Lambda_{j-1})$ if and only if $\lambda_1\lambda^{-1}_2$ is trivial on  $u^{-1}(T^\vee\cap H)u$. Moreover, in this case we have $\lambda_1(u^{-1}\rho(q)u)L^j_{\lambda_1}=\lambda_2(u^{-1}\rho(q)u)L^j_{\lambda_2}$ in $K^{G^\vee\times \C^\times}(\Lambda_j\setminus\Lambda_{j-1})$.

\end{prop}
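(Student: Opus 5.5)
The plan is to reduce $\Lambda_j\setminus\Lambda_{j-1}=\tau^{-1}(\s_j)$ to the $G^\vee$ orbit $\s_j\subset G^\vee/H\times\B$, and then to a single point. First, by the proof of the cellular fibration proposition, $\tau:\tau^{-1}(\s_j)\to\s_j$ is an affine fibration, with fibers affine spaces of the form $\{\phi\mid \phi|_{\mathfrak h}=\psi,\ (u^{-1}\phi)|_{\mathfrak b^\vee}=0\}$. By the Thom isomorphism, pullback induces $K^{G^\vee}(\s_j)\cong K^{G^\vee}(\tau^{-1}(\s_j))$, and the same holds $G^\vee\times\C^\times$-equivariantly. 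Moreover $L^j_\lambda$ is by definition the pullback of the line bundle $L_\lambda$ restricted from $\B$ to $\s_j$. Next, $\s_j=G^\vee\cdot(H,uB^\vee)\cong G^\vee/(H\cap uB^\vee u^{-1})$, so $K^{G^\vee}(\s_j)\cong R(H\cap uB^\vee u^{-1})$. Conjugating by $u$, this is $R(u^{-1}Hu\cap B^\vee)$. Under this identification, $L_\lambda|_{\s_j}$ corresponds to the character $\lambda$ restricted from $B^\vee$ to $u^{-1}Hu\cap B^\vee$.

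The key step is to compute the character group of $K_u:=u^{-1}Hu\cap B^\vee$. Characters of a connected solvable group factor through its maximal torus (the unipotent radical has no characters), and $R$ of a solvable group equals $R$ of its torus quotient. Since $u$ is a permutation matrix, $u^{-1}Hu$ contains $T^\vee\cap u^{-1}Hu=u^{-1}(T^\vee\cap H)u$. I would check that this is a maximal torus of $K_u$: $K_u$ is stable under $T^\vee\cap u^{-1}Hu$-conjugation and $K_u/\mathrm{unip}$ is a torus contained in the image of $T^\vee$. Concretely, $H\cap T^\vee$ is the torus $\{\mathrm{diag}(t,t)\}$, and the diagonal entries of elements of $K_u$ come from the diagonal block $g$. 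Then $L_{\lambda_1}=L_{\lambda_2}$ in $R(K_u)=R(u^{-1}(T^\vee\cap H)u)$ exactly when $\lambda_1\lambda_2^{-1}$ is trivial on $u^{-1}(T^\vee\cap H)u$. Since classes of characters in $R$ of a torus are distinct basis elements, this gives both directions.

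For the $\C^\times$-equivariant statement, I would redo the computation using Proposition~\ref{semi direct product}. The stabilizer of the point $(H,uB^\vee)$ in $G^\vee\times\C^\times$ is now $\{(g,z)\mid g\in uB^\vee u^{-1},\ g\in H\rho(z)\}$, since $(g,z)$ sends $gH$ to $g\rho(z)H$. Its torus part, conjugated by $u$, is generated by $u^{-1}(T^\vee\cap H)u$ together with the elements $(u^{-1}\rho(z)u,z)$. Note that $\rho(z)\in T^\vee$ normalizes $H$, and the pair $(\rho(z)^{-1},z)$ stabilizes $H$. The line bundle $L_\lambda$ restricts to the character $(b,z)\mapsto\lambda(b)$. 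If $\lambda_1\lambda_2^{-1}$ is trivial on $u^{-1}(T^\vee\cap H)u$, the two characters therefore differ only on the $\C^\times$ factor, by $z\mapsto(\lambda_1\lambda_2^{-1})(u^{-1}\rho(z)^{\pm1}u)$. That is multiplication by the corresponding element of $R(\C^\times)=\Z[q^{\pm1}]$, which yields the stated identity $\lambda_1(u^{-1}\rho(q)u)L^j_{\lambda_1}=\lambda_2(u^{-1}\rho(q)u)L^j_{\lambda_2}$.

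I expect the main obstacle to be this last bookkeeping: getting the sign/inverse convention on $\rho$ right (it depends on whether the stabilizer element is $(\rho(z),z)$ or $(\rho(z)^{-1},z)$ under the chosen action), and verifying that the stabilizer's torus quotient is exactly as claimed. I would settle the convention by testing the case $n=1$ directly.
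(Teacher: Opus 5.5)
Your proposal is correct and is essentially the paper's argument. You reduce along the affine fibration $\tau^{-1}(\s_j)\to\s_j$ to the stabilizer $H\cap uB^\vee u^{-1}$, whose characters are detected on the torus $T^\vee\cap H$, and for the $G^\vee\times\mathbb{C}^\times$ statement you use that the $\mathbb{C}^\times$ factor fixes $uB^\vee$ through $\rho(z)$; the paper phrases this with $H$- and $\Tilde{H}$-orbits on $\B$ instead, which is equivalent via $\Lambda=(G^\vee\times\mathbb{C}^\times)\times^{\Tilde{H}}\Lambda_H$. The only slip is the stabilizer condition, which should read $g\rho(z)\in H$, i.e.\ $g\in H\rho(z)^{-1}$, so the conjugated torus elements are $(u^{-1}\rho(z)^{-1}u,z)$; this is consistent with the element $(\rho(z)^{-1},z)$ you then use.
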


\begin{proof}
 The line bundles $L^{j}_{\lambda_1}$ and $L^{j}_{\lambda_2}$ are equal in $K^{G^\vee\times \C^\times}(\Lambda_j\setminus\Lambda_{j-1})$ if and only if $L_{\lambda_1}$, $L_{\lambda_2}$ are equal on the $H$ orbit $\s_j\subset \B$. The line bundles are equal on $\s_j$ if and only if $L_{\lambda_1\lambda^{-1}_2}$ is trivial on $\s_j$ if and only if the conjugation by $u$ of $\lambda_1\lambda^{-1}_2$ is trivial on  the stabilizer $Stab_H(uB^\vee)=H\cap uB^\vee u^{-1}$ if and only if $\lambda_1\lambda^{-1}_2$ is trivial on $u^{-1}(H\cap T^\vee )u$.

We have $K^{G^\vee}(\Lambda_j\setminus\Lambda_{j-1})=K^{\Tilde{H}}(\s_j)$. We want to find a power of $q$ such that $q^aL_{\lambda_1\lambda^{-1}_2}$ is trivial in $K^{\Tilde{H}}(\s_j)$. The action of $\C^\times\subset \Tilde{H}$ on $uB^\vee$ is given by $z\mapsto \rho(z)uB^\vee=uB^\vee$. So the condition is that $q^a=\lambda_1\lambda_2^{-1}(u^{-1}\rho(q)u)$.
    
\end{proof}

We denote by $\A_{\Lambda_0}\in K^{G^\vee\times \C^\times}(\Lambda)$ the structure sheaf of the irreducible component $\Lambda_0$.

Recall our notation $IM$, for the Iwahori Matsumoto involution (Definition \ref{involution}). Like in Proposition \ref{map BM} we construct a map $S(X)^I\mapsto IM(K^{G^\vee\times \C^\times}(\Lambda))$ by acting on $\A_{\Lambda_0}$ and checking annihilation conditions.

\begin{prop}\label{map KGGm}
        Consider the map $H(G,I)\rightarrow K^{G^\vee\times \mathbb{G}_m}(\Lambda)$ given by acting on $\A_{\Lambda_0}$. It factors through the map $H(G,I)\rightarrow IM(S(X)^I)$ given by twisting the action on $f_{max}$ by the Iwahori Matsumoto involution. 
\end{prop}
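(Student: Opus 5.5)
Since $S(X)^I$ is generated by $f_{max}$, the claim is equivalent to an annihilator statement. Write $\mathrm{Ann}(f_{max})$ for the left ideal described in Proposition \ref{ideal generaotrs}. It suffices to show that for every generator $r$ of this ideal, the element $IM(r)$ kills $\A_{\Lambda_0}$ under the convolution action of $H(G,I)\cong K^{G^\vee\times\C^\times}(St)$. Then $h f_{max}\mapsto IM(h)\A_{\Lambda_0}$ is well defined, and after relabelling the action through the involution this is exactly the asserted factorization. So I would check the three families of generators one at a time, transported by $IM$ (Definition \ref{involution}).

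\textbf{Step 1: the elements $T_{s_{2i-1}}-q$.} Under $IM$ this becomes $-T_s^{-1}-q$. Vanishing on $\A_{\Lambda_0}$ is equivalent to $T_s\A_{\Lambda_0}=-\A_{\Lambda_0}$, using the quadratic relation $(T_s+1)(T_s-q)=0$. In the Kazhdan–Lusztig realization, $T_s+1$ corresponds, up to a line bundle twist, to convolution with the structure sheaf of $St_s$. I would repeat the argument of Proposition \ref{map BM}. For $s=s_{2i-1}$ we have $s\s_0=\s_0$, and the set $\pi_1^{-1}\Lambda_0\cap\pi_3^{-1}St_s$ is empty, because a point in it would make $s\s_0$ relevant. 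Hence convolution with $\s_{St_s}$ kills $\A_{\Lambda_0}$, which gives $T_s\A_{\Lambda_0}=-\A_{\Lambda_0}$. Since the $K$-theoretic convolution is supported on that intersection, the emptiness gives the vanishing directly, with no $q$-bookkeeping.

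\textbf{Step 2: the elements $T_{s_{2i-1}}T_{s_{2i}}-T_{s_{2i+1}}T_{s_{2i}}$.} The image under $IM$ is, up to sign, $T_{s_{2i-1}}^{-1}T_{s_{2i}}^{-1}-T_{s_{2i+1}}^{-1}T_{s_{2i}}^{-1}$. By Step 1, $T^{-1}_{s_{2i\pm1}}$ acts on $\A_{\Lambda_0}$ by $-1$. As in the Borel–Moore case, it then suffices to show that $T_{s_{2i-1}}$ and $T_{s_{2i+1}}$ act identically on $T_{s_{2i}}\A_{\Lambda_0}$. I would use the filtration $\Lambda_i$ and the cellular fibration description from Section \ref{s6}. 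The element $T_{s_{2i}}\A_{\Lambda_0}$ is supported on $\overline{\tau^{-1}(s_{2i}\s_0)}$. Both convolutions land in the closure of the component attached to $s_{2i+1}s_{2i}\s_0=s_{2i-1}s_{2i}\s_0$. The permutation of $\{1,\dots,2n\}$ exchanging the pairs $\{2i-1,2i\}$ and $\{2i+1,2i+2\}$ is implemented by an element normalizing the relevant data. Because this element fixes $\s_0$, the two computations are exchanged by symmetry. I expect this symmetry argument to need care equivariantly: the conjugating element must normalize $\Tilde{H}$, or at least one must track the resulting twist by $\rho$.

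\textbf{Step 3: the elements $\theta_\lambda-q^{(l(\lambda^-)-l(\lambda^+))/2}$ for $\lambda\in L_{max}$.} I expect this to be the main obstacle. After $IM$ the element is $\theta_{\lambda^{-1}}-q^{(\cdots)/2}$. By Proposition \ref{torus action}, $\theta_{\lambda^{-1}}$ acts on $\A_{\Lambda_0}$ by tensoring with $L^\Lambda_{\lambda}$. Since $\Lambda_0=\Lambda^0$ is the closed stratum, $K^{G^\vee\times\C^\times}(\Lambda_0)=K^{\Tilde H}(\tau^{-1}\s_0)$ injects into $K(\Lambda)$ by the cellular fibration lemma (Lemma \ref{cfl}). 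It therefore suffices to compute in $K^{\Tilde H}(\tau^{-1}(\s_0))$. There I would apply Proposition \ref{kernel on zero level} with $u$ a representative of $\s_0$ and $\lambda_2=1$. First I check that every $\lambda\in L_{max}$ (characters of the form $(a_1,-a_1,\dots)$) is trivial on $u^{-1}(T^\vee\cap H)u$. The torus $T^\vee\cap H$ consists of $\mathrm{diag}(t,t)$ with $t\in (\C^\times)^n$, and the pairing $\{1,2\},\{3,4\},\dots$ defining $\s_0$ conjugates it onto the entries $(t_1,t_1,t_2,t_2,\dots)$, on which $\lambda$ is trivial. Proposition \ref{kernel on zero level} then gives $L_\lambda=\lambda(u^{-1}\rho(q)u)^{-1}$ as a scalar on the structure sheaf. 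The remaining task is to match $\lambda(u^{-1}\rho(q)u)$, a power of $q$ read off from $\rho$ and the pairing, with $q^{(l(\lambda^-)-l(\lambda^+))/2}$. This requires matching the $q$ versus $q^{1/2}$ normalization of the $\C^\times$ action (Remark \ref{linear action}) against the normalization of $\theta_\lambda$, including the sign of the exponent introduced by $IM$. I would first check the case $n=1$ explicitly to fix these conventions, and then reduce the general case to it blockwise.
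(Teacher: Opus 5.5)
\textbf{Steps 1 and 3.} Your reduction to the three families of generators follows the paper. So does Step 1: emptiness of $\pi_1^{-1}\Lambda_0\cap\pi_3^{-1}St_s$ kills the convolution whatever the line-bundle twist. Step 3 also matches: compute in $K$ of the closed stratum, which injects into $K(\Lambda)$ by Lemma \ref{cfl}, and apply Propositions \ref{torus action} and \ref{kernel on zero level}. The paper settles Step 3 in one line, and your worry about the $q$ versus $q^{1/2}$ normalization there is legitimate.

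There is one slip in Step 1. With the literal formula $IM(T_s)=-T_s^{-1}$, the element $-T_s^{-1}-q$ acts by $1-q\neq 0$ on the $(-1)$-eigenspace of $T_s$. So the equivalence you state is false as written. You need the standard normalization $IM(T_s)=-qT_s^{-1}$, which gives $IM(T_s-q)=-(T_s+1)$; this is what the paper's proof actually uses.

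\textbf{The gap is Step 2.} The symmetry argument from Proposition \ref{map BM} does not carry over to equivariant $K$-theory.

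First, every $St_w$ is $G^\vee$-stable, so no element of $G^\vee$ can interchange convolution with $St_{s_{2i-1}s_{2i}}$ and with $St_{s_{2i+1}s_{2i}}$. That requires an outer automorphism $\phi$, essentially $g\mapsto w_0\,{}^t g^{-1}w_0$ adjusted by a torus element to fix $\psi$. Such a $\phi$ exchanges $s_{2i-1}$ and $s_{2i+1}$ while fixing $s_{2i}$ only when $n=2i$.

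Second, and more seriously, $\phi$ only gives $x_{2i+1}=\phi_*(x_{2i-1})$, where $x_k=(T_{s_k}+1)(T_{s_{2i}}+1)\mathcal{A}_{\Lambda_0}$. In top Borel--Moore homology this suffices, because the classes involved are combinations of the fundamental classes of $\Lambda_0$, $\Lambda_{s_{2i}}$ and the top component, and $\phi$ fixes all three. In $K^{G^\vee\times\mathbb{C}^\times}(\Lambda)$, however, $\phi_*$ is only semilinear: on characters it is $\lambda\mapsto -w_0\lambda$. It is not the identity, so ``symmetric computations'' do not yield equal classes.

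\textbf{What the paper does instead} (written for $n=2$, $i=1$):
\begin{enumerate}
\item It applies the symmetry only after restricting by $j^*$ to the open stratum $\Lambda\setminus\Lambda_1$. There base change reduces the comparison to two pushforwards in $K^{\tilde H}$ of a single orbit.
\item The exact sequences of Lemma \ref{cfl} then place $x_3-x_1$ in $K$ of the closed union of the two lower strata.
\item On those strata it kills $x_3-x_1$ by a weight argument that your proposal lacks. Using Theorem \ref{Ber_presentation} together with the type 1 and type 3 relations already proved, it shows $\theta_\lambda(x_3-x_1)=c(\lambda)(x_3-x_1)$ with $c(\lambda)\in\mathbb{C}[q^{\pm1}]$ for every $\lambda=(a,b,-b,-a)$.
\item By Propositions \ref{torus action} and \ref{kernel on zero level}, each such $\theta_\lambda$ acts on a lower stratum through a line bundle whose character on $u^{-1}(T^\vee\cap H)u$ is nontrivial for generic $a,b$. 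Hence no nonzero class supported there can be such a common eigenvector.
\end{enumerate}
These $\lambda$ restrict trivially on the top stratum. That is why symmetry is still needed there, but only there.
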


Before we can prove Proposition \ref{map KGGm} we need to recall how the elements of the from $T_s+1$ for $s$ a simple reflection act (for more details see Section 7.6 of \cite{Chriss1997RepresentationTA}). Let $St_s\subset St$ be the irreducible component of $St$ corresponding to $s$. $St_s$ sits above the closure of the orbit $\s_s\subset \B\times \B$. We have the first projection $\overline{\s_s}\rightarrow \B$ and we have the sheaf of relative one forms $\Omega^1_{\overline{\s_s}/\B}=Coker(\Omega^1_{\B}\rightarrow\Omega^1_{\overline{\s_s}})$. We denote this sheaf by $\Omega^1_s$. 

We define $Q_s=\tau^*\Omega^1_{\overline{\s_s}/\B}\in K^{G^\vee\times \C^\times}(St_s)$ and extend it to $K^{G^\vee\times \C^\times}(St)$. 

The element $T_s+1\in H(G,I)$ acts by convolution with $-qQ_s$. 

\begin{proof}
    In light of Proposition \ref{ideal generaotrs} it is enough to show that the elements of the form:

    \begin{enumerate}
        \item $IM(T_{s_{2i-1}}-q)=-\frac{1}{q}(T_{s_{2i-1}}+1)$
        \item $IM(T_{s_{2i-1}}T_{s_{2i}}-T_{s_{2i+1}}T_{s_{2i}})=\frac{1}{q}(T_{s_{2i+1}}-T_{s_{2i-1}})IM(T_{s_{2i}})$
        \item $IM(\theta_{\lambda})-q^\frac{l(\lambda^-)-l(\lambda^+)}{2}=\theta_{\lambda^{-1}}-q^\frac{l(\lambda^-)-l(\lambda^+)}{2}$ for $\lambda\in L_{max}$.
    \end{enumerate}

    act as zero on $\A_{\Lambda_0}$.

    The result for elements of the third type follows from Propositions \ref{torus action} and \ref{kernel on zero level}.

    Let $s=s_{2i-1}$, like in the proof of Proposition \ref{map BM} we get that $T_s\A_{\Lambda_0}$ is supported on $\Lambda_0$. Moreover, to compute the action of $T_s+1$ on $\A_{\Lambda_0}$, we compute  $\pi_{2*}(\pi^*_1\A_{\Lambda_0}\otimes \pi_3^*Q_s)$ for the sheaf $Q_s$ supported on $St_s$. The supports of $\pi^*_1\A_{\Lambda_0}$ and $\pi_3^*Q_s$ do not intersect, so  $T_s+1$ acts as $0$ as desired.

    We need to show that the elements of the form $T_{s_{2i-1}}IM(T_{s_{2i}})-T_{s_{2i+1}}IM(T_{s_{2i}})$ act as zero on $\A_{\Lambda_0}$. 
    
    It is same as showing that the actions of $(T_{s_{2i+1}}+1)(T_{s_{2i}}+1)$ and $(T_{s_{2i-1}}+1)(T_{s_{2i}}+1)$ agree because we already know that $T_{s_{2i-1}}$ and $T_{s_{2i+1}}$ act the same way on $\A_{\Lambda_0}$.

    For simplicity we assume that $n=2$ and $i=1$.

    The action of $(T_{s_{3}}+1)(T_{s_{2}}+1)$ is by convolution with a certain sheaf supported on $St_{s_{3}s_{2}}$, denote this sheaf by $P_3\in K^{G^\vee\times\C^\times}(St)$, similarly we also have $P_1\in K^{G^\vee\times\C^\times}(St)$. This sheaves are in fact pulled back from $K^{G^\vee\times\C^\times}(\B\times \B)$. Let $R_1,R_3$ denote the sheaves in $K^{G^\vee\times\C^\times}(\B\times \B)$ whose pullbacks are $P_1,P_3$ respectively.  

    Recall we have a filtration on $\Lambda$. 
    
    We have $\Lambda_0\subset \Lambda_1\subset \Lambda_2=\Lambda$ each time adding a single irreducible component according to the Burhat order on relevant $H$ orbits on $\B$, $\s_0<\s_1<\s_2$ . 
    
    Let $\Lambda'=\Lambda\setminus\Lambda_1$ and let $j:\Lambda'\rightarrow\Lambda$ be the open embedding. We have the pullback map $j^*:K^{G^\vee\times\C^\times}(\Lambda)\rightarrow K^{G^\vee\times\C^\times}(\Lambda')$. We begin with checking that $j^*((T_{s_{3}}+1)(T_{s_{2}}+1)\A_{\Lambda_0})=j^*((T_{s_{1}}+1)(T_{s_{2}}+1)\A_{\Lambda_0})$.

    By definition $j^*((T_{s_{3}}+1)(T_{s_{2}}+1)\A_{\Lambda_0})=j^*(\pi_{2*}(\pi_1^*\A_{\Lambda_0}\otimes \pi_3^*P_3))$. The map $\pi_2$ is proper so we may apply the base change theorem. Consider the following commutative diagram:

    \[
    \begin{tikzcd}
\Lambda'\times_{\mathfrak{g}^{\vee*}} T^*\B 
    \arrow[r,"j'"] 
    \arrow[d,"\pi_2'"] &
\Lambda\times_{\mathfrak{g}^{\vee*}} T^*\B
    \arrow[d,"\pi_2"] \\
\Lambda'
    \arrow[r, "j"] 
& \Lambda
\end{tikzcd}
\]

We have $j^*(\pi_{2*}(\pi_1^*\A_{\Lambda_0}\otimes \pi_3^*P_3))=\pi_{2*}'(j'^*(\pi_1^*\A_{\Lambda_0}\otimes \pi_3^*P_3))=\pi_{2*}'((\pi_1\circ j')^*\A_{\Lambda_0}\otimes (\pi_3\circ j')^*P_3))$.

The sheaf $(\pi_1\circ j')^*\A_{\Lambda_0}$ is equal to the structure sheaf of $$Z=\{(g_1H,g_2B^\vee,g_3B^\vee,\phi)|Hg_1^{-1}g_2B^\vee=\s_1,Hg_1^{-1}g_3B^\vee=\s_3,(g^{-1}_1\phi)|_\mathfrak{h}=\psi,(g^{-1}_2\phi)|_\mathfrak{b^\vee}=(g^{-1}_3\phi)|_\mathfrak{b^\vee}=0\}$$

The conditions $(g^{-1}_1\phi)|_\mathfrak{h}=\psi$ and $(g^{-1}_3\phi)|_\mathfrak{b^\vee}=0$ for $g_1,g_2\in G$ such that $Hg_1^{-1}g_3B^\vee=\s_3$ force $\phi=g_1\psi$.  

We have the projection $Z\rightarrow St$ which we denote by $\pi_3'$, the sheaf $(\pi_1\circ j')^*\A_{\Lambda_0}\otimes (\pi_3\circ j')^*P_3)$ is equal to the extension by zero of $\pi_3'^*P_3$.

Let $Z_H=\{(g_2B^\vee,g_3B^\vee)|Hg_2B^\vee=\s_1,Hg_3B^\vee=\s_3\}=\s_1\times\s_3$, we have 

$Z=(G^\vee\times\C)^\times\times^{\Tilde{H}}Z_H$ and $K^{G^\vee\times\C^\times}(Z)\cong K^{\Tilde{H}}(Z_H)$.

We denote by $k$ the embedding $k:Z_H\rightarrow \B\times \B$. 

Under the isomorphism $K^{G^\vee\times\C^\times}(Z)\cong K^{\Tilde{H}}(Z_H)$ the sheaf $\pi_3'^*P_3$ corresponds to $k^*R'_3$. Here $R'_3\in K^{\Tilde{H}}(\B\times \B)$ is the restriction of $R_3\in K^{G^\vee\times\C^\times}(\B\times \B)$. The sheaf $R_1'$ is defined similarly. Let $pr_2:Z_H\rightarrow \s_3$ be the projection on the second coordinate. It is enough to show that $pr_{2*}k^*R_3=pr_{2*}k^*R_1$ in $K^{\Tilde{H}}(\s_3)$. This follows from symmetry of $H$ and $\s_3$ with respect to switching the simple reflections $s_1$ and $s_3$.

The upshot is that $j^*((T_{s_{3}}+1)(T_{s_{2}}+1)\A_{\Lambda_0})=j^*((T_{s_{1}}+1)(T_{s_{2}}+1)\A_{\Lambda_0})$. By the short exact sequence $0\rightarrow K^{G^\vee\times\C^\times}(\Lambda_2)\rightarrow K^{G^\vee\times\C^\times}(\Lambda)\rightarrow K^{G^\vee\times\C^\times}(\Lambda\setminus \Lambda_2)\rightarrow 0$ we get that $((T_{s_{3}}+1)(T_{s_{2}}+1)-(T_{s_{1}}+1)(T_{s_{2}}+1))\A_{\Lambda_0}\in K^{G^\vee\times\C^\times}(\Lambda_2)$. 

Next, we check that the action of $\theta_\lambda\in \Theta$ for $\lambda$ of the form $\lambda(t)=diag(t^a,t^b,t^{-b},t^{-a})$ on $((T_{s_{3}}+1)(T_{s_{2}}+1)-(T_{s_{1}}+1)(T_{s_{2}}+1))\A_{\Lambda_0}$ is by multiplication by an element of $\C[q^{\pm 1}]$. From Propositions \ref{torus action} and \ref{kernel on zero level} it follows that any element with this property in  $K^{G^\vee\times\C^\times}(\Lambda_2)$ is zero. This will finish the proof.

The first step is a calculation done using Theorem \ref{Ber_presentation}.

For such $\lambda$ we have $(\theta_{s_1(\lambda)}-\theta_{s_3(\lambda)})\A_{\Lambda_0}=0$. This holds as $s_3(\lambda)s_1(\lambda)\in L_{max}$ and we can write $s_3(\lambda)s_1(\lambda^{-1})=\lambda^+(\lambda^-)^{-1}$ for $\lambda^+,\lambda^-$ dominant with $l(\lambda^+)=l(\lambda^-)$. We use the result of the proposition for elements of the third type to get that $$(\theta_{s_1(\lambda)}-\theta_{s_3(\lambda)})\A_{\Lambda_0}=\theta_{s_1(\lambda)}(1-\theta_{s_1(\lambda^{-1})s_3(\lambda)})\A_{\Lambda_0}=0$$

Similarly, $\theta_{s_2(s_1(\lambda))}\A_{\Lambda_0}=\theta_{s_2(s_3(\lambda))}\A_{\Lambda_0}=c(\lambda)\A_{\Lambda_0}$ for some $c(\lambda)\in \C[q^{\pm1}]$. 

Now we can do the computation. The element $\theta_\lambda T_{s_3}T_{s_{2}}$ is equal to:

$$(T_{s_3}\theta_{s_{3}(\lambda)}+(q-1)\frac{\theta_\lambda-\theta_{s_{3}(\lambda)}}{1-\theta_{-\alpha_{3}^\vee}})T_{s_{2}}=T_{s_3}T_{s_{2}}\theta_{s_{2}(s_{3}(\lambda))}+(q-1)T_{s_{3}}\frac{\theta_{s_{3}(\lambda)}-\theta_{s_{2}(s_{3}(\lambda))}}{1-\theta_{-\alpha_{2}^\vee}}+(q-1)\frac{\theta_\lambda-\theta_{s_{3}(\lambda)}}{1-\theta_{-\alpha_{3}^\vee}})T_{s_{2}}$$ 

The elements $(q-1)T_{s_{3}}\frac{\theta_{s_{3}(\lambda)}-\theta_{s_{2}(s_{3}(\lambda))}}{1-\theta_{-\alpha_{2}^\vee}}$ and $(q-1)T_{s_{1}}\frac{\theta_{s_{1}(\lambda)}-\theta_{s_{2}(s_{1}(\lambda))}}{1-\theta_{-\alpha_{2}^\vee}}$ act the same way on $\A_{\Lambda_0}$ so they cancel out. 

We have $(T_{s_3}T_{s_{2}}\theta_{s_{2}(s_{3}(\lambda))}-T_{1}T_{s_{2}}\theta_{s_{2}(s_{1}(\lambda))})\A_{\Lambda_0}=c(\lambda)(T_{s_3}T_{s_{2}}-T_{s_1}T_{s_{2}})\A_{\Lambda_0}$ so we only need to understand the last term.

Denote $m_3=\theta_{-\alpha_{3}^\vee}$ and $s_2(m_3)=\theta_{-s_2(\alpha_{3}^\vee)}$. Similarly,  $m_1=\theta_{-\alpha_{1}^\vee}$ and $s_2(m_1)=\theta_{-s_2(\alpha_{1}^\vee)}$. We have
 $\frac{\theta_\lambda-\theta_{s_{3}(\lambda)}}{1-\theta_{-\alpha_{3}^\vee}}=\sum_{i=0}^{\alpha_3(\lambda)}\theta_\lambda m_3^i$.

 Therefore $\frac{\theta_\lambda-\theta_{s_{3}(\lambda)}}{1-\theta_{-\alpha_{3}^\vee}}T_2=\sum_{i=0}^{\alpha_3(\lambda)}\theta_\lambda m_3^iT_2=\theta_\lambda(\sum_{i=0}^{\alpha_3(\lambda)}T_2s_2(m_3)^i+(q-1)\frac{m_3^i-s_2(m_3)^i}{1-\theta_{-\alpha_2^\vee}})$.

 Notice that $m_3$ and $m_1$ act the same on $\A_{\Lambda_0}$. Also $s_2(m_3)$ and $s_2(m_1)$ act the same on $\A_{\Lambda_0}$. Using that we conclude that for $c(\lambda)\in \C[q^{\pm 1}]$.
 
 $$\theta_\lambda((T_{s_{3}}+1)(T_{s_{2}}+1)-(T_{s_{1}}+1)(T_{s_{2}}+1))\A_{\Lambda_0}=c(\lambda)((T_{s_{3}}+1)(T_{s_{2}}+1)-(T_{s_{1}}+1)(T_{s_{2}}+1))\A_{\Lambda_0}$$

\end{proof}

We denote the obtained map by $\Phi_X$. We have to show that it is injective and surjective.

\begin{prop}
    The map $\Phi_X$ defined in Proposition \ref{map KGGm} is an isomorphism of $H(G,I)$ modules. 
\end{prop}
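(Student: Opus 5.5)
We need to show the map $\Phi_X: IM(S(X)^I)\rightarrow K^{G^\vee\times\C^\times}(\Lambda)$, $h\cdot f_{max}\mapsto IM(h)\A_{\Lambda_0}$, is bijective. The plan mirrors the Borel--Moore argument of Section \ref{s4}: prove surjectivity by induction along the cellular filtration $\Lambda_0\subset\Lambda_1\subset\dots\subset\Lambda_m=\Lambda$, then deduce injectivity by comparing the structure of both sides as free modules over a common commutative ring.

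\textbf{Surjectivity.} By the Corollary after the cellular fibration argument, $K^{G^\vee\times\C^\times}(\Lambda)$ is filtered by the submodules $K^{G^\vee\times\C^\times}(\Lambda_i)$. The graded pieces are $K^{G^\vee\times\C^\times}(\Lambda_i\setminus\Lambda_{i-1})\cong K^{\Tilde H}(\s_i)$, and the short exact sequences of Lemma \ref{cfl} hold. Since $\s_i$ is an affine fibration over $\B_n$ (Proposition \ref{affine fibers}), each $K^{\Tilde H}(\s_i)$ is generated, over $\C[q^{\pm1}]$, by the restrictions of the line bundles $L^i_\lambda$; by Proposition \ref{torus action} these are hit by $\Theta$ acting on any class whose restriction to the open stratum is the structure sheaf. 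I would therefore induct on $i$. Choose a simple reflection $s$ with $s\s_{i'}=\s_i$ and $\s_{i'}<\s_i$ relevant. By induction the image contains a class $\xi$ restricting to $\s_{i'}$ as $\A_{\tau^{-1}(\s_{i'})}$. Then $(T_s+1)\xi$, i.e.\ convolution with $-qQ_s$, restricts on $\tau^{-1}(\s_i)$ to a unit multiple of a line bundle. This is the $K$-theoretic version of the computation $a_\s=1$ in Proposition \ref{bm cyclic}: $\pi_2$ is an isomorphism over the open part of $\tau^{-1}(\s_i)$, so base change as in the proof of Proposition \ref{map KGGm} gives the restriction of $Q_s$ pulled back to a single point fiber, which is a line bundle. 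Applying $\Theta$ then gives all of $K^{\Tilde H}(\s_i)$ modulo lower strata, and induction with the exact sequences gives surjectivity.

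\textbf{Injectivity.} Reduce to a rank comparison over the commutative algebra $\Theta$, or over $Z(H(G,I))\cong R(G^\vee\times\C^\times)$. On the $K$-side, $K^{G^\vee\times\C^\times}(\Lambda)$ is free over $R(B_n\times\C^\times)=\C[q^{\pm1}][X^*(T_n)]$, with rank equal to the number of relevant orbits $(2n-1)!!$. On the function side, $S(X)^I$ is spanned by $I$-orbits, that is, antisymmetric monomial matrices (Proposition \ref{I orbits}). Grouping them by the underlying pairing, which is a $B$-orbit, each group is a torsor for a lattice of rank $n$ given by the exponents of $u$. So $S(X)^I$ is also free of rank $(2n-1)!!$ over a Laurent ring in $n$ variables. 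I would make this compatible by filtering $S(X)^I$ via the Bruhat order on $B\backslash X$ reversed through $\Phi_X$ (Proposition \ref{reverse order}), so that $\Phi_X$ is filtered. The induction above then shows that on each graded piece, $\Phi_X$ is a surjection between free $\C[q^{\pm1}][\Z^n]$-modules of rank one, hence an isomorphism. Alternatively, injectivity follows by specializing $q=1$ and counting via part 3, using freeness over $\C[q^{\pm1}]$ and Nakayama-type reasoning as in the proof that $\ker e$ is generated as claimed.

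\textbf{Main obstacle.} The hard step is verifying that on each stratum the induced map of graded pieces is an isomorphism of rank-one modules. The torus acting on $K^{\Tilde H}(\s_i)$ is $u^{-1}(T^\vee\cap H)u$ twisted by $\rho(q)$, as in Proposition \ref{kernel on zero level}. The lattice of $I$-orbits over the corresponding pairing must be matched with it, including the powers of $q$ coming from the normalization of $\theta_\lambda$. I would first check this on the minimal stratum: there, Proposition \ref{ideal generaotrs}(3) and Proposition \ref{kernel on zero level} give exactly the same quotient of $\Theta$ by $L_{max}$. I would then propagate it along simple reflections using the Bernstein relation (Theorem \ref{Ber_presentation}).
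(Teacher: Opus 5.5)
Your surjectivity argument is the paper's argument (induction over relevant orbits via convolution with $St_s$), written out more carefully. Using $\Theta$ to fill out each graded piece $K^{\tilde H}(\mathcal{O}_i)$ is exactly the step the paper leaves implicit. To keep $(T_s+1)\xi$ inside $K(\Lambda_i)$, take $\xi$ to be the structure sheaf of $\overline{\tau^{-1}(\mathcal{O}_{i'})}$; it lies in the image by the inductive hypothesis $K(\Lambda_{i-1})\subseteq \mathrm{Im}\,\Phi_X$.

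\textbf{The gap is in injectivity at generic $q$.} The Laurent ring in $n$ variables over which you count ranks on the function side acts by the set-theoretic translation $x\mapsto txt^{t}$ of $I$-orbits. This agrees with the $\Theta$-action only after setting $q=1$. At generic $q$, $\theta_\lambda$ involves $T_s^{-1}=q^{-1}T_s+q^{-1}-1$, and the $(q^{-1}-1)$-terms change the support pattern. So $\Phi_X$ is not linear for your lattice action. Nothing in the proposal shows that the span of $I$-orbits with a given pairing is $\Theta$-stable modulo lower pairings, or that $\Phi_X$ maps it into $K(\Lambda_i)$. In the analogous group case this fails. In $H(GL_2,I)$ one computes
\[
\theta_{(0,1)}=q^{-1/2}\,T_{t_{(0,1)}}+(q^{-1/2}-q^{1/2})\,T_{t_{(0,1)}s},
\]
so the span of the $T_{t_\mu}$ (finite part $e$) is not $\Theta$-stable. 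By contrast, the part of $K^{G^\vee\times\mathbb{C}^\times}(St)$ supported over the diagonal of $\mathcal{B}\times\mathcal{B}$ is $\Theta$-stable. Hence your step ``so that $\Phi_X$ is filtered'' is unjustified. Your ``main obstacle'' paragraph would need a genuinely $\Theta$-stable filtration of $S(X)^I$, generated by elements $IM(T_w)f_{max}$ for monotone words $w$. You would have to prove three things about it, and none is supplied: that it exhausts $S(X)^I$, that $IM(T_s)$ respects it, and that its graded annihilators are the ideals of Proposition \ref{kernel on zero level}.

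\textbf{The repair} is to run your own graded rank-one comparison only at $q=1$, and then lift; this is exactly the paper's proof.
\begin{itemize}
\item At $q=1$, $T/T^0$ genuinely permutes $I$-orbits and preserves their pairing.
\item The specialization $\Phi_X|_{q=1}$ sends the pairing-$p$ part into $K^{G^\vee}(\tau^{-1}(\overline{\mathcal{O}_p}))$, where $\mathcal{O}_p$ is the relevant orbit attached to $p$. This holds because each step of a monotone word raises the support by one orbit, and $T/T^0$ acts by line bundles.
\item On the open stratum, the kernel of $\lambda\mapsto(\lambda_i+\lambda_j)_{\{i,j\}\in p}$ agrees with the kernel in Proposition \ref{kernel on zero level}. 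So each graded map is an isomorphism, and induction downward from the top stratum gives injectivity at $q=1$.
\item To lift, note that $S(X)^I$ is free over $\mathbb{C}[q^{\pm1}]$ on $I\backslash X$, with reduction $\mathbb{C}[I\backslash X]$. Also $K^{G^\vee\times\mathbb{C}^\times}(\Lambda)$ is free over $R(B_n\times\mathbb{C}^\times)$, hence $(q-1)$-torsion free, with reduction $K^{G^\vee}(\Lambda)$ by part 3 of Lemma \ref{cfl}.
\item Therefore every coefficient of a kernel element is divisible by $q-1$; divide and descend on degree.
\end{itemize}
Your ``alternatively'' sentence points at this route, but ``counting via part 3'' is circular. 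Part 3 of Theorem \ref{relative KL} is proved by exactly this $q=1$ argument, so you must supply it rather than cite it.
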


\begin{proof}
    We begin with checking injectivity. 
    
    First we check injectivity of the map $\C[I\backslash X]\rightarrow K^{G^\vee}(\Lambda)\otimes sgn_f$ obtained from $\Phi_X$ by setting $q=1$. Let $a\in \C[I\backslash X]$ be in the kernel. We can find $h\in \C[W_{aff}]$ such that $a=h\mathcal{U}_{\max}$. Write $h=\sum _{w\in W,t\in T/T^0}a_w^ttw$ for some coefficients $a_w^t\in \C$.  Consider the maximal $i$ such that there are $w\in W$ and $t\in T/T^0$ such that $a_w^t\neq 0$ and $\Phi_X(w\mathcal{U}_{max})\in K^{G^\vee}(\Lambda_i)$. Let $x_0\in B\backslash X$ be the orbit $x_0=w\mathcal{U}_{max}$. It is the orbit corresponding to the irreducible component of $\Lambda$ in $\Lambda^i\setminus\Lambda_{i-1}$ under the bijection constructed in Section \ref{s3}.

    Let $j^*:K^{G^\vee}(\Lambda_i)\rightarrow K^{G^\vee}(\Lambda_i\setminus\Lambda_{i-1})$
    
    Consider $j^*\Phi_X(a)$ in $K^{G^\vee}(\Lambda_i\setminus\Lambda_{i-1})$. By Propositions \ref{torus action} and \ref{kernel on zero level} we know that the action of $T/T^0$ on $K^{G^\vee}(\Lambda_i\setminus\Lambda_{i-1})$ factors through $T/(T^0Stab_T(x_0))$. Here, $Stab_T(x_0)$ is the stabilizer of $x_0$ in $T/T^0$. 

    Let $W_{x_0}=\{w\in W|w\mathcal{U}_{max}=x_0\}$. Notice that for any other $w'\notin W_{x_0}$ we have that for every $t\in T/T^0$, $j^*\Phi_X(a^t_{w'}tw'\mathcal{U}_{max})=0$.  

    We claim that $\sum_{w\in W_{x_0},t\in T/T^0}a_{w}^ttw\mathcal{U}_{max}=\sum_{w\in W_{x_0},t\in T/T^0}a_{w}^ttx_0=0$.
    
    This holds because the action of $T/T^0$ on $x_0$ has the same stabilizer as the action on $j^*\Phi_X(x_0)$ and $j^*\Phi_X(\sum_{w\in W_{x_0},t\in T/T^0}a_{w}^ttx_0)=j^*\Phi_X(a)=0$.
    
     Continuing like this for smaller values of $i$ shows that $a=0$.
    
    Now we return to the original map, for generic $q$. Assume that we have some $a\in S(X)^I$ with $\Phi_X(a)=0$. 
    
    We can write $a$ as a linear combination of characteristic functions of $I$ orbits on $X$ over $\C[q,q^{-1}]$, i.e. $a=\sum _j a_j1_{x_j}$ with $a_j\in \C[q,q^{-1}]$.
    
    We can multiply $a$ by powers of $q$ such that all $a_j$ become polynomial in $q$. We know that $\Phi_X$ is injective when we specialize to $q-1$. Therefore $q-1|a_j(q)$ for every $j$ and then we can divide everything by $q-1$ and still get an element in the kernel of $\Phi_X$. We can continue this way, reducing the degrees of $a_j$ until we get that $a_j=0$ for every $j$ which implies that $a=0$ and $\Phi_X$ is injective.

    In order to prove surjectivity it is enough to show that $\A_{\Lambda_0}$ generates $K^{G\times \C^\times}(\Lambda)$ over $K^{G\times \C^\times}(St)$.

    It is enough to show that for every irreducible component $\Lambda_i$, we can get $\A_{\Lambda_i}$ from $\A_{\Lambda_0}$. We do it by induction on the Bruhat order and we use the cellular fibration lemma. Assume we know this for all irreducible components $\Lambda_j$ such that $\s_j<_{Bruhat}\s_i$, then we can choose some simple reflection $s\in W$ and some orbit $\s_j$ smaller then $\s_i$ with $s\s_j=\s_i$. 
    
    Then $\A_{St_s}*\A_{\Lambda_j}$ is supported on irreducible components which correspond to orbits which are smaller or equal to $\s_i$. Over $\Lambda_i$ we get $\A_{\Lambda_i}$. By induction this finishes the proof.
\end{proof}

\begin{cor}
    Theorem \ref{relative KL} holds.
\end{cor}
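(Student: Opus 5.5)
The corollary is an assembly step: each of the four parts of Theorem \ref{relative KL} is matched with a result already established, and the only genuinely new point is part three, which follows from part four by specializing $q$.

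Part one is the corollary of Section \ref{s3}. There, via the bijection $\Phi_X$ and the result of \cite{kononenko2024lagrangiansubvarietieshypersphericalvarieties}, irreducible components of $\Lambda$ are identified with relevant orbits, and both sets are counted by pairings of $\{1,\dots,2n\}$, giving $(2n-1)!!$.

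Part two is the corollary of Section \ref{s4}. Proposition \ref{map BM} gives a $\C[W]$-map $\C[B\backslash X]\otimes sgn\to H^{BM}_{top}(\Lambda)$. Proposition \ref{bm cyclic} shows this map is surjective. Part one shows the two sides have equal dimension, so the map is an isomorphism. Compatibility with $\Phi_f$ holds by construction, since the map is defined through the convolution action of $H^{BM}_{top}(St)\cong\C[W]$.

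Part four is the last proposition of Section \ref{s7}. The map $\Phi_X:IM(S(X)^I)=IM(M_q)\to K^{G^\vee\times\C^\times}(\Lambda)$ is $H_q$-linear by Proposition \ref{map KGGm}, which relies on the annihilator computation of Proposition \ref{ideal generaotrs}. That proposition shows $\Phi_X$ is bijective. Twisting both sides by $IM$ and using that $IM$ is an involution gives $IM(K^{G^\vee\times\C^\times}(\Lambda))\cong M_q$, compatibly with $\Phi_{KL}$.

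Part three is the one step that needs care. I would deduce it by specializing $q=1$. By the cellular fibration corollary of Section \ref{s6}, $K^{G^\vee\times\C^\times}(\Lambda)$ is free over $R(B_n\times\mathbb{G}_m)$ on the structure sheaves of the components, and $K^{G^\vee}(\Lambda)$ is free over $R(B_n)$ on the same sheaves. Part three of Lemma \ref{cfl}, applied with $A=H\subset\Tilde H$, then gives
$$R(H)\otimes_{R(\Tilde H)}K^{\Tilde H}(\Lambda_H)\cong K^H(\Lambda_H).$$
This says exactly that setting $q=1$ (killing the $\mathbb{G}_m$ factor) in $K^{G^\vee\times\C^\times}(\Lambda)$ yields $K^{G^\vee}(\Lambda)$. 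Under this specialization:
\begin{itemize}
\item the generic module $M_q$ at $q=1$ becomes $\C[I\backslash X]$ with the permutation action of $W_{aff}$, read off from the explicit $T_s$ formulas of Section \ref{s5};
\item $H_q$ at $q=1$ becomes $\C[W_{aff}]$, matching $\Phi_a$;
\item $IM$ at $q=1$ becomes tensoring with $sgn_f$.
\end{itemize}
The injectivity at $q=1$ was already proved inside the proof of the isomorphism proposition. Surjectivity at $q=1$ follows from the same induction on the Bruhat order, generating each component from $\A_{\Lambda_0}$.

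The main obstacle is checking that the specialization commutes with everything: that $K^{G^\vee}(\Lambda)$ really is the fibre at $q=1$, which is where freeness from Lemma \ref{cfl} is essential, and that the convolution actions specialize to $\Phi_a$.
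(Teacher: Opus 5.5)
Your proposal is correct. Parts one, two and four are assembled exactly as in the paper, but part three takes a different route.

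The paper proves part three by rerunning the Section \ref{s7} arguments directly in $K^{G^\vee}(\Lambda)$. In fact it proves the $q=1$ injectivity first and then lifts it to generic $q$ by repeatedly dividing by $q-1$. You instead obtain part three formally: you base-change the part-four isomorphism along $R(\mathbb{G}_m)\to\mathbb{Z}$ and identify the fibre with $K^{G^\vee}(\Lambda)$ through Lemma \ref{cfl}(3) applied to $H\subset\tilde H$.

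You should state why that lemma applies. We have $\tilde H/U_H\cong GL_n\times\mathbb{G}_m$, and the $\mathbb{G}_m$ factor acts trivially on $\mathcal{B}_n$, since $\rho(z)$ acts on $\mathrm{span}(e_1,\dots,e_n)$ by a scalar. Hence $R(H)\otimes_{R(\tilde H)}K^{\tilde H}(\mathcal{B}_n)\cong R(B_n)\cong K^{H}(\mathcal{B}_n)$, which is free over $R(H)$.

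What each route buys:
\begin{itemize}
\item Your route makes part three a formal consequence of part four. Your extra checks of injectivity and surjectivity at $q=1$ are therefore superfluous, and the annihilator argument need not be redone without the $\mathbb{C}^\times$.
\item The freeness your route rests on also justifies the division by $q-1$, which the paper uses tacitly.
\item Your route does not avoid the $q=1$ analysis, because the paper's proof of part four already relies on the $q=1$ injectivity into $K^{G^\vee}(\Lambda)$. There is no circularity, since that injectivity is proved directly from the filtration and torus-stabilizer argument.
\item The paper's route never needs Lemma \ref{cfl}(3), at the price of an unwritten repetition of the Section \ref{s7} arguments.
\end{itemize}
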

\end{section}

\begin{section}{Fixed points and central characters}\label{s8}

In this Section we will use Theorem \ref{relative KL} to deduce results about $X$ distinguished irreducible representations generated by their $I$ fixed vectors (recall Definition \ref{dist rep}).

For a representation $\pi$ denote by $\pi^\vee$ its contragredient.
 By Frobenius reciprocity, an irreducible representation $\pi$ is $X$ distinguished if and only if $Hom_G(S(X),\pi^\vee)\neq 0$.  
 
 For representations generated by their $I$ fixed vectors this condition is equivalent to 
 
 $Hom_{H(G,I)}(S(X)^I,(\pi^\vee)^I)\neq 0$ (see \cite{Borel1976}). Therefore, we are interested in simple $H(G,I)$ modules which are irreducible quotients of $S(X)^I$.

We know that every simple $H(G,I)$ module is parametrized by a Deligne Langlands parameter. Meaning a pair $(t,n)\in G^\vee\times \mathfrak{g^\vee}$ with $t$ semi simple and $n$ nilpotent such that $t^{-1}nt=q_rn$. Recall that $q_r$ is the size of our residue field. Denote by $v=\sqrt{q_r}$ the positive square root of $q_r$.

Fix $t\in G^\vee$ semisimple and consider $a=(t,v)\in G^\vee\times\C^\times$. 

\begin{Remark}
    We need the square root to make our $\C^\times$ action consists with the one used in \cite{Chriss1997RepresentationTA} and \cite{Kazhdan1987ProofOT} (see Remark \ref{linear action}).
\end{Remark}

The element $a$ determines a central character of $H(G,I)$ as $Z(H(G,I))\cong R(G^\vee\times\C^\times)$ (see Section 8.1 of \cite{Chriss1997RepresentationTA}). We denote by $\C_a$ the one dimensional representation of $Z(H(G,I))$ corresponding to $a$. Denote $H_a=H(G,I)\otimes_{Z(H(G,I))}\C_a$.

For a variety $Z$ with an action of $a$ we denote by $Z^a$ the fixed points.

\begin{prop}[8.1.5 of \cite{Chriss1997RepresentationTA}]\label{fixed points algebra}

We have an algebra isomorphism $H_a\cong H^{BM}_\bullet(St^a)$.
    
\end{prop}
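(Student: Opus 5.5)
The plan follows the argument of Chriss--Ginzburg (Section 8.1) for the classical Steinberg variety. The statement is an isomorphism $H_a\cong H^{BM}_\bullet(St^a)$ of algebras, and I would build it in three steps. First, specialize the $K$-theoretic Kazhdan--Lusztig isomorphism $\Phi_{KL}:K^{G^\vee\times\C^\times}(St)\cong H_q$ of Theorem \ref{classic} at the central character of $a$. Second, pass to the fixed points $St^a$ by the localization theorem. Third, apply the bivariant Riemann--Roch map to go from $K$-theory to Borel--Moore homology.

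\textbf{Step 1.} I would fix the center. Under $\Phi_{KL}$ the center $Z(H(G,I))$ corresponds to $R(G^\vee\times\C^\times)=K^{G^\vee\times\C^\times}(pt)$, acting through the pullback from a point. Hence
\[
H_a\cong K^{G^\vee\times\C^\times}(St)\otimes_{R(G^\vee\times\C^\times)}\C_a .
\]
Next, let $A\subset G^\vee\times\C^\times$ be the Zariski closure of the group generated by $a$; it is abelian and reductive. The cellular fibration lemma (Lemma \ref{cfl}), applied to $St\to\B$ as in Chriss--Ginzburg Chapter 6, gives that $K^{G^\vee\times\C^\times}(St)$ is free and that $R(A)\otimes_{R(G^\vee\times\C^\times)}K^{G^\vee\times\C^\times}(St)\cong K^A(St)$. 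Tensoring with $\C_a$ then gives
\[
H_a\cong K^A(St)_a:=K^A(St)\otimes_{R(A)}\C_a .
\]
This identification is compatible with convolution.

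\textbf{Step 2.} Let $i:St^a\hookrightarrow St$ be the inclusion of the fixed points. The Thomason localization theorem makes $i_*:K^A(St^a)_a\to K^A(St)_a$ an isomorphism. However, $i_*$ does not respect convolution. To fix this I would use the twisted map $r_a=(1\boxtimes\lambda_a(T^*_{\tilde N^a}\tilde N)^{-1})\circ i^*$, built from the $\lambda$-classes of the conormal bundle of $\tilde N^a\subset\tilde N$ on the second factor. The bivariant localization theorem (Chriss--Ginzburg 5.11.10) says this twisted map is an algebra isomorphism $K^A(St)_a\cong K(St^a)$. Here the smoothness of $\tilde N^a$ is what makes the $\lambda$-classes invertible after specializing at $a$.

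\textbf{Step 3.} Since $A$ acts trivially on $St^a$, we have $K^A(St^a)_a=K(St^a)\otimes\C$. Compose with the bivariant Riemann--Roch map $RR:K(St^a)\to H^{BM}_\bullet(St^a)$, normalized by the Todd class of the smooth variety $\tilde N^a$ on the second factor. This normalized map is a convolution-algebra homomorphism. It becomes an isomorphism because $St^a$ admits an $\alpha$-partition into pieces that are affine fibrations over the smooth projective varieties $\B^a$. On such pieces both $K(\cdot)\otimes\C$ and $H^{BM}_\bullet$ are free and Chern character maps are isomorphisms, so the map is an isomorphism piece by piece.

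The main obstacle is Step 2, namely verifying that the twisted restriction is multiplicative for convolution. This requires the clean-intersection and excess-bundle computation of Chriss--Ginzburg 5.11. Since the proposition is stated as 8.1.5 of \cite{Chriss1997RepresentationTA}, I would cite that computation rather than redo it. The one point to adjust is the paper's convention that $\C^\times$ acts by $z^2$ (Remark \ref{linear action}). This is handled by the choice $a=(t,v)$ with $v=\sqrt{q_r}$, which matches the standard parameter $q_r$ under the usual action.
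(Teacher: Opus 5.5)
Your proposal is correct and is essentially the argument the paper relies on. The paper only cites Chriss--Ginzburg 8.1.5, but its own proof of the relative analogue (Proposition \ref{fixed points module}) runs through the same steps: part 3 of the cellular fibration lemma to pass to $K^A$, then bivariant localization (5.11.5, 5.11.10) with the $\lambda$-class twist, then the Todd-normalized bivariant Riemann--Roch map, shown to be an isomorphism via 5.9.19 and 5.11.11. One small correction: what makes $\lambda(T^*_{\tilde N^a}\tilde N)$ invertible after specializing at $a$ is that $a$ has no eigenvalue $1$ on the conormal fibers; smoothness of $\tilde N^a$ only guarantees that this conormal bundle exists.
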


We also have a relative version. First we need to determine the values of $a$ such that $\Lambda^a\neq\emptyset$.

\begin{prop}\label{non_empty_fixed_points}
    Let $a=(t,v)\in G^\vee\times\C^\times$ with $t$ semisimple. $\Lambda^a\neq\emptyset$ if and only if $t$ is conjugate to an element of the form $diag(t_1v^{-1},...,t_nv^{-1},t_1v,...,t_nv)$.
\end{prop}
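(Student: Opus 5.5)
Since $\Lambda\cong(G^\vee\times\C^\times)\times^{\Tilde H}\Lambda_H$ (Proposition \ref{semi direct product}), a point of $\Lambda$ fixed by $a=(t,v)$ exists if and only if some $G^\vee\times\C^\times$-conjugate of $a$ lies in $\Tilde H$ and fixes a point of $\Lambda_H$. Indeed, $(g,z,y)$ is fixed by $a$ exactly when $g^{-1}a g$ (with the appropriate $\C^\times$-component) lies in $\Tilde H$ and fixes $y$. So I would reduce to the following question: for which semisimple elements $\tilde h=(h,v)\in\Tilde H$ is $\Lambda_H^{\tilde h}\neq\emptyset$? Here the image of $(h,z)\in\Tilde H$ in $G^\vee\times\C^\times$ is $(h\rho(z),z)$, up to the convention $\rho(z^{\pm1})$ fixed in Proposition \ref{semi direct product}.

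For the ``only if'' direction, a semisimple element of $\Tilde H$ is conjugate inside $\Tilde H$ to one in the maximal torus $T_H\times\C^\times$, where $T_H=\{diag(s,s)\}$ with $s=diag(t_1,\dots,t_n)$. Its image in $G^\vee$ is $diag(s,s)\rho(v)=diag(t_1v,\dots,t_nv,t_1v^{-1},\dots,t_nv^{-1})$, which is conjugate (by a permutation) to the stated form. If the sign convention for $\rho$ gives $v^{-1}$ on the first block, the resulting element is again of the same shape. This shows that $t$ must have the stated form whenever $a$ is conjugate into $\Tilde H$. The one remaining point is that $\Lambda^a\neq\emptyset$ actually forces $a$ to be conjugate into $\Tilde H$. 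This follows from the fibration $\Lambda\to (G^\vee\times\C^\times)/\Tilde H = G^\vee/H$: a fixed point maps to a fixed coset $gH$, so $g^{-1}ag\in\Tilde H$.

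For the ``if'' direction, take $\tilde h=(diag(s,s),v)\in\Tilde H$; it suffices to find a fixed point in $\Lambda_H$. Its action on $\Lambda_H$ is $(gB^\vee,\phi)\mapsto(diag(s,s)\rho(v)^{-1}g\rho(v)B^\vee,\,v^2\,Ad^*\phi)$, suitably interpreted. I would take $g=w$ to be a permutation matrix such that $wB^\vee$ lies in the minimal relevant orbit $\s_0$, or in any relevant orbit. Then $wB^\vee$ is fixed by the torus element, because torus elements fix $T$-fixed flags. The fiber of $\tau$ over $wB^\vee$ is an affine space $A=\{\phi:\phi|_\mathfrak{h}=\psi,\ (w^{-1}\phi)|_{\mathfrak{b}^\vee}=0\}$, and it is nonempty by relevance. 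This affine space is stable under the stabilizer of $wB^\vee$ in $\Tilde H$, so $\tilde h$ acts on $A$ by an affine transformation. I need to show that this transformation has a fixed point. Its linear part acts on the direction space $\{\phi:\phi|_\mathfrak{h}=0,\ (w^{-1}\phi)|_{\mathfrak b^\vee}=0\}$ through weights of the form $v^2\cdot(\text{root values of } diag(s,s)\rho(v))$.

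\textbf{Main obstacle.} The affine map need not have a fixed point if the linear part has eigenvalue $1$ on directions where the translation part is nonzero. To handle this I would first use the $\Tilde H$-equivariance to note that $\psi$ itself is invariant. Concretely, $\psi$ is $H$-invariant and scales by $z^2$ under $\rho(z)$ conjugation, which cancels the $z^2$ scaling of $\phi$; this is exactly why the $\C^\times$ action was defined that way. I would then choose the $T$-weight-zero part: decompose $\mathfrak g^{\vee*}$ into $T^\vee$-weight spaces and look for $\phi$ inside the weight-$(1,1,\dots)$ component, meaning the combination of the coordinates $E_{i,i+n}^*$ on which $\tilde h$ acts trivially. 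Explicitly, I would try $\phi$ equal to the element pairing with $tr(a)$, that is, $\phi=\sum_i E^*_{i,i+n}$. This $\phi$ is fixed by $\tilde h$ by the computation above, and it satisfies $\phi|_\mathfrak h=\psi$. It then remains to check that $(w^{-1}\phi)|_{\mathfrak b^\vee}=0$ for $w$ in the minimal relevant orbit. Since $w$ is chosen so that every $(i,i+n)$ entry vanishes on $w^{-1}B^\vee w$, this is exactly the relevance condition, and the point $(wB^\vee,\phi)$ is therefore fixed.
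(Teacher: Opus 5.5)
Your proposal is correct and is essentially the paper's proof. For ``only if'', both arguments observe that a fixed point forces $a$ to stabilize a point of $G^\vee/H$, so $a$ is conjugate into $\Tilde{H}$, and then diagonalize; your passage through the maximal torus $T_H\times\mathbb{C}^\times$ of $\Tilde{H}$ makes precise the paper's terse ``conjugate by an element of $H$'' step. For ``if'', the paper exhibits exactly your point $(H,wB^\vee,\psi)$, with $\psi$ extended by $\sum_i E^*_{i,i+n}$ and $HwB^\vee$ relevant, so your detour about fixed points of affine maps is unnecessary.
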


\begin{proof}
    Let $(g_1H,g_2B^\vee,\phi)\in\Lambda$ be a fixed point of $a$. 
    
    Then we have $tg_1\rho(v)H=g_1H$, $tg_2B^\vee=g_2B^\vee$ and $t^{-1}q_r\phi=\phi$. Acting by $G^\vee$ we can assume that $g_1=1$ and that $g_2=w\in W$. Assuming that, we have $t\rho(v)H=H$ and $twB^\vee=wB^\vee$. Equivalently, $t\rho(v)\in H$ and $t\in wB^\vee w^{-1}$.   

    Using the condition $t\rho(v)\in H$ we can conjugate $t$ by an element of $H$ to be of the form $diag(t_1v^{-1},...,t_nv^{-1},t_1v,...,t_nv)$.

    For the other direction, assume that $t$ has the form $t=diag(t_1v^{-1},...,t_nv^{-1},t_1v,...,t_nv)$, let $w\in W$ be such that $HwB^\vee$ is a relevant orbit. We have $(H,wB^\vee,\psi)\in \Lambda$ a fixed point of $a=(t,v)$.
\end{proof}

As a corollary we obtain the following.

\begin{cor}\label{central cahrachter}
    Assume that $\pi$ is an irreducible representation of $G$ generated by its $I$ fixed vectors, with Deligne Langlands parameter $(t,n)\in G^\vee\times\mathfrak{g}^\vee$. If $\pi$ is $X$ distinguished then $t$ can be conjugated to an element of the form  $diag(t_1v^{-1},...,t_nv^{-1},t_1v,...,t_nv)$.
\end{cor}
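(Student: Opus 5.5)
The plan is to combine the relative Kazhdan–Lusztig isomorphism (part four of Theorem \ref{relative KL}) with the fixed point description of Proposition \ref{non_empty_fixed_points}, passing through central characters.

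First, by Frobenius reciprocity and the discussion at the start of this section, $\pi$ being $X$ distinguished gives a nonzero map of $H(G,I)$ modules $S(X)^I\to(\pi^\vee)^I$. The module $(\pi^\vee)^I$ is a simple $H(G,I)$ module, so $Z(H(G,I))$ acts on it by a character. I would first record how the central character of $\pi^\vee$ relates to that of $\pi$: it corresponds to $t^{-1}$, or equivalently to $t$ after the twist by $IM$, which inverts $\theta_\lambda$. I would also check that the set of elements of the form $diag(t_1v^{-1},\dots,t_nv^{-1},t_1v,\dots,t_nv)$ is stable under inversion and under conjugation; inversion gives $diag(t_1^{-1}v,\dots,t_n^{-1}v^{-1},\dots)$, which is conjugate to a form of the same shape after permuting the two blocks. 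So these bookkeeping issues do not matter.

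The nonzero map factors through $S(X)^I\otimes_{Z}\C_{a}$ with $a=(t^{\pm1},v)$. By Theorem \ref{relative KL}, $IM(S(X)^I)\cong K^{G^\vee\times\C^\times}(\Lambda)$, compatibly with $\Phi_{KL}$. Since $\Phi_{KL}$ identifies $Z(H(G,I))$ with $R(G^\vee\times\C^\times)$, and $IM$ acts on the center compatibly with $a\mapsto a^{-1}$, it follows that $K^{G^\vee\times\C^\times}(\Lambda)\otimes_{R(G^\vee\times\C^\times)}\C_a\neq0$. Here $\C_a$ is evaluation of characters at $a$, possibly after replacing $a$ by $a^{-1}$.

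Next I would use the localization theorem in equivariant $K$-theory (Thomason, as in Section 5.10 of \cite{Chriss1997RepresentationTA}). Let $A$ be the closed subgroup generated by $a$. Then $K^{A}(\Lambda)\otimes_{R(A)}\C_a\cong K(\Lambda^a)\otimes\C$. Moreover $K^{G^\vee\times\C^\times}(\Lambda)\otimes_{R}\C_a$ vanishes whenever $\Lambda^{a'}=\emptyset$ for all conjugates $a'$ of $a$, because the module is then supported away from the point $a$ of $\mathrm{Spec}\,R(G^\vee\times\C^\times)$. A cleaner route is to use the freeness result of Section \ref{s6} together with part 3 of the cellular fibration lemma, which give $R(A)\otimes_R K^{G^\vee\times\C^\times}(\Lambda)\cong K^A(\Lambda)$. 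Combining this with localization yields $\Lambda^a\neq\emptyset$. Proposition \ref{non_empty_fixed_points} then gives the conjugacy form of $t$.

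The main obstacle is this localization step: the support of $K^{G^\vee\times\C^\times}(\Lambda)$ over $R(G^\vee\times\C^\times)$ must be contained in the set of semisimple classes meeting fixed points. I would handle it by reducing through $K^{G^\vee\times\C^\times}(\Lambda)=K^{\Tilde H}(\Lambda_H)$ and the filtration by the cells $\tau^{-1}(\s_j)$. Each graded piece is $K^{\Tilde H}$ of an affine fibration over an orbit, hence $R(\mathrm{Stab})$ for the $\Tilde H$-stabilizer of a point. Its support is then visibly the set of classes of elements lying in that stabilizer, which are exactly the $a$ fixing a point. Since the center acts through these subquotients, a nonzero specialization forces some cell to have an $a$-fixed point.
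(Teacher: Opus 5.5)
Your proposal is correct and follows the paper's proof. Both go through the nonvanishing of $IM(S(X)^I)\otimes_{Z(H(G,I))}\mathbb{C}_a\cong K^{G^\vee\times\mathbb{C}^\times}(\Lambda)\otimes_{R(G^\vee\times\mathbb{C}^\times)}\mathbb{C}_a$, then a localization/support argument giving $\Lambda^a\neq\emptyset$, then Proposition \ref{non_empty_fixed_points}. Your cell-by-cell support argument, via $K^{\Tilde H}(\tau^{-1}(\s_j))\cong R(\mathrm{Stab})$ and right exactness of $\otimes\,\mathbb{C}_a$, is a valid and more self-contained substitute for the paper's citation of Segal's localization theorem.

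Discard the ``cleaner route'' through part 3 of the cellular fibration lemma. The freeness established in Section \ref{s6} is over $R(\Tilde H)$, not over $R(G^\vee\times\mathbb{C}^\times)$. Moreover, applying part 3 with $A\subset\Tilde H$ presupposes that $a$ is already conjugate into $\Tilde H$, which is exactly the conclusion you want.
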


\begin{proof}
    Let $a=(t,v)$ and let $\C_a$ be the one dimensional representation of $Z(H(G,I))$ corresponding to $a$. 

    The center $Z(H(G,I))$ acts on $IM((\pi^\vee)^I)$ by the character defined by $a$. This holds because both passing to the contragrident and applying the Iwahori Matsumoto involution inverses the central character. Therefore $IM((\pi^\vee)^I)=IM((\pi^\vee)^I)\otimes_{Z(H(G,I))}\C_a$ is a quotient of $IM(S(X)^I)\otimes_{Z(H(G,I))} \C_a$. In particular $IM(S(X)^I)\otimes_{Z(H(G,I))} \C_a\neq 0$.

    On the other hand, we have $$IM(S(X)^I)\otimes_{Z(H(G,I))} \C_a\cong K^{G^\vee\times \C^\times}(\Lambda)\otimes_{Z(H(G,I))} \C_a\cong  K^{G^\vee\times \C^\times}(\Lambda)\otimes_{R(G^\vee\times\mathbb{G}_m)} \C_a$$

    By the localization theorem (Proposition 4.1 of \cite{Segal1968EquivariantKTheory}) $\Lambda^a=\emptyset$ implies $$K^{G\times \C^\times}(\Lambda)\otimes_{R(G\times\mathbb{G}_m)} \C_a=0$$ 

    Thus, the result follows from Proposition \ref{non_empty_fixed_points}.
\end{proof}

\begin{Remark}
    In our case the dual group $G_X^\vee$ defined in \cite{Knop2017TheDG} is $GL_n$ and the map $G_X^\vee\times SL_2\rightarrow G_X^\vee$ constructed in \cite{Knop2017TheDG} is the following $g,\begin{pmatrix}
        a & b\\ c& d
    \end{pmatrix}\mapsto \begin{pmatrix}
        ag & bg\\ cg & dg
    \end{pmatrix}$. 

    Corollary \ref{central cahrachter} is equivalent to saying that if $\pi$ is $X$ distinguished with Deligne Langalnds parameter $(t,n)$ then $t$ can be conjugated into the image of $G_X^\vee\times\begin{pmatrix}
        v^{-1} &0\\0 & v
    \end{pmatrix}\subset G_X^\vee\times SL_2$. 
\end{Remark}

Now, we will give a relative version of Proposition \ref{fixed points algebra} under the assumption that $\Lambda^a\neq\emptyset$.

By the convolution construction we get an action of $H^{BM}_\bullet(St^a)$ on $H^{BM}_\bullet(\Lambda^a)$. 

Recall that $\Lambda=G^\vee\times^H \Lambda_H$ and that $\Lambda_H$ satisfies the cellular fibration lemma over $\B_n$. 

Also, recall the definition of $\Tilde{H}$ from Proposition \ref{semi direct product} and its action on $\Lambda_H$. 

We have the embedding $\Tilde{H}\subset G\times \mathbb{G}_m$. Notice that by Proposition \ref{non_empty_fixed_points} $\Lambda^a\neq\emptyset$ if and only if $a$ can be conjugated into $\Tilde{H}$.

Let $S=\{\phi\in \mathfrak{g}^{\vee*},\phi|_\mathfrak{h}=\psi\}$, clearly $H$ acts on $S$. Notice that $\Lambda_H=T^*\B\times_{\mathfrak{g}^{\vee*}}S$. Thus we can form a convolution diagram and define an action of $H^{BM}_\bullet(St^a)$ on $H^{BM}_\bullet(\Lambda_H^a)$.

\begin{prop}\label{fixed points module}
    We have an isomorphism $H^{BM}_\bullet(\Lambda_H^a)\cong IM(S(X)^I)\otimes_{Z(H(G,I))} \C_a$ as $H_a$ modules compatible with the isomorphism given in Proposition \ref{fixed points algebra}.
\end{prop}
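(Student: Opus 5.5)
We follow the proof of Theorem 8.1.5 and Proposition 8.1.5-type arguments in Section 8 of \cite{Chriss1997RepresentationTA}, replacing $St$ by $\Lambda$ and $G^\vee\times\C^\times$ by $\Tilde{H}$.

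We begin from Theorem \ref{relative KL}(4), $IM(S(X)^I)\cong K^{G^\vee\times\C^\times}(\Lambda)$, and the identification $K^{G^\vee\times\C^\times}(\Lambda)\cong K^{\Tilde{H}}(\Lambda_H)$ from Proposition \ref{semi direct product}. This gives
$$IM(S(X)^I)\otimes_{Z(H(G,I))}\C_a\cong K^{\Tilde{H}}(\Lambda_H)\otimes_{R(G^\vee\times\C^\times)}\C_a .$$
By Proposition \ref{non_empty_fixed_points} we may assume $a\in\Tilde{H}$. Let $A\subset\Tilde{H}$ be the Zariski closure of the group generated by $a$; it is a diagonalizable group.

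\textbf{Change of groups.} We apply part 3 of the cellular fibration lemma (Lemma \ref{cfl}) to $f\circ\tau:\Lambda_H\to\B_n$, once with $A\subset\Tilde{H}$ and once with $A\subset G^\vee\times\C^\times$. To do so we need $K^A(\B_n)$ to be free and $R(A)\otimes_{R(\Tilde{H})}K^{\Tilde{H}}(\B_n)\to K^A(\B_n)$ to be an isomorphism. Both follow by reducing to $GL_n\times\C^\times$, since $\Tilde{H}$ modulo its unipotent radical is $GL_n\times\mathbb{G}_m$ and $K^{GL_n}(\B_n)=R(T_n)$. The lemma then gives $K^A(\Lambda_H)\cong R(A)\otimes_{R(\Tilde{H})}K^{\Tilde{H}}(\Lambda_H)$. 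Specializing at the point $a$ of $\mathrm{Spec}\,R(A)$ yields $K^{\Tilde{H}}(\Lambda_H)\otimes_{R(\Tilde{H})}\C_a\cong K^A(\Lambda_H)_a$.

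There is a subtlety here: specializing over $R(G^\vee\times\C^\times)$ rather than over $R(\Tilde{H})$ at the conjugacy class of $a$. We handle it as in Section 8.1 of \cite{Chriss1997RepresentationTA}. We complete at the maximal ideal and use that $R(\Tilde{H})$ is finite over $R(G^\vee\times\C^\times)$. The completed $K^{\Tilde{H}}(\Lambda_H)$ then splits over the finitely many $\Tilde{H}$-classes fused into the $G^\vee$-class of $a$, and this matches the decomposition of $\Lambda^a=G^\vee\times^H(\ldots)$ into pieces.

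\textbf{Localization and Riemann–Roch.} Next we apply the concentration theorem (Thomason, or Proposition 4.1 of \cite{Segal1968EquivariantKTheory}) to get $K^A(\Lambda_H)_a\cong K^A(\Lambda_H^a)_a\cong K(\Lambda_H^a)$. Then the bivariant Riemann–Roch map gives $K(\Lambda_H^a)\otimes\C\to H^{BM}_\bullet(\Lambda_H^a)$. To show this is an isomorphism we use that the cellular fibration on $\Lambda_H$ restricts to one on $\Lambda_H^a$, by Bialynicki-Birula-type arguments (fixed points of affine fibrations are affine fibrations over $\B_n^a$). Consequently both sides have vanishing odd homology, are free, and have the same rank.

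\textbf{Compatibility with convolution.} We follow \cite[5.11, 8.1]{Chriss1997RepresentationTA}: the action is intertwined after twisting the Riemann–Roch map by $\pi_2^*\mathrm{Td}$ of the smooth ambient spaces ($T^*\B$ and the affine space $S$). The $H_a$-module structures then match the ones from Proposition \ref{fixed points algebra}.

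\textbf{Main obstacle.} I expect the hardest step to be the change-of-groups specialization: verifying the freeness and base-change hypotheses for $A$ and correctly matching the specialization over $R(G^\vee\times\C^\times)$ with the fixed-point set of $\Lambda_H$ rather than of $\Lambda$. My first attempt would be to use $\Lambda^a=G^\vee\times^H$ and to argue via the completion at $a$, as sketched above.
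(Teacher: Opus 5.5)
Your main line is the paper's: Theorem \ref{relative KL}(4), $K^{G^\vee\times\mathbb{C}^\times}(\Lambda)\cong K^{\Tilde{H}}(\Lambda_H)$, part 3 of Lemma \ref{cfl} for $A\subset\Tilde{H}$, bivariant localization, Riemann--Roch via the cellular structure, and \cite[5.11]{Chriss1997RepresentationTA} for compatibility with convolution. The paper replaces $\otimes_{R(G^\vee\times\mathbb{C}^\times)}\mathbb{C}_a$ by $\otimes_{R(\Tilde{H})}\mathbb{C}_a$ without comment. You are right that this step needs an argument, but the one you sketch would not prove the statement.

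For an $R(\Tilde{H})$-module $M$ one has $M\otimes_{R(G^\vee\times\mathbb{C}^\times)}\mathbb{C}_a\cong M\otimes_{R(\Tilde{H})}F_a$. Here $F_a=R(\Tilde{H})\otimes_{R(G^\vee\times\mathbb{C}^\times)}\mathbb{C}_a$ is the ring of the scheme-theoretic fibre of $\mathrm{Spec}\,R(\Tilde{H})\to\mathrm{Spec}\,R(G^\vee\times\mathbb{C}^\times)$ over $a$, and completing changes nothing.

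\begin{itemize}
\item If several $\Tilde{H}$-classes $a'$ lay in this fibre, you would get a direct sum corresponding to $\bigoplus_{a'}H^{BM}_\bullet(\Lambda_H^{a'})$, not $H^{BM}_\bullet(\Lambda_H^a)$.
\item Matching with $\Lambda^a$ is the wrong target. Its piece attached to $a'$ is $Z_{G^\vee\times\mathbb{C}^\times}(a')\times^{Z_{\Tilde{H}}(a')}\Lambda_H^{a'}$, whose homology is different.
\item Nilpotents in $F_a$ would enlarge even a single summand.
\item You also cannot apply Lemma \ref{cfl}(3) with $G^\vee\times\mathbb{C}^\times$ as the big group. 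The module $K^{G^\vee\times\mathbb{C}^\times}\bigl((G^\vee\times\mathbb{C}^\times)\times^{\Tilde{H}}\B_n\bigr)=K^{\Tilde{H}}(\B_n)$ is killed by the nonzero kernel of $R(G^\vee\times\mathbb{C}^\times)\to R(\Tilde{H})$, so hypothesis 3 fails.
\end{itemize}

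What is needed is $F_a=\mathbb{C}_a$, and this holds. Modulo its unipotent radical, $\Tilde{H}$ is $GL_n\times\mathbb{G}_m$, mapping to $G^\vee\times\mathbb{G}_m$ by $(g,z)\mapsto(\mathrm{diag}(gz^{-1},gz),z)$ (cf.\ Proposition \ref{non_empty_fixed_points}). So a point of the fibre is a monic $p$ with $p(0)\neq0$ such that $p(xv)p(x/v)$ is the characteristic polynomial of $t$.

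\begin{itemize}
\item Such $p$ is unique. In each chain $\lambda v^{2\mathbb{Z}}$ of eigenvalues of $t$, the eigenvalue $\mu$ of smallest modulus must equal $\tau/v$ for a root $\tau$ of $p$. Removing $\mu$ and $\mu v^2$ and recursing determines the roots of $p$; this is the relation $m_i=k_i+k_{i-1}$ of Section \ref{s9}.
\item The fibre is reduced. Take a tangent vector $\delta p$ with $\deg\delta p<n$ and $\delta p(xv)p(x/v)+p(xv)\delta p(x/v)=0$. Then $r=\delta p/p$ satisfies $r(yv^2)=-r(y)$. Hence the finite set of poles of $r$, all lying in $\mathbb{C}^\times$, is stable under multiplication by $v^2$. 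Since $v>1$ this set is empty, so $p\mid\delta p$ and $\delta p=0$.
\end{itemize}

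So $F_a=\mathbb{C}_a$. With this inserted, your argument (and the paper's) goes through.
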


\begin{proof}
    We follow the same ideas as in the proof of Proposition \ref{fixed points algebra}. 

    Let $A\subset \Tilde{H}$ be the minimal closed subgroup that contains $a$. 
    
    We have   $$IM(S(X)^I)\otimes_{Z(H(G,I))} \C_a\cong K^{G\times \C^\times}(\Lambda)\otimes_{R(G\times \C^\times)} \C_a\cong K^{\Tilde{H}}(\Lambda_H)\otimes_{R(G\times \C^\times)} \C_a\cong K^{\Tilde{H}}(\Lambda_H)\otimes_{R(\Tilde{H})} \C_a$$ 
    
    By part 3 of the cellular fibration lemma we have $K^{\Tilde{H}}(\Lambda_H)\otimes_{R(\Tilde{H})}R(A)\cong K^A(\Lambda_H)$. Tensoring with $\C_a$ over $R(A)$ we get $K^{\Tilde{H}}(\Lambda_H)\otimes_{R(\Tilde{H})} \C_a\cong K^A(\Lambda_H)\otimes_{R(A)}\C_a$.

    By Lemma 5.11.5 of \cite{Chriss1997RepresentationTA} we have $K^A(\Lambda_H)\otimes_{R(A)}\C_a\cong K(\Lambda_H^A)$. We can find an isomorphism which is compatible with the convolution action using Lemma 5.11.10 of \cite{Chriss1997RepresentationTA}.

    Using the cellular fibration lemma and Theorem 5.9.19 of \cite{Chriss1997RepresentationTA} we have $K(\Lambda_H^A)\cong H_\bullet^{BM}(\Lambda_H^A)$. We can take the isomorphism to be compatible with the convolution action by Theorem 5.11.11 of \cite{Chriss1997RepresentationTA}.

    By construction of $A$ we have $H_\bullet^{BM}(\Lambda_H^A)=H_\bullet^{BM}(\Lambda_H^a)$ which completes the proof. 
\end{proof}

\end{section}

\begin{section}{Sheaf theoretic analysis of distinguished representations}\label{s9}

In this Section we continue the analysis of $S(X)^I\otimes_{Z(H(G,I))} \C_a$  that we started in the previous section. This time we use a sheaf theoretic description of the module $H^{BM}_\bullet(\Lambda_H^a)$. We follow the ideas of Section 8.6 of \cite{Chriss1997RepresentationTA}.

Fix $a=(t,v)\in \Tilde{H}\subset G\times\mathbb{G}_m$ such that $\Lambda_H^a\neq\emptyset$.

Recall the definition $S=\{\phi\in \mathfrak{g}^{\vee*},\phi|_\mathfrak{h}=\psi\}$ from the previous section.

Let $D_1,D_2$ be the dualizing complexes on $S^a$ and on $(T^*\B)^a$ respectively. Both $S^a$ and $(T^*\B)^a$ are smooth so the dualizing complexes are just constant sheaves shifted by twice the dimension of the corresponding variety.

We denote the moment map of $T^*\B$ by $\pi$ and the embedding of $S$ by $\mu$. We denote by $\pi$ and $\mu$ also the restrictions to the fixed points of $a$.

\begin{prop}\label{pass to Ext}

We have a commutative diagram whose vertical lines are isomorphisms.

    \[
    \begin{tikzcd}
H^{BM}_{\bullet}(St^a) \otimes H^{BM}_{\bullet}(\Lambda_H^a) 
    \arrow[r, "\textit{convolution action}"] 
    \arrow[d]
& H^{BM}_{\bullet}(\Lambda_H^a) 
    \arrow[d] \\
\text{Ext}^\bullet(\pi_*D_2, \pi_*D_2) \otimes \text{Ext}^\bullet(\pi_*D_2, \mu_*D_1) 
    \arrow[r, "\textit{composition}"] 
& \text{Ext}^\bullet(\pi_*D_2, \mu_*D_1)
\end{tikzcd}
\]

The Ext groups are computed in the category ${D_c^b((\mathfrak{g}^{\vee*})^a)}$ of bounded derived constructible sheaves on  $(\mathfrak{g}^{\vee*})^a$.

\end{prop}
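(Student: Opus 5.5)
This is the standard Ginzburg-type isomorphism of Chriss--Ginzburg, Theorem 8.6.7 and Lemma 8.6.1, transplanted to a fibre product of two different proper-over-base spaces. The plan is to use the general principle that for maps $f_i:M_i\rightarrow N$ from smooth varieties to a common base, Borel--Moore homology of the fibre product computes $\text{Ext}$ between pushforwards of dualizing complexes. We apply it on the fixed locus $N=(\mathfrak{g}^{\vee*})^a$, where $(T^*\B)^a$ and $S^a$ are smooth (as noted just before the statement). We also need $\mu:S^a\rightarrow N$ to be a closed embedding and $\pi:(T^*\B)^a\rightarrow N$ to be proper. The first holds because $S$ is an affine subspace and $S^a$ is closed in it; the second holds because $\B^a$ is projective.

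\textbf{Step 1: the vertical isomorphisms.} For $f_1:M_1\rightarrow N$, $f_2:M_2\rightarrow N$ we use Chriss--Ginzburg 8.6.7:
$$H^{BM}_\bullet(M_1\times_N M_2)\cong \text{Ext}^\bullet(f_{1*}D_{M_1},f_{2*}D_{M_2}).$$
The proof is base change along $M_1\times_N M_2\rightarrow M_1\times M_2$, combined with Verdier duality $\text{Hom}(f_{1*}D,f_{2*}D)=\text{Hom}(f_{1!}D, f_{2*}D)$ (valid since $\pi$ is proper) and the identification $D_{M_1\times M_2}=D_{M_1}\boxtimes D_{M_2}$. The left vertical map is then the classical one, using $St^a=(T^*\B)^a\times_N(T^*\B)^a$. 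The right vertical map uses $\Lambda_H^a=(T^*\B)^a\times_N S^a$, which holds because taking $a$-fixed points commutes with fibre products. Only properness of $\pi$ is needed here; $\mu$ need not be proper for the Ext statement to hold.

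\textbf{Step 2: compatibility of convolution with composition.} Write $M_1=M_2=(T^*\B)^a$ and $M_3=S^a$. The convolution in Subsection \ref{convultion} is defined by pulling back along $\pi_1$ and $\pi_3$, taking the intersection product on $M_1\times M_2\times M_3$ (with supports in the triple fibre product), and pushing forward along the proper map $\pi_2$, which forgets the middle $M_2$ factor. We need to check that this matches Yoneda composition $\text{Ext}(f_{1*}D,f_{2*}D)\otimes\text{Ext}(f_{2*}D,f_{3*}D)\rightarrow \text{Ext}(f_{1*}D,f_{3*}D)$. We follow the proof of Chriss--Ginzburg 8.6.15 word for word. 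That proof is local on the base and uses only smoothness of the $M_i$ and properness of the projection being integrated out, here $\pi_2$, which forgets the $T^*\B$ factor. Both conditions hold in our situation.

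The main obstacle is keeping track of the orientation of this convolution. Our convolution is a right action of $H^{BM}(St^a)$ on $H^{BM}(\Lambda_H^a)$, whereas the Ext pairing is written as a left action. We must choose the identification $\text{Ext}(\pi_*D,\mu_*D)$ versus $\text{Ext}(\mu_*D,\pi_*D)$ consistently, since Verdier duality swaps them. We must also match degree shifts and signs, because $D_1$ and $D_2$ have different shifts: twice $\dim S^a$ and twice $\dim (T^*\B)^a$ respectively. We will first fix the convention on a single connected component of $(T^*\B)^a$ and then check it by comparing fundamental classes, mirroring the bookkeeping in Chriss--Ginzburg 8.6.
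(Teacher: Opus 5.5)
Your proposal is correct and follows essentially the paper's route. The paper observes that $\pi$ is proper and that $\mu$ is a closed embedding (hence proper), and then quotes Chriss--Ginzburg Proposition 8.6.16; that result packages the fibre-product Ext isomorphism and the convolution/Yoneda compatibility that you unpack in Steps 1 and 2. Your extra remarks are accurate refinements the paper leaves implicit: only properness of $\pi$ is needed for the vertical isomorphism in the direction $\text{Ext}(\pi_*D_2,\mu_*D_1)$, and the left/right ordering convention has to be fixed.
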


\begin{proof}
Notice that the map $\pi$ is proper and $\mu$ is a closed embedding so it is also proper. Thus, the proposition is a special case of Proposition 8.6.16 of \cite{Chriss1997RepresentationTA}.     
\end{proof}

Using a Killing form we pass from $\mathfrak{g}^{\vee*}$ to $\mathfrak{g}$. The moment map $\pi$ from $T^*\B$ to $\mathfrak{g}^{\vee*}$ becomes the Springer resolution of the nilpotent cone $\Tilde{N}\rightarrow N$. We identify $S$ with the subspace of $\mathfrak{g}$ given by $S=\{\begin{pmatrix}
        a & I_n  \\
        b & -a
        \end{pmatrix}|a,b\in \mathfrak{gl}_n\}$ and we denote by $\mu$ the embedding $\mu:S\hookrightarrow \mathfrak{g}$.

We can also replace the sheaves $D_1,D_2$ with the constant sheaves $\C_{S^a}$ and $\C_{\Tilde{N}^a}$. 
Using this notations, we study the $Ext^\bullet_{\mathfrak{g}^a}(\pi_*\C_{\Tilde{N}^a},\pi_*\C_{\Tilde{N}^a})$ module $Ext^\bullet_{\mathfrak{g}^a}(\pi_*\C_{\Tilde{N}^a},\mu_*\C_{S^a})$.

Denote by $C_{G^\vee}(t)\subset G^\vee$ the centralizer of $t$. Denote by $\mathcal{E}$ the collection of $C_{G^\vee}(t)$ orbits on $\mathfrak{g}^a$. For any $f\in \mathcal{E}$ we denote by $IC_f$ the IC sheaf corresponding to $f$.

Now we recall the relation between simple $H^{BM}_\bullet(St^a)$ modules and nilpotent elements $n$ such that $t^{-1}nt=q_rn$. These nilpotent elements are precisely the elements of $\mathfrak{g}^a$. See Section 8.6 of \cite{Chriss1997RepresentationTA} for more details.

By the decomposition theorem (Theorem 8.4.8 in \cite{Chriss1997RepresentationTA}) we have $\pi_*\C_{\Tilde{N}^a}=\bigoplus_{f\in \mathcal{E}}L_f\otimes IC_f$. Where $L_f$ is some graded vector space. By a result of Kazhdan and Lusztig (see \cite{Kazhdan1987ProofOT}), we have $L_f\neq 0$ for every $f\in\mathcal{E}$.

Denote $A=\bigoplus_k Ext^k_{\mathfrak{g}^a}(\pi_*\C_{\Tilde{N}^a},\pi_*\C_{\Tilde{N}^a})$.

We have $$A=\bigoplus_{k\in \Z,c,d\in\mathcal{E}}Ext^k(IC_c,IC_d)\otimes Hom(L_c,L_d)=\bigoplus_{c\in \mathcal{E}}End(L_c)\oplus\bigoplus_{k>0,c,d\in\mathcal{E}}Ext^k(IC_c,IC_d)\otimes Hom(L_c,L_d)$$ 

The radical of $A$ is $Rad(A)=\bigoplus_{k>0,c,d\in\mathcal{E}}Ext^k(IC_c,IC_d)\otimes Hom(L_c,L_d)$ and the semi-simple part is $A/Rad(A)=\bigoplus_{c\in \mathcal{E}}End(L_c)$. To each orbit $c\in \mathcal{E}$ corresponds a simple $A$ module $L_c$. 

We go back to our problem.

We have an $A$ module $V$ given by: $$V:=\bigoplus_k Ext^k_{\mathfrak{g}^a}(\pi_*\C_{\Tilde{N}^a},\mu_*\C_{S^a})=\bigoplus_{k\in \Z,c\in\mathcal{E}}Ext^k(IC_c,\mu_*\C_{S^a})\otimes L_c^*$$

We are interested in simple modules of $A$ with are quotients of $V$. 

We have the following result.
\begin{prop}\label{sheaf condition}
    Let $\pi$ be an irreducible representation of $G$, generated by its $I$ fixed vectors. Let $(t,n)$ be a Deligne Langlands parameter of $\pi$. Let $c\in\mathcal{E}$ be the orbit of $n\in \mathfrak{g}^a$.

    The following conditions are equivalent:

    \begin{enumerate}
        \item Let $Z(\pi)$ be the Zelevinsky dual of $\pi$ (see Section 9 of \cite{Zelevinsky1980}) and let $Z(\pi)^\vee$ be its contragrident representation. The representation $Z(\pi)^\vee$ is $X$ distinguished. 
        \item The $H(G,I)$ module  $IM(\pi^I)$ is a quotient of $S(X)^I$.
        \item The simple $A$ module $L_c$ is a quotient of $V$.
        \item The natural map $\bigoplus_{k>0,d\in\mathcal{E}}Ext^k(IC_c,IC_d)\otimes \bigoplus Ext^{\bullet}(IC_d,\mu_*\C_{S^a})\rightarrow\bigoplus Ext^\bullet(IC_c,\mu_*\C_{S^a})$ is not surjective. We denote this map by $Comp_c$.
    \end{enumerate}
    
\end{prop}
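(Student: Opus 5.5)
The plan is to prove the chain $(1)\Leftrightarrow(2)\Leftrightarrow(3)\Leftrightarrow(4)$, each step being a translation through results stated earlier.

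\emph{From (1) to (2).} By Frobenius reciprocity and Borel's equivalence (as recalled at the start of Section \ref{s8}), an irreducible $\sigma$ generated by its $I$-fixed vectors is $X$ distinguished iff $(\sigma^\vee)^I$ is a quotient of $S(X)^I$ as an $H(G,I)$ module. Apply this to $\sigma=Z(\pi)^\vee$, so that $\sigma^\vee=Z(\pi)$. It then remains to use the standard fact that on Iwahori-fixed vectors the Aubert--Zelevinsky involution is realized by the Iwahori--Matsumoto involution, i.e. $Z(\pi)^I\cong IM(\pi^I)$; this gives $(2)$. Both $Z$ and $(\cdot)^\vee$ preserve the property of being generated by $I$-fixed vectors, so the reduction is legitimate.

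\emph{From (2) to (3).} A simple quotient of $S(X)^I$ has a central character, namely the one attached to $a=(t,v)$ (the same bookkeeping as in Corollary \ref{central cahrachter}). Hence $IM(\pi^I)$ is a quotient of $S(X)^I$ iff $\pi^I$ is a quotient of $IM(S(X)^I)\otimes_{Z(H(G,I))}\C_a$. By Proposition \ref{fixed points module} this module is $H^{BM}_\bullet(\Lambda_H^a)$ as an $H_a\cong H^{BM}_\bullet(St^a)$ module. Proposition \ref{pass to Ext} then identifies this with $V$ as a module over $A$. Under the Chriss--Ginzburg/Kazhdan--Lusztig correspondence (Section 8.6 of \cite{Chriss1997RepresentationTA}), the simple $H_a$ module with parameter $(t,n)$ and trivial $\chi$ corresponds to the simple $A$ module $L_c$, where $c$ is the orbit of $n$. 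One point needs care: the simple modules $L_c$ are exactly the simple modules of $A/Rad(A)=\bigoplus End(L_c)$, and only local systems appearing in $\pi_*\C$ occur. For $GL_{2n}$ all component groups are trivial, so the parameter reduces to $(t,n)$. I expect the main bookkeeping obstacle to be matching the normalizations, namely $IM$ versus dual versus contragredient and the central character $a$ versus $a^{-1}$. This should be handled exactly as in Corollary \ref{central cahrachter}: both $IM$ and contragredient invert the central character, so they compose consistently.

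\emph{From (3) to (4).} This is pure algebra over $A$. $L_c$ is a quotient of $V$ iff $e_cV\not\subset e_c\,Rad(A)V$, where $e_c$ is a primitive idempotent of $End(L_c)$. Equivalently, $L_c$ appears as a summand of the semisimple module $V/Rad(A)V$, which decomposes as $\bigoplus_c L_c\otimes(\text{multiplicity space})$. Using the decompositions of $A$ and of $V$ into $Ext^k(IC_c,\cdot)\otimes L_c^*$, the $c$-isotypic part of $Rad(A)V$ is precisely the image of the composition map $Comp_c$, tensored with $L_c^*$. The multiplicity of $L_c$ in $V/Rad(A)V$ is therefore the dimension of the cokernel of $Comp_c$. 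This is nonzero iff $Comp_c$ is not surjective, which uses $L_c\neq0$ by Kazhdan--Lusztig. The dual pairing of $L_c$ versus $L_c^*$ only affects which side the multiplicity space sits on, not whether it is nonzero.
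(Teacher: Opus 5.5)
Your proposal is correct and follows the paper's proof step for step: (1)$\Leftrightarrow$(2) via $Z(\pi)^I\cong IM(\pi^I)$, (2)$\Leftrightarrow$(3) via the central-character reduction together with Propositions \ref{fixed points module} and \ref{pass to Ext}, and (3)$\Leftrightarrow$(4) by the radical argument, which you phrase positively as $V/Rad(A)V\cong\bigoplus_c \mathrm{coker}(Comp_c)\otimes L_c^*$ where the paper argues by contrapositive. The one point to make explicit is that identifying the $c$-part of $Rad(A)V$ with the image of $Comp_c$ needs $L_d\neq 0$ for every $d\in\mathcal{E}$, not only $L_c\neq 0$, since each $d$-summand of $Comp_c$ is realized only through $Hom(L_c,L_d)$; this is the same Kazhdan--Lusztig nonvanishing that the paper invokes.
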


\begin{proof}
    To see the equivalence between items 1 and 2 it is enough to show that $Z(\pi)^I\cong IM(\pi^I)$. This is well known, see Theorem 2 in \cite{Duality_for_representations_of_a_Hecke_algebra}.    

    We already gave the argument for the equivalence of items 2 and 3. We prove that items 3 and 4 are equivalent.

    To prove that 3 implies 4, assume that $Comp_c$  is surjecitve.

    Using the fact that $L_d\neq 0$ for every $d\in \mathcal{E}$ we get that $\bigoplus_{k\in \Z}Ext^k(IC_c,\mu_*\C_{S^a})\otimes L_c^*$ is in the image of $Rad(A)\otimes V\rightarrow V$. 

    Let $\varphi:V\rightarrow L_c$ be a surjective map of $A$ modules. The radical $Rad(A)$ acts as zero on $L_c$ so $\varphi(Rad(A)V)=0$. In particular $\varphi$ is zero on $\bigoplus_{k\in \Z}Ext^k(IC_c,\mu_*\C_{S^a})\otimes L_c^*$. The algebra $End(L_c)\subset A$ acts trivially on $\bigoplus_{k\in \Z,d\in \mathcal{E},d\neq c}Ext^k(IC_d,\mu_*\C_{S^a})\otimes L_d^*$, the only point in $L_c$ fixed under the action of $End(L_c)$ is 0. 
    
    Thus, $\varphi$ must be zero on $\bigoplus_{k\in \Z,d\in \mathcal{E},d\neq c}Ext^k(IC_d,\mu_*\C_{S^a})\otimes L_d^*$. We got that $\varphi=0$.

    For the opposite direction, if $Comp_c$ is not surjecitve, we get a copy of $L_c^*$ in $V/Rad(A)V$ and it gives us a non trivial map of $A/Rad(A)$ modules $V/Rad(A)V\rightarrow L_c$.
\end{proof}

We give a general necessary condition on $c$ such that the equivalent conditions of Proposition \ref{sheaf condition} hold.

First, let us introduce some notations.

\begin{Not}
    We denote $\mathfrak{g}^{*a}$ by $Q$ and we abuse the notation and denote $S^a$ by $S$. The element $a$ is fixed so there is no harm in committing it for the notation.

    Denote by $\mathcal{E}_S$ the collection of orbits which intersect $S$. Let $Q_S$ be the union of all orbits in $\mathcal{E}$ whose closure intersects $S$. Let $j_S:Q_s\rightarrow Q$ be the open embedding.
\end{Not}

 We first restrict to $Q_S$ using the next proposition.

\begin{prop}
   for any $c,d\in\mathcal{E}$ we have  $$Ext^{\bullet>0}(IC_c,IC_d)\otimes Ext^\bullet(IC_d,\mu_*\C)\cong Ext^{\bullet>0}(j_S^*IC_c,j_S^*IC_d)\otimes Ext^\bullet(IC_d,\mu_*\C)$$
\end{prop}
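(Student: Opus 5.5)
The statement compares two tensor products whose second factor $Ext^\bullet(IC_d,\mu_*\C)$ is the same on both sides. So it suffices to understand when that factor vanishes, and to show that $j_S^*$ gives an isomorphism $Ext^{\bullet}(IC_c,IC_d)\cong Ext^{\bullet}(j_S^*IC_c,j_S^*IC_d)$ whenever it does not vanish. (Here $Q_S$ is understood as the union of orbits $f$ with $S\cap\overline{f}\neq\emptyset$, i.e. $\overline f$ meets $S$. We need it to be open, which I expect holds because $S$ is a closed subvariety and orbit closures are unions of orbits.)

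\textbf{Step 1: vanishing off $Q_S$.} If $d\notin Q_S$, then $\overline{d}\cap S=\emptyset$. The sheaf $IC_d$ is supported on $\overline d$ and $\mu_*\C$ is supported on $S$. By adjunction, $Ext^\bullet(IC_d,\mu_*\C)=Ext^\bullet(\mu^*IC_d,\C_S)=0$, because $\mu^*IC_d=0$. Both sides of the claimed isomorphism are therefore zero. The same argument applies if $c\notin Q_S$ once $d$ is allowed; but in that case I would also use that $\mu^*IC_c=0$ to see that the terms we care about in $Comp_c$ trivially vanish. So we may assume $d\in Q_S$, and similarly $c\in Q_S$ where needed.

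\textbf{Step 2: restriction to the open set.} Let $i:Z=Q\setminus Q_S\hookrightarrow Q$ be the closed complement. Apply $Hom(IC_c,-)$ to the triangle $i_*i^!IC_d\to IC_d\to j_{S*}j_S^*IC_d$. This gives
$$Ext^\bullet(IC_c,IC_d)\to Ext^\bullet(j_S^*IC_c,j_S^*IC_d)$$
with cone governed by $Ext^\bullet(i^*IC_c,i^!IC_d)$.

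The key observation is that $Q_S$ is closed under generization: any orbit containing a point of $Q_S$ in its closure has closure meeting $S$. So $Q_S$ is an open union of strata, and $Z$ is closed and a union of strata. If $d\in Q_S$, then $\overline d\not\subset Z$, but $\overline d$ may still meet $Z$. So $i^!IC_d$ need not vanish, and this is where I expect the main obstacle.

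\textbf{Step 3: the main obstacle, and what I would try first.} The danger is that the restriction map fails to be an isomorphism. My first move is to exploit that $Q$ is a cone for a contracting $\C^\times$ action. The torus in $\Tilde H$, or the $\mathbb{G}_m$ through $\rho$, fixes $\psi$ up to scaling and contracts $S$ toward a point of $S$, which lies in the minimal stratum meeting $S$.

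The intended argument then runs as follows. Every orbit in $Z$ has closure disjoint from $S$. The orbits are conic, so their closures contain $0$ (here $0\in\mathcal E$ is the zero orbit). Now $0\in\overline{f}$ for all $f$. If $0\in S$, then every orbit lies in $Q_S$, $Z=\emptyset$, and the statement is trivial. If $0\notin S$, then $Z\ni 0$ is nonempty.

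In the nontrivial case I would instead use parity and purity. Take Frobenius weights, anticipating the $l$-adic setting of the paper. The groups $Ext^\bullet(IC_c,IC_d)$ are pure, so the long exact sequence splits and the restriction is surjective. It is injective on the part that pairs nontrivially with $Ext^\bullet(IC_d,\mu_*\C)$: the kernel factors through $i^!IC_d$, and composing it with a morphism $IC_d\to\mu_*\C$ gives a morphism from something supported on $Z$ into $\mu_*\C$, which vanishes since $Z\cap S=\emptyset$.

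This last point is what should be formalized. Since the tensor products are really images in $Ext^\bullet(IC_c,\mu_*\C)$, any class coming from $Ext(IC_c,i_*i^!IC_d)$ composes to zero. That gives the identification of the images relevant to $Comp_c$, which is what is used afterwards.
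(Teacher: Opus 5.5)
\paragraph{Where you agree with the paper.}
Your core mechanism is the paper's. Applying $\mathrm{Hom}(IC_c,-)$ to $i_*i^!IC_d\to IC_d\to j_{S*}j_S^*IC_d$ is the same as the paper's triangle $i_{S*}i_S^!hom(IC_c,IC_d)\to hom(IC_c,IC_d)\to j_{S*}j_S^*hom(IC_c,IC_d)$, because $i^!hom(IC_c,IC_d)=hom(i^*IC_c,i^!IC_d)$. Your observation that anything factoring through $i_*i^!IC_d$ composes to zero into $\mu_*\mathbb{C}$, because $Z\cap S=\emptyset$, is exactly the paper's support argument. It shows that composition into $\mathrm{Ext}^\bullet(IC_c,\mu_*\mathbb{C})\cong \mathrm{Ext}^\bullet(j_S^*IC_c,j_S^*\mu_*\mathbb{C})$ factors through restriction to $Q_S$. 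Hence the image over $Q$ is contained in the image over $Q_S$. That is the whole content of the paper's proof.

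\paragraph{The gap.}
The problem is your additional claim that $j_S^*$ is an isomorphism on $\mathrm{Ext}^\bullet(IC_c,IC_d)$ whenever $\mathrm{Ext}^\bullet(IC_d,\mu_*\mathbb{C})\neq 0$, or at least surjective ``by purity''. Purity of $\mathrm{Ext}^\bullet(IC_c,IC_d)$ says nothing about $H^\bullet(Q_S,j_S^*hom(IC_c,IC_d))$. That group is typically mixed, since $Q_S$ misses the cone point $0$, so the long exact sequence need not split.

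A concrete counterexample is the paper's Example with $m=1$. There $Q=\mathbb{A}^3$, $S=\{(1,x,1)\}$ and $Q_S=\mathbb{G}_m\times\mathbb{A}^1\times\mathbb{G}_m$. Take $c=\{A_1=0,\ A_0A_2\neq 0\}$, so $IC_c=\mathbb{C}_{\{A_1=0\}}[2]$, and let $d$ be the open orbit. Then:
\begin{itemize}
\item $\mathrm{Ext}^\bullet(IC_c,IC_d)\cong H^{\bullet-1}(\mathbb{A}^2)$, of dimension $1$;
\item $\mathrm{Ext}^\bullet(j_S^*IC_c,j_S^*IC_d)\cong H^{\bullet-1}(\mathbb{G}_m^2)$, of dimension $4$;
\item $\mathrm{Ext}^\bullet(IC_d,\mu_*\mathbb{C})\cong H^{\bullet-3}(\mathbb{A}^1)\neq 0$.
\end{itemize}
So restriction is not surjective, and the displayed ``$\cong$'' cannot hold literally. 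Only images in $\mathrm{Ext}^\bullet(IC_c,\mu_*\mathbb{C})$ can be compared. Your argument for the inclusion ``image over $Q_S$ $\subseteq$ image over $Q$'' therefore collapses. That is precisely the direction needed to pass from the surjectivity of Proposition \ref{orbits must intersect} on $Q_S$ back to $Q$ in Corollary \ref{MOS condition}.

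\paragraph{A possible repair.}
A weight argument can replace surjectivity. Since the IC sheaves are pure, $hom(IC_c,IC_d)$ has weights $\geq w:=w_d-w_c$. The cokernel of restriction in degree $k$ embeds in $\mathrm{Ext}^{k+1}(i^*IC_c,i^!IC_d)$, which has weights $\geq w+k+1$. So every weight-$(w+k)$ class lifts to $Q$. The remaining classes compose into weights strictly above the minimal weight of $\mathrm{Ext}^\bullet(IC_c,\mu_*\mathbb{C})$. They therefore vanish, provided you separately show that this target is pure.

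Finally, the conic discussion in your Step 3 is not needed. Since $\psi\neq 0$, one always has $0\notin S$.
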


\begin{proof}
    We denote by $hom(\mathcal{F},\mathcal{G})$ the inner hom of two sheaves $\mathcal{F},\mathcal{G}$.

    Notice that $Ext^{\bullet}(j_S^*IC_c,j_S^*IC_d)=H^{\bullet}(j_S^*hom(IC_c,IC_d))$ because $j_S^*=j_S^!$ as $j_S$ is an open embedding.

    We know that $hom(IC_c,\mu_*\C)=\mu_*hom(\mu^*IC_c,\C)$ is supported on $S$. 
    
    Let $i_S:Q\setminus Q_S\rightarrow Q$ be the closed embedding of the complement of $Q_S$. We have a distinguished triangle 

     $$i_{S*}i_S^!hom(IC_c,IC_d)\rightarrow hom(IC_c,IC_d)\rightarrow j_{S*}j_S^*hom(IC_c,IC_d)\rightarrow$$
    
    As the image of $i_S$ does not intersect the support of $hom(IC_c,\mu_*\C)$ we get the result. 
\end{proof}

Thus, we can assume that all of our sheaves live only on $Q_S$. We will abuse the notation and will not write $j_S^*$ every time.

Now, we pass to the settings of $\overline{\Q}_l$ sheaves in the etale topology \cite{BBD}. Checking if the map $Comp_c$ of Proposition \ref{sheaf condition} is surjective can be done either in the analytic settings or in the etale setting (replacing the constant sheaf $\C$ with the constant sheaf $\overline{\Q}_l$) and the result does not depend on the setting. 

We may assume that $t$ is a diagonal matrix. The geometry of the orbits $\mathcal{E}$ on $Q$ depends only on whether ratios of values of $t$ are powers of $\sqrt{q_r}$. We work over finite field that contains $\sqrt{q_r}$. Let $\F_s$ be such a finite field and let $Fr\in Gal(\overline{\F_s}/\F_s)$ be the geometric Frobenius element. We recall the notions of purity and weights (see \cite{cataldo} Subsection 3.1 for more detail).

\begin{defn}
    Let $Z$ be an algebraic variety defined over $\F_s$. Let $\mathcal{F}$ be a constructible $\overline{\Q}_l$ sheaf on $Z$. We say that $\mathcal{F}$ is pure of weight $w$ if for any $x\in Z$ a point defined over $\F_{s^m}$. All the eigenvalues of $Fr^m$ on the stalk  $\mathcal{F}_x$ are algebraic with absolute value $s^\frac{mw}{2}$.

    We say the a sheaf is mixed of weight $\leq w$( $\geq w$) if it is filtered by pure sheaves whose eigenvalues have absolute values $\leq w$ (respectively $\geq w$).  

    Let $Z=\cup_\alpha Z_\alpha$ be stratified and let $i_\alpha:Z_\alpha\rightarrow Z$ be the embedding.
    
    We say that a complex of sheaves constant on each strata, $\mathcal{F}$ is $*$ pure of weight $w$ if $H^i(i^*_\alpha\mathcal{F})$ is pure of weight $i+w$ on $Z_\alpha$. We say that $\mathcal{F}$ is $!$ pure of weight $w$ if $H^i(i^!_\alpha\mathcal{F})$ is pure of weight $i+w$ on $Z_\alpha$. We say that $\mathcal{F}$ is pure if it is both $*$ pure and $!$ pure. 
\end{defn}

Gabber proved (see \cite{cataldo} Theorem 3.1.6) that the intersection cohomology sheaf of a connected variety of dimension $d$ is pure of weight $d$.

We use the same notation for orbits and varieties we have over $\C$ also over $\F_s$. From now on our $IC$ sheaves are $l$-adic sheaves, we also use the same notations for them.

First, we need a general result about pure sheaves.

\begin{lemma}\label{Exts are pure}
    \begin{enumerate}
        \item Let $\mathcal{F}$ be $*$ pure and let $\mathcal{G}$ be $!$ pure, then $hom(\mathcal{F},\mathcal{G})$ is $!$ pure.
        \item Let $\mathcal{F}$ be $!$ pure and let $i_\alpha:Z_\alpha\rightarrow Z$ be an open embedding of stratum. Then the map of cohomologies $H^\bullet(\mathcal{F})\rightarrow H^\bullet(i_\alpha^*\mathcal{F})$ is surjecitve.
    \end{enumerate}
\end{lemma}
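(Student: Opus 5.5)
Both parts are local on each stratum and then glued using the attaching triangles for the stratification $Z=\bigcup_\alpha Z_\alpha$. Throughout, $\mathcal{F}$ and $\mathcal{G}$ are constructible with respect to this stratification, with lisse cohomology sheaves on each stratum. We will use the weight yoga of \cite{BBD} as recalled in \cite{cataldo}:
\begin{itemize}
\item $f_!$ and $f^*$ preserve weights $\le w$;
\item $f_*$ and $f^!$ preserve weights $\ge w$;
\item $D$ exchanges the two conditions.
\end{itemize}

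\textbf{Part 1.} We must show that $H^i(i_\alpha^! hom(\mathcal{F},\mathcal{G}))$ is pure of weight $i+w_{\mathcal{G}}-w_{\mathcal{F}}$ for each stratum. The key identity is
$$i_\alpha^! hom(\mathcal{F},\mathcal{G})\cong hom(i_\alpha^*\mathcal{F},i_\alpha^!\mathcal{G}),$$
which is formal: it follows from adjunction and Verdier duality (both sides corepresent the same functor on $D^b_c(Z_\alpha)$). On the stratum $Z_\alpha$ both arguments are complexes with lisse cohomology sheaves. By hypothesis $\mathcal{H}^j(i_\alpha^*\mathcal{F})$ is pure of weight $j+w_{\mathcal{F}}$ and $\mathcal{H}^k(i_\alpha^!\mathcal{G})$ is pure of weight $k+w_{\mathcal{G}}$.

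Since $Z_\alpha$ is smooth (an orbit), a complex whose cohomology sheaf in each degree $j$ is a pure lisse sheaf of weight $j+w$ is itself pure of weight $w$. Such a complex splits (non-canonically, over $\overline{\F}_s$) as the sum of its shifted cohomology sheaves. This splitting is compatible with Frobenius weights by the semisimplicity argument of BBD 5.4, or one can simply run the spectral sequence and observe that weights are preserved.

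The inner hom of lisse sheaves $L_1,L_2$ of weights $a,b$ is the lisse sheaf $L_1^\vee\otimes L_2$, of weight $b-a$. Hence $\mathcal{H}^i hom(i_\alpha^*\mathcal{F},i_\alpha^!\mathcal{G})$ has a filtration whose graded pieces are $\mathcal{H}om(\mathcal{H}^j i_\alpha^*\mathcal{F},\mathcal{H}^{j+i}i_\alpha^!\mathcal{G})$. Each piece is pure of weight $(i+j+w_{\mathcal{G}})-(j+w_{\mathcal{F}})=i+w_{\mathcal{G}}-w_{\mathcal{F}}$, which is what we need. The only delicate point is that the grading uses the spectral sequence $E_2=\bigoplus_j$ rather than a literal splitting, but a subquotient of a sheaf pure of a single weight is still pure of that weight, so this is harmless.

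\textbf{Part 2.} Let $i_\alpha$ be the open stratum and $k$ the closed complement, and use the triangle
$$k_*k^!\mathcal{F}\to\mathcal{F}\to i_{\alpha*}i_\alpha^*\mathcal{F}\xrightarrow{+1}.$$
The long exact sequence shows that surjectivity of $H^m(\mathcal{F})\to H^m(i_\alpha^*\mathcal{F})$ is equivalent to vanishing of the connecting map $H^m(i_\alpha^*\mathcal{F})\to H^{m+1}(k^!\mathcal{F})$. We argue with weights, assuming the relevant spaces are pure, i.e. that all the cohomology is taken over strata with pure cohomology (true in our application, where strata are orbits that are affine-space-like or satisfy a Hodge-type purity, e.g.\ vector-space quotients and equivariantly contracting varieties). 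This is the main obstacle; I would handle it by assuming $Z$ admits a contracting $\mathbb{G}_m$-action, as the $\mathfrak{g}^a$ setting does, so that $H^\bullet(Z,-)$ equals the stalk at the fixed point. Under that assumption:
\begin{itemize}
\item the $!$-purity of $\mathcal{F}$ restricted to the closed part gives that $H^{m+1}(k^!\mathcal{F})$ has weights $\ge m+1+w$ (it is built from $H^\bullet(i_\beta^!\mathcal{F})$ over closed strata, and $*$-pushforward to a point raises weights);
\item $H^m(i_\alpha^*\mathcal{F})=H^m(i_\alpha^!\mathcal{F})$ is pure of weight $m+w$, computed by the contraction.
\end{itemize}
Frobenius-equivariant maps between spaces whose weights are disjoint vanish, so the connecting map is zero and surjectivity follows. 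Iterating over a filtration by open unions of strata extends the argument to any open stratum.
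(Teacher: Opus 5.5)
Part 1 follows the paper's argument. The paper's entire proof is the identity $i_\beta^!\,hom(\mathcal{F},\mathcal{G})\cong hom(i_\beta^*\mathcal{F},i_\beta^!\mathcal{G})$, and your weight count on the stratum fills it in correctly. For Part 2 the paper gives no argument of its own; it simply invokes Lemma 4.1.4 of \cite{ginzburg2025pointwisepurityderivedsatake}. Your direct weight argument for Part 2 has a genuine gap.

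Your lower bound is fine. $!$-purity puts $k^!\mathcal{F}$ in weights $\geq w$, and $R\Gamma$ preserves this, so $H^{m+1}(k^!\mathcal{F})$ has weights $\geq m+1+w$. The step that fails is the claim that $H^m(Z_\alpha,i_\alpha^*\mathcal{F})$ is pure of weight $m+w$. On a non-proper stratum, $!$-purity only gives weights $\geq m+w$. The contracting $\mathbb{G}_m$-action you propose does not supply the matching upper bound. It identifies $H^\bullet(Z,-)$ with the stalk at the fixed point, but that point lies in the closed complement, so it says nothing about $H^\bullet(Z_\alpha,-)$.

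In fact the conclusion fails under your hypotheses. Take $Z=\mathbb{A}^1=\{0\}\sqcup\mathbb{G}_m$, which is the orbit stratification of $GL_1\times GL_1$ on the $A_2$-quiver space with dimension vector $(1,1)$, contracted to $0$ by scaling. Take $\mathcal{F}=\overline{\mathbb{Q}}_l$.
\begin{itemize}
\item $\mathcal{F}$ is both $*$-pure and $!$-pure of weight $0$, since $i_0^!\overline{\mathbb{Q}}_l=\overline{\mathbb{Q}}_l(-1)[-2]$.
\item $H^1(\mathbb{G}_m,\overline{\mathbb{Q}}_l)=\overline{\mathbb{Q}}_l(-1)$ has weight $2$, not $1$.
\item The connecting map $H^1(\mathbb{G}_m)\to H^2(i_0^!\overline{\mathbb{Q}}_l)=\overline{\mathbb{Q}}_l(-1)$ is an isomorphism.
\item Consequently $H^1(\mathbb{A}^1)=0\to H^1(\mathbb{G}_m)$ is not surjective.
\end{itemize}

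What is missing is an input that makes the cohomology of each stratum pure. Two examples are strata that are affine spaces, or working $K$-equivariantly, where $H^\bullet_K(K/K_x)=H^\bullet(BK_x)$ is pure. Given such an input, you would argue by induction over strata that $H^m(U,\mathcal{F})$ is pure of weight $m+w$ for every open union $U$ of strata. Every connecting map then vanishes for weight reasons, which gives the surjectivity. That purity has to come from the hypotheses of the cited lemma; $!$-purity plus a contraction cannot supply it.
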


\begin{proof}
    The first item follows from the identity $i_\beta^!hom(\mathcal{F},\mathcal{G})=hom(i_\beta^*\mathcal{F},i_\beta^!\mathcal{G})$ for any embedding  $i_\beta:Z_\beta\rightarrow Z$.
    The second item follows from Lemma 4.1.4 of \cite{ginzburg2025pointwisepurityderivedsatake}.
\end{proof}

\begin{prop}\label{orbits must intersect}
    Let $\omega$ be the dualizing sheaf on $S$.
    Let $\mathcal{F}\in D_c^b(Q_S)$ be a $*$ pure sheaf, then the natural map $\bigoplus_{d\in\mathcal{E}_S}Ext^{\bullet}(\mathcal{F},IC_d)\otimes Ext^\bullet(IC_d,\mu_*\omega)\rightarrow Ext^\bullet(\mathcal{F},\mu_*\omega)$ is surjective.
\end{prop}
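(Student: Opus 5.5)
We aim to show that, for $*$ pure $\mathcal{F}$, every morphism $\mathcal{F}\to\mu_*\omega[k]$ factors through the sum $\bigoplus_{d\in\mathcal{E}_S} IC_d\otimes Ext^\bullet(IC_d,\mu_*\omega)$. The natural object to use is the "universal" map
$$\epsilon:\ \mathcal{P}:=\bigoplus_{d\in\mathcal{E}_S}IC_d\otimes Ext^\bullet(IC_d,\mu_*\omega)^*\longrightarrow \mu_*\omega .$$
Surjectivity of the map in the statement is then equivalent to surjectivity of $Ext^\bullet(\mathcal{F},\mathcal{P})\to Ext^\bullet(\mathcal{F},\mu_*\omega)$. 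We will prove this by induction on the strata of $Q_S$, using the orbit stratification $\mathcal{E}_S$ and the purity results of Lemma \ref{Exts are pure}.

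\textbf{Step 1: reduction to stalks.} We work stratum by stratum, removing open orbits one at a time. For an open stratum $i_\alpha:Z_\alpha\to Z$ with closed complement $i$, the triangle $i_*i^!\to \mathrm{id}\to i_{\alpha*}i_\alpha^*$ applied to $hom(\mathcal{F},-)$ gives long exact sequences in cohomology. By Lemma \ref{Exts are pure}(1), $hom(\mathcal{F},\mu_*\omega)$ and $hom(\mathcal{F},IC_d)$ are $!$ pure, since $\mu_*\omega$ and the $IC_d$ are pure. Lemma \ref{Exts are pure}(2) then says that restriction of global sections to the open stratum is surjective. Hence these long exact sequences split into short exact sequences
$$0\to H^\bullet(i^!\,\cdot)\to H^\bullet(\cdot)\to H^\bullet(i_\alpha^*\,\cdot)\to 0 .$$
By the five lemma, it suffices to prove surjectivity separately for the open orbit and for the $i^!$-restriction to the closed complement. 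The latter is again a problem of the same shape on a smaller union of orbits, with $!$-restricted objects that remain pure.

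\textbf{Step 2: a single orbit $c\in\mathcal{E}_S$.} On $c$ everything is a shifted local system, and $c\cap S\neq\emptyset$ by the definition of $\mathcal{E}_S$. The restriction $i_c^!\mu_*\omega$ is a dualizing-type sheaf on $c\cap S$. A class in $Ext^\bullet(\mathcal{F}|_c,i_c^!\mu_*\omega)$ must be produced from $IC_c$ composed with an element of $Ext^\bullet(IC_c,\mu_*\omega)$. We would use the adjunction
$$Ext(IC_c,\mu_*\omega)=H^\bullet(S,\mu^!D IC_c)$$
(up to shift), together with the fact that $IC_c|_c$ is the constant sheaf. This shows that the map $Ext(IC_c,\mu_*\omega)\to H^\bullet(c\cap S,\omega)$ is surjective, by a purity argument identical to Step 1 applied on $S$. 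Since $\mathcal{F}|_c$ is a sum of shifted constant sheaves (it is $*$ pure and constant on strata), the surjectivity for $\mathcal{F}$ follows from the surjectivity for the constant sheaf. Orbits not in $\mathcal{E}_S$ contribute nothing, because $\mu_*\omega$ is supported on $S$.

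\textbf{Main obstacle.} The delicate step is the induction across strata. The lifted classes coming from the $IC_d$ on the open orbit must be shown to be corrected by classes supported on smaller orbits, which requires the splitting into short exact sequences to hold compatibly for both targets $\mathcal{P}$ and $\mu_*\omega$ at once. The plan is to handle this with weight arguments. Frobenius weights force the connecting maps to vanish, since the relevant $Ext$ groups are pure of the correct weight by Gabber's theorem and Lemma \ref{Exts are pure}. The first thing to check is that $\mu_*\omega$, the pushforward of the dualizing sheaf of the smooth $S$, is indeed pure with respect to the orbit stratification. This is where the geometry of $S$, namely the fact that $S$ is transversal to the orbits in the style of a Slodowy slice, would have to enter.
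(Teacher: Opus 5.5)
This is correct and is essentially the paper's argument. Both proofs run the same d\'evissage along the orbit stratification: Lemma \ref{Exts are pure} makes the restriction maps to the open pieces surjective, and the single-orbit case is settled using $IC_c|_c=\overline{\Q}_l[\dim c]$ together with the degree-zero class. By adjunction, your peeling-off of open orbits with $i^!$ applied to the targets gives the same long exact sequences the paper gets from $j_!j^*\mathcal{F}\to\mathcal{F}\to i_*i^*\mathcal{F}$ and induction on the support of $\mathcal{F}$.

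Minor points:
\begin{itemize}
\item The evaluation map should be $IC_d\otimes Ext^\bullet(IC_d,\mu_*\omega)\to\mu_*\omega$, without the dual.
\item The four-lemma step only needs surjective restriction for the $\mathcal{P}$-row, so no compatible splitting of the $\mu_*\omega$-row is required.
\item The purity of $\mu_*\omega$ used in Step 2 is automatic pointwise: $i_x^!\mu_*\omega$ is $\overline{\Q}_l$ for $x\in S$ and $0$ otherwise. No slice-type transversality is needed, and the paper assumes this purity without further geometric input.
\end{itemize}
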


\begin{proof}
    We prove this by induction on the support of $\mathcal{F}$. We denote by $supp_S(\mathcal{F})$ the subset of orbits in $\mathcal{E}_S$ which lie in the closure of the support of $\mathcal{F}$. We prove that in the statement of the proposition it is enough to take the sum over $supp_S(\mathcal{F})$.
    
    The basis of the induction is the case where $supp_S(\mathcal{F})$ is the closed orbit in $\mathcal{E}_S$ (it is closed in $Q_S$). Denote this closed orbit by $C_0$ and denote $S_0=C_0\cap S$. Let $i:C_0\rightarrow Q_S,i_S:S_0\rightarrow S$ be the closed embeddings. 

    In this case we claim that the map $Ext^{\bullet}(\mathcal{F},IC_{C_0})\otimes Ext^\bullet(IC_{C_0},\mu_*\omega)\rightarrow Ext^\bullet(\mathcal{F},\mu_*\omega)$ is surjective. As $C_0$ is a closed orbit we have $IC_{C_0}=i_*\overline{\Q}_l[dim(C_0)]$.

    We compute that $Ext^{\bullet}(\mathcal{F},IC_{C_0})=Ext^{\bullet}(\mathcal{F},i_*\overline{\Q}_l[dim(C_0)])=Ext^\bullet(i^*\mathcal{F},\overline{\Q}_l[dim(C_0)]))$.
    
    Also, $Ext^\bullet(IC_{C_0},\mu_*\omega)=Ext^\bullet(i_*\overline{\Q}_l[dim(C_0)],\mu_*\omega)=Ext^\bullet(\mu^*i_*\overline{\Q}_l[dim(C_0)],\omega)$, by proper base change this space is equal to $Ext^\bullet(i_{S*}\overline{\Q}_l[dim(C_0)],\omega)$. 
    
    The map $i_S$ is a closed embedding so we have $i_{S*}=i_{S!}$, therefore $Ext^\bullet(i_{S*}\overline{\Q}_l[dim(C_0)],\omega)$ is equal to $$Ext^\bullet(\overline{\Q}_l[dim(C_0)],i_S^!\omega)=Ext^\bullet(\overline{\Q}_l[dim(C_0)],\overline{\Q}_l[2dim(S_0)])=H^\bullet(S_0[2dim(S_0)-dim(C_0)])$$  
    
    Lastly, $Ext^\bullet(\mathcal{F},\mu_*\omega)=Ext^\bullet(\mu^*\mathcal{F},\omega)$.

    We know that $\mathcal{F}$ is constant on $C_0$ so we can write $i^*\mathcal{F}=\oplus_i\overline{\Q}_l^{\alpha_i}[i]$ for some integers $\alpha_i$. We also have $\mu^*\mathcal{F}=\oplus_i i_{S*}\overline{\Q}_l^{\alpha_i}[i]$ by our assumption on the support of $\mathcal{F}$.
    
    Then, $Ext^\bullet(\mu^*\mathcal{F},\omega)=Ext^\bullet(\oplus_i i_{S*}\overline{\Q}_l^{\alpha_i}[i],\omega)=Ext^\bullet(\oplus_i \overline{\Q}_l^{\alpha_i}[i],i^!_{S}\omega)=Ext^\bullet(\oplus_i \overline{\Q}_l^{\alpha_i}[i],\overline{\Q}_l[2dim(S_0)])$.

    We want to show that the following natural map is surjective.
$$Ext_{C_0}^\bullet(\oplus_i\overline{\Q}_l^{\alpha_i}[i],\overline{\Q}_l[dim(C_0)])\otimes H^\bullet(S_0[2dim(S_0)-dim(C_0)])\rightarrow Ext_{S_0}^\bullet(\oplus_i \overline{\Q}_l^{\alpha_i}[i],\overline{\Q}_l[2dim(S_0)])$$ 

    Another way to write the map is as the map $$\oplus_i \overline{\Q}_l^{\alpha_i}\otimes H^\bullet(C_0[dim(C_0)-i])\otimes H^\bullet(S_0[2dim(S_0)-dim(C_0)])\rightarrow \oplus_i \overline{\Q}_l^{\alpha_i}\otimes H^\bullet(S_0[2dim(S_0)-i])$$

    The dimension shifts cancel out and we have $$\oplus_i \overline{\Q}_l^{\alpha_i}\otimes H^\bullet(C_0[-i])\otimes H^\bullet(S_0)\rightarrow \oplus_i \overline{\Q}_l^{\alpha_i}\otimes H^\bullet(S_0[-i])$$ This map is clearly surjective. It is enough to take just a copy of $\overline{\Q}_l$ in $H^0(C_0)$ to get surjectivity.

    Next we move to the general case. 

    Choose some linear order on $\mathcal{E}_S$ that refines the order of closure containment. Let $C\in supp_S(F)$, assume that we already know the result for sheaves whose support contains only orbits smaller than $C$.

    Let $Q_C\subset Q_S$ be the union of all orbits whose closure contains $C$. Let $j:Q_C\rightarrow Q_S$ be the open embedding and let $i:Q_S\setminus Q_C\rightarrow Q_S$ be the closed embedding of the complement. 

    We have a distinguished triangle

    $$j_!j^*\mathcal{F}\rightarrow \mathcal{F}\rightarrow i_*i^*\mathcal{F}\rightarrow$$


    We also have the following distinguished triangle which gives a long exact sequence of Ext groups.

    $$hom(i_*i^*\mathcal{F},\mu_*\omega)\rightarrow hom(\mathcal{F},\mu_*\omega) \rightarrow hom(j_!j^*\mathcal{F},\mu_*\omega) \rightarrow$$


    Notice that the support of $i_*i^*\mathcal{F}$ is smaller than the support of $\mathcal{F}$. The map $i$ is a closed embedding, in particular it is proper and thus $i_*i^*\mathcal{F}$ is also a pure sheaf.  By induction we can apply the result of the proposition to this sheaf. We get that the following  map is surjective. 
    
    $$\bigoplus_{D<C}Ext^{\bullet}(i_*i^*\mathcal{F},IC_D)\otimes Ext^\bullet(IC_D,\mu_*\omega)\rightarrow Ext^\bullet(i_*i^*\mathcal{F},\mu_*\omega)$$ 


    The only orbit from $\mathcal{E}_S$ in the support of the sheaf $j^*\mathcal{F}$ is $C$. We can apply the same argument we used for the basis of the induction to get that the following map is surjective
    
    $$Ext^{\bullet}(j^*\mathcal{F},j^*IC_C)\otimes Ext^\bullet(j^*IC_C,j^*\mu_*\omega)\rightarrow Ext^\bullet(j^*\mathcal{F},j^*\mu_*\omega)$$ 

    Now we combine both parts. 

    By Lemma \ref{Exts are pure} we get that the maps $Ext^{\bullet}(\mathcal{F},IC_C)\rightarrow Ext^{\bullet}(j^*\mathcal{F},j^*IC_C)$ and
    
    $Ext^{\bullet}(\mathcal{F},\mu_*\omega)\rightarrow Ext^{\bullet}(j^*\mathcal{F},j^*\mu_*\omega)$ are surjective. We used the fact that $IC_C$ is pure, in particular ! pure. We also used the facts that $\mu^*\mathcal{F}$ is $*$ pure and that $\omega$ is the equal to the $IC$ sheaf of $S$ shifted and thus pure.

    Thus, for every element of $Ext^\bullet(\mathcal{F},\mu_*\omega)$ we can find an element of $Ext^{\bullet}(\mathcal{F},IC_C)\otimes Ext^{\bullet}(\mathcal{F},\mu_*\C)$ that has the same image in $Ext^\bullet(j_!j^*\mathcal{F},\mu_*\omega)$. By the long exact sequence of cohomology associated to 
    
    $$hom(i_*i^*\mathcal{F},\mu_*\omega)\rightarrow hom(\mathcal{F},\mu_*\omega) \rightarrow hom(j_!j^*\mathcal{F},\mu_*\omega) \rightarrow$$
    
    we get that that it is enough to show surjectivity on $Ext^\bullet(i_*i^*\mathcal{F},\mu_*\omega)$ which we already know.
\end{proof}

As an immediate corollary we get that.

\begin{cor}\label{MOS condition}
    Assume that for $c\in\mathcal{E}$, the following map is not surjective. $$\bigoplus_{k>0,d\in\mathcal{E}}Ext^k(IC_c,IC_d)\otimes \bigoplus Ext^{\bullet}(IC_d,\mu_*\C_{S^a})\rightarrow\bigoplus Ext^\bullet(IC_c,\mu_*\C_{S^a})$$ Then $c\in \mathcal{E}_S$.
\end{cor}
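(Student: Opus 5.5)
We argue by contraposition: assuming $c\notin\mathcal{E}_S$, we show that the map $Comp_c$ of Proposition \ref{sheaf condition} is surjective. The idea is to apply Proposition \ref{orbits must intersect} with $\mathcal{F}=IC_c$ and then check that none of the resulting terms come from degree zero Ext groups.

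First we reduce to $Q_S$. By the proposition preceding Lemma \ref{Exts are pure}, both the source and the target of $Comp_c$ are unchanged when every sheaf is replaced by its restriction $j_S^*$. If the closure of $c$ does not meet $S$, then $c$ is not contained in $Q_S$. Hence $j_S^*IC_c=0$, all the relevant Ext groups vanish, and surjectivity is trivial. So we may assume that $c\subset Q_S$ but $c\notin\mathcal{E}_S$, meaning that $c$ does not itself meet $S$. We also replace $\C_{S^a}$ by $\omega$, the dualizing complex on the smooth variety $S$. This only shifts degrees, since $\omega=\overline{\Q}_l[2\dim S]$ on each connected component of $S$, and a shift does not affect surjectivity. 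We then pass to the étale setting, where the question has the same answer.

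Next we apply Proposition \ref{orbits must intersect} to $\mathcal{F}=j_S^*IC_c$. By Gabber's theorem $IC_c$ is pure, so in particular it is $*$ pure. The proposition shows that
$$\bigoplus_{d\in\mathcal{E}_S}Ext^\bullet(IC_c,IC_d)\otimes Ext^\bullet(IC_d,\mu_*\omega)\rightarrow Ext^\bullet(IC_c,\mu_*\omega)$$
is surjective. It remains to see that the summands with $k=0$ contribute nothing. For two IC sheaves of orbits with trivial local systems, $Ext^0(IC_c,IC_d)=Hom(IC_c,IC_d)$ is nonzero only when $d=c$, because these are distinct simple perverse sheaves. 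Since $c\notin\mathcal{E}_S$, the index $d$ ranges over orbits different from $c$, so only the terms with $k>0$ survive. Therefore the surjective map above factors through the source of $Comp_c$, and so $Comp_c$ is surjective.

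The main point to verify is that the hypotheses of Proposition \ref{orbits must intersect} really hold for the restricted sheaf. We need $j_S^*IC_c$ to remain $*$ pure on $Q_S$, which holds because restriction to an open set preserves the stalk conditions. We also need the degree bookkeeping, since Ext groups are graded and negative degrees are possible. Here the standard vanishing $Ext^{k}(IC_c,IC_d)=0$ for $k<0$, coming from the perverse t-structure, ensures that the degree zero terms are the only ones that need to be excluded.
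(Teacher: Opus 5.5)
Your proposal is correct and is essentially the paper's argument. The paper likewise assumes $c\notin\mathcal{E}_S$, applies Proposition \ref{orbits must intersect} to the pure sheaf $IC_c$, and discards the $k\le 0$ summands because $Ext^k(IC_c,IC_d)=0$ for $k\le 0$ whenever $d\in\mathcal{E}_S$ (so $d\neq c$). Your additional bookkeeping only makes explicit what the paper leaves implicit: the trivial case $\overline{c}\cap Q_S=\emptyset$, where the target vanishes; the harmless shift from the constant sheaf on $S^a$ to $\omega$; and the vanishing in negative degrees coming from the perverse $t$-structure.
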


\begin{proof}
    Assume that $c\notin\mathcal{E}_S$, we know that $IC_c$ is pure and that $Ext^k(IC_c,IC_d)=0$ for every $d\in \mathcal{E}_S$ and $k\leq 0$. By Proposition \ref{orbits must intersect} we are done.
\end{proof}

\begin{theorem}\label{rep conjecture is true}
     Let $\pi$ be an irreducible representation of $G$ with $\pi^I\neq 0$. Let $(t,n)$ be the Deligne Langlands parameter of $\pi$. Let $a=(t,v)\in G^\vee\times\C^\times$. If $Z(\pi)^\vee$ is $X$ distinguished then $(M^\vee)^a\neq 0$ and $n\in (\mathfrak{g}^{*\vee})^a$ is in the image of the moment map $\mu:(M^\vee)^a\rightarrow (\mathfrak{g}^{*\vee})^a$. 
\end{theorem}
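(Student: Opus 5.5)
The plan is to chain together the equivalences and the necessary condition already established in Sections \ref{s8} and \ref{s9}.

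\textbf{Non-emptiness of $(M^\vee)^a$.} Suppose $Z(\pi)^\vee$ is $X$ distinguished. By the equivalence of items 1 and 2 in Proposition \ref{sheaf condition}, $IM(\pi^I)$ is a quotient of $S(X)^I$. The central character of $IM(\pi^I)$ is the one attached to $a=(t,v)$, up to the inversion conventions handled in the proof of Corollary \ref{central cahrachter}. Hence $IM(S(X)^I)\otimes_{Z(H(G,I))}\C_a\neq 0$, and the localization argument of Corollary \ref{central cahrachter} gives $\Lambda^a\neq\emptyset$. Since $\Lambda^a$ maps $a$-equivariantly to $(M^\vee)^a$ by forgetting the flag, $(M^\vee)^a\neq\emptyset$. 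Proposition \ref{non_empty_fixed_points} also lets us conjugate $a$ into $\Tilde{H}$. I will therefore fix $a\in \Tilde{H}$, so that the module $H^{BM}_\bullet(\Lambda_H^a)$ of Proposition \ref{fixed points module} and the space $S^a$ are available.

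\textbf{Membership of $n$ in $\mathcal{E}_S$.} Let $c\in\mathcal{E}$ be the $C_{G^\vee}(t)$-orbit of $n$. By Proposition \ref{sheaf condition}, whose item 3 rests on Propositions \ref{fixed points module} and \ref{pass to Ext}, distinction of $Z(\pi)^\vee$ is equivalent to the map $Comp_c$ not being surjective. Corollary \ref{MOS condition} then gives $c\in\mathcal{E}_S$, i.e.\ the orbit $c$ meets $S^a$. So there is $g\in C_{G^\vee}(t)$ with $gn\in S^a$, where $S^a\subset(\mathfrak{g}^{\vee*})^a$ is identified via the Killing form.

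\textbf{Translating to the moment map.} It remains to show that $S^a$ lies in $\mu((M^\vee)^a)$ and that this image is stable under $C_{G^\vee}(t)$. The point $(H,\phi)\in M^\vee$ is fixed by $a=(t,v)$ exactly when $t\rho(v)\in H$ (already arranged) and $\phi$ is fixed by the induced action. Hence $\phi\in S^a$ gives $(H,\phi)\in(M^\vee)^a$ with $\mu(H,\phi)=\phi$. The moment map is $G^\vee$-equivariant and $C_{G^\vee}(t)$ commutes with $a$, so $C_{G^\vee}(t)$ preserves $(M^\vee)^a$ and $\mu((M^\vee)^a)$. Therefore $n=g^{-1}(gn)$ lies in the image.

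\textbf{Main obstacle.} The delicate point is the bookkeeping of conventions. We need the $\C^\times$-weight in the $\Tilde{H}$-action on $S$, which involves both the $\rho$-twist and the $z^2$ scaling, to agree with the condition $Ad_t(n)=q_rn$ and with the use of $v=\sqrt{q_r}$ in Proposition \ref{fixed points module}. I would check this first, by writing $a$ explicitly as $t=\mathrm{diag}(t_iv^{-1},t_iv)$ and verifying directly that the fixed points of $a$ on $S$ are exactly the elements of $S$ lying in the $q_r$-eigenspace of $Ad_t$. I would also confirm that the passage to $Q_S$ and the $\ell$-adic setting in Corollary \ref{MOS condition} loses no orbits.
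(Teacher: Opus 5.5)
Your proposal is correct and follows the paper's proof. Non-emptiness of $(M^\vee)^a$ comes from the localization argument of Corollary \ref{central cahrachter} together with Proposition \ref{non_empty_fixed_points}, and the statement about $n$ comes from Proposition \ref{sheaf condition} combined with Corollary \ref{MOS condition}. You also make explicit two steps the paper leaves implicit: you rerun the central-character argument for $Z(\pi)^\vee$ via items 1 and 2 of Proposition \ref{sheaf condition}, since the corollary as stated concerns $\pi$ itself; and you derive $n\in\mu((M^\vee)^a)$ from $c\in\mathcal{E}_S$ using $S^a\subset\mu((M^\vee)^a)$ and $C_{G^\vee}(t)$-equivariance.
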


\begin{proof}
    The result $(M^\vee)^a\neq 0$ follows from Proposition \ref{non_empty_fixed_points} and Corollary \ref{central cahrachter}. The result about $n$ being in the image of the moment map follows from Proposition \ref{sheaf condition} and Corollary \ref{MOS condition}.
\end{proof}

\begin{Remark}
     Theorem \ref{rep conjecture is true} can be translated to a condition on the distinction of $\pi$ in terms of Zelevinsky parameters, i.e. multi-segments (see \cite{Zelevinsky1980}). The obtained condition is the same one proven in in \cite{MitraOffenSayag2017} (see Theorem 1.1). It states that the lengths of all segments in the multi-segment parameterizing $\pi$ are even. See \cite{MitraOffenSayag2017} for more detail. 

    This condition is not a sufficient condition.
\end{Remark}

\begin{prop}
    Let $C\in \mathcal{E}_S$, Denote by $Q_C\subset Q$ the union of all orbits whose closure contains $C$. Let $j:Q_C\rightarrow Q_S$ be the open embedding.
    
    The map $\bigoplus_{D\in\mathcal{E}_S}Ext^{\bullet>0}(IC_C,IC_D)\otimes Ext^\bullet(IC_D,\mu_*\omega)\rightarrow Ext^\bullet(IC_C,\mu_*\omega)$ is surjective if and only if the map $\bigoplus_{D\in\mathcal{E}_S,C\subset \overline{D},D\neq C}Ext^{\bullet}(j^*IC_C,j^*IC_D)\otimes Ext^\bullet(j^*IC_D,j^*\mu_*\omega)\rightarrow Ext^\bullet(j^*IC_C,j^*\mu_*\omega)$ is surjective. 
\end{prop}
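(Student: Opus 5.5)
The plan is to compare the global composition map for $IC_C$ with its restriction to the open set $Q_C$, using the devissage from the proof of Proposition \ref{orbits must intersect}. Let $i:Q_S\setminus Q_C\rightarrow Q_S$ be the closed complement of $Q_C$. We have the triangle $j_!j^*IC_C\rightarrow IC_C\rightarrow i_*i^*IC_C\rightarrow$ and the induced long exact sequence
$$Ext^\bullet(i_*i^*IC_C,\mu_*\omega)\rightarrow Ext^\bullet(IC_C,\mu_*\omega)\rightarrow Ext^\bullet(j_!j^*IC_C,\mu_*\omega)=Ext^\bullet(j^*IC_C,j^*\mu_*\omega).$$
By Lemma \ref{Exts are pure}, applied to the ! pure complex $hom(IC_C,\mu_*\omega)$, the restriction map to $Q_C$ is surjective. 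Hence the connecting morphism vanishes, and we get a short exact sequence
$$0\rightarrow Ext^\bullet(i_*i^*IC_C,\mu_*\omega)\rightarrow Ext^\bullet(IC_C,\mu_*\omega)\rightarrow Ext^\bullet(j^*IC_C,j^*\mu_*\omega)\rightarrow 0.$$

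\textbf{Step 1: the kernel term is always hit by positive degree classes.} Since $i_*i^*IC_C$ is $*$ pure, Proposition \ref{orbits must intersect} shows that $Ext^\bullet(i_*i^*IC_C,\mu_*\omega)$ is spanned by composites through $Ext^\bullet(i_*i^*IC_C,IC_D)$, where $D$ runs over orbits in $supp_S$ that do not lie in $Q_C$. Each such class composed with $IC_C\rightarrow i_*i^*IC_C$ gives a class in $Ext^\bullet(IC_C,IC_D)$ with $D\neq C$. I must check this lands in positive degree. For $D\neq C$, the IC property gives $Ext^{\leq 0}(IC_C,IC_D)=0$, since $Hom^{0}$ between distinct simple perverse sheaves vanishes and negative Exts vanish. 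So the image of $Ext^\bullet(i_*i^*IC_C,\mu_*\omega)$ in $Ext^\bullet(IC_C,\mu_*\omega)$ lies in the image of the global map. Consequently the global map is surjective if and only if its composite with restriction to $Q_C$ is surjective onto $Ext^\bullet(j^*IC_C,j^*\mu_*\omega)$.

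\textbf{Step 2: identify the restricted images.} Restricting $Ext^{>0}(IC_C,IC_D)\otimes Ext^\bullet(IC_D,\mu_*\omega)$ to $Q_C$ gives zero unless $D\subset Q_C$, that is unless $C\subset\overline D$. For such $D$ the classes restrict into $Ext^\bullet(j^*IC_C,j^*IC_D)\otimes Ext^\bullet(j^*IC_D,j^*\mu_*\omega)$, which is a target of the local map. For the $D=C$ summand on $Q_C$, $C$ is closed and $j^*IC_C$ is a shifted constant sheaf on $C$. Its positive self-Exts should be handled as in the base case of Proposition \ref{orbits must intersect}; I would need to argue that they contribute nothing beyond what the other terms give. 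This matching of the $D=C$ contribution with the statement's exclusion $D\neq C$ is the delicate point.

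\textbf{Step 3: the converse direction.} Lemma \ref{Exts are pure} again gives that $Ext^\bullet(IC_C,IC_D)\rightarrow Ext^\bullet(j^*IC_C,j^*IC_D)$ and $Ext^\bullet(IC_D,\mu_*\omega)\rightarrow Ext^\bullet(j^*IC_D,j^*\mu_*\omega)$ are surjective. So every local composite lifts to a global one, and for $D\neq C$ the lift is automatically in positive degree by the IC vanishing above. This gives the equivalence.

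\textbf{Main obstacle.} I expect the main difficulty to be the degree and weight bookkeeping in Steps 2 and 3: the local Exts for $D\neq C$ are in all degrees, while the global map uses only positive degree, and the $D=C$ positive-degree term has to be disposed of. I would resolve this using purity together with the vanishing of $Ext^{\leq0}$ between distinct IC sheaves. For $D=C$ I would try to show that $Ext^{>0}(j^*IC_C,j^*IC_C)\otimes Ext^\bullet(j^*IC_C,j^*\mu_*\omega)$ maps into the part coming from restriction from larger strata via the same triangle. If that fails, I would try to show that it lands in a complement of the degree-zero generators.
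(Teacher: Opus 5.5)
Your outline is the paper's: the triangle $j_!j^*IC_C\to IC_C\to i_*i^*IC_C\to$, Proposition \ref{orbits must intersect} applied to $i_*i^*IC_C$ to put the kernel of restriction inside the global image, and Lemma \ref{Exts are pure} to lift local composites. Your remark that $Ext^{\leq 0}(IC_C,IC_D)=0$ for $D\neq C$ makes explicit what the paper leaves implicit. Steps 1 and 3 together correctly prove that surjectivity of the local map implies surjectivity of the global map.

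The converse is not proved, despite the last sentence of Step 3. Write $E'=Ext^\bullet(j^*IC_C,j^*\mu_*\omega)$, let $N\subset E'$ be the image of the local map over $D\neq C$, and let $R^{>0}=Ext^{>0}(j^*IC_C,j^*IC_C)$. Then global surjectivity only gives $E'=N+R^{>0}\cdot E'$, and you leave the second summand open. Your first proposed remedy, pushing this term into contributions from larger strata, cannot work in general. When $C$ is maximal in $\mathcal{E}_S$ (exactly the case used in Corollary \ref{maximal orbits}) there are no such strata, yet $R^{>0}\cdot E'$ need not vanish. For instance, for the open orbit in the paper's rank example it is a shift of $H^{>0}(GL_m)\neq 0$. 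The paper's own proof also drops this $D=C$ term without comment.

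The missing idea is a graded Nakayama argument, which makes precise your second, vaguer idea. Set $R=Ext^{\bullet}(j^*IC_C,j^*IC_C)$.
\begin{itemize}
\item $N$ is stable under precomposition with $R$. If $\alpha\in Ext^\bullet(j^*IC_C,j^*IC_D)$, $\beta\in Ext^\bullet(j^*IC_D,j^*\mu_*\omega)$ and $\gamma\in R$, then $(\beta\circ\alpha)\circ\gamma=\beta\circ(\alpha\circ\gamma)$, and $\alpha\circ\gamma\in Ext^\bullet(j^*IC_C,j^*IC_D)$.
\item Restriction to $Q_C$ is compatible with composition and preserves degree. So the global $D=C$ summand restricts into $R^{>0}\cdot E'$.
\item Hence, if the global map is surjective, $M:=E'/N$ is a graded $R$-module with $M=R^{>0}M$.
\item $M$ is finite dimensional and $R^{>0}$ strictly raises cohomological degree. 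So the lowest nonzero graded piece of $M$ cannot lie in $R^{>0}M$, forcing $M=0$.
\end{itemize}
Thus the local map over $D\neq C$ is surjective. With this step added, your argument gives both directions.
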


\begin{proof}
    Let $i$ be the closed embedding which is the complement of $j$. We have the following distinguished triangle which gives a long exact sequence of $Ext$ groups.

    $$hom(i_*i^*IC_C,\mu_*\omega)\rightarrow hom(IC_C,\mu_*\omega) \rightarrow hom(j_!j^*IC_C,\mu_*\omega) \rightarrow$$

    Now, like in the proof of Proposition \ref{orbits must intersect} we get that $Ext^\bullet(i_*i^*IC_C,\mu_*\omega)$ can be obtained from orbits which are contained in the closure of $C$. These orbits also can not contribute to $Ext^\bullet(j_!j^*IC_C,\mu_*\omega)$ as if $C\nsubseteq \overline{D}$ and then $j^*IC_D=0$. Thus the only way to get the elements in $Ext^\bullet(IC_C,\mu_*\omega)$ whose image in $Ext^\bullet(j_!j^*IC_C,\mu_*\omega)$ is non zero is by using orbits from $Q_C$. Which means that we must have 
    elements in the image of the map $\bigoplus_{D\in\mathcal{E}_S,C\subset \overline{D},D\neq C}Ext^{\bullet}(j^*IC_C,j^*IC_D)\otimes Ext^\bullet(j^*IC_D,j^*\mu_*\omega)\rightarrow Ext^\bullet(j^*IC_C,j^*\mu_*\omega)$.

    We can lift elements of $Ext^{\bullet}(j^*IC_C,j^*IC_D)\otimes Ext^\bullet(j^*IC_D,j^*\mu_*\omega)$ to $Ext^{\bullet}(IC_C,IC_D)\otimes Ext^\bullet(IC_D,\mu_*\omega)$ using Lemma \ref{Exts are pure}.
\end{proof}

We immediately get the following:

\begin{cor}\label{maximal orbits}
    Let $c\in \mathcal{E}_S$ be a maximal orbit, let $\pi$ be the irreducible representation with a $I$ fixed vector attached to $c$. The representation $Z(\pi)^\vee$ is $X$ distinguished.
\end{cor}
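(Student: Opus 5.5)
Corollary \ref{maximal orbits} follows from the preceding proposition combined with Proposition \ref{sheaf condition}. By Proposition \ref{sheaf condition}, $Z(\pi)^\vee$ is $X$ distinguished if and only if the map $Comp_c$ is \emph{not} surjective. So the plan is to prove that $Comp_c$ fails to be surjective whenever $c$ is a maximal element of $\mathcal{E}_S$ with respect to closure order.

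\textbf{Reduction to $Q_S$.} First, apply the earlier proposition on restricting to $Q_S$. This replaces the source of $Comp_c$ by the sum over $d$ with $\overline{d}\cap S\neq\emptyset$ of $Ext^{\bullet>0}(IC_c,IC_d)\otimes Ext^\bullet(IC_d,\mu_*\omega)$. The constant sheaf and the dualizing sheaf on the smooth variety $S^a$ differ only by a shift, so surjectivity is unaffected. Moreover $Ext^\bullet(IC_d,\mu_*\omega)=Ext^\bullet(\mu^*IC_d,\omega)$ vanishes unless $d$ meets $S$. Hence only $d\in\mathcal{E}_S$ contribute, and we are exactly in the setting of the preceding proposition.

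\textbf{Applying the preceding proposition.} With $C=c$ and $j:Q_C\to Q_S$, surjectivity of $Comp_c$ is equivalent to surjectivity of
\[
\bigoplus_{D\in\mathcal{E}_S,\,C\subset\overline{D},\,D\neq C}Ext^{\bullet}(j^*IC_C,j^*IC_D)\otimes Ext^\bullet(j^*IC_D,j^*\mu_*\omega)\rightarrow Ext^\bullet(j^*IC_C,j^*\mu_*\omega).
\]
Since $C$ is maximal in $\mathcal{E}_S$, the index set on the left is empty, so the left side is $0$. It remains to show the target is nonzero.

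\textbf{Nonvanishing of the target (main step).} The open set $Q_C$ contains $C$ as a closed orbit. The only orbits of $\mathcal{E}_S$ inside $Q_C$ are those whose closure contains $C$, and by maximality that is $C$ alone. Hence $j^*\mu_*\omega$ is supported on $S\cap C=:S_C$, which is nonempty because $C\in\mathcal{E}_S$. On $Q_C$, the restriction $j^*IC_C$ is the shifted constant sheaf on the closed orbit $C$. The computation from the base case of Proposition \ref{orbits must intersect} then gives
\[
Ext^\bullet(j^*IC_C,j^*\mu_*\omega)\cong H^\bullet(S_C)
\]
up to a shift, which contains $H^0(S_C)\neq0$. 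The expected subtlety is in this step: checking that $\mu^*j^*IC_C$ is the extension by zero of a constant sheaf on $S_C$, so that $i_S^!\omega$ is a shifted constant sheaf on $S_C$. This uses that $S_C$ is open in $S\cap Q_C$, which holds because no other orbit of $\mathcal{E}_S$ lies in $Q_C$.

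\textbf{Conclusion.} The map is therefore $0\to(\text{nonzero})$, which is not surjective. So $Comp_c$ is not surjective, and Proposition \ref{sheaf condition} shows that $Z(\pi)^\vee$ is $X$ distinguished.
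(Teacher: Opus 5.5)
\textbf{Gap: the source of $Comp_c$ need not vanish.} Your route is the paper's: the paper also deduces the corollary in one line from the preceding proposition plus Proposition \ref{sheaf condition}. Your computation of the target on $Q_C$ is correct. The problem is the step that makes the source vanish. $Ext^\bullet(IC_d,\mu_*\omega)=Ext^\bullet(\mu^*IC_d,\omega)$ vanishes when $\overline{d}\cap S=\emptyset$, not when $d\cap S=\emptyset$. Even for $C$ maximal in $\mathcal{E}_S$, there can be orbits $D\neq C$ with $C\subset\overline{D}$ and $D\cap S=\emptyset$.

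For example, take $n=2$ and $t=\mathrm{diag}(1,q_r,q_r,q_r^2)$. The dimension vector is $(1,2,1)$, and $S=\{A_0=\left(\begin{smallmatrix}1\\ B_0\end{smallmatrix}\right),\ A_1=(-B_0\ \ 1)\}$.
\begin{itemize}
\item $A_1A_0=0$ on $S$, so $\mathcal{E}_S=\{C\}$ with $C=\{A_0\neq0,A_1\neq0,A_1A_0=0\}$.
\item $C$ is a divisor in the closure of the open orbit $D=\{A_1A_0\neq0\}$.
\item $IC_D=\overline{\mathbb{Q}}_l[4]$, so $Ext^\bullet(IC_D,\mu_*\omega)\cong H^{\bullet-2}(S)\neq0$.
\item One also checks $Ext^1(IC_C,IC_D)\neq0$.
\end{itemize}
So the source of $Comp_c$ is not $0$, and ``the map is $0\to(\text{nonzero})$'' fails. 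The preceding proposition only concerns the $\mathcal{E}_S$-part of the source, which is a smaller source, so it cannot give non-surjectivity of $Comp_c$ by itself. The paper's ``immediately'' passes over the same point.

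\textbf{Repair: a degree count.} The statement still holds, and your main step already sets this up.
\begin{itemize}
\item Put $s=\dim S_C$. $S_C=S\cap Q_C$ is open in the affine space $S^a$. Hence $Ext^\bullet(j^*IC_C,j^*\mu_*\omega)\cong H^{\bullet-\dim C+2s}(S_C)$, which has a nonzero class in the lowest degree $k_0=\dim C-2s$ and vanishes in degrees $<k_0$.
\item For $D\neq C$ with $C\subset\overline{D}$, the IC support condition puts $IC_D|_C$ in degrees $\le-\dim C-1$. Therefore $Ext^m(j^*IC_D,j^*\mu_*\omega)=Hom(\mu^*j^*IC_D,\overline{\mathbb{Q}}_l[2s+m])=0$ for $m\le k_0$.
\item If $C\not\subset\overline{d}$, then $j^*IC_d=0$.
\item The first factor in $Comp_c$ has degree $\ell>0$, so every composite of total degree $k_0$ restricts to $0$ on $Q_C$.
\end{itemize}
Now lift the lowest class to some $e\in Ext^{k_0}(IC_C,\mu_*\omega)$ with $j^*e\neq0$, using Lemma \ref{Exts are pure}(2) as the paper does. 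Since composition commutes with $j^*$, $e$ is not in the image of $Comp_c$. Replacing ``the left side is $0$'' with this argument completes your proof.
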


\begin{Remark}
    The representations $Z(\pi)^\vee$ described by Corollary \ref{maximal orbits} are precisely the spherical irreducible representations, i.e. the representations with a $\mathbf{G}(\s)$ fixed vector. 
\end{Remark}

\begin{subsection}*{Example and explicit computations}

In this Subsection, we give an explicit description of the space $Q_S$ and the orbits $\mathcal{E}$. We compute some examples.

We relate $Q$ to quiver loci. 

Recall that we have $a=(t,v)$, we can assume that $t$ is diagonal of the form 

$t=diag(t_1v^{-1},...,t_nv^{-1},t_1v,...,t_nv)$. We can change $t$ by a central element and think about $t=diag(t_1,...,t_n,t_1q_r,...,t_nq_r)$. We can assume that each $t_i$ is a power of $q_r$. We define $(m_i)^{i=\infty}_{i=-\infty}$, $m_i$ is the number of times $q_r^i$ appears in $t$. We consider the equioriented quiver of type $A_\infty$ and we look at the space of representations with dimensions $m_i$. As $m_i$ is non zero only at finitely many places we actually have a finite number of equioriented quivers of type $A$ and a representation of each one. It is easy to see that $\mathfrak{g}^a$ is isomorphic to the space of such representations. The group $C_{G^\vee}(t)$ is isomorphic to $\Pi_{i}GL_{m_i}$. It acts on this spaces of quiver representations in the obvious way. As everything splits to a product over a finite number of quivers we can assume without the loss of generality that we have a single quiver. This is equivalent to all non zero elements of $m_i$ being consecutive.

\begin{Remark}
    In the case where there are entries of $t$ whose ratio is not power of $q_r$ we also get a product of representations of disjoint quivers.
\end{Remark}

We abuse the notation and denote by $Q$ the space of all quiver representations with dimensions $m_i$.

We need to explain how $S$ and its embedding $\mu$ can be seen in this picture. We can restrict $m_i$ to the places where it is non zero and by changing the indices assume that the dimensions of the representations are $m_0,...,m_l$. As $t=diag(t_1,...,t_n,t_1q_r,...,t_nq_r)$ we know that there are integers $k_0,...,k_{l-1}$ such that $m_0=k_0,m_l=k_{l-1}$ and $m_i=k_i+k_{i-1}$ for $1\leq i\leq l-1$. Each quiver representation is given by a set of matrices $A_i\in M_{m_{i+1},m_i}$ for $0\leq i\leq l-1$. The space $S\subset Q$ corresponds to matrices of the from $A_0=\begin{pmatrix}
    I \\B_0
\end{pmatrix}, A_{l-1}=\begin{pmatrix}
    -B_{l-2} & I
\end{pmatrix}$ and $A_i=\begin{pmatrix}
    -B_{i-1}& I\\C_i& B_i
\end{pmatrix}$ for $1\leq i\leq i-2$. There is no condition on $B_i\in M_{k_{i+1},k_i}$ and $C_i\in M_{k_{i+1},k_{i-1}}$. By abuse of notation we denote the subspace of such representations by $S$ and we have the embedding $\mu:S\subset Q$. We also denote the collection of $\Pi_{i}GL_{m_i}$ orbits in $Q$ by $\mathcal{E}$.

The problem we need to solve is the description of all orbits $c\in \mathcal{E}$ such that the map $\bigoplus_{d\in\mathcal{E}}Ext^{\bullet>0}(IC_c,IC_d)\otimes Ext^\bullet(IC_d,\mu_*\omega)\rightarrow Ext^\bullet(IC_c,\mu_*\omega)$ is not surjective. Such orbits will be called relevant orbits.

The orbits $\mathcal{E}$ can be parametrized by Zelevinsky parameters, i.e. multi-segments (see Definition \ref{multisegments}).



We give an example for which we can solve the problem completely. In this example we again work with sheaves over $\C$ in the analytic setting.

\begin{exmp}
Consider the following dimensions $m_0=m_1=m_2=m_3=m$ and the values  $k_0=m,k_1=0,k_2=m$. In this cases a quiver representation is given by three $m\times m$ matrices $A_0,A_1,A_2$. We have an action of the group $GL_m^4$.

The matrices in $S$ are of the form $A_0=I,A_1=C_0$ and $A_2=I$. 

Clearly, we only need to consider orbits $c\in\mathcal{E}$ whose closure intersect $S$. For such orbits $A_0,A_2$ are invertible matrices and so the $GL_m^4$ orbit is determined by the rank of $A_1$. Thus, there are $m+1$ orbits we should consider. Denote by $C_r, IC_r$ the the orbit of rank $r$ matrices and its $IC$ sheaf respectively. 

Denote by $rank_r$ the subspace of $M_{m,m}$ of $m\times m$ matrices of rank exactly $r$.

$C_r$ is a subset of $M_{m,m}\times M_{m,m}\times M_{m,m}$. The condition is only on the second matrix so $C_r=M_{m,m}\times rank_{r}\times M_{m,m}$. The map $\mu:M_{m,m}\rightarrow M_{m,m}\times M_{m,m}\times M_{m,m}$ is given by $\mu(A)=(I,A,I)$. We have $IC_r=\C_{M_{m,m}}\boxtimes IC_{rank_r}\boxtimes\C_{M_{m,m}}[2m^2]$ and $\mu^*IC_r=IC_{rank_r}[2m^2]$.

We also have $Ext^\bullet(IC_r,IC_k)=Ext^\bullet(IC_{rank_r},IC_{rank_k})$.

Thus, instead of $\bigoplus_{D\in\mathcal{E}}Ext^{\bullet>0}(IC_C,IC_D)\otimes Ext^\bullet(IC_D,\mu_*\C)\rightarrow Ext^\bullet(IC_C,\mu_*\C)$ we have $\bigoplus_{k}Ext^{\bullet>0}(IC_{rank_r},IC_{rank_k})\otimes Ext^\bullet(IC_{rank_k}[2m^2],\C)\rightarrow Ext^\bullet(IC_{rank_r}[2m^2],\C)$. We can cancel out the shift by $[2m^2]$ and get the map $$\bigoplus_{k}Ext^{\bullet>0}(IC_{rank_r},IC_{rank_k})\otimes Ext^\bullet(IC_{rank_k},\C)\rightarrow Ext^\bullet(IC_{rank_r},\C)$$

We claim that this map is surjective unless $r=m$. For $k=m$ we $IC_{rank_k}=\C[m^2]$ and $Ext^{\bullet}(IC_{rank_r},IC_{rank_k})=Ext^{\bullet}(IC_{rank_k},IC_{rank_r})=Ext^{\bullet}(\C[m^2],IC_{rank_r})=H^{\bullet-m^2}(IC_{rank_r})$. We also have $Ext^\bullet(IC_{rank_k},\C)=Ext^\bullet(\C[m^2],\C)=H^{\bullet-m^2}(\C)$. 

We also have  $Ext^\bullet(IC_{rank_r},\C)=Ext^\bullet(\C[2m^2],IC_{rank_r})=H^{\bullet-2m^2}(IC_{rank_r})$. Overall we have the map  $H^{\bullet-m^2}(IC_{rank_r})\otimes H^{\bullet-m^2}(\C)\rightarrow H^{\bullet-2m^2}(IC_{rank_r})$. We have the condition of $\bullet>0$ in the term $H^{\bullet-m^2}(IC_{rank_r})$. If $r<m$ there is nothing in cohomological degree $-m^2$ so we have a surjective map. Thus, the only values of $r$ for which we do not have a surjective map is $r=m$.  

This implies that the only distinguished representation with this specific central character has Zalevisnky parameter $[0,1,2,3]\times m$. 
\end{exmp}

\end{subsection}

\end{section}

\newpage

\renewcommand{\C}{\mathcal{C}}

\appendix

\section{Appendix A, Boundary degenerations}\label{A1}

In this Appendix, we discuss the general theory of boundary degenerations of symmetric spaces. We will describe the action of the affine Hecke algebra on the space of $I$ invariant compactly supported functions on a most degenerate boundary degenerations. We will prove the results about boundary degenerations we used to prove Proposition \ref{ideal generaotrs}.

In this Appendix, we use the notion of boundary degeneration introduced for symmetric varieties by Delorme in \cite{NeighborhoodsInf} and for spherical varieties by Sakellaridis in \cite{sakellaridis2017periods}. Additional relevant results are discussed in \cite{Delorme2014PaleyWienerTF}. We almost follow the notations of \cite{NeighborhoodsInf}, in our notation the action of $G$ on $X$ is a left action.

\begin{subsection}{Delorme's results on boundary degenerations}

In this Subsection we recall the definitions and results of \cite{NeighborhoodsInf} that are used in this appendix.

Let $G$ be any reductive connected group split over $F$, let $\sigma:G\rightarrow G$ be an algebraic involution. Let $H=G^\sigma$ and let $X=G/H$. 

\begin{defn}
    Let $P$ be a parabolic subgroup of $G$, we call $P$ a $\sigma$ parabolic if $P$ and $\sigma(P)$ are opposites, i.e. $P\cap \sigma(P)$ is a Levi subgroup of $P$.
\end{defn}

\begin{defn}
    Let $T\subset G$ be a torus, we say that $T$ is $\sigma$ split if it is split over $F$ and $\sigma(t)=t^{-1}$ for any $t\in T$.
\end{defn}

\begin{defn}
    Let $P=MU$ be a $\sigma$ parabolic with its Levy decomposition, let $A_M$ be the maximal $\sigma$ split torus in the center of $M$.
    Let $\Sigma(P)$ be the set of $A_M$ roots in the Lie algebra of $P$. Let $\Delta(P)$ be the set of simple roots in $\Sigma(P)$. Let $|\cdot|$ be the norm of $F$, denote
    $A^+_P=\{a\in A_M||\alpha(a)|\geq 1, \alpha\in \Delta(P)\}$ and  $A^{++}_P=\{a\in A_M||\alpha(a)|> 1, \alpha\in \Delta(P)\}$.
\end{defn}

\begin{defn}
    Let $P_\emptyset=M_\emptyset U_\emptyset$ be a minimal $\sigma$ parabolic. Let $A_\emptyset$ be a maximal $\sigma$ stable torus contained in the center of $M_\emptyset$. Let $P=MU$ be a $\sigma$ parabolic that contains $P_\emptyset$. Let $C>0$ and let $\Delta(U,A_\emptyset)$ be the simple roots of $A_\emptyset$ in the Lie algebra of $U$, let $A^+_\emptyset(P,C)=\{a\in A_\emptyset ||\alpha(a)|\geq C,\alpha\in \Delta(U,A_\emptyset)\}$.
\end{defn}

\begin{defn}
    Let $P=MU$ be a $\sigma$ parabolic, let $X^{Lev}_M=\{g\in G| g^{-1}A_Mg \text{ is a $\sigma$ split torus}\}$. Denote $X_M=X^{Lev}_M/H$ and $X_P=G\times^{P^-} X_M$, $P^-=\sigma(P)$ being the parabolic opposite to $P$.
    $X_P$ is called the boundary degeneration of $X$ with respect to $P$.
\end{defn}

\begin{Remark}
$M/H\cap M$ is a connected component of $X_M$ (see Lemma 1 of \cite{NeighborhoodsInf} for a proof) and $G/(H\cap M)U^-$ is a connected component of $X_P$ (see Lemma 2 of \cite{NeighborhoodsInf} for a proof).
\end{Remark}

\begin{defn}
    Let $P=MU$ be a $\sigma$ parabolic, let $a_n\in A_M$ we say that $a_n\rightarrow_P \infty$ if for every $\alpha\in \Sigma(U,A_M)$, which are the roots of $A_M$ in the Lie algebra of $U$, we have $|\alpha(a_n)|\rightarrow\infty$.
\end{defn}

\begin{Remark}
    In this appendix we mostly consider the case of $P$ being a minimal $\sigma$ parabolic. In this case we denote $a_n\rightarrow_P \infty$ simply by $a_n\rightarrow\infty$.
\end{Remark}

\begin{Remark}
    Notice that if $t\in A_P^{++}$ then $t^n\rightarrow_P\infty$ 
\end{Remark}

We will the need to following lemma. 

\begin{lemma}{(Lemma 7 of \cite{NeighborhoodsInf})}
\label{lemma7}
Let $P=MU$ be a $\sigma$ parabolic, let $a_n\in A_M$ such that $a_n\rightarrow_P\infty$. Let $g_n\in G$ be a sequence converging to $g$, $g_n\rightarrow g$ such that for all $n\in\N$ $g_na_nH=a_nH$ then $g\in (H\cap M)U^-$.
\end{lemma}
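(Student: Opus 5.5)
We argue by contradiction combined with a compactness/limit argument, working in the spherical coordinates provided by the minimal $\sigma$ parabolic. Write $P=MU$ with opposite $P^-=\sigma(P)=MU^-$, and note that $M=P\cap\sigma(P)$ is $\sigma$-stable. The condition $g_na_nH=a_nH$ is equivalent to
\[
a_n^{-1}g_na_n\in H,
\]
that is, $\sigma(a_n^{-1}g_na_n)=a_n^{-1}g_na_n$. Since $a_n\in A_M$ is $\sigma$-split, $\sigma(a_n)=a_n^{-1}$, and the condition becomes
\[
a_n\sigma(g_n)a_n^{-1}=a_n^{-1}g_na_n .
\]
This is the relation we will exploit.

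Since $g_n\to g$ and the big cell $U^-MU$ is open, we may perturb the situation so that $g$ lies in the big cell: either we treat first $g\in U^-MU$ and then reduce the general case by a translation argument, or we observe that the conclusion forces this. For $n$ large we then write $g_n=v_nm_nu_n$ with $v_n\in U^-$, $m_n\in M$, $u_n\in U$, and these factors converge to the factors $v,m,u$ of $g$. Conjugating by $a_n^{-1}$ gives
\[
a_n^{-1}g_na_n=(a_n^{-1}v_na_n)\,m_n\,(a_n^{-1}u_na_n).
\]
Because $a_n\to_P\infty$, conjugation by $a_n^{-1}$ contracts $U^-$ and expands $U$, with the precise direction fixed by the sign convention on $\Sigma(U,A_M)$; the roles swap under $\sigma$. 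Applying $\sigma$ sends $U$ to $U^-$ and $a_n$ to $a_n^{-1}$, so
\[
\sigma(a_n^{-1}g_na_n)=\bigl(a_n\sigma(v_n)a_n^{-1}\bigr)\,\sigma(m_n)\,\bigl(a_n\sigma(u_n)a_n^{-1}\bigr),
\]
whose factors lie in $U$, $M$ and $U^-$ respectively. Equating this with the unipotent-Levi decomposition of $a_n^{-1}g_na_n$, and using uniqueness of the decomposition in the big cell for the pair of opposite parabolics (after rearranging the orders appropriately), we obtain two things. First, $\sigma(m_n)=m_n$, so $m_n\in H\cap M$. Second, the $U$-parts satisfy
\[
a_n^{-1}u_na_n=a_n\sigma(v_n)a_n^{-1}.
\]
One side is expanded and the other contracted by conjugation with $a_n$, and both are bounded or tending to $1$ by the convergence of $v_n$ and $u_n$. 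Using that $|\alpha(a_n)|\to\infty$ for every root of $U$, the conjugation $u\mapsto a_n^{-1}ua_n$ is injective with a distortion that forces $u_n\to 1$, while $v_n$ is unconstrained. Passing to the limit gives $u=1$ and $m\in H\cap M$, hence $g=vm\in U^-(H\cap M)=(H\cap M)U^-$.

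The main obstacle is the first reduction, namely ensuring that $g$, or at least $g_n$ for large $n$, lies in the big cell $U^-MU$ so that the decomposition is unique; we must also match the order of factors, $U^-MU$ versus $UMU^-$, after applying $\sigma$. To handle it we would first use a Bruhat decomposition $g\in P^-wP$ and show that for $w\neq 1$ the growth of $a_n$ makes $a_n^{-1}g_na_n$ unbounded in a way incompatible with the relation, which holds since the relation forces it to lie in $H$ with controlled $\sigma$-behaviour. Alternatively, we can appeal to Delorme's limiting argument via $H$-orbit closures, which is how \cite{NeighborhoodsInf} proceeds.
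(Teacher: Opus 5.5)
\textbf{The central step does not hold.} At the point where you ``equate'' the two factorizations, the element $y_n=a_n^{-1}g_na_n$ is factored in the cell $U^-MU$, but $\sigma(y_n)$ comes out factored in a different cell, $UMU^-$. Uniqueness holds inside each cell, but it says nothing about how the factors of one cell compare with those of the other. Reordering changes every factor: already $\left(\begin{smallmatrix}1&0\\ c&1\end{smallmatrix}\right)\left(\begin{smallmatrix}1&b\\ 0&1\end{smallmatrix}\right)$ has Levi component $1$ in $U^-MU$ but $\mathrm{diag}\bigl((1+bc)^{-1},1+bc\bigr)$ in $UMU^-$. So neither $\sigma(m_n)=m_n$ nor $a_n^{-1}u_na_n=a_n\sigma(v_n)a_n^{-1}$ follows. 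The first is false in general: in the example below, $m_n=\mathrm{diag}(c_n,c_n^{-1})$ is not $\sigma$-fixed.

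\textbf{The direction is also reversed.} Take $\mathrm{Ad}(a)X=\alpha(a)X$ and the stated definition $|\alpha(a_n)|\to\infty$ on $\Sigma(U,A_M)$. Then $x\mapsto a_n^{-1}xa_n$ contracts $U$ and expands $U^-$, the opposite of what you use, so your scheme would kill $v_n$ and land in $(H\cap M)U$. Read literally with that definition, the statement is in fact false. Take $G=GL_2$, $\sigma(g)={}^tg^{-1}$, $H=O_2$, and $P$ the upper triangular Borel, so $M=A_M$ is the diagonal torus. Let $a_n=\mathrm{diag}(\varpi^{-n},1)$ with $\varpi$ a uniformizer, fix $0\neq b\in\mathcal{O}$, and choose $c_n\in F$ with $c_n^2=1-\varpi^{2n}b^2$ and $c_n\to1$ (possible for $n$ large). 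Then
\[
g_n=\begin{pmatrix}c_n&b\\ -\varpi^{2n}b&c_n\end{pmatrix},\qquad a_n^{-1}g_na_n=\begin{pmatrix}c_n&\varpi^{n}b\\ -\varpi^{n}b&c_n\end{pmatrix}\in O_2,
\]
so $g_na_nH=a_nH$. Yet $g_n\to\left(\begin{smallmatrix}1&b\\ 0&1\end{smallmatrix}\right)$, which is not in $(H\cap M)U^-$. The lemma must be read with $|\alpha(a_n)|\to0$ on $\Sigma(U,A_M)$, equivalently $a_n^{-1}\to_P\infty$. Presumably a convention flipped when passing to a left action. Your phrase ``with the precise direction fixed by the sign convention'' is exactly where this gets lost.

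\textbf{How to repair it.} With the corrected normalization, your direction ($u_n\to1$, $v_n$ free) is right. The missing idea is to use $\sigma(a_n)=a_n^{-1}$ once more and rewrite the hypothesis as
\[
\sigma(g_n)=a_n^{-2}\,g_n\,a_n^{2}.
\]
Now only the single cell $U^-MU$ is involved, since conjugation by $a_n^{-2}\in A_M$ preserves it, and the left side converges to $\sigma(g)$.

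This relation also settles the big-cell reduction you leave open. Note that $a_n^{-1}g_na_n\in H$ alone gives no boundedness, since $H$ is non-compact. In an $A_M$-weight basis of $\mathfrak g$, the coefficients of $\mathrm{Ad}(g_n)$ from $\mathfrak m\oplus\mathfrak u^-$ to $\mathfrak u$ are multiplied by factors of absolute value tending to $\infty$ when passing to $\mathrm{Ad}(a_n^{-2}g_na_n^{2})$. That matrix stays bounded, so these coefficients vanish for $g$. Hence $\mathrm{Ad}(g)$ preserves $\mathfrak p^-=\mathfrak m\oplus\mathfrak u^-$, and $g\in P^-=U^-M$.

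For large $n$, write $g_n=v_nm_nu_n$ with $(v_n,m_n,u_n)\to(v,m,1)$. In
\[
a_n^{-2}g_na_n^{2}=(a_n^{-2}v_na_n^{2})\,m_n\,(a_n^{-2}u_na_n^{2})
\]
the first factor tends to $1$. So $a_n^{-2}u_na_n^{2}\to m^{-1}\sigma(g)$, giving $\sigma(g)\in mU$. On the other hand $\sigma(g)=\sigma(v)\sigma(m)\in\sigma(m)U$. Uniqueness in $P=M\ltimes U$ then gives $\sigma(m)=m$, so $g=vm\in U^-(H\cap M)=(H\cap M)U^-$.
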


\begin{defn}
    Let $A_i$ be representatives of the maximal $\sigma$ split tori of $G$ (under the action of $H$ by conjugation). For each $i$ choose $x_i$ such that $A_i=x^{-1}_i A_\emptyset x_i$ with one of them being $1$. 

    Denote by $W_R(A_\emptyset)=N_R(A_\emptyset)/Z_R(A_\emptyset)$ for any group $R$ that contains $A_\emptyset$.
    
    Let $W_i$ be a set of representatives in $N_G(A_\emptyset)$
    of $W_G(A_\emptyset)/W_{x^{-1}_iHx_i}(A_\emptyset)$. Let $W^G_{M_\emptyset}=\cup W_ix^{-1}_i$ and $\chi^G_{M_\emptyset}=\{xH|x^{-1}\in W^G_{M_\emptyset}\}$.

    Now let $P$ be a $\sigma$ parabolic and $C>0$, define $N_X(P,C)=\cup_{x\in \chi^G_{M_\emptyset}}A_\emptyset^+(P,C)xH$
\end{defn}

We can now formulate a key result concerning the existence of Bernstein maps $$e:S(X_P) \rightarrow S(X)$$  from a degeneration $X_P$ of a $p$-adic symmetric spaces.

\begin{theorem}{(Theorem 3 of \cite{NeighborhoodsInf})}
\label{theorem3}
    Let $P$ be a $\sigma$ parabolic, there exists a map $e:S(X_P)^I\rightarrow S(X)^I$ which is $G$ equivariant and for any compact set $\Omega\subset G$ there exists $C$ large enough such that we have for $x\in \Omega N_X(P,C)$, e sends the characteristic function of $Ix(M\cap H)U^-$ to the characteristic function of $IxH$.
\end{theorem}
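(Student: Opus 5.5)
The plan is to build $e$ from the identification of $I$-orbits ``near infinity'' on $X$ and on $X_P$, and then extend it to all of $S(X_P)^I$ using invertible Bernstein elements of $H(G,I)$. Throughout, ``$G$ equivariant'' is read as $H(G,I)$-equivariant on $I$-invariants, which is the only structure visible on $S(X_P)^I$ and is how the result is used in Section \ref{s5}.

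\emph{Step 1: matching orbits deep in the cone.} Fix a compact $\Omega\subset G$. We aim to show that for $C$ large the assignment
$$IxH\longmapsto Ix(M\cap H)U^-,\qquad x\in \Omega N_X(P,C),$$
is a well defined bijection between the two sets of $I$-orbits. It should also match stabilizer volumes, i.e. the measures of the corresponding $I$-orbits. We argue by contradiction, writing points as $x=\omega a x_i$ with $a\in A^+_\emptyset(P,C)$.
\begin{enumerate}
\item Suppose the matching fails along a sequence with $a_n\rightarrow_P\infty$. Then there are $k_n\in I$ (after passing to a subsequence and absorbing $\Omega$) with $g_n a_nH=a_nH$ and $g_n\rightarrow g\notin (H\cap M)U^-$. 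This contradicts Lemma \ref{lemma7}.
\item The converse inclusion is handled the same way. Here one uses that on $X_P$ the stabilizer of $a(M\cap H)U^-$ is exactly $a(M\cap H)U^-a^{-1}$.
\item Intersecting with $I$ then gives equal stabilizers in $I$, hence equal volumes.
\end{enumerate}
Finiteness of $\chi^G_{M_\emptyset}$ lets us treat each $x_i$ separately.

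\emph{Step 2: compatibility with the Hecke action.} Define $e_0$ on functions supported in the deep region by
$$e_0\bigl(1_{Ix(M\cap H)U^-}\bigr)=1_{IxH}.$$
Take a generator $1_{IgI}$ of $H(G,I)$ with $g$ in a fixed compact set. Its convolution with $1_{IxH}$ is a finite sum over $I$-cosets $g_jI\subset IgI$. Each term is computed by which orbit $g_jxH$ lies in, together with ratios of orbit volumes. By Step 1, applied with $\Omega$ enlarged by the finitely many $g_j$ and with $C$ increased, the identical computation holds on $X_P$. Hence
$$1_{IgI}*e_0(f)=e_0(1_{IgI}*f)$$
whenever $f$ is supported deep enough relative to $g$.

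\emph{Step 3: extension to all of $S(X_P)^I$.} For $\lambda$ dominant and sufficiently regular, the element $\theta_\lambda$ (that is, $T_{\lambda(u)}$ up to a power of $q$) pushes the support of any given $f\in S(X_P)^I$ into $\Omega N_X(P,C)$. It does so because $A_M$ acts on $X_P$ on the right, commuting with $G$. We then set
$$e(f):=\theta_\lambda^{-1}\, e_0(\theta_\lambda f).$$
Independence of $\lambda$ follows from Step 2 applied to $\theta_\mu$: for $\lambda'=\lambda+\mu$ with $\mu$ dominant we get $e_0(\theta_\mu\theta_\lambda f)=\theta_\mu e_0(\theta_\lambda f)$. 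Equivariance follows similarly. Given $h$, choose $\lambda$ so deep that $\theta_\lambda h f$ and $\theta_\lambda f$ are both in the good region. Then use the Bernstein relations (Theorem \ref{Ber_presentation}) to commute $h$ past $\theta_\lambda$ modulo elements of $\Theta$, which preserve the deep region. The stated normalization on $\Omega N_X(P,C)$ holds by construction, taking $\lambda$ trivial there.

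\emph{Main obstacle.} I expect Step 2, and specifically the uniformity needed for it, to be the hardest part. The constant $C$ must be chosen once for each compact $\Omega$, and the orbit matching must be exact, not merely asymptotic, including the volume factors. Without this, $e_0$ is not Hecke-equivariant. My first attempt would be a stronger form of the sequence argument behind Lemma \ref{lemma7}, made uniform over $g$ in a compact set and over the finitely many $x_i$.
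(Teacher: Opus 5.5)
The paper gives no proof of this statement; it quotes Theorem 3 of Delorme. So your argument has to stand on its own, and it has a genuine gap in Step 3.

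\textbf{Step 3 rests on a false claim.} You assert that for dominant, very regular $\lambda$, left convolution by $\theta_\lambda$ moves the support of an arbitrary $f\in S(X_P)^I$ into $\Omega N_X(P,C)$. Your justification is the right action of $A_M$ on $X_P$. But left convolution by $\theta_\lambda=q^{-l(\lambda)/2}T_{\lambda(u)}$ is not that right translation.

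Here is a counterexample. Take $G=SL_2$ and $X_\emptyset=G/N^-$; up to the central $\{\pm1\}$, this is $X_\emptyset$ for $SL_2/T$ with $T$ a split torus. Let $w=\left(\begin{smallmatrix}0&-1\\ 1&0\end{smallmatrix}\right)$, $f=1_{IwN^-}$, and $\lambda(u)=\mathrm{diag}(u^m,u^{-m})$ with $m\geq 1$. Write $n^-(c)$ for the lower unipotent matrix with entry $c$, and put $c=u^m\gamma$ with $\gamma\in O^\times$. Then $n^-(c)\in I$, and
$g=\lambda(u)n^-(c)w=\left(\begin{smallmatrix}0&-u^m\\ u^{-m}&-\gamma\end{smallmatrix}\right)$ satisfies $g\,n^-(u^{-m}\gamma^{-1})=\left(\begin{smallmatrix}-\gamma^{-1}&-u^m\\ 0&-\gamma\end{smallmatrix}\right)\in I$.
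Hence $IN^-\subset I\lambda(u)IwN^-$. So for every dominant $\lambda\neq 0$, the support of $\theta_\lambda f$ contains the base orbit $IN^-$. This orbit is never in a region $\Omega N_X(P,C)$ with $C$ large, so $e_0(\theta_\lambda f)$ is simply undefined. Left $\theta_\lambda$ agrees with right translation only on orbits of the form $ItH_P$, $t\in T$; that is what the paper's computation of $\theta_\lambda f_\emptyset$ uses. It fails on $IwtH_P$ for $w\neq 1$.

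\textbf{The equivariance and independence arguments fail for a related reason.} The element $\theta_\lambda h\theta_\lambda^{-1}$ does not have bounded support. For $h=T_s$ with $\langle\alpha,\lambda\rangle\neq0$, Theorem~\ref{Ber_presentation} shows it involves $\theta_{s\lambda-\lambda}$. Its support therefore grows linearly in $\lambda$, at the same rate as the depth you want $\theta_\lambda$ to create, so Step 2 can never be invoked. Moreover, elements of $\Theta$ do not preserve the deep region.

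\textbf{What is missing.} You need a way to move vectors to infinity that commutes with $H(G,I)$. That device is the right action $R_a$ of $A_\emptyset$ on $X_P$ (Proposition~\ref{normalizing}). It has no counterpart on $X$, so ``translate, apply $e_0$, translate back'' is unavailable. One workable replacement goes as follows.
\begin{enumerate}
\item Assume $S(X_P)^I$ is finitely generated by some $v_1,\dots,v_r$, as it is in the paper's example by $f_\emptyset$. Since $H(G,I)$ is Noetherian, the relation module is generated by finitely many tuples, all supported in a fixed compact set.
\item Because $R_a$ is an $H(G,I)$-module automorphism, the vectors $R_av_k$ satisfy exactly the same relations as the $v_k$.
\item For $a$ deep enough, Step 2 shows that the $e_0(R_av_k)$ satisfy these relations too. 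This yields a well-defined $H(G,I)$-linear map $e$ with $e(R_av_k)=e_0(R_av_k)$.
\end{enumerate}
You would still have to prove two further things. First, that $e$ is independent of $a$. Second, that the normalization $e(1_{IxH_P})=1_{IxH}$ holds on all of $\Omega N_X(P,C)$, for instance by showing that on deep vectors $R_b$ with $b\in A^+$ is realized by Hecke operators of controlled support. Your Step 3 supplies neither.
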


\end{subsection}

\begin{subsection}{The action of $H(G,I)$ on $S(X_\emptyset)^I$}

Let $P$ be a minimal $\sigma$ parabolic. Let $T\subset P$ be a maximal split torus. Let $U\subset P$ be the unipotent radical of $P$ and let $U^-=\sigma(U)$ be the unipotent radical of the opposite parabolic $P^-$. Let $M=P\cap \sigma(P)$, we have $P=MU$ and $P^-=MU^-$. We denote $X_P$ be $X_\emptyset$.

In this subsection we describe the action of $H(G,I)$ on $S(X_\emptyset)^I$

We need the following proposition:

\begin{prop}\label{normalizing}[Proposition 4.7 in \cite{Helminck}]
The torus $T$ normalizes $H_P=U^-(M\cap H).$ In fact, the $\sigma$ split part of $T$ is contained in the center of $P\cap \sigma(P).$
\end{prop}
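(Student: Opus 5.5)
The plan is to reduce both assertions to two standard structural facts about the minimal $\sigma$ parabolic $P$. First, we may take $T$ to be $\sigma$ stable and to contain the maximal $\sigma$ split torus $A_\emptyset$. Second, the Levi $M=P\cap\sigma(P)$ equals the centralizer $Z_G(A_\emptyset)$. Both are part of the standard theory of minimal $\sigma$ parabolics (Helminck–Wang): a maximal split torus containing a maximal $\sigma$ split torus can be chosen $\sigma$ stable, and the minimal $\sigma$ parabolics are exactly the parabolics with Levi $Z_G(A_\emptyset)$. I would quote these rather than reprove them. If the given $T$ is arbitrary in $P$, I would first conjugate inside $P$ to reach this situation. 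This is harmless for a normalization statement, since $P$ normalizes $U$ and $M$ is only changed by $U$-conjugation. I expect this normalization of $T$ to be the main point needing care, because the rest is formal.

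With $A_\emptyset\subset T$ and $T$ commutative, we get $T\subset Z_G(A_\emptyset)=M$. Since $M$ is a Levi of $P^-=MU^-$, $T$ normalizes $U^-$. So it remains to show that $T$ normalizes $M\cap H$; together these give that $T$ normalizes $H_P=U^-(M\cap H)$.

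For the second claim, let $T^-$ be the identity component of $\{s\in T\mid \sigma(s)=s^{-1}\}$. Since $T$ is split, $T^-$ is a split torus on which $\sigma$ acts by inversion, i.e.\ it is $\sigma$ split. It contains $A_\emptyset$, so maximality gives $T^-=A_\emptyset$. As $M=Z_G(A_\emptyset)$, this $\sigma$ split part of $T$ is central in $M$, which is the second assertion.

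For the normalization, let $t\in T$ and $m\in M\cap H$. Put $s=t^{-1}\sigma(t)$. Then $\sigma(s)=\sigma(t)^{-1}t=s^{-1}$, using that $T$ is commutative and $\sigma$ stable. Moreover $s$ lies in the image of the connected group $T$ under the morphism $x\mapsto x^{-1}\sigma(x)$, so $s\in T^-=A_\emptyset$, which is central in $M$. Hence
$$\sigma(tmt^{-1})=\sigma(t)\,m\,\sigma(t)^{-1}=t s\,m\,s^{-1}t^{-1}=tmt^{-1},$$
using $\sigma(m)=m$ and $m\in M$. Also $tmt^{-1}\in M$ since $t\in M$. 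Therefore $tmt^{-1}\in M\cap G^\sigma=M\cap H$, so $T$ normalizes $M\cap H$ and hence $H_P$. This argument avoids decomposing $T(F)$ into $\sigma$ fixed and $\sigma$ split parts, which can fail on $F$ points up to a finite group.
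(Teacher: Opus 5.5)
Your main computation is right, but your reduction for a non-$\sigma$-stable $T$ is not. The group $H_P=U^-(M\cap H)$ depends only on $P$ and $\sigma$: $M=P\cap\sigma(P)$ is fixed, not a Levi you may move. So conjugating $T$ by some $p\in P$ does not preserve the property ``$T$ normalizes $H_P$''. It is true that $P$ normalizes $U$, but $H_P$ involves $U^-$, and the normalizer of $U^-$ is $P^-$.

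In fact the statement is false for such $T$. Take $G=GL_2$, $\sigma=\mathrm{Ad}\begin{pmatrix}0&1\\1&0\end{pmatrix}$, and $P$ the upper triangular Borel. Then $M$ is the diagonal torus and $H_P=\{\begin{pmatrix}c&0\\x&c\end{pmatrix}\}$. Anything normalizing $H_P$ normalizes its unipotent radical $U^-$, hence lies in $P^-$. But with $u=\begin{pmatrix}1&1\\0&1\end{pmatrix}$, the maximal split torus $uMu^{-1}\subset P$ consists of the matrices $\begin{pmatrix}a&d-a\\0&d\end{pmatrix}$, which are not lower triangular when $a\neq d$.

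So $\sigma$-stability of $T$ has to be a hypothesis, not something you arrange by conjugation. This is also how the paper uses $T$ in the proof of Proposition \ref{action}. Once it is assumed, your setup is automatic: $T=\sigma(T)\subset P\cap\sigma(P)=M$, and $A_\emptyset\subset T$ because a central split torus of $M$ lies in every maximal split torus of $M$.

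With this correction your argument is sound, and it is more explicit than the paper, which only cites Proposition 4.7 of \cite{Helminck}. You isolate the one structural input, $M=Z_G(A_\emptyset)$ with $A_\emptyset$ a maximal $\sigma$-split torus. You then get normalization of $M\cap H$ directly from $\sigma(tmt^{-1})=tmt^{-1}$, using $t^{-1}\sigma(t)\in T^-=A_\emptyset$.

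Your version also gets two points right that the paper glosses over:
\begin{itemize}
\item The $\sigma$-split part must be the identity component of $\{t\in T\mid\sigma(t)=t^{-1}\}$ for the centrality claim to hold. The full group can contain $2$-torsion that is not central in $M$; an example is $\sigma=\mathrm{Ad}(\mathrm{diag}(1,1,1,-1))$ on $GL_4$, where $M\cong GL_1\times GL_1\times GL_2$.
\item You never need the decomposition $t=t^+t^-$ on $F$-points. The paper invokes it in the proof of Proposition \ref{action}, and it can fail.
\end{itemize}
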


Thus, there is a right action of $T$ on $S(X_\emptyset)^I$ which commutes with the action of $H(G,I)$.

Let $\Tilde{\Delta}$ be the set of simple reflections inside $W_{aff}$. For any $s\in \Tilde{\Delta}$ we have $T_s\in H(G,I)$.

Let $w\in W_{aff}$, the next proposition describes the action of $T_s$ on a characteristic function of the form $1_{IwH_P}\in S(X_\emptyset)^I$. 

We use the language of the Bruhat Tits building of $G$, as was done in \cite{my}. Let $\B$ be the extended Bruhat Tits building of $G$. Let $\Omega$ be the fundamental group of $G$. The set of $I$ orbits on $X$ can be identified with $H$ orbits of $\Omega$ colored chambers in $\B$. 

\begin{prop}\label{action}
For an element $w\in W_{aff}$ exactly one of the following three statements holds:
\begin{enumerate}
    \item There exists $t_n\rightarrow\infty$ such that $T_s1_{Iwt_nH}=1_{Iswt_nH}$
    \item There exists $t_n\rightarrow\infty$ such that $T_s1_{Iwt_nH}=q1_{Iswt_nH}+(q-1)1_{Iwt_nH}$
    \item There exists $t_n\rightarrow\infty$ such that $T_s1_{Iwt_nH}=q1_{Iswt_nH}$
\end{enumerate}
And we have respectively
\begin{enumerate}
    \item $T_s1_{IwH_P}=1_{IswH_P}$
    \item $T_s1_{IwH_P}=q1_{IswH_P}+(q-1)1_{IwH_P}$
    \item $T_s1_{IwH_P}=q1_{IswH_P}$
\end{enumerate}
\end{prop}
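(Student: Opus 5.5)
The plan is to use the building and Delorme's asymptotics to carry the three-case trichotomy on $X$ down to $X_\emptyset$. First I establish the trichotomy on $X$ for translates $wt_nH$ with $t_n\rightarrow\infty$. By \cite{my}, $T_s1_{y}$ for an $I$ orbit $y\subset X$ is computed from the $\Omega$-colored chambers adjacent to a chamber representing $y$ across the $s$-wall. These are the $q+1$ chambers of the $s$-panel, and we ask how they distribute among $H$-orbits. The three listed cases correspond to the three possible distributions:
\begin{enumerate}
\item $q$ of the chambers lie in the orbit of $sw t_n$ and one in the orbit of $wt_n$, giving $T_s1_{Iwt_nH}=1_{Iswt_nH}$.
\item The dual distribution, giving $q1_{Iswt_nH}+(q-1)1_{Iwt_nH}$.
\item All the other chambers lie in a single orbit $Iswt_nH=Iwt_nH$ (the split case), giving $q\cdot 1$.
\end{enumerate}
For large $t_n$ the type of the panel stabilizer intersected with $t_nHt_n^{-1}$ stabilizes: it only depends on which root subgroups of $T$ are contracted or expanded by $t_n$. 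This follows by applying Lemma \ref{lemma7} to the finitely many panel elements $g$ with $gwt_nH=wt_nH$ (or $=swt_nH$), which forces the limiting stabilizer into $(H\cap M)U^-=H_P$. So I would choose $t_n=t^n$ with $t\in A_P^{++}$ generic. Then exactly one case holds for all large $n$, and the case cannot change along other sequences tending to $\infty$, which gives exclusivity.

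Second, I transfer this to $X_\emptyset$ via the Bernstein map $e$ of Theorem \ref{theorem3}. By Proposition \ref{normalizing}, $T$ acts on the right on $S(X_\emptyset)^I$ commuting with $H(G,I)$, and $1_{IwH_P}\cdot t_n=1_{Iwt_nH_P}$. Take $\Omega$ to be a compact set containing $w$, $sw$, and the relevant panel elements. For $C$ large and $t_n\in A_\emptyset^+(P,C)$, Theorem \ref{theorem3} gives $e(1_{Iwt_nH_P})=1_{Iwt_nH}$ and $e(1_{Iswt_nH_P})=1_{Iswt_nH}$. Applying the $G$-equivariant (hence $H(G,I)$-linear) map $e$ to $T_s1_{Iwt_nH_P}$ therefore yields the known formula on $X$.

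The main obstacle is injectivity: $e$ need not be injective, so the identity on $X$ does not immediately pull back. To handle this I would compute $T_s1_{Iwt_nH_P}$ directly on $X_\emptyset$ by the same panel count, now with stabilizer $H_P$, and compare. The key point is that the orbit structure of $I$ on $I\Omega t_nH_P$ matches that on $I\Omega t_nH$ for $t_n$ deep in the chamber. This comes from Lemma \ref{lemma7} again, since stabilizers in $I$ of $g t_nH$ converge into $H_P$, together with the fact that $H_P$ is normalized by $t_n$, which makes the $X_\emptyset$ orbit structure $t_n$-invariant. So the local counts agree and the coefficients coincide.

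Finally, I translate back by the right action of $t_n^{-1}$, which commutes with $T_s$. This gives the stated formulas for $1_{IwH_P}$ with the same case label.
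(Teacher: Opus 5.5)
\textbf{Gap 1: the trichotomy on $X$ is assumed, not proved.} Your Step~1 rests on the claim that the $q+1$ chambers of an $s$-panel can only be distributed among $H$-orbits in the three listed ways. For a general symmetric space this is false, and ruling out the other distributions is exactly what the ``at least one case holds'' part has to do.

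Non-standard panels do occur in the $\sigma$-stable apartment $\mathcal{A}$. Already for $GL_2/(GL_1\times GL_1)$ with odd residue characteristic, take the $\sigma$-fixed vertex $[\mathcal{O}^2]$ of the $\sigma$-split apartment. There the $H$-orbit of the base chamber meets the panel in exactly $q-1$ chambers, so $1_x$ occurs in $T_s1_x$ with coefficient $q-2$. By Lemma 5.5 of \cite{my}, such panels can only occur along walls that are intersections of two $\sigma$-stable apartments in different $H$-orbits.

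Your stabilization remark does not close this gap: even if the panel type stabilizes along $t^n$, nothing you say shows the limiting type is one of the three. The paper supplies the missing geometric input:
\begin{itemize}
\item By Proposition 7.11 of \cite{my}, which uses that $T^-$ has maximal dimension, $\sigma$ swaps the two half-apartments bounded by such a wall.
\item Hence the wall's hyperplane contains the $\sigma$-fixed part of $\mathcal{A}$, no two such walls are parallel, and there are only finitely many.
\item So a suitable generic $t^-\in A_M$ moves $w\mathcal{C}$ away from all of them, and a subsequence of its powers lands in one of the three cases.
\end{itemize}
Alternatively, you could show directly that every panel of $X_\emptyset$ is of one of the three types, using the structure of $H_P=(H\cap M)U^-$, and then push forward with $e$. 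The proposal does neither.

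\textbf{Gap 2: Lemma \ref{lemma7} only goes one way.} It says that limits of elements of $t_nHt_n^{-1}$ lie in $H_P$. This shows that for large $n$ the partition of the panel into $X$-orbits is at least as fine as its partition into $X_\emptyset$-orbits. It does not show that the type stabilizes, nor that the two partitions coincide; the latter would need an approximation of $U^-$ by elements of $t_nHt_n^{-1}$, which the lemma does not give. So your claims that the orbit structures ``match'' and that ``the case cannot change along other sequences'' are unsupported.

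You do not need them. The paper's argument is:
\begin{itemize}
\item Write $T_s1_{IwH_P}=\sum_i a_i1_{Ig_iH_P}$ with distinct orbits, $a_i\ge 0$, and the $g_i$ in the compact set $IsIw$.
\item Translate by $t_n$ using Proposition \ref{normalizing}, and apply $e$ term by term using Theorem \ref{theorem3}.
\item Comparing with the known formula on $X$ forces each $Ig_it_nH$ to be the appropriate orbit ($Iswt_nH$ or $Iwt_nH$).
\item Only then apply Lemma \ref{lemma7}, in the direction it actually provides, to get $Ig_iH_P=IswH_P$ (respectively $IwH_P$).
\end{itemize}
Exclusivity is then automatic: the three formulas on $X_\emptyset$ are mutually exclusive, and each case on $X$ forces its counterpart.
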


\begin{proof}
We begin with proving that at least one of the first 3 options occurs. 

Let $\A\subset \B$ be the $\sigma$ stable apartment corresponding to the torus $T$.

Under the correspondence between $I$ orbits on $X$ and $H$ orbits on $\Omega$ colored chambers in $\B$, the orbit  $IwH$ for $w\in W_{aff}$ corresponds to a $H$ orbit of a  chamber in $\A$. We choose such a chamber and denote it by $\C$. Let $o\in \Omega$ be such that $IwH$ corresponds to the $H$ orbit of $(\C,o)$.

Let $f$ be a facet of $\C$ of codimension 1, such that the action of $s$ on $H(\C,o)$ is by reflection across $f$.

By Lemma 5.5 of \cite{my} if $T_s1_{IwH}\notin \{1_{IswH},q1_{IswH}+(q-1)1_{IwH},q1_{IswH}\}$ then $f$ is equal to the intersection of two $\sigma$ stable apartments that are not in the same $H$ orbit.

The torus $T$ is a maximal $\sigma$ stable split torus inside of a minimal $\sigma$ parabolic. As such its $\sigma$ split part $T^-=\{t\in T|\sigma(t)=t^{-1}\}$ is of maximal dimension among all $\sigma$ split parts of $\sigma$ stable tori (see Proposition 4.7 of \cite{Helminck}). Thus, by Proposition 7.11 of \cite{my}, the involution $\sigma$ switches the half spaces on both sides of $f$. Therefore, the affine hyper plane generated by $f$ must contain the $\sigma$ fixed part of $\A$. In particular, no two faces $f$ like that can be parallel and thus there are only finitely many faces $f$ like that. Therefore, for any chamber in $\A$ there is a direction such that moving far enough in this direction keeps the chamber away from faces of the mentioned form. Let $t\in T$ be an element acting on $\A$ by such a translation. Write $t=t^-t^+$ with $t^+\in T\cap H$ and $t^-\in T^-$. By Proposition \ref{normalizing} we have $t^-\in A_M$. As our condition on $t$ is an open one, we may assume that for every root $\alpha$ of $A_M$ in the lie algebra of $U$, $\alpha(t^-)\neq 1$. We can find $t_n$, a sub sequence of the powers of $t^-$ such that one of the first options occurs.

It is clear that at most one of the last three options occurs, so it is enough to prove the correspondence between them.

We show the correspondence for the first case, the others are similar.
We assume that there exists $t_n\rightarrow\infty$ such that $T_s1_{IwtH}=1_{IswtH}$.
We can write $T_s1_{IwH_P}=\sum a_i1_{Ig_iH_P}$ for some $a_i\in\mathbb{C}$ and $g_i\in G$.

By Theorem \ref{theorem3} for $n$ large enough we have $e(1_{Ig_it_nH_P})=1_{Ig_it_nH}$. By Proposition \ref{normalizing} we have for any $n$, $T_s1_{Iwt_nH_P}=\sum a_i1_{Ig_it_nH_P}$. For $n$ large enough we have $1_{Iswt_nH}=T_s1_{Iwt_nH}=e(T_s1_{Iwt_nH_P})=\sum a_ie(1_{Ig_it_nH_P})=\sum a_i1_{Ig_it_nH}$. Thus, for every $i$ we have $Ig_it_nH=Iswt_nH$. By Lemma \ref{lemma7}  we get $Ig_iH_P=IswH_P$ for every $i$ and the result follows.
\end{proof}

\begin{cor}
     $G=IW_{aff}H_P$
\end{cor}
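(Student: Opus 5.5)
The claim is $G=IW_{aff}H_P$, where $H_P=U^-(M\cap H)$. I would deduce it from Proposition \ref{action} by a generation argument inside the finitely generated module $S(X_\emptyset)^I$, rather than by a direct matrix computation.

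First, consider the subspace $V\subset S(X_\emptyset)^I$ spanned by the functions $1_{IwH_P}$ for $w\in W_{aff}$. Proposition \ref{action} shows that each $T_s$, for $s\in\Tilde{\Delta}$, maps $1_{IwH_P}$ into the span of $1_{IswH_P}$ and $1_{IwH_P}$. The length-zero elements of $W_{aff}$ (the group $\Omega$) normalize $I$, so $T_\omega 1_{IwH_P}=1_{I\omega wH_P}$. Since $H(G,I)$ is generated by $\{T_s\}$ and $T_\omega$, it follows that $V$ is an $H(G,I)$-submodule. Moreover, by Proposition \ref{normalizing}, right translation by $T$ preserves $V$, because $wtH_P$ with $t\in T$ lies in $W_{aff}H_P$ up to $T^0\subset I$.

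Second, I would show that the $I$-orbits $IwH_P$ exhaust $G/H_P$. Take any $g\in G$. By Iwasawa/Bruhat--Tits, write $g=k\,p^-$ with $k\in\mathbf{G}(\s)$ and $p^-\in P^-=MU^-$ (using that $G=\mathbf{G}(\s)P^-$). Then $p^-H_P$ only depends on the $M/(M\cap H)$-component. Since $P$ is a minimal $\sigma$ parabolic, $M/(M\cap H)$ is "anisotropic modulo $A_M$", and by the Cartan decomposition for $M/(M\cap H)$ (Delorme/Benoist--Oh/Helminck--Wang) one gets $M=(M\cap\mathbf{G}(\s))\,A_M\,(M\cap H)$ up to finitely many representatives. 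I expect precisely these finitely many representatives to be the main obstacle, since for minimal $\sigma$ parabolics the $\sigma$-split tori may be non-conjugate. To avoid that, I would instead use the argument structure of Proposition \ref{action}: pick $g$, and choose $t_n\rightarrow\infty$ in $A_M$ with $e(1_{Igt_nH_P})=1_{Igt_nH}$ via Theorem \ref{theorem3}. By the $I$-orbit classification on $X$ from \cite{my} and the Cartan decomposition on $X$ itself (every $I$-orbit on $X$ for split $H$ meets $W_{aff}\cdot N_X(P,C)$), we get $Igt_nH=Iw_nt_nH$ with $w_n\in W_{aff}$. Passing to a subsequence with $w_n t_n$ adjusted in the torus, Lemma \ref{lemma7} (applied to $g_n=t_n^{-1}w^{-1}i_n g t_n$ converging) yields $g\in IwH_P$.

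Finally, combining the two steps, every $g$ lies in some $IwH_P$, i.e.\ $G=IW_{aff}H_P$. The delicate point is the compactness/convergence needed to apply Lemma \ref{lemma7}. I would handle it by first reducing $g$ modulo $I$ on the left and $A_M$ on the right to a compact set $\Omega$, using $G=\mathbf{G}(\s)P^-$ and the fact that $\mathbf{G}(\s)/I$ is finite. Then Theorem \ref{theorem3} applies uniformly on $\Omega N_X(P,C)$.
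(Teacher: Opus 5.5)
There is a genuine gap in your second step, and it is where your conclusion actually comes from. Everything rests on the claim that $Igt_nH=Iw_nt_nH$ with $w_n\in W_{aff}$, and with $w_n$ confined to a finite set, which you need before extracting a convergent subsequence. (Also, Lemma \ref{lemma7} should be applied to $g_n=w^{-1}i_ng$, which satisfies $g_nt_nH=t_nH$. It should not be applied to $t_n^{-1}w^{-1}i_ngt_n$, which lies in $H$.)

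Nothing you cite gives this claim:
\begin{enumerate}
\item The Cartan decomposition of $X$ is about $\mathbf{G}(\s)$, not $I$, and $I$ does not commute with the torus part.
\item That decomposition carries the extra representatives $x\in\chi^G_{M_\emptyset}$ attached to non-$H$-conjugate maximal $\sigma$-split tori.
\item Its torus component has no reason to be your $t_n$.
\item The description of $I\backslash X$ in \cite{my} by $H$-orbits of colored chambers does not place every chamber in the $H$-orbit of $\A$. Transitivity of $H$ on $\sigma$-stable apartments comes from the minimal-rank hypothesis, which is only imposed in the next subsection.
\end{enumerate}
More fundamentally, the claim is equivalent to the corollary. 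If $g\in IwH_P$, then $Igt_nH_P=Iwt_nH_P$ by Proposition \ref{normalizing}, and Theorem \ref{theorem3} gives $Igt_nH=Iwt_nH$ for large $n$. Your Lemma \ref{lemma7} step is exactly the converse, so the argument is circular. The final compactness reduction only makes Theorem \ref{theorem3} uniform; it does not supply the missing claim.

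The idea that does prove the corollary is already in your first step, but you stop at too weak a statement. Knowing that the span $V$ is an $H(G,I)$-submodule does not show $V=S(X_\emptyset)^I$. What you need is the support version of Proposition \ref{action}:
\begin{enumerate}
\item Since $T_s=1_{IsI}$ and a convolution of nonnegative functions is nonzero exactly on the product set, the support of $T_s1_{IwH_P}$ is $IsIwH_P$.
\item Proposition \ref{action} therefore gives $IsI\,wH_P\subseteq IwH_P\cup IswH_P$ for all $s\in\Tilde{\Delta}$ and $w\in W_{aff}$.
\item Together with $\omega I=I\omega$ for length-zero $\omega$, this shows $IW_{aff}H_P$ is stable under left multiplication by every $IsI$ and by $\Omega$.
\item These generate $G$: for a reduced expression $w=s_1\cdots s_k\omega$ one has $IwI=Is_1I\cdots Is_kI\,\omega$, and $G=\bigcup_w IwI$.
\item Since $1\in IW_{aff}H_P$, it follows that $G=IW_{aff}H_P$.
\end{enumerate}
This is the paper's proof, phrased there as connectivity of the chamber graph of the building. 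If some chamber were not $H_P$-conjugate into $\A$, there would be adjacent chambers $\mathcal{C}_1\subset\A$ and $\mathcal{C}_2$ with $\mathcal{C}_2$ not $H_P$-conjugate into $\A$. Then $T_s1_{IwH_P}$ would be nonzero on the orbit of $\mathcal{C}_2$, contradicting Proposition \ref{action}. Theorem \ref{theorem3} and Lemma \ref{lemma7} enter only through Proposition \ref{action} and need not be invoked again.
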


\begin{proof}
    We need to show that every $I$ orbit on $X_P$ can be represented by an element of $W_{aff}$. Consider the $H_P$ orbits on the set of $\Omega$ colored chambers in $\B$. Assuming that there is an $I$ orbit on $X_P$ not represented by $W_{aff}$ there is a chamber in $\B$ that is not in the $H_P$ orbits of the chambers inside $\A$. As the chambers of $\B$ with the adjacency relation form a connected graph, we can find two chambers $\C_1,\C_2$ which intersect on a face and such that $\C_1\subset \A$ and $\C_2$ is not $H_P$ conjugate to a chamber in $\A$. Let $s\in\Tilde{\Delta}$ be the simple reflection that corresponds to the the reflection across the face $\C_1\cap\C_2$ of $\C_1$ and let $w\in W_{aff}$ such that $IwH_P$ corresponds to the $H_P$ orbit of $(\C_1,o)$ for some $o\in \Omega$. The function $T_s1_{IwH_P}$ is non zero on the $H_P$ orbit of $(\C_2,o)$, but by the previous proposition it is non zero only on $I$ orbits which are represented by $W_{aff}$.
\end{proof}
\end{subsection}

\subsection{The map $e:S(X_\emptyset)^I \to S(X)^I$}

In this Subsection we make two assumptions about $X$. Our main example in this paper $X=GL_{2n}/Sp_{2n}$ satisfies both assumptions. 

\begin{enumerate}
    \item The algebraic group $\mathbf{G}$ is defined over $\s$ and the maximal compact subgroup $\mathbf{G}(\s)$ is $\sigma$ stable.
    \item The space $X$ has minimal rank, i.e. that $rank(X)=rank(G)-rank(H)$. 
 \end{enumerate}

We begin with the following simple observation.
\begin{prop}
\label{onto}
The Bernstein map $e:S(X_\emptyset)^I\rightarrow S(X)^I$ is surjective.
\end{prop}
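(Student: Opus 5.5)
The plan is to prove surjectivity of $e$ by showing that its image contains every characteristic function $1_{IxH}$ of an $I$ orbit on $X$. These span $S(X)^I$ over $\C[q^{\pm1}]$. The image of $e$ is an $H(G,I)$ submodule, since $e$ is $G$ equivariant. So it suffices to show two things: that the image contains one function $1_{Ix_0H}$, and that starting from it we can reach every $1_{IxH}$ using the Hecke action, keeping track of the triangular form of the formulas for $T_s$.

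\textbf{Step 1: one orbit lies in the image.} I invoke Theorem \ref{theorem3} with $\Omega=\{1\}$. For $C$ large and $x=a\in A^+_\emptyset(P,C)$ (taking the representative $x_i=1$ in $\chi^G_{M_\emptyset}$), we get $e(1_{Ia(M\cap H)U^-})=1_{IaH}$. So $1_{IaH}$ is in the image for all sufficiently $P$-dominant $a\in A_\emptyset$.

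\textbf{Step 2: generate everything.} Let $V=e(S(X_\emptyset)^I)$ and fix $x=IyH$ with $1_{IyH}\in V$. For a simple affine reflection $s$, the formulas recalled for $S(X)^I$ (from \cite{my}) give the following:
\begin{enumerate}
\item if $sx=x$, then $T_s1_x=q1_x$;
\item if $x$ goes down, then $T_s1_x=(q-1)1_x+q1_{sx}$;
\item if $x$ goes up, then $T_s1_x=1_{sx}$.
\end{enumerate}
In cases 2 and 3 this expresses $1_{sx}$ as an element of $V$, because $q$ is invertible. Hence the set of $I$ orbits $x$ with $1_x\in V$ is stable under the $W_{aff}$ action on $I\backslash X$.

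\textbf{Step 3: transitivity and the main obstacle.} The remaining issue is that $W_{aff}$ may fail to act transitively on $I\backslash X$ for a general symmetric space; this is where I expect the real work. The minimal rank hypothesis, together with the $\sigma$ stability of $\mathbf{G}(\s)$, is what I would use to control the orbits. Under minimal rank, I would show that every $I$ orbit on $X$ lies in the $W_{aff}$ orbit of one of the orbits $IaxH$ with $x\in\chi^G_{M_\emptyset}$ and $a\in A_\emptyset$ dominant.

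To prove this, I would use the building picture from Appendix A. The $I$ orbits correspond to $H$ orbits of colored chambers. Each such chamber can be moved by adjacency steps (simple reflections) into a $\sigma$ stable apartment $x^{-1}\A$, and then translated by dominant elements of $A_\emptyset$ into the region $N_X(P,C)$. Both kinds of moves stay inside the $W_{aff}$ orbit.

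Finally, I apply Theorem \ref{theorem3} with $\Omega$ a finite set of representatives $x$. This gives $e(1_{Iax(M\cap H)U^-})=1_{IaxH}$ for $C$ large, so every starting orbit is in $V$. Combined with Step 2, $V$ contains all $1_{IxH}$, and $e$ is surjective.

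For $X=GL_{2n}/Sp_{2n}$ this last step is immediate, since Proposition \ref{I orbits} gives transitivity of $W_{aff}$ directly.
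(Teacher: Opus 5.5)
Your Steps 1 and 2 are the paper's argument. Theorem \ref{theorem3} puts one $1_{IaH}$ (a chamber of the $\sigma$-stable apartment $\A$) in the image. The image is an $H(G,I)$-submodule. The three standard formulas for $T_s$ then propagate this function along its $W_{aff}$-orbit; you should also include the length-zero elements $\omega$ of $W_{aff}$, for which $T_\omega 1_x=1_{\omega x}$. For $X=GL_{2n}/Sp_{2n}$, those formulas are stated in Section \ref{s5} and transitivity of $W_{aff}$ on $I\backslash X$ is explicit, so there your proof is complete and coincides with the paper's.

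For the proposition as stated in the appendix (arbitrary $X$ of minimal rank with $\mathbf{G}(\s)$ $\sigma$-stable), Step 3 has a gap. You never say where minimal rank enters, and the mechanism you describe does not work by itself.

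\begin{itemize}
\item Crossing a wall keeps you inside the $W_{aff}$-orbit only when $T_s1_x$ has one of the three standard forms. In that case all $s$-neighbours of a chamber of $x$ lie in $x\cup sx$.
\item For a general symmetric space this can fail. By Lemma 5.5 of \cite{my}, the exceptional behaviour occurs exactly at walls that are the intersection of two $\sigma$-stable apartments not in the same $H$-orbit. Crossing such a wall can take you into a different $W_{aff}$-orbit.
\item For the same reason, your Step 2 formulas, which you quote from the $GL_{2n}/Sp_{2n}$ case, need justification in general.
\end{itemize}

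The missing input is the one the paper uses: for $X$ of minimal rank, $H$ acts transitively on the $\sigma$-stable apartments (Proposition 2.1 of \cite{Ressayre_2010}). Hence the $H$-orbits of chambers of $\A$ exhaust all chambers of the building. This does two things at once: it rules out the exceptional walls, and it makes $W_{aff}$ transitive on $I\backslash X$. With it, the single orbit from Step 1 already suffices. The auxiliary apartments $x^{-1}\A$, the representatives $x\in\chi^G_{M_\emptyset}$, and the further applications of Theorem \ref{theorem3} become unnecessary.
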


\begin{proof}
Any characteristic function of a chamber in $\A$ generates the space of functions supported on chambers in $\A$. By Theorem \ref{theorem3} there is a characteristic function of an $I$ orbits which is mapped to a characteristic function of an $I$ orbit.

The space $X$ is of minimal rank so $H$ acts transitively on $\sigma$ stable apartment (see Proposition 2.1 of \cite{Ressayre_2010}). Thus the $H$ orbits of chambers in $\A$ are equal to the $H$ orbits of all chambers in $\B$.
\end{proof}

\begin{prop}\label{prop:matching}
There exists $w\in W$ such that $e(1_{IwH_P})=1_{IwH}$.
\end{prop}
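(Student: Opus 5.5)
The plan is to combine Theorem \ref{theorem3} with the transitivity of the $H$ action on $\sigma$ stable apartments (valid under the minimal rank assumption, as in the proof of Proposition \ref{onto}) and with the right action of $T$ from Proposition \ref{normalizing}. Theorem \ref{theorem3}, applied with the compact set $\Omega=\mathbf{G}(\s)$ and $C$ large, gives $e(1_{IxH_P})=1_{IxH}$ for $x\in \mathbf{G}(\s)A^+_\emptyset(P,C)x_iH$. The task is to move the large torus element back to the origin while keeping a Weyl group representative.

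\textbf{Step 1.} Since $X$ has minimal rank, there is only one $H$ conjugacy class of maximal $\sigma$ split tori. So in the definition of $N_X(P,C)$ we may take $x_i=1$. Then Theorem \ref{theorem3} gives, for every $k\in\mathbf{G}(\s)$ and every $a\in A^+_\emptyset(P,C)$,
$$e(1_{IkaH_P})=1_{IkaH}.$$
(Here we use that $(M\cap H)U^-=H_P$.)

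\textbf{Step 2.} Take $k=w\in W\subset \mathbf{G}(\s)$, represented by a permutation matrix, and $a=t\in A_\emptyset$ deep in the positive chamber. By Proposition \ref{normalizing}, $t$ normalizes $H_P$, so right translation $R_t$ by $t$ acts on $S(X_\emptyset)^I$ and commutes with $H(G,I)$. We have $1_{IwtH_P}=R_t 1_{IwH_P}$. We then want to show that $e(1_{IwtH})$ determines $e(1_{IwH})$. Since $e$ does not commute with $R_t$, we instead use the Hecke action. Write $t$ as $\lambda(u)$ with $\lambda$ dominant (after possibly conjugating by $W$), and compare $\theta_\lambda 1_{IwH_P}$ with $\theta_\lambda 1_{IwH}$.

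\textbf{Step 3, the main obstacle.} The difficulty is to choose $w$ so that on both sides the Bernstein element $\theta_\lambda$ (equivalently $T_{\lambda(u)}$) acts "freely", i.e. $T_{\lambda(u)}1_{IwH}=1_{I\lambda(u)wH}$ and likewise on $X_\emptyset$, with no lower terms. For $X_\emptyset$ this is the computation already carried out with Proposition \ref{action}. For $X$ it holds when $IwH$ is the open orbit $\mathcal{U}_{max}$ in $B\backslash X$: there every simple reflection raises or fixes the orbit, and multiplication by the dominant translation raises lengths additively, since $l_\sigma=l-1$ is additive along such products. I would therefore take $w$ with $BwH=\mathcal{U}_{max}$ and check that $l_\sigma(\lambda(u)w)=l(\lambda(u))+l_\sigma(w)$ and the analogous identity on $X_\emptyset$.

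\textbf{Step 4.} With these identities, $T_{\lambda(u)}e(1_{IwH_P})=e(1_{I\lambda(u)wH_P})=1_{I\lambda(u)wH}=T_{\lambda(u)}1_{IwH}$. Since $T_{\lambda(u)}$ is invertible in $H(G,I)$, it follows that $e(1_{IwH_P})=1_{IwH}$.

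If Step 3 fails as stated, the fallback is an argument by reduction mod $q-1$ (as in the injectivity proofs): identify $I$ orbits combinatorially via the monomial matrix description of Proposition \ref{I orbits}, and compare supports.
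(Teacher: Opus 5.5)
The mechanism of your Step 4 is sound, and it differs from the paper's packaging. Since $e$ is $H(G,I)$-linear and $T_{\lambda(u)}$ is invertible, it suffices to know two things:
\begin{itemize}
\item[(a)] $\lambda(u)w$ lies in the region where Theorem \ref{theorem3} applies;
\item[(b)] $T_{\lambda(u)}1_{IwH}=c\,1_{I\lambda(u)wH}$ and $T_{\lambda(u)}1_{IwH_P}=c\,1_{I\lambda(u)wH_P}$ with the same scalar $c$.
\end{itemize}
But the whole content of the proposition lies in (a) and (b), you do not prove them, and your criterion for choosing $w$ does not secure them.

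Step 1 gives goodness of $wa$ with $a$ deep in $A^+_\emptyset(P,C)$. Step 4 instead uses goodness of $\lambda(u)w=w\cdot w^{-1}\lambda(u)w$. So $w^{-1}\lambda w$ must lie in that cone while $\lambda$ is dominant, i.e.\ $w$ must carry the positive cone of $A_\emptyset$ into the dominant chamber. ``After possibly conjugating by $W$'' is no longer available once $w$ is fixed, and the condition $BwH=\mathcal{U}_{max}$ does not force this.

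For example, take $H=Sp_{2n}$ defined by $\omega$. The permutation $w_b$ reversing the order of the $2\times 2$ blocks lies in $H$, so $Iw_bH=IH$. However, $w_bH_Pw_b^{-1}$ contains the block upper unipotent radical instead of $U^-_X$. For $\lambda\in X_*(A_\emptyset)$ dominant and non-central:
\begin{itemize}
\item $w_b^{-1}\lambda w_b$ is antidominant, so Step 1 does not cover $\lambda(u)w_b$;
\item $T_{\lambda(u)}1_{Iw_bH}=1_{I\lambda(u)H}$, whereas $T_{\lambda(u)}1_{Iw_bH_P}$ has coefficient $q^{l(\lambda(u))}$ on $I\lambda(u)w_bH_P$ and is also supported on further orbits.
\end{itemize}
So Steps 2--4 break for this $w$. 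Tying $w$ to the direction of the cone in Theorem \ref{theorem3} is exactly the role of the paper's chamber $\mathcal{C}_0$: the chamber through the $\sigma$-fixed point $0$, in the direction of $t\sigma(t)^{-1}$.

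Your justification of (b) also does not work, for three reasons.
\begin{itemize}
\item How the finite simple reflections act at $\mathcal{U}_{max}$ says nothing about the intermediate orbits along a reduced word for $\lambda(u)$, which involves the affine reflection and length-zero elements.
\item The identity $l_\sigma(\lambda(u)w)=l(\lambda(u))+l_\sigma(w)$ has the wrong coefficient: the translation enters on both sides of $\sigma$. The relevant statement, which the paper proves via a minimal gallery $t_-\mathcal{C}_0\to\mathcal{C}_0\to\sigma(\mathcal{C}_0)\to\sigma(t_-\mathcal{C}_0)$, is $l_\sigma(t_-\mathcal{C}_0)=2l(t_-)+l_\sigma(\mathcal{C}_0)$.
\item On the $X_\emptyset$ side, Proposition \ref{action} transfers the type of a single $T_s$ from the asymptotic behaviour of $Iw't_nH$. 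You must therefore control every step of the word uniformly as $t_n\to\infty$; this is precisely the paper's induction along the gallery from $t_-\mathcal{C}_0$ to $\mathcal{C}_0$.
\end{itemize}
The mod $q-1$ fallback does not help, since at $q=1$ you would still need to know where $e$ sends $1_{IwH_P}$.

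Your route can be completed in the present case. Take $H=Sp_{2n}$ for $\omega$, $w=1$, and $\lambda\in X_*(A_\emptyset)$ dominant. One checks the factorizations $I=(I\cap\lambda(u)^{-1}I\lambda(u))(I\cap U^-_X)$ and $I=(I\cap\lambda(u)^{-1}I\lambda(u))(I\cap Sp_{2n})$. Together with an integrality check on the block upper unipotent coset representatives, these give $T_{\lambda(u)}1_{IH_P}=1_{I\lambda(u)H_P}$ and $T_{\lambda(u)}1_{IH}=1_{I\lambda(u)H}$, and then your Step 4 finishes. That would be a more hands-on proof for $GL_{2n}/Sp_{2n}$. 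The paper's gallery argument is what covers a general minimal-rank $X$ satisfying the appendix assumptions.
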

\begin{proof}
 We call an element $w\in W_{aff}$ good if 
$$e(1_{IwH_P})=1_{IwH}$$
By Theorem \ref{theorem3} a dominant element $t\in T$ which is dominant enough is good.

Let $t\in T$ be a dominant good element. We can write $t=t_+t_-$ with $t_+\in T\cap H=T\cap H_P$ and $t_-\in T^-$, then $t_-$ is also good. 

Let $0\in \A$ be the point corresponding to the maximal compact subgroup $\mathbf{G}(\s)$. By assumption $0$ is $\sigma$ stable. 

Let $\C_0\subset \A$ be a chamber that contains $0$ such that for $t\in T$ dominant, $\C_0$ is in the direction of $t\sigma(t)^{-1}$. The $H$ orbit of $(\C_0,1)$ corresponds to $IwH$ some $w\in W$, we claim that this $w$ is good.

Recall the length function defined on chambers by $l_\sigma(\C)=d(\C,\sigma(\C))$. Here, $d$ is the distance function between chambers in $\B$. We also have the usual length function $l$ on $W_{aff}$.

We claim that $l_\sigma(t_-\C_0)=2l(t_-)+l_\sigma(\C_0)$. This is equivalent to $d(t^2_-\C_0,\sigma(\C_0))=l(t^2_-)+d(\C_0,\sigma(\C_0))$. For this it is enough to prove that there is a gallery between $t^2_-\C$ and $\sigma(\C)$ which contains $\C_0$. We know that $t^2_-=t\sigma(t)^{-1}$. The line passing through $0$ in the direction defined by $t$ intersects $\C_0$ and the line passing through $0$ in the direction defined by $\sigma(t)$ intersects $\sigma(\C_0)$. The direction defined by $t^2_-$ is pointing from $\sigma(\C_0)$ to $\C_0$ which implies that there is a minimal gallery between $\sigma(\C_0)$ and $t^2_-\C_0$ that contains $\C_0$.

Thus, we can find a minimal gallery between $t_-\C_0$ and $\sigma(t_-\C_0)$ by combining a minimal galleries from $t_-\C_0$ to $\C_0$, from $\C_0$ to $\sigma(\C_0)$ and from $\sigma(\C_0)$ to $\sigma(t_-\C_0)$.

Let $\C_i=w_i\C_0$ be a chamber in a minimal gallery from $t_-\C_0$ to $\C_0$, we claim that $w_iw$ is good. Proving this will finish the proof.

We prove that $w_i$ is good by induction. 
Assume that $w_{i-1}$ is good, there is a simple  reflection $s$ such that $w_i=sw_{i-1}$. By Proposition \ref{action} it is enough to prove that there are $t_n\in T$ such that $t_n\rightarrow \infty$ and $l_\sigma(sw_it_n)>l_\sigma(w_it_n)$. Let $t_n=t^n$, it is enough to show that $l_\sigma(sw_it^n_-)>l_\sigma(w_it^n_-)$.

This follows from the fact that translating a minimal gallery between $\C_0$ and $t_-\C_0$ by $t_-$ gives a minimal gallery between $t_-\C_0$ and $t^2_-\C_0$. Thus we can find a minimal gallery from $sw_it^n_-\C_0$ to $\C_0$ be concatenating the minimal gallery from $sw_it^n_-\C_0$ to $t^n_-\C_0$ and a gallery from $t^n_-\C_0$ to $\C_0$. Such a minimal gallery passes through $w_it^n_-\C_0$. We can use this to construct a minimal gallery from $sw_it^n_-\C_0$ to $\C_0$, to $\sigma(\C_0)$, to $\sigma(sw_it^n_-\C_0)$. Such a gallery passes through $w_it^n_-\C_0$ and  $\sigma(w_it^n_-\C_0)$.
\end{proof}

\bibliographystyle{alphaurl}
\bibliography{mybib}

\end{document}